\newcommand\smallO{
  \mathchoice
    {{\scriptstyle\mathcal{O}}}
    {{\scriptstyle\mathcal{O}}}
    {{\scriptscriptstyle\mathcal{O}}}
    {\scalebox{.7}{$\scriptscriptstyle\mathcal{O}$}}
  }
\newcommand{\Hquad}{\hspace{0.5em}} 
\DeclareSymbolFont{sfletters}{OML}{cmbrm}{m}{it}
\DeclareMathSymbol{\somega}{\mathord}{sfletters}{"21}
\DeclareSymbolFont{Letters} {U}{zeur}{m}{n} 
\DeclareMathSymbol{\ssomega}{\mathalpha}{Letters}{"21} 
\DeclareSymbolFont{Letters} {U}{zeur}{m}{n} 
\DeclareMathSymbol{\pPhi}{\mathalpha}{Letters}{"1E}
\newcommand*\bigcdot{\mathpalette\bigcdot@{.5}}
\newcommand*\bigcdot@[2]{\mathbin{\vcenter{\hbox{\scalebox{#2}{$\m@th#1\bullet$}}}}}
\newtheorem{theorem}{Theorem}[section]
\newtheorem{lemma}[theorem]{Lemma}
\newtheorem{prop}[theorem]{Proposition}
\newtheorem{Assump}[theorem]{Assumptions}
\theoremstyle{definition}
\theoremstyle{remark}
\newtheorem{remark}[theorem]{Remark}
\numberwithin{equation}{section}
\newcommand\reallywidehat[1]{
\savestack{\tmpbox}{\stretchto{
  \scaleto{
    \scalerel*[\widthof{\ensuremath{#1}}]{\kern.1pt\mathchar"0362\kern.1pt}
    {\rule{0ex}{\textheight}}
  }{\textheight}
}{2.4ex}}
\stackon[-6.9pt]{#1}{\tmpbox}
}
\newcommand*{\rom}[1]{\expandafter\@slowromancap\romannumeral #1@}
\begin{document}

\title{non-linear periodic waves on the Einstein cylinder}

\author{Athanasios Chatzikaleas}

\address{Westf{\"a}lische Wilhelms-Universit{\"a}t M{\"u}nster, Mathematical Institute, Einsteinstrasse 62, 48149 Munster, Germany} 
\email{achatzik@uni-muenster.de}

\author{Jacques Smulevici}

\address{Laboratory Jacques-Louis Lions (LJLL), University Pierre and Marie Curie (Paris 6), 4 place Jussieu, 75252 Paris, France}
\email{jacques.smulevici@ljll.math.upmc.fr}

\dedicatory{}

\begin{abstract}
Motivated by the study of small amplitudes non-linear waves in the Anti-de-Sitter spacetime and in particular the conjectured existence of periodic in time solutions to the Einstein equations, we construct families of arbitrary small time-periodic solutions to the conformal cubic wave equation and the spherically-symmetric Yang--Mills equations on the Einstein cylinder $\mathbb{R}\times \mathbb{S}^3$. For the conformal cubic wave equation, we consider both spherically-symmetric solutions and complexed-valued aspherical solutions with an ansatz relying on the Hopf fibration of the $3$-sphere. In all three cases, the equations reduce to $1$+$1$ semi-linear wave equations. \\ 
Our proof relies on a theorem of Bambusi--Paleari for which the main assumption is the existence of a seed solution, given by a non-degenerate zero of a non-linear operator associated with the resonant system. For the problems that we consider, such seed solutions are simply given by the mode solutions of the linearized equations. Provided that the Fourier coefficients of the systems can be computed, the non-degeneracy conditions then amount to solving infinite dimensional linear systems. Since the eigenfunctions for all three cases studied are given by Jacobi polynomials, we derive the different Fourier and resonant systems using linearization and connection formulas as well as integral transformation of Jacobi polynomials. \\
In the Yang--Mills case, the original version of the theorem of Bambusi--Paleari is not applicable because the non-linearity of smallest degree is nonresonant. The resonant terms are then provided by the next order non-linear terms with an extra correction due to backreaction terms of the smallest degree non-linearity and we prove an analogous theorem in this setting. \\
Finally, we emphasize that, in view of the KAM type method used, the time periodic solutions that we construct exist only when the small perturbative parameter belongs to a Cantor--like set.
\end{abstract}

\maketitle
\tableofcontents 
\addtocontents{toc}{\protect\setcounter{tocdepth}{1}} 
 
\noindent

\section{Introduction}
\subsection{Stability/instability of the Anti-de-Sitter spacetime}
The Anti-de-Sitter spacetime (AdS) is the maximally symmetric solution to the vacuum Einstein equations with a negative cosmological constant
\begin{equation} \label{eve}
\mathrm{Ric}(g)=- \Lambda g, \quad \Lambda < 0. 
\end{equation}
Given $\Lambda < 0$, this is the simplest or trivial solution to \eqref{eve}, in a similar sense that the Minkowski or de-Sitter spacetimes are the trivial solutions to the vacuum Einstein equation with $\Lambda=0$ or $\Lambda >0$. While the stability properties of the Minkowski or de-Sitter spacetimes are now well-understood \cite{MR1316662, MR868737}, the study of pertubations of AdS is still an active subject of research. One important aspect is that, AdS or, more generally, spacetimes which are asymptotically AdS are not globally hyperbolic. Hence, any evolution problem for these solutions is only well-posed after boundary conditions are imposed at the conformal boundary. Two naturally opposite classes of boundary conditions are the fully reflective and dissipative boundary conditions. 
In the reflective case, we expect, as originally conjectured by Dafermos-Holzegel in \cite{DafHol} and independently by Anderson in \cite{MR2271154}, that AdS is unstable. Strong evidence for this instability was first presented by Bizo\'n-Rostworowski \cite{Bizon:2011gg}, which pioneered the study of the spherically-symmetric Einstein-Klein-Gordon system using numerical and Fourier based analysis and proposed weak turbulence as the non-linear source of instability. Recently, a proof of instability for the spherically Einstein-Vlasov\footnote{Moschidis has further announced similar results for the spherically-symmetric Einstein-scalar-field system \cite{gm:esf}.} system has been obtained in the work of Moschidis \cite{gm:evads,MR4150259}, based on a physical space mechanism. 
 In the dissipative case, one has strong decay of solutions for the linearized Einstein equations \cite{hlsw:aplfads} and this should lead to stability even at the non-linear level.  

\subsection{The time-periodic solutions of Rostworowski--Maliborski}
In the reflective case, parallel to the study of the instability conjecture, an interesting class of solutions was introduced by Rostworowski--Maliborski in \cite{Maliborski:2013jca}, where they constructed pertubatively and numerically small data, time-periodic solutions of the spherically-symmetric Einstein-scalar field system. They furthermore suggested, based on their numerical analysis, that these solutions should enjoy stronger stability properties than the original AdS space. The present paper is directly motivated by this work. We indeed prove here the existence of arbitrary small time-periodic solutions for various toy models, which mimic certain properties of non-linear waves in the AdS space.

\subsection{The conformal wave and the Yang--Mills equations} \label{TheToyModels}
More precisely, we study the conformal wave and the Yang-Mills equations on the Einstein cylinder $\mathbb{R}\times \mathbb{S}^3$. The AdS spacetime is conformal to half of the Einstein cylinder, so that solutions to the conformal wave and the Yang-Mill equations on the AdS spacetime can be mapped to solutions on the whole Einstein cylinder with a certain reflection symmetry at the equator. 
The conformal cubic wave equation on the Einstein cylinder can be written as 
	\begin{align}\label{Model1CW} 
	- \partial_{t}^2 \phi(t,\omega)+ \Delta_{\mathbb{S}^3}\phi(t,\omega) - \phi(t,\omega) =\left| \phi(t,\omega) \right|^2 \phi(t,\omega),
	\end{align}
	for a scalar field $\phi:\mathbb{R} \times \mathbb{S}^3 \longrightarrow \mathbb{C}$ with $\phi=\phi(t,\omega)$. We will consider perturbations around the static solution $\phi_0=0$ and for simplicity with zero initial velocity. 
\\ \\In the spherically symmetric case, the initial value problem for \eqref{Model1CW} reduces to 
\begin{align}\label{Model1WSphericalSymmetry}
\begin{dcases}
	\left( \partial_{t}^2 + L \right) u = f(u), \Hquad (t,x)\in \mathbb{R} \times (0,\pi), \\
	u_{0}(x)= u(0,x),  \\
	u_{1}(x)=\partial_{t} u(0,x) =0, 
\end{dcases}
\end{align}
for a scalar field $u:\mathbb{R} \times (0,\pi) \longrightarrow \mathbb{R}$ with $u = u(t,x)$ and  
\begin{align}\label{DefinitionLinearOperatorModel1WSphericalSymmetry}
    Lu: = -\Delta^{ss}_{\mathbb{S}^3 } u +  u, \Hquad f(u):= - u^3,
\end{align}
where
\begin{align*}
	-\Delta^{ss}_{\mathbb{S}^3 }  u := - \frac{1}{\sin^2 (x) } \partial_{x} \left(
	\sin^2 (x) \partial_{x} u
	\right)
\end{align*}
stands for the spherically symmetric Laplace--Beltrami operator on $\mathbb{S}^3$. 
\\ \\
When the spherical symmetry assumption is removed \cite{2021arXiv210409797E, MR3450568}, we use an ansatz based on \textit{Hopf coordinates}\footnote{We would like to thank Oleg Evnin who suggested the Hopf coordinate ansatz.} $(\eta,\xi_{1},\xi_{2}) \in  \left[0,\frac{\pi }{2} \right] \times \left[0,2\pi  \right) \times \left[0,2\pi  \right)$ rather than the standard spherical coordinates.
The Laplace--Beltrami operator on $\mathbb{S}^3$ in these coordinates reads as
\begin{align*}
	\Delta_{(\eta,\xi_1,\xi_2)}^{\mathbb{S}^3} \chi= \partial_{\eta}^2 \chi + \left(\frac{\cos \eta}{\sin \eta} - \frac{\sin \eta}{\cos \eta} \right)\partial_{\eta}\chi +\frac{1}{\sin^2 \eta} \partial_{\xi_1}^2 \chi +\frac{1}{\cos^2 \eta} \partial_{\xi_2}^2 \chi.
\end{align*}
While in principle, the Fourier expansion with respect to $\xi_1$ and $\xi_2$ of a solution $\chi(t,\eta,\xi_1,\xi_2)$ to \eqref{Model1CW} may include all possible admissible frequencies, we will pick a fixed pair of frequencies $(\mu_1,\mu_2) $ and force the Fourier expansion to excite only this particular pair by implementing the ansatz
\begin{align}\label{AnsatzHof}
	\chi(t,\eta,\xi_1,\xi_2)=u(t,\eta) e^{i \mu_1 \xi_1}  e^{i \mu_2 \xi_2}.
\end{align}  
This leads us to consider the following initial value problem
\begin{align}\label{Model1CWOUTSphericalSymmetryHOPFanstaz}
\begin{dcases}
	\left( \partial_{t}^2 + \mathsf{L}^{(\mu_1,\mu_2)} \right) u = \mathsf{f}(u), \Hquad (t,\eta)\in \mathbb{R} \times \left(0, \pi/2 \right) , \\
	u_{0}(\eta)= u(0,\eta), \\
	u_{1}(\eta)=\partial_{t} u(0,\eta) =0, \\ 
\end{dcases}
\end{align}
for a scalar field $u:\mathbb{R} \times \left( 0, \pi/2 \right) \longrightarrow \mathbb{R}$ with $u = u(t,\eta)$ and
\begin{align}\label{DefinitionLinearOperatorModel1CWOUTSphericalSymmetryHOPF} 
\mathsf{L}^{(\mu_1,\mu_2)}u:=-\partial_{\eta}^2 u - \left(\frac{\cos \eta}{\sin \eta} - \frac{\sin \eta}{\cos \eta} \right) \partial_{\eta} u + \left( 
	\frac{\mu_1^2}{\sin^2 \eta} + \frac{\mu_2^2}{\cos^2 \eta}+1
	\right)u, \Hquad \mathsf{f}(u)= -u^3.
\end{align}
Finally, we consider the spherically symmetric equivariant Yang--Mills equation on the Einstein cylinder for the $SU(2)$ connection, which reduces \cite{MR1622539} to the study of
 \begin{align}
 	- \partial_{t}^2 \phi(t,x) + \partial_{xx}^2\phi (t,x) + \frac{\phi (t,x)  }{\sin^2(x)}    
 	=  \frac{\phi ^3 (t,x) }{\sin^2(x)} ,
 \end{align}
 for a scalar field $\phi:\mathbb{R} \times (0,\pi) \longrightarrow \mathbb{R}$ with $\phi=\phi(t,x)$. We will study perturbations of the static solution $\phi_0=1$ to the above equation. Writing 
 $$
 \phi(t,x)=1+ \sin^2 (x) u(t,x),
 $$
 we are led to the following initial value problem   
%
%
%
\begin{align}\label{Model2YM}
\begin{dcases}
	\left( \partial_{t}^2 + \mathfrak{L} \right) u = \mathfrak{f}(x,u), \Hquad (t,\omega)\in \mathbb{R} \times (0,\pi) , \\
	u_{0}(x)= u(0,x), \Hquad \omega \in (0,\pi)  \\
	u_{1}(x)=\partial_{t} u(0,x) =0, \Hquad \omega \in  (0,\pi)
\end{dcases}
\end{align} 
where
\begin{align}\label{DefinitionLinearOperatorModel2YM}
	\mathfrak{L}u:=- \frac{1}{\sin^4 x}\partial_x \left( \sin^4 x \partial_x u \right)+4u, \Hquad \mathfrak{f}(x,u):= -3 u^2 -\sin^2(x) u^3.
\end{align} 
 
\subsection{Main results and general strategy}
In the following, we use the abbreviations:
\begin{itemize}
	\item CW: conformal cubic wave equation in spherical symmetry, that is \eqref{Model1WSphericalSymmetry}--\eqref{DefinitionLinearOperatorModel1WSphericalSymmetry},
	\item  CH: conformal cubic wave equation out of spherical symmetry in Hopf coordinates according to the ansatz \eqref{AnsatzHof}, that is \eqref{Model1CWOUTSphericalSymmetryHOPFanstaz}--\eqref{DefinitionLinearOperatorModel1CWOUTSphericalSymmetryHOPF},
	\item  YM: Yang--Mills equation in spherical symmetry, that is \eqref{Model2YM}--\eqref{DefinitionLinearOperatorModel2YM},
\end{itemize}

and study the evolution of the perturbations  
\begin{align*}
	u:\mathbb{R} \times I \longrightarrow \mathbb{R}, \Hquad u = u(t,x), \Hquad  I =
	\begin{dcases}
		(0,\pi), & \text{ for CW} \\
		\left(0,\pi/2 \right), & \text{ for CH} \\
		(0,\pi), & \text{ for YM}
	\end{dcases} 
\end{align*}
driven by the partial differential equations
\begin{align}\label{AllModelsinOnePDE}
	\left( \partial_{t}^2 + \mathbf{L} \right) u = \mathbf{f}(x,u), \Hquad (t,x)\in \mathbb{R} \times I, 
\end{align}
subject to the initial data with zero initial velocity, 
\begin{align*}
\begin{dcases}
	u_{0}(x)= u(0,x), \Hquad x \in I,  \\
	u_{1}(x)=\partial_{t} u(0,x) =0, \Hquad x \in  I.
\end{dcases}
\end{align*} 
Here, the linear operators and the non--linearities are given respectively by
\begin{align}
	\mathbf{L}u&=  
	\begin{dcases}
		- \frac{1}{\sin^2 (x) } \partial_{x} \left(
	\sin^2 (x) \partial_{x} u
	\right) +  u, & \text{ for CW}  \\
	-\partial_{x}^2 u - \left(\frac{\cos x}{\sin x} - \frac{\sin x}{\cos x} \right) \partial_{x} u + \left( 
	\frac{\mu_1^2}{\sin^2 x} + \frac{\mu_2^2}{\cos^2 x}+1
	\right)u, & \text{ for CH} \\
	- \frac{1}{\sin^4 x}\partial_x \left( \sin^4 x \partial_x u \right)+4u,& \text{ for YM}
	\end{dcases}\label{DefinitionLinearOperator}   \\
	\mathbf{f}(x,u) & = 
	\begin{dcases}
	 - u^3	, & \text{ for CW}  \\
	  - u^3	, & \text{ for CH} \\
	   -3 u^2 -\sin^2(x) u^3 ,& \text{ for YM}
	\end{dcases}\label{Definitionnon-linearily}
\end{align}
Associated to the linear operators given by \eqref{DefinitionLinearOperator}, one can introduce natural Hilbert spaces and with suitable definitions for their domains (Section \ref{SectionEigenvalueproblems}), the linear operators are then all self--adjoint operators with compact resolvent. 
In order to simplify the presentation below, we denote by $\{e_n(x):n\ge 0 \}$ the set of eigenfunctions of any of these operators\footnote{Of course, the eigenfunctions are different for the different operators, so that this is just a generic name.} and by $\{\omega_n: n \geq 0 \}$ the set of corresponding eigenvalues. Recall the facts that, in all three models considered, the sequences $\{\omega_n : n \geq 0\}$ are all strictly increasing with $\omega_n \sim n$ as $n \rightarrow + \infty$. \\ \\
We also denote by $\Phi^t(\xi)$ the flow associated to any of the linearized equations with initial data $\left(u_{t=0},\partial_t u_{t=0}\right)=\left( \xi,0\right)$. If $\xi_n$ denote the Fourier coefficients of $\xi$ associated to the eigenbasis $\{e_n(x):n\ge 0 \}$, then 
\begin{align} \label{eq:lf}
\Phi^t(\xi)= \sum_{n=0}^\infty \cos(t \omega_n) \xi_n e_n(x).
\end{align}
To state our result, we need to introduce a set of frequencies verifying a certain Diophantine condition \cite{MR1819863}. Given $0<\alpha < 1/3$, define
\begin{equation}\label{def:Wa}
\mathcal{W}_\alpha:=\left\{ \omega \in \mathbb{R}: \Hquad 	   |\omega \cdot l -\omega_{j}| \geq \frac{\alpha}{l}, \Hquad \forall (l,j) \in \mathbb{N}^2,\Hquad l\geq 1, \Hquad \omega_{j} \neq l \,\right\}.
\end{equation}
Consider any of the problems \text{CW}, \text{CH} or \text{YM} and let $e_\gamma$ be one of the eigenfunction to the corresponding linear operator. On top of $\alpha$, the statements of our results depend on the constant $\gamma\in \mathbb{N}\cup \{0\}$, the index of the eigenfunction, and $s>0$, which defines the Sobolev space\footnote{The definition of the $H^s$ spaces is adapted to each problem, see Section \ref{SectionMethodBambusiPaleari}.} $H^s$ where the solutions will belong to. Our assumptions are slightly different depending on the problems addressed. 

\begin{Assump}\label{AssumptionsRef}
Specifically, we make the following assumptions.

\begin{itemize}
	\item CW:  We take $\gamma\in \{0,1,2,\dots,\}$ and $s \in \mathbb{N}$ with $s \geq 2$. 
	\item CH: We take $\gamma\in \{0,1,2,3,4,5\}$ and $s \in \mathbb{N}$ with $s \geq 2$. Moreover, we assume that the parameters $\mu_1$ and $\mu_2$ appearing in \eqref{Model1CWOUTSphericalSymmetryHOPFanstaz} verify $\mu_1=\mu_2=\mu$ with $\mu$ either sufficiently large, or $\mu\in\{0,1,2,3,4,5\}$.
	\item YM: We take $\gamma\in \{0,1,2,3,4,5\}$ and $s \in \mathbb{N}$ with $s \geq 3$.  
\end{itemize}
\end{Assump}

\begin{remark}[Range of $\gamma$]\label{RemarkRangeGamma}
 We note that our proof is based on closed formulas for the Fourier coefficients, integrals that quantify the mode couplings. Although we derive these formulas uniformly with respect to $\gamma$ (see Section \ref{SectionFourierConstants}), we also need to check the validity of particular non--linear conditions depending on the Fourier coefficients. On the one hand, for the CW model, the Fourier coefficients have a relatively simple closed formula. Hence, there is no need to restrict the range of $\gamma$ and we establish the validity of the conditions needed uniformly with respect to $\gamma$. On the other hand, for the CH and YM models, the complexity of the Fourier coefficients requires to restrict the range of $\gamma$ to any finite set. Since the smaller the range is, the easier one can verify our computations, we fix $\gamma \in \{0,1,2,3,4,5\}$ solely for the purpose of computing and verifying all computations in the manuscript by hand. However, we believe that our result stated below holds true also for larger values of $\gamma $. For the convenience of the reader, we attach Mathematica files that can help the reader to both easily verify our computations for small $\gamma$ as well as derive and verify the analogous closed formulas for larger values of $\gamma$.       
\end{remark}

Under Assumptions \ref{AssumptionsRef}, we prove the following result. 
	\begin{theorem}[Main result 1: Existence of time--periodic solutions to all three models bifurcating from various 1--modes]\label{MainTheorem1} 
		 Let $(\gamma, s) \in (\mathbb{N}\cup\{0\}) \times \mathbb{R}$ verify the Assumptions \ref{AssumptionsRef} and let $e_\gamma$ be the eigenfunction to the corresponding linear operator. 
Also, let $0< \alpha< 1/3$ and $\mathcal{W}_\alpha$ be the corresponding set of frequencies, defined in \eqref{def:Wa}. Then, there exists a family $\{ u_\epsilon:\epsilon \in \mathcal{E}_\alpha\}$ of time-periodic solutions to either CW, CH or YM where $\mathcal{E}_{\alpha}$ is an uncountable set that has zero as an accumulation point. In addition, each element $u_{\epsilon}$ has the following properties:
		 \begin{enumerate}
		 	\item $ u_{\epsilon}$ has period $T_{\epsilon}=2\pi / \omega_\epsilon  $ with $\omega_\epsilon \in \mathcal{W}_\alpha$ being $\epsilon-$close to one,  
		 	\item $u_\epsilon \in H^1 \left( \left[0,T_{\epsilon} \right]; H^s \right)$. 
		 	\item $u_\epsilon$ stays for all times close to the solution to the linearized equation with initial data $\left(u_{t=0},\partial_t u_{t=0} \right)=\left( \epsilon \kappa_\gamma e_\gamma,  0\right)$,
		 	\begin{align*}
		 		\sup_{t\in \mathbb{R}} \left \|u_{\epsilon}(t,\cdot)-\Phi^{t \omega_\epsilon }\left(\epsilon \kappa_\gamma e_\gamma  \right ) \right \|_{H^s} \lesssim \epsilon^2,
		 	\end{align*} 
	 	where $\kappa_\gamma$ is a normalization constant. 
		 \end{enumerate}
	 \end{theorem}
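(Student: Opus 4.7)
The plan is to apply a Bambusi--Paleari type theorem, which constructs small amplitude time-periodic solutions to semi-linear wave equations of the form (\ref{AllModelsinOnePDE}) provided: (i) the spectrum $\{\omega_n\}$ of $\mathbf{L}$ is integer, so the linearized flow (\ref{eq:lf}) is $2\pi$-periodic; (ii) the resonant/averaged system derived from the non-linearity admits a non-degenerate zero (the \emph{seed} solution); and (iii) the target frequency satisfies a Diophantine non-resonance condition, precisely the one encoded in $\mathcal{W}_\alpha$. First I would rescale $u = \epsilon v$ and $t = \tau/\omega$, with $\omega$ close to one playing the role of a free parameter to be tuned, converting (\ref{AllModelsinOnePDE}) into an equation to which one can apply a Lyapunov--Schmidt decomposition separating the kernel of $\partial_\tau^2 + \mathbf{L}$ on $2\pi$-periodic functions from its range. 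The bifurcation equation reduces to an infinite algebraic system for the Fourier coefficients $v_n$, whose non-linear part is a purely resonant cubic (or, for YM, quadratic plus cubic) expression with coupling constants $S_{njkl}$ given by $L^2$ integrals of products of eigenfunctions.

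For the cubic models CW and CH, I would take as seed the single-mode profile $v = \kappa_\gamma e_\gamma$. That this is a zero of the resonant vector field is immediate from the selection rule $\omega_n + \omega_l = \omega_j + \omega_k$ restricted to a single active index $\gamma$, while the tangential frequency shift produces $\omega_\epsilon$ close to one as in the statement of Theorem \ref{MainTheorem1}. The main obstacle is the non-degeneracy condition: one has to show that the Jacobian of the resonant map at $\kappa_\gamma e_\gamma$ is an invertible operator on the weighted sequence space corresponding to $H^s$. This amounts to inverting an explicit infinite linear system whose entries are the constants $S_{njkl}$. I would handle it by exploiting the fact that the eigenfunctions of the three linearized operators (Section \ref{SectionEigenvalueproblems}) are all expressible in terms of Jacobi polynomials, so that the $S_{njkl}$ admit closed-form expressions derived from linearization and connection formulas together with integral transformations of Jacobi polynomials (Section \ref{SectionFourierConstants}). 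For CW the resulting expressions are transparent enough to push the argument through uniformly in $\gamma$, whereas for CH the combinatorial complexity forces the restriction $\gamma \in \{0,1,\dots,5\}$ recorded in Assumptions \ref{AssumptionsRef} and Remark \ref{RemarkRangeGamma}, and the linear system is checked case by case, with the additional reduction $\mu_1=\mu_2=\mu$ with $\mu$ large or small narrowing the range of surviving resonances.

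The Yang--Mills model requires a modification because the dominant non-linearity $-3u^2$ is \emph{quadratic} and fully non-resonant: no triple of linear frequencies $\omega_n, \omega_j, \omega_k$ satisfies $\omega_n = \omega_j \pm \omega_k$. I would perform one step of Poincar\'e normal form to eliminate this quadratic term by a near-identity transformation $v \mapsto v + \epsilon \Psi(v)$, producing an effective cubic resonant map built from two contributions: the direct resonant part of the genuine cubic term $-\sin^2(x) u^3$ in (\ref{Definitionnon-linearily}), plus a \emph{backreaction} contribution quadratic in $\Psi$ arising from substituting the transformation into $-3u^2$. I would then prove an adapted version of the Bambusi--Paleari theorem for this setting and verify its non-degeneracy hypothesis at $\kappa_\gamma e_\gamma$, again for $\gamma \in \{0,\dots,5\}$, using the Jacobi-polynomial Fourier constants from Section \ref{SectionFourierConstants}. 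Finally, the Cantor-like structure of $\mathcal{E}_\alpha$ emerges from the inverse function step in the bifurcation argument: the linearization at the approximate solution has small divisors that are bounded from below exactly when $\omega \in \mathcal{W}_\alpha$, and a standard measure-theoretic argument combined with the Diophantine definition (\ref{def:Wa}) shows that the admissible $\epsilon$'s form an uncountable set with zero as accumulation point, yielding the full conclusion of the theorem with the stated $\epsilon^2$ closeness to the linear mode solution $\Phi^{t\omega_\epsilon}(\epsilon \kappa_\gamma e_\gamma)$.
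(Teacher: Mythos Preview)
Your overall strategy for CW and CH matches the paper's: apply the Bambusi--Paleari theorem (Theorem~\ref{OrigivalversionTheoremBambusi}), verify that the rescaled single mode $\kappa_\gamma e_\gamma$ is a zero of the averaged operator $\mathcal{M}$, and check non-degeneracy of $d\mathcal{M}(\kappa_\gamma e_\gamma)$ via the closed-form Fourier coefficients of Section~\ref{SectionFourierConstants}. That part is fine.

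For YM your route differs from the paper's, and there is also a factual slip. You assert that the quadratic term is non-resonant because ``no triple of linear frequencies $\omega_n,\omega_j,\omega_k$ satisfies $\omega_n=\omega_j\pm\omega_k$''. This is false: for YM one has $\ssomega_n=n+2$, so $\ssomega_i+\ssomega_j=\ssomega_{i+j+2}$ always. What actually makes $\langle\mathfrak{f}^{(2)}\rangle=0$ (Lemma~\ref{LemmaNonResoantnf2}) is the \emph{vanishing of the Fourier coefficients} $\overline{\mathfrak{C}}_{ijm}$ on these resonant triples (Lemma~\ref{LemmaVanisginFourierModel3YM}), not the absence of frequency resonances. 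This matters for your normal-form step: the homological equation for $\Psi$ has denominators $\ssomega_\nu^2-(\ssomega_i\pm\ssomega_j)^2$ that do vanish on resonant triples, and it is only because the corresponding numerators $\overline{\mathfrak{C}}_{ij\nu}$ vanish there that $\Psi$ is well-defined.

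Beyond this, your normal-form approach is a legitimate alternative that the paper explicitly mentions (see the remark following Theorem~\ref{MainTheorem2}) but does not carry out. The paper instead works directly with the Lyapunov--Schmidt decomposition: it solves the $P$-equation for $q_\perp(v)$ (Lemma~\ref{LemmaSolutionPequation}), substitutes the leading piece $L_1^{-1}P\mathfrak{f}^{(2)}(v)$ back into the $Q$-equation, and isolates the resulting cubic backreaction as an explicit correction $\mathfrak{F}_0(\xi)$ to the averaged operator (Lemma~\ref{LemmaDefinitionMathfrakF}). The two routes are formally equivalent---your $\Psi$ is essentially $L_1^{-1}P\mathfrak{f}^{(2)}$, and the backreaction you describe corresponds to the paper's $\mathfrak{F}_0$---but the paper's presentation keeps all estimates inside the Lyapunov--Schmidt framework and avoids having to prove separately that the normal-form change of variables preserves the function spaces and the Diophantine structure. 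Also, a small imprecision: the backreaction term is not ``quadratic in $\Psi$'' but rather bilinear in $v$ and $\Psi(v)$, i.e.\ of the form $d\mathfrak{f}^{(2)}_v[\Psi(v)]$; see \eqref{def:Eextrquad}.
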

For the CW and CH models, the results above are proven using a theorem of Bambusi and Paleari \cite{MR1819863}, while for the YM model, the original version of their theorem (stated at Theorem \ref{OrigivalversionTheoremBambusi} below) is not applicable and we will adapt their work. To explain this, we follow \cite{MR1819863} and consider any of the models above in the Fourier space by projecting the equations on the eigenbasis $\{e_n : n \geq 0 \}$, so that, schematically, the equations take the form
\begin{equation} \label{eq:fb}
\ddot{ u}^j (t) + (Au(t))^j = (f(u))^j, \Hquad j \geq 0,
\end{equation}
where $u=\{ u^j:j\ge 0 \}$ denotes the array of the Fourier coefficients, $A$ is a multiplication operator defined by $(Au)^j= \omega_j^2 u^j$ and $(f(u))^j$ are the coefficients of the non-linearity written in Fourier space, which takes the form of a polynomial in the $u^j$. In addition, we assume that
\begin{align*}
f(u)=f^{(0)}(u)+ f^{(1)}(u),
\end{align*}
where $f^{(0)}$ is a homogeneous polynomial of degree $r \geq 2$ and $f^{(1)}$ is a polynomial of degree at least $r+1$. Then, one looks for solutions $u(t)$ to \eqref{eq:fb} where $u(t)$ belongs to the Hilbert space 
\begin{align*}
	l_s^2 := \left\{u=\{u^j: j\geq 0\}: \Hquad | u |^2_s<\infty \right\}
\end{align*}
associated with the norm
\begin{align*}
	| u |^2_s := \sum_{j= 0}^{\infty} j^{2s} |u_j|^2.
\end{align*}
Besides some regularity considerations, the main theorem in \cite{MR1819863} asserts that given any non-degenerate zero of the operator
\begin{align*}
	\mathcal{M} \xi = A \xi + \langle  f^{(0)} \rangle(\xi), \quad \xi= \left\{ \xi^j  :j \ge 0 \right\}\in l_s^2,
\end{align*}
where $\langle  f^{(0)} \rangle(\xi)$ denotes the average in time of the non-linearity $f^{(0)}$ along the linearized flow, one can construct a family of small data periodic in time solutions, with properties similar to those stated in Theorem \ref{MainTheorem1}. 
The operator $\mathcal{M}$ is in fact linked to the resonant system associated to the original equation.  If $u(t)$ is periodic in time with period $\omega$, let $q$ be defined by $u(t)=q(\omega t)$ and let $L_\omega$ be the operator 
$$
L_\omega q = \omega^2 \frac{d}{dt^2}q +Aq. 
$$
The proof of \cite{MR1819863} is based on a Lyapunov-Schmidt decomposition $q=q_\perp+v$, with $v \in \ker L_1$ and $q_\perp \in (\ker L_1)^\perp$, together with the projections of the equations onto the range and kernel of $L_1$, leading to the so-called $P$-equation 
\begin{align}
	L_{\ssomega} q_{\perp} &= 
	P  f(v+q_{\perp}) , \label{PEquation}
	\end{align}
	and $Q$-equation
	\begin{align}
(1-\omega^2) A v &= 
	Q  f(v+q_{\perp}). \label{QEquation} 
	\end{align}
 The Diophantine condition \eqref{def:Wa} is then used to solve the $P$-equation, while the non-degeneracy assumption and an implicit function argument is used to solve the $Q$-equation. \\ \\
 For the CW and CH models, one easily verifies that the eigenfunctions $\kappa_{n}e_n$, where $\kappa_{n}$ is an appropriate rescaling, are all zeroes of $M$, so that the main difficulty is to establish the non-degeneracy condition, i.e., to prove that the kernel of $d\mathcal{M}(\kappa_{n} e_n)$ is trivial. In the YM case, however, the non-linearity contains both quadratic and cubic terms, so that a priori, only the quadratic terms would contribute to the definition of the operator $\mathcal{M}$. On the other hand, it turns out that, the average along the flow of the quadratic terms actually vanishes identically, leading to a degenerate, linear operator $\mathcal{M}$. Thus, we introduce a replacement for the operator $\mathcal{M}$, that takes into account also the cubic terms. However, the quadratic terms still play a role in this modified operator. Indeed, the solution to the $P-$equation roughly takes the form 
\begin{align*}
	q_{\perp}(v)=q_{\perp,\text{quadratic}}(v) +q_{\perp,\text{cubic}}(v) + \dots 
\end{align*}
 where the term $q_{\perp,\text{quadratic}}(v)$ arises from the quadratic non-linearity, and after substituting $q_{\perp}(v)$ into the $Q-$equation, these terms will generate new additional cubic terms. Thus, in some sense, the backreaction of the quadratic terms into the $Q-$equation must also be taken into account. This type of difficulty, where the contribution of the lowest degree part of the non-linearity is non-resonant, has been treated in some situations, see for instance \cite{MR2021909}, Section 4.2 and \cite{MR2248834}, Section 1.2.3, that consider equations of the form $-\partial_{tt}u+\partial_{xx}u= u^{2p} +\mathcal{O}(u^{2p+1})$. 
Here, we prove a modified abstract version of the Bambusi-Paleari's theorem which we  then apply to the YM model.

\begin{theorem}[Main result 2: Modification of the Bambusi-Paleari theorem for the YM model]\label{MainTheorem2}
	Consider the partial differential equation in the Fourier space
\begin{align}\label{BambusiMainPDEINTRO}
	 \ddot{ u}^j (t) + (\mathfrak{A}u(t))^j = \left( \mathfrak{f}\left(u(t) \right) \right)^j, \Hquad j \geq 0,
\end{align}  
 where the dots stand for derivatives with respect to time and $\mathfrak{A}$ is a positive multiplication self--adjoint operator with pure point and resonant spectrum $\{\ssomega_{j}^2 > 0 : j \geq 0\}$ with $\ssomega_{j} \sim j$, as $j \rightarrow \infty$, defined by
\begin{align*}
	\mathfrak{A}:\mathcal{D} (\mathfrak{A}) \simeq l_{s+2}^2 \longrightarrow l_{s}^2, \Hquad  (\mathfrak{A}u)^{j}:=\ssomega_{j}^2 u_{j},
\end{align*}
with $\mathcal{D} (\mathfrak{A})$ being its maximal domain of definition\footnote{Later, we will take $l_{s+1}^2$ instead of $l_{s}^2$ as our base Hilbert space, so that we will consider $\mathfrak{A}$ as an operator from $l_{s+3}^2 \longrightarrow l_{s+1}^2$. This allows for the construction of classical solutions, instead of solutions defined via the Duhamel formula or duality.}. In addition, assume that the non--linearity is given by
\begin{align*}  
 	\mathfrak{f}(u)= \mathfrak{f}^{(2)} (u)+ \mathfrak{f}^{(3)} (u),
\end{align*}
where each $ \mathfrak{f}^{(k)} $ is a homogeneous polynomial of order $k$ which is well-defined and smooth in $l_s^2$, with $ \mathfrak{f}^{(2)} $ being \textit{non--resonant}, that is
\begin{align}\label{ConditionNonResonantf2INTRO}
	\langle  \mathfrak{f}^{(2)}  \rangle(x):=\frac{1}{2\pi } \int_{0}^{2\pi } \pPhi^{t} \left(  \mathfrak{f}^{(2)}   \left( \pPhi^{t}(x)  \right)\right) dt = 0,
\end{align}
for all initial data $x$, where $\pPhi^{t}(x)$ denotes the solution to the linearized equation with initial data $(x,0)$. Furthermore, define the \textit{modified} operator
	\begin{align*}
	 \textswab{M}_{\pm}(\xi) &=  \pm  \mathfrak{A} \xi +     \langle \mathfrak{f}^{(3)}  \rangle( \xi)+\mathfrak{F}_{0}( \xi) ,
\end{align*} 
where $\mathfrak{F}_{0}( \xi) $ is given by Lemma \ref{LemmaDefinitionMathfrakF} 
and let $\xi_0 \in l_{s+3}^2 $ be initial data such that
\begin{itemize}
	\item $\xi_0$ is a \textit{zero} of $ \textswab{M}_{\pm}$,  
		 \begin{align*}
		 \textswab{M}_{\pm}\left(\xi _0 \right)=0,
	\end{align*}
	\item and $\xi_0$ satisfies the following \textit{non--degeneracy condition}
\begin{align*}
	\ker \left(d  \textswab{M}_{\pm}\left(\xi _0 \right)\right)=\{ 0 \}.
\end{align*}
\end{itemize}
Also, for some $0<\alpha <1/ 3$, define $\mathcal{W}_{\alpha}$ according to \eqref{def:Wa}. Then, there exists a family $\{u_{\epsilon}(t,\cdot):\epsilon \in \mathcal{E}_{\alpha}\}$ of time--periodic solutions to \eqref{BambusiMainPDEINTRO} where $\mathcal{E}_{\alpha} $ is an uncountable set that has zero as an accumulation point. In addition, each element $u_{\epsilon} $ has the following properties:
\begin{enumerate}
		 	\item $ u_{\epsilon}$ has period $T_{\epsilon}=2\pi / \omega_\epsilon  $ with $\omega_\epsilon \in \mathcal{W}_\alpha$ being $\epsilon-$close to one,  
		 	\item $u_\epsilon \in H^1 \left( \left[0,T_{\epsilon} \right]; l_{s}^2\right)$. 
		 	\item $u_\epsilon$ stays for all times close to the solution to the linearized equation with initial data $\left(u_{t=0},\partial_t u_{t=0} \right)=\left( \epsilon \kappa_\gamma e_\gamma,  0\right)$,
		 	\begin{align*}
		 		\sup_{t\in \mathbb{R}} \left |u_{\epsilon}(t,\cdot)-\Phi^{t \omega_\epsilon }\left(\epsilon \kappa_\gamma e_\gamma  \right ) \right |_{s} \lesssim \epsilon^2,
		 	\end{align*} 
	\end{enumerate}
\end{theorem}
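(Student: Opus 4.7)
The plan is to push the Lyapunov--Schmidt strategy of Bambusi--Paleari \cite{MR1819863} one order further so that the vanishing time-average of the quadratic term $\mathfrak{f}^{(2)}$ does not kill the bifurcation argument. I would first pass to the $2\pi$--periodic profile via $u(t,\cdot)=q(\omega t,\cdot)$, rewrite \eqref{BambusiMainPDEINTRO} as $L_{\ssomega}q=\mathfrak{f}(q)$ with $L_{\ssomega}=\omega^{2}\partial_{tt}+\mathfrak{A}$, decompose $q=v+q_{\perp}$ with $v\in\ker L_{1}$ and $q_{\perp}\in(\ker L_{1})^{\perp}$, and project the equation to obtain the $P$--equation \eqref{PEquation} and the $Q$--equation \eqref{QEquation}. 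The frequency ansatz $\omega^{2}=1\mp\epsilon^{2}$ produces the two signs in $\textswab{M}_{\pm}$ and the rescaling $v\mapsto\epsilon v$ fixes $\epsilon$ as the small parameter.

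The first step is to solve the $P$--equation by a fixed point argument. Since $\mathfrak{f}=\mathfrak{f}^{(2)}+\mathfrak{f}^{(3)}$ starts at quadratic order and $v$ has size $\epsilon$, the right--hand side $P\mathfrak{f}(\epsilon v+q_{\perp})$ is of order $\epsilon^{2}$, forcing $q_{\perp}=O(\epsilon^{2})$; I therefore set $q_{\perp}=\epsilon^{2}\tilde q$. Using the Diophantine restriction $\omega\in\mathcal{W}_{\alpha}$ to control $L_{\ssomega}^{-1}P$ exactly as in \cite{MR1819863}, a standard contraction in a ball of $H^{1}\bigl([0,2\pi];l_{s+1}^{2}\bigr)$ yields a smooth solution $\tilde q=\tilde q(v,\epsilon)$ satisfying
\begin{align*}
\tilde q(v,0)=L_{1}^{-1}P\mathfrak{f}^{(2)}(v).
\end{align*}
This quantifies the leading backreaction of the non-resonant quadratic nonlinearity that will feed the $Q$--equation at cubic order.

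The second step is to substitute $q_{\perp}$ into the $Q$--equation and expand. Denoting by $\mathfrak{B}^{(2)}$ the symmetric bilinear form polarizing $\mathfrak{f}^{(2)}$, I obtain
\begin{align*}
\mathfrak{f}(\epsilon v+\epsilon^{2}\tilde q)=\epsilon^{2}\mathfrak{f}^{(2)}(v)+\epsilon^{3}\bigl(2\mathfrak{B}^{(2)}(v,\tilde q(v,0))+\mathfrak{f}^{(3)}(v)\bigr)+O(\epsilon^{4}).
\end{align*}
Taking the resonant projection $Q$, which after passing to initial-data variables coincides with the time-average $\langle\cdot\rangle$ of \eqref{ConditionNonResonantf2INTRO}, the $O(\epsilon^{2})$ piece vanishes by the non-resonance assumption. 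The rescaled left--hand side of \eqref{QEquation} is $\pm\epsilon^{3}\mathfrak{A}v$, so dividing by $\epsilon^{3}$ the leading bifurcation equation reads $\textswab{M}_{\pm}(v)=0$ with
\begin{align*}
\mathfrak{F}_{0}(v)=2\bigl\langle\mathfrak{B}^{(2)}\bigl(v,L_{1}^{-1}P\mathfrak{f}^{(2)}(v)\bigr)\bigr\rangle,
\end{align*}
which is precisely the operator produced by Lemma \ref{LemmaDefinitionMathfrakF}.

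Finally I would run the bifurcation analysis. Since by hypothesis $\textswab{M}_{\pm}(\xi_{0})=0$ and $d\textswab{M}_{\pm}(\xi_{0})$ is injective, the implicit function theorem provides a smooth branch $v(\epsilon)$ of solutions to the full reduced equation; the admissible parameters form the uncountable set $\mathcal{E}_{\alpha}$ that accumulates at $0$ by the same measure-theoretic argument as in \cite{MR1819863}, combined with $\omega_{\epsilon}\to 1$. Unwinding the rescalings reconstructs $u_{\epsilon}$ and the stated closeness to the linear flow, since $q_{\perp}=O(\epsilon^{2})$ in the base norm. The main obstacle, and the genuine novelty compared to \cite{MR1819863}, is the bookkeeping that produces $\mathfrak{F}_{0}$: one needs the $P$--equation to be solvable with enough regularity that $L_{1}^{-1}P\mathfrak{f}^{(2)}$ lands in the natural Hilbert space and differentiates into a well-defined operator on $l_{s}^{2}$ (which forces the slight derivative loss reflected in the base space $l_{s+1}^{2}$ and the requirement $\xi_{0}\in l_{s+3}^{2}$), and one must verify that the quadratic backreaction does not spoil the invertibility estimates powering the $P$--equation contraction itself.
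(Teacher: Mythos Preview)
Your proposal is correct and follows essentially the same route as the paper: Lyapunov--Schmidt decomposition, solving the $P$--equation by contraction under the Diophantine condition, then expanding the $Q$--equation to cubic order to isolate the backreaction term $\mathfrak{F}_{0}(\xi)=2\bigl\langle\mathfrak{B}^{(2)}\bigl(\xi,L_{1}^{-1}P\mathfrak{f}^{(2)}(\xi)\bigr)\bigr\rangle$, and closing via the implicit function theorem at the non-degenerate zero $\xi_{0}$.

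Two points the paper treats more carefully than your sketch and that you should be aware of: (i) the paper splits the cubic remainder in the $Q$--equation into three explicit pieces $\mathfrak{F}_{\epsilon}$, $\mathfrak{G}_{\epsilon}$, $\mathfrak{R}_{\epsilon}$, where $\mathfrak{R}_{\epsilon}$ measures the mismatch between $L_{\ssomega}^{-1}$ and $L_{1}^{-1}$ and requires a separate estimate costing two extra derivatives (hence the appearance of $l^{2}_{s+3}$); (ii) to apply the implicit function theorem the paper invokes that $d\textswab{M}_{\pm}(\xi_{0})=\pm\mathfrak{A}+\text{compact}$ is Fredholm of index zero, so the assumed injectivity automatically yields surjectivity. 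Both fit naturally into your outline.
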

 
 \subsection{Remarks}
  We note the following remarks.  \\ \\
 (Minimal periods) Theorems \ref{MainTheorem1} and \ref{MainTheorem2} give no information on the minimal periods $T_{\epsilon}$ of the time--periodic solutions $u_{\epsilon}(t,\cdot)$. However, one can relate $T_{\epsilon}$ to the minimal period $T_0$ of the solutions to the linearized system with 1--mode initial data, see Theorem 5.3 in \cite{MR1819863}. \\ \\ 
  (Cantor--like set) We emphasize that the time periodic solutions we construct exist only when the small perturbative parameter belongs to a Cantor--like set (of $0$ measure). This set together with the Diophantine condition introduced in Theorem \ref{MainTheorem1} are closely related to the presence of small divisors in the perturbation series around equilibrium points, a classical topic in the context of KAM theory in infinite dimensions. Although this type of conditions is essential in proving the existence of time--periodic solutions as in \cite{MR1819863}, it can be removed in some very special cases, see for example \cite{MR4173810}).  On the other hand, we note that the numerical constructions \cite{Maliborski:2013jca, PhysRevLett.121.021103 , PhysRevD.92.025036} do not seem to see the small divisors obstructions. \\ \\ 
 (Proof) The proof of Theorem \ref{MainTheorem2} follows the general strategy of \cite{MR1819863}, the main and essential difference being the backscattering contribution of the quadratic terms. An alternative approach to ours would be to find a normal form, in the spirit of \cite{MR803256}, that allows us to eliminate the quadratic terms and then apply the original result of \cite{MR1819863}. \\ \\ 
(The works of Berti--Bolle) In the works \cite{MR2038735, MR2248834, MR2021909}, a different strategy, based on variational methods, was introduced to solve the $Q$-equation \eqref{QEquation}, instead of the implicit function theorem as in \cite{MR1819863}. This in particular leads to a strong improvement in \cite{MR2248834} concerning the size of the frequency set, using an extra Nash-Moser iteration. We have not implemented this here for simplicity and leave a possible implementation of this improvement for future works. The works \cite{MR2038735,MR2248834} also treat the case of non-resonant quadratic terms, as we have here in the Yang-Mills case, in the specific case of the wave equation on an interval with Dirichlet boundary conditions. \\ \\ 
 (Regularity of the solutions) The solutions constructed here are $H^1$ in time with values in $H^s$, with $s$ arbitrarily large but fixed a priori. A posteriori, one can then use the equation to obtain additional regularity properties of the solutions. For instance, one easily has $\partial_t^2 u \in L_t^2 H_x^{s-1}$. Since some of the estimates depend a priori on the value of $s$, we cannot directly take $s=\infty$ but it is likely that a refinement of the methods presented would lead to such an improvement. \\ \\ 
  (Jacobi polynomials) One of the difficulties to proving Theorem \ref{MainTheorem1} comes from the fact that the eigenfunctions $e_n(x)$ of the linearized operators are given by Jacobi polynomials, instead of simpler explicit functions. This fact is not specific to our model problem and is a general feature of non-linear wave equations on AdS-like background. In particular, in the CH and YM models\footnote{The eigenfunctions for the CW case are given by Chebyshev polynomials of the second kind. The derivation of the resonant system in this case had been previously addressed in \cite{MR3652487}.}, the computation and the analysis of the Fourier coefficients associated to the resonant terms are non--trivial and constitute one of the contributions of this paper. To this end, we use linearization and connection formulas as well as particular Mellin transforms for the Jacobi polynomials. On the one hand, a linearization formula (also called addition formula) represents a product of two orthogonal polynomials with some parameters as a linear combination of orthogonal polynomials of the same kind with the same parameters. On the other hand, a connection formula represents a single orthogonal polynomial with some parameters as a linear combination of orthogonal polynomials of different kind with new parameters. In both cases, these are computationally efficient only in the case where the coefficients in the expansions are known in closed formulas. These computations also motivate our choice of $\mu_1=\mu_2=\mu$ for the CH model, since in this case, the eigenfunctions are reduced to Gegenbauer polynomials, a special class of Jacobi polynomials with additional algebraic properties that lead to closed formulas for the linearization and connection coefficients described above. Moreover, we also use particular Mellin transforms of Gegenbauer polynomials. These are integral transforms that may be regarded as the multiplicative version of the Laplace transform.   \\ \\
   (Mathematica files) For the CH and YM models, Theorem \ref{MainTheorem1} ensures the existence of time--periodic solutions bifurcating only from finitely many 1--mode initial data. As stated in Remark \ref{RemarkRangeGamma}, this is solely for the purpose of computing and verifying all computations in the manuscript by hand. Furthermore, one can use Mathematica \cite{Mathematica} to verify that our result still holds true also for larger values of $\gamma$.   For the convenience of the reader, we attach the Mathematica files needed that can help the reader to both easily verify our computations for small $\gamma$ as well as derive and verify the analogous computations for larger values of $\gamma$.

  \subsection{Previous works}
  The conformal wave equation \eqref{Model1CW} was introduced as a toy problem for the study of non-linear waves in confined geometries in \cite{MR3652487} and has since been the study of several works, see \cite{MR4173810, MR3948554, MR4026950}.  In particular, in \cite{MR4173810}, it was proven that solutions emanating from the first mode $e_0$, stay proportional to $e_0$ for all times and are periodic in time. The fact these data do not excite further modes is however specific to the first mode and to this equation. \\ \\
Concerning the well-posedness theories for the different models, since we do not focus here on low regularity solutions, we will simply recall that global well-posedness holds for the conformal cubic wave equation in the energy space, while the Yang-Mills equations in curved geometry have been shown to be globally well-posed in $H^2 \times H^1$ in \cite{MR1604914, MR722506} and on AdS with reflective boundary conditions in \cite{MR1028380}. We were motivated for the study of the Yang-Mills model by \cite{bizontalk}.   \\ \\
 Since the pioneering work \cite{Maliborski:2013jca}, there has been many investigations of time-periodic solutions for non-linear equations with completely resonant spectrum \cite{MR2038735, MR1877691, MR2021909, MR2248834}. 
 For the conformal wave equations, there exist also several constructions of time-periodic, weak solutions, via the variational techniques first introduced by Rabinowitz \cite{MR467823, MR470378}, see \cite{MR831036, MR2635200}.

\subsection{Acknowledgments}
 The authors would like to thank Andrzej Rostworowski (Jagiellonian University) for useful discussions, Oleg Evnin (Chulalongkorn University and Vrije Universiteit Brussel) who proposed the Hopf coordinates as a suitable coordinate system for the study of aspherical solutions to the conformal wave equation, Piotr Bizo\'{n} (Jagiellonian University) who introduced us to the work \cite{MR1819863}, during the BIRS workshop "Time-like Boundaries in General Relativistic Evolution Problems", held at Oaxaca, Mexico, in July 2019, as well as Riccardo Montalto (University of Milan) for useful discussions concerning KAM theory. This research was concluded at the Mathematisches Forschungsinstitut Oberwolfach during the conference ``Mathematical Aspects of General Relativity'' from August 29 to September 4, 2021. The authors gratefully acknowledge the kind hospitality and the comfortable working environment at Oberwolfach. Finally, the authors gratefully acknowledge the support of the ERC grant 714408 GEOWAKI, under the European Union's Horizon 2020 research and innovation program.

 \subsection{Organization of the paper}
We split the paper into the following sections:
\begin{itemize}
  \item Section \ref{SectionMethodBambusiPaleari}: we describe the methods we are about to use. For CW and CH, we will use the original version \cite{MR1819863} of Bambusi--Paleari's theorem (Theorem \ref{OrigivalversionTheoremBambusi}). However, for YM, as explained above, we need to revise the original version and establish an extension of their result (Theorem \ref{TheoremModificationofBambusiPaleari}) as stated in Theorem \ref{MainTheorem2}. In particular, we define the operators $\mathcal{M}$ and $\textswab{M}_{\pm}$ that determine the ``special'' initial data leading to time--periodic solutions.
	\item Section \ref{SectionEigenvalueproblems}: we study the linear eigenvalue problems where the linearized operators are given by \eqref{DefinitionLinearOperator}. As it turns out, the associated eigenfunctions are given by Jacobi polynomials and this is a common feature with the Einstein-Klein-Gordon system in spherical symmetry \cite{Maliborski:2013jca}. 
	\item  Section \ref{SectionPDEsinFourierSpace}: we express the partial differential equations \eqref{AllModelsinOnePDE} in the Fourier space and obtain infinite  dimensional systems of coupled harmonic oscillators. 
	\item Section \ref{SectionFourierConstants}: we define and study the mode couplings given by the Fourier coefficients. Specifically, we derive explicit closed formulas for all the Fourier coefficients on resonant indices.
	\item Section \ref{Section1modeinitialdata}: we study 1--mode initial data. In particular, we show that these modes satisfy the resonant systems (are zeros of the operators $\mathcal{M}$ and $\textswab{M}_{\pm}$ defined in Section \ref{SectionMethodBambusiPaleari}). In addition, we derive their differentials at these modes.
	\item Section \ref{SectionNonDegeneracyCondition}: firstly, we derive the crucial non--degeneracy conditions for the 1--mode initial data. As it turns out, these are non--linear conditions for the Fourier coefficients. Then, we use the analysis on the Fourier coefficients from Section \ref{SectionFourierConstants} to rigorously establish these conditions and prove the existence of time--periodic solutions as stated in Theorem \ref{MainTheorem1}. 
\end{itemize}

 \subsection{Notation}
We use different notation for each of the models we consider. Specifically, we summarize the notation that will be used in the table below. 
\begin{table}[h!]
\centering
 \begin{tabular}{|c |c|c|c|} 
 \hline
  Model & CW & CH & YM   \\ [0.5ex] 
 \hline 
Equation & 
\eqref{Model1WSphericalSymmetry}--\eqref{DefinitionLinearOperatorModel1WSphericalSymmetry} & 
\eqref{Model1CWOUTSphericalSymmetryHOPFanstaz}--\eqref{DefinitionLinearOperatorModel1CWOUTSphericalSymmetryHOPF} &
\eqref{Model2YM}--\eqref{DefinitionLinearOperatorModel2YM}     \\ [0.5ex]  \hline 
Font & 
Standard & 
Serif & 
Fraktur   \\ [0.5ex] \hline 
Linearized operator & 
$L $ & 
$\mathsf{L}^{(\mu_1,\mu_2)} $ &
$\mathfrak{L} $   \\ [0.5ex]  \hline
 Eigenvalues & 
 $\omega_n$ & 
$\somega_n^{(\mu_1,\mu_2)}$ &
$ \ssomega_n$    \\ [0.5ex]  \hline 
Eigenfunctions & 
$e_n $ & 
$\mathsf{e}_n^{(\mu_1,\mu_2)} $ &
$\mathfrak{e}_n $  \\ [0.5ex]  \hline
Inner product & 
$ (\cdot|\cdot) $ & 
$ \langle\cdot | \cdot\rangle$ &
$ [\cdot|\cdot]$   \\ [0.5ex]  \hline
Linear flow & 
$\Phi^t$ & 
$   \mathsf{\Phi}^t $ &
$\pPhi^t $  \\ [0.5ex]  \hline
Fourier coefficients & 
$C$ & 
$\mathsf{C}^{(\mu_1,\mu_2)}$ &
$ \mathfrak{C}, \overline{ \mathfrak{C}}$   \\ [0.5ex]  \hline
 \end{tabular} 
\end{table}

 \section{The method of Bambusi--Paleari revisited}\label{SectionMethodBambusiPaleari}

 In order to establish Theorem \ref{MainTheorem1} and construct our time--periodic solutions we rely on the method Bambusi--Paleari \cite{MR1819863}. This is an effective method to construct families of small amplitude time--periodic solutions close to a resonant equilibrium point for semi--linear partial differential equations. 
\subsection{The original version of the theorem}
Let $s \geq 0$ be a real number and define the Hilbert space $l_s^2$ to be the space of sequences such that
\begin{align*}
	u = \{u^j: j\geq 0 \}, \Hquad |u|_{s}^2:=\sum_{j = 0}^{\infty} | j^s u^j |^2   <\infty. 
\end{align*}
We endow $l_s^2$ with the natural inner product associated with the norm $| \cdot |_s$
%
%
and consider the following differential equation in $l_s^2$,
\begin{align}\label{BambusiMainPDE}
	\ddot{u}^j + (\mathfrak{A}u)^j = \left( \mathfrak{f}\left(u \right) \right)^j, \Hquad j \geq 0,
\end{align}  
 where the dots denote derivatives with respect to time. Here, $\mathfrak{A}$ is a positive multiplication self--adjoint operator with pure point and resonant spectrum $\{\ssomega_{j}^2 : j \geq 0 \}$ defined by
\begin{align*}
	\mathfrak{A}:\mathcal{D} (\mathfrak{A})  \longrightarrow l_{s}^2, \Hquad  (\mathfrak{A}u)^{j}:=\ssomega_{j}^2 u_{j}
\end{align*} 
and $\mathcal{D} (\mathfrak{A})$ stands for its maximal domain of definition endowed with the norm
\begin{align*}
	\| u \|_{\mathcal{D}(\mathfrak{A})}^2 := 
	|u|_{s}^2 + |\mathfrak{A} u|_{s}^2=
	\sum_{j=0}^{\infty} j^{2s} \left| \xi^j \right|^2+
	\sum_{j=0}^{\infty} j^{2s} \left|\ssomega_{j}^2 \xi^j \right|^2 .
\end{align*}
Moreover, we assume that $\mathfrak{A}$ and $f$ verify the following conditions:
\begin{enumerate}
	\item \label{H:compact} The injection of $\left( \mathcal{D} (\mathfrak{A}), 	\|\,.\,\|_{\mathcal{D} (\mathfrak{A})} \right)$ into $l_s^2$ is compact.
	\item \label{H:NL}The non-linearity $\mathfrak{f}(u)$ can be decomposed into 
	\begin{align}\label{SplitingfBaumbusi}
		\mathfrak{f}(u)=\mathfrak{f}^{(0)}(u)+\mathfrak{f}^{(1)}(u).
	\end{align} 
	\item \label{HCondition3} The lowest degree term $\mathfrak{f}^{(0)}(u)$ is a homogeneous polynomial of order $r\geq 2$ and is bounded operator from $\mathcal{D} (\mathfrak{A})$ to $\mathcal{D} (\mathfrak{A})$ with the domain $\mathcal{D} (\mathfrak{A})$ being invariant under $\mathfrak{f}^{(0)}$.
	\item \label{HCondition4} The highest degree term $\mathfrak{f}^{(1)}(u)$ (treated perturbatively as an error term) has a zero of order $r+1$ at zero, is differentiable in $l^2_s$, its differential is Lipschitz and verify the estimate
	\begin{align*}
		\left | d \mathfrak{f}^{(1)}(u_1)-d \mathfrak{f}^{(1)}(u_2) \right |_{s}  \leq C \epsilon^{r-1} \left | u_1 -u_2 \right |_{s},
	\end{align*} 
	for all $u_1,u_2 \in l_s^2$ with $ |u_1|_s\leq \epsilon$ and $|u_2|_s\leq \epsilon $.
	\end{enumerate}
\begin{remark}
In our case, the conditions above are obtained by starting from any of the equations \eqref{Model1WSphericalSymmetry}, \eqref{Model1CWOUTSphericalSymmetryHOPFanstaz}, \eqref{Model2YM} and projecting them on the eigenfunctions to the corresponding linear operator. Specifically, condition \eqref{H:compact} follows automatically from the fact  $\ssomega_j \sim j$, as $j \rightarrow \infty$, while conditions \eqref{HCondition3} and \eqref{HCondition4} that refer to the non-linearities in the Fourier space essentially follow from the fact the original non-linearities are smooth and that the Sobolev spaces of sufficiently high regularity form an algebra, see Section \ref{SubsectionPreliminaries}. 
\end{remark}
Let $\mathfrak{e}_{n}$ be the eigenfunctions to the associated linearized operators. On the Fourier side, these can be identified with $\mathfrak{e}_{n}= \{\delta^i_n: i  \geq 0 \} \in l^2_s$. Then, for any initial data, 
\begin{align*}
 \xi =\{ \xi ^{n} : n \geq 0 \}=\sum_{n=0}^{\infty} 	\xi^{n}  \mathfrak{e}_{n}  ,\Hquad
\end{align*}
we denote by  
\begin{align*}
  \pPhi ^{t} (\xi) = \left \{	  \xi^{n} \cos(\ssomega_{n} t) : n \geq 0   \right \}
\end{align*}
its linear flow, that is the solution to the initial value problem 
\begin{align*}
\begin{dcases}
	\ddot{u}^{n}(t) +  \ssomega_{n}  ^2 u^{n}(t) =0,\Hquad  t\in \mathbb{R}
	 \\
	 u^n(0)=\xi^n,  \Hquad  \dot{u}^n(0) =0.
\end{dcases}
\end{align*} 
We note that $\pPhi ^{t} (\xi) =\pPhi ^{-t} (\xi) $. Moreover, we define the operator 
\begin{align*}
	\mathcal{M}\left(\xi  \right) : =   \mathfrak{A} \xi  + \langle \mathfrak{f}^{(0)} \rangle(\xi),
\end{align*}
where
\begin{align*}
	\langle \mathfrak{f}^{(0)} \rangle(\xi):=  \frac{1}{2\pi } \int_{0}^{2\pi } \pPhi ^{-t} \left[\mathfrak{f}^{(0)}\left(\pPhi ^{t} (\xi ) \right) 
	\right]  dt
\end{align*}
is the average of $\mathfrak{f}^{(0)}$ along the linear flow. Notice that the highest degree term $\mathfrak{f}^{(1)}$ does not contribute to the definition of the operator $\mathcal{M}$. In \cite{MR1819863}, Bambusi--Paleari used a Lyapunov--Schmidt decomposition together with averaging theory and established the existence of a family of small amplitude time--periodic solutions with frequencies that satisfy the strong Diophantine condition  
\begin{align}\label{DefinitionSetWGamma}
	\ssomega \in  \mathcal{W}_{\alpha}:= \left \{
	 \ssomega\in \mathbb{R}: \Hquad |\ssomega \cdot l -\ssomega_{j}| \geq \frac{\alpha}{l}, \Hquad \forall (l,j) \in \mathbb{N}^2,\Hquad l\geq 1, \Hquad \ssomega_{j} \neq l
	 \right\}.
\end{align}
 
\begin{remark}[Accumulation to one]\label{Acculuatesto1}
For $0< \alpha<1/3$, the set $ \mathcal{W}_{\alpha}$ is an uncountable Cantor--like set that accumulates to one from above and below (Remark 2.4 \cite{MR1819863}). 
\end{remark}

\begin{remark}[Connection to Hurwitz's theorem] 
	According to Hurwitz's theorem, for every irrational number $\ssomega $ there are infinitely many relatively prime integers $\ssomega_{j}$ and $ l$ such that
	\begin{align*}
		\left|\ssomega \cdot l -  \ssomega_{j} \right| < \frac{1}{\sqrt{5}} \frac{1}{l}
	\end{align*}
	and moreover the constant $\sqrt{5}$ is optimal. Consequently, $\mathcal{W}_\alpha = \emptyset$, for $\alpha \ge \frac{1}{\sqrt{5}}$. In this note, we pick a suitable $\alpha$ with $0< \alpha<1/3$.
\end{remark}
The main result of \cite{MR1819863} reads as follows.
\begin{theorem}[Original version of Bambusi--Paleari's theorem \cite{MR1819863}]\label{OrigivalversionTheoremBambusi}
	For $0<\alpha <1/3$, define $\mathcal{W}_{\alpha}$ according to \eqref{DefinitionSetWGamma} and consider the operator
	\begin{align*}
	 \mathcal{M} (\xi) &=   \mathfrak{A} \xi +     \langle \mathfrak{f}^{(0)}  \rangle( \xi).
\end{align*} 
Assume that conditions \eqref{H:compact} and \eqref{H:NL} are verified. Moreover, let $\xi_0 $ be such that 
\begin{itemize}
	\item $\xi_0$ is a \textit{zero} of $ \mathcal{M} $,  
		 \begin{align*}
		\mathcal{M} \left(\xi_0  \right)=0,
	\end{align*}
	\item and $\xi_0$ satisfies the following \textit{non--degeneracy condition}
\begin{align*}
	\ker \left(d \mathcal{M} \left(\xi_0  \right)\right)=\{ 0 \}.
\end{align*}
\end{itemize}
Then, there exists a family $\{u_{\epsilon}:\epsilon \in \mathcal{E}_{\alpha}\} $ of time--periodic solutions to \eqref{BambusiMainPDE}--\eqref{SplitingfBaumbusi} where $\mathcal{E}_{\alpha} $ is an uncountable set that has zero as an accumulation point. In addition, each element $u_{\epsilon}$ has the following properties:
\begin{enumerate}
	\item $u_\epsilon$ has period $T_{\epsilon}=2\pi / \ssomega_{\epsilon} $ and there exists $\ssomega_{\star}>0$ such that the map $\epsilon\in \mathcal{E}_{\alpha}  \longmapsto \ssomega_{\epsilon}\in \mathcal{W}_{\alpha}\cap [1,1+\ssomega_{\star}) $ is a monotone, one-to-one map that stays close to one, $|1-\ssomega_{\epsilon}| \lesssim \epsilon^{r-1}$,
	\item $u_\epsilon \in H^1 ([0,T_{\epsilon}];l_s^2)$,
	\item $u_{\epsilon}$ stays for all times close to the solution to the linearized equation with initial data $\left(u_{t=0},\partial_t u_{t=0} \right)=\left( \epsilon \xi_0,  0\right)$,
	\begin{align*}
			\sup_{t\in \mathbb{R}} \left |u_{\epsilon}(t,\cdot)-\pPhi^{t \ssomega_{\epsilon}} \left(\epsilon \xi_0 \right ) \right |_{s} \lesssim \epsilon^2.
	\end{align*} 
	\end{enumerate}
\end{theorem}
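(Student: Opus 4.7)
The plan is to reduce the problem to finitely many ODE-type bifurcation equations via a Lyapunov--Schmidt decomposition, and then to exploit the non-degeneracy of $\xi_0$ through the implicit function theorem. First, I would introduce the rescaling $u(t) = \epsilon\, q(\omega t)$ with $\omega$ close to $1$. Using homogeneity of $\mathfrak{f}^{(0)}$ of order $r$, the PDE for $q$ becomes $\omega^2 \ddot q + \mathfrak{A}q = \epsilon^{r-1} \mathfrak{f}^{(0)}(q) + \epsilon^r R(q,\epsilon)$, where $R$ encodes $\mathfrak{f}^{(1)}$ and is uniformly bounded thanks to hypothesis \eqref{HCondition4}. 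We then seek $2\pi$-periodic solutions in a space such as $H^1([0,2\pi]; l^2_s) \cap L^2([0,2\pi]; \mathcal{D}(\mathfrak{A}))$.

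Let $L_\omega := \omega^2 \partial_t^2 + \mathfrak{A}$ and decompose $q = v + q_\perp$, where $v \in \ker L_1$ (the space of solutions to the linearized equation, parametrized by the initial datum $\xi \in l^2_s$ via $v(t) = \Phi^t(\xi)$) and $q_\perp \in (\ker L_1)^\perp$. Denoting by $P$ the projector onto the range and $Q := I - P$, I would split the equation into the $P$-equation \eqref{PEquation} and the $Q$-equation \eqref{QEquation}. The $P$-equation is solved first: on each Fourier mode of time-frequency $l$ and spatial index $j$ orthogonal to $\ker L_1$, the operator $L_\omega$ acts as $-\omega^2 l^2 + \omega_j^2$, whose inverse is controlled by
\[
\left|\omega^2 l^2 - \omega_j^2\right| \,=\, |\omega l - \omega_j| \cdot |\omega l + \omega_j| \,\gtrsim\, \tfrac{\alpha}{l} \cdot l \,=\, \alpha,
\]
provided $\omega \in \mathcal{W}_\alpha$. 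This Diophantine bound gives a bounded right-inverse of $L_\omega$ (with a controlled regularity loss absorbed by the algebra property of $l^2_s$), and a Banach fixed-point argument yields a unique small solution $q_\perp = q_\perp(v, \omega, \epsilon)$ with $|q_\perp|_s \lesssim \epsilon^{r-1}|v|_s^r$ and smooth dependence on $v$.

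With $q_\perp$ so determined, substitute back into the $Q$-equation. Writing $\omega^2 = 1 - \epsilon^{r-1}\lambda$ and dividing by $\epsilon^{r-1}$, the leading-order bifurcation equation reads
\[
\lambda\, \mathfrak{A} v \,=\, \bigl\langle \mathfrak{f}^{(0)} \bigr\rangle(v) + O(\epsilon),
\]
where the average comes from the fact that $Q\mathfrak{f}^{(0)}$ restricted to solutions of the linearized equation, integrated in time, is exactly $\langle \mathfrak{f}^{(0)}\rangle$. Taking $\lambda = 1$, the limit equation at $\epsilon = 0$ is precisely $\mathcal{M}(v) = 0$, of which $\xi_0$ is a solution. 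The non-degeneracy hypothesis $\ker d\mathcal{M}(\xi_0) = \{0\}$, combined with the Fredholm property of $d\mathcal{M}(\xi_0)$ inherited from the compactness assumption \eqref{H:compact}, makes $d\mathcal{M}(\xi_0)$ an isomorphism on $\ker L_1$. The implicit function theorem then delivers a smooth branch $v = v(\epsilon)$ emanating from $\xi_0$.

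The main obstacle is the restriction $\omega \in \mathcal{W}_\alpha$. Since the map $\epsilon \mapsto \omega(\epsilon)$ is (to leading order) a monotone analytic function relating $\omega$ to $\epsilon^{r-1}$ through the $Q$-equation, the set of admissible parameters $\mathcal{E}_\alpha$ is essentially the preimage of $\mathcal{W}_\alpha$ under this map; its accumulation at $\epsilon = 0$ is a consequence of Remark \ref{Acculuatesto1}. Care must also be taken in the fixed-point step for $q_\perp$: the naive bound on $L_\omega^{-1}$ loses factors of $l$, and recovering them requires using the algebra structure of $l^2_s$ for $s$ large enough together with the $\epsilon^{r-1}$ smallness. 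Finally, the closeness estimate $\sup_t |u_\epsilon(t,\cdot) - \Phi^{t\omega_\epsilon}(\epsilon \xi_0)|_s \lesssim \epsilon^2$ follows from $|q_\perp|_s = O(\epsilon^{r-1}) = O(\epsilon)$ and the Lipschitz dependence $v(\epsilon) = \xi_0 + O(\epsilon)$, combined with the rescaling $u_\epsilon = \epsilon q$.
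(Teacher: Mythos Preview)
Your outline is correct and follows essentially the same Lyapunov--Schmidt strategy as the original Bambusi--Paleari argument; note that the present paper does not reprove Theorem~\ref{OrigivalversionTheoremBambusi} itself (it is cited from \cite{MR1819863}), but your sketch matches both that reference and the paper's proof of the modified version (Sections~\ref{SubsectionPreliminaries}--\ref{SubsectionSolutionQEquation}). One small remark: your own computation $|\omega^2 l^2-\omega_j^2|\gtrsim\frac{\alpha}{l}\cdot l=\alpha$ already gives a uniform lower bound, so $L_\omega^{-1}$ is bounded on $\mathcal{H}^1_s\cap R$ with no loss of factors of $l$, and the subsequent worry about ``recovering'' such factors is unnecessary.
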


\subsection{A modified Bambusi--Paleari theorem}
 
As we will see in Section \ref{SubsetionYMSysteminFourier}, in the case of the YM model, the non--linearity is given by
\begin{align} \label{Splitingf}
 	\mathfrak{f}(u)= \mathfrak{f}^{(2)} (u)+ \mathfrak{f}^{(3)} (u),
\end{align}
where
\begin{itemize}
	\item the lowest degree term $ \mathfrak{f}^{(2)} $ is a homogeneous polynomial of order $2$,
	\begin{align} \label{def:f2F}
	\left(  \mathfrak{f}^{(2)} (\{u^j(t):j\geq 0\}) \right)^m&:= -3\sum_{i,j=0}^{\infty}\overline{\mathfrak{C}}_{ijm}  u^{i}(t)u^{j}(t) , 
\end{align}
	\item  the highest degree term $ \mathfrak{f}^{(3)} $ is a homogeneous polynomial of order $3$,
	\begin{align} \label{def:f3F}
\left(  \mathfrak{f}^{(3)} (\{u^j(t):j\geq 0\}) \right)^m &:= - \sum_{i,j,k=0}^{\infty}  \mathfrak{C}_{ijkm} u^{i}(t)u^{j}(t)u^{k}(t) ,
\end{align}
\end{itemize}
Thus, according to the original version of Bambusi--Paleari's theorem (Theorem \ref{OrigivalversionTheoremBambusi}), one may argue that  $\mathfrak{f}^{(2)}$ is the main non-linearity and $ \mathfrak{f}^{(3)} $ can be treated perturbatively. However, in this setting, the original version of Bambusi--Paleari's theorem would not be applicable, because $ \mathfrak{f}^{(2)} $ is \textit{non--resonant} (Lemma \ref{LemmaNonResoantnf2}), that is
\begin{align}\label{ConditionNonResonantf2}
	\langle  \mathfrak{f}^{(2)}  \rangle(\xi):=\frac{1}{2\pi } \int_{0}^{2\pi } \pPhi^{t} \left(  \mathfrak{f}^{(2)}   \left( \pPhi^{t}(\xi)  \right)\right) dt = 0,
\end{align}
for all initial data $\xi$, and therefore
	\begin{align*}
	 \mathcal{M} (\xi) &=   \mathfrak{A} \xi  
\end{align*} 
leading to trivial zeros of the operator $\mathcal{M}$. Consequently, we need to revisit the theorem of Bambusi--Paleari in this context. Specifically, we consider \eqref{BambusiMainPDE}--\eqref{Splitingf}--\eqref{ConditionNonResonantf2}, replace $\mathfrak{f}^{(0)}(u)$ by $ \mathfrak{f}^{(2)} (u)+ \mathfrak{f}^{(3)} (u)$ and establish the following theorem.  
\begin{theorem}[Modification of Bambusi--Paleari's theorem]\label{TheoremModificationofBambusiPaleari}
Let $0<\alpha <1/ 3$ and define $\mathcal{W}_{\alpha}$ according to \eqref{DefinitionSetWGamma}. Let $\mathfrak{A}$ be a positive multiplication self--adjoint operator with pure point and resonant spectrum $\{\ssomega_{j}^2 > 0 : j \geq 0\}$ with $\ssomega_{j} \sim j$, as $j \rightarrow \infty$, defined by
\begin{align*}
	\mathfrak{A}:\mathcal{D} (\mathfrak{A}) \simeq l_{s+2}^2 \longrightarrow l_{s}^2, \Hquad  (\mathfrak{A}u)^{j}:=\ssomega_{j}^2 u_{j},
\end{align*}
with $\mathcal{D} (\mathfrak{A})$ being its maximal domain of definition. Assume that $\mathfrak{f}=\mathfrak{f}^{(2)}+ \mathfrak{f}^{(3)}$, where $\mathfrak{f}^{(2)}$ and $\mathfrak{f}^{(3)}$ admit the representations \eqref{def:f2F} and \eqref{def:f3F} respectively. Moreover, assume that both $\mathfrak{f}^{(2)}$ and $\mathfrak{f}^{(3)}$ are differentiable, with Lipschitz differentials and define the \textit{modified} operator
	\begin{align*}
	 \textswab{M}_{\pm}(\xi) &=  \pm \mathfrak{A} \xi +     \langle \mathfrak{f}^{(3)}  \rangle( \xi)+\mathfrak{F}_{0}( \xi) ,
\end{align*} 
where $\mathfrak{F}_{0}( \xi)= \{\left( \mathfrak{F}_{0}( \xi) \right)^m : m\geq 0 \}$ is a bounded map on $l_s^2$ that is given by
\begin{align*}
		\left( \mathfrak{F}_{0}( \xi) \right)^m=& \frac{9}{4} \sum_{\kappa,\nu \geq 0} \overline{\mathfrak{C}}_{\kappa \nu m} \sum_{ \substack{i,j \geq 0 \\ \ssomega_{i}-\ssomega_{j} \neq \pm \ssomega_{\nu}  }  } \frac{\overline{\mathfrak{C}}_{ij\nu }}{\ssomega_{\nu}^2-(\ssomega_{i}-\ssomega_{j})^2  } \xi^i \xi^j \xi^{\kappa} \sum_{\pm} \mathds{1}(\ssomega_{i}-\ssomega_{j}\pm \ssomega_{\kappa} \pm \ssomega_{m}=0) \\
		+ &\frac{9}{4} \sum_{\kappa,\nu \geq 0} \overline{\mathfrak{C}}_{\kappa \nu m} \sum_{ \substack{i,j \geq 0 \\ \ssomega_{i}+\ssomega_{j} \neq \pm \ssomega_{\nu}  }  } \frac{\overline{\mathfrak{C}}_{ij\nu }}{\ssomega_{\nu}^2-(\ssomega_{i}+\ssomega_{j})^2  } \xi^i \xi^j \xi^{\kappa} \sum_{\pm} \mathds{1}(\ssomega_{i}+\ssomega_{j}\pm \ssomega_{\kappa} \pm \ssomega_{m}=0).
	\end{align*}
Also, let $\xi_0 \in l_{s+3}^2$ be such that
\begin{itemize}
	\item $\xi_0$ is a \textit{zero} of $ \textswab{M}_{\pm}$,  
		 \begin{align*}
		 \textswab{M}_{\pm}\left(\xi _0 \right)=0,
	\end{align*}
	\item and $\xi_0$ satisfies the following \textit{non--degeneracy condition}
\begin{align*}
	\ker \left(d  \textswab{M}_{\pm}\left(\xi _0 \right)\right)=\{ 0 \}.
\end{align*}
\end{itemize}
Then, there exists a family $\{u_{\epsilon}(t,\cdot):\epsilon \in \mathcal{E}_{\alpha}\}$ of time--periodic solutions to \eqref{BambusiMainPDE}--\eqref{Splitingf}--\eqref{ConditionNonResonantf2} where $\mathcal{E}_{\alpha} $ is an uncountable set that has zero as an accumulation point. In addition, each element $u_{\epsilon} $ has the following properties: 
\begin{enumerate}
	\item $u_{\epsilon} $ has period  $T_{\epsilon}=2\pi / \ssomega_{\epsilon} $ where there exists $\ssomega_{\star}>0$ such that the maps 
	\begin{itemize}
		\item $\epsilon \longmapsto \ssomega_{\epsilon}\in \mathcal{W}_{\alpha}\cap [1,1+\ssomega_{\star}) $, for $ \textswab{M}_{+}$
		\item $\epsilon \longmapsto \ssomega_{\epsilon}\in \mathcal{W}_{\alpha}\cap (1-\ssomega_{\star},1] $, for $ \textswab{M}_{-}$
	\end{itemize}
	 are monotone, one-to-one map that stay close to one, $|1-\ssomega_{\epsilon}| \lesssim \epsilon$,
	 \item $u_{\epsilon} \in H^{1} ([0,T_{\epsilon}];l_s^2])$,
	\item $u_{\epsilon} $ stays close to the solution to the linearized equation with the same initial data as above and zero initial velocity, 
	\begin{align*}
			\sup_{t\in \mathbb{R}} \left |u_{\epsilon}(t,\cdot)-\pPhi^{t \ssomega_{\epsilon}} \left(\epsilon \xi_0 \right ) \right |_{s} \lesssim \epsilon^2.
	\end{align*} 
	\end{enumerate}
\end{theorem}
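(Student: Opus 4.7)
The plan is to adapt the Lyapunov--Schmidt scheme of \cite{MR1819863} to the present situation, where the loss of resonance of the quadratic term forces one to push the expansion one order further in $\epsilon$ and to track explicitly the backreaction from solving the $P$-equation. Looking for a $2\pi/\ssomega$-periodic solution $u(t)$, I rescale time by setting $q(\tau) := u(\tau/\ssomega)$, so that $q$ has period $2\pi$ and solves $L_\ssomega q = \mathfrak{f}^{(2)}(q) + \mathfrak{f}^{(3)}(q)$ with $L_\ssomega q := \ssomega^2 \ddot q + \mathfrak{A} q$. Decomposing $q = v + q_\perp$ with $v \in \ker L_1$ and $q_\perp \in (\ker L_1)^\perp$ and projecting yields the $P$- and $Q$-equations of exactly the form \eqref{PEquation}--\eqref{QEquation}. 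The key rescaling is $v = \epsilon \xi$ together with $\ssomega^2 = 1 \mp \epsilon^2$, the sign matching the two cases of $\textswab{M}_\pm$; this choice is forced because, once the quadratic average vanishes, the leading resonant balance in the $Q$-equation is cubic rather than quadratic, consistent with the stated bound $|1-\ssomega_\epsilon| \lesssim \epsilon$.

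I first solve the $P$-equation in a neighbourhood of $\ssomega = 1$, where $L_\ssomega$ restricted to $(\ker L_1)^\perp$ is invertible with operator bounds controlled by the small divisors $|\ssomega^2 l^2 - \ssomega_j^2|^{-1}$; the Diophantine condition $\ssomega \in \mathcal{W}_\alpha$ supplies the needed quantitative lower bound. A standard contraction mapping argument then produces $q_\perp = q_\perp(\epsilon,\xi,\ssomega)$ admitting the expansion $q_\perp = \epsilon^2 w_2(\xi) + O(\epsilon^3)$, where $w_2(\xi) = L_1^{-1} P\,\mathfrak{f}^{(2)}(\xi)$, with $\xi$ tacitly extended to a function of $\tau$ via the linear flow. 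Since the average $\langle \mathfrak{f}^{(2)} \rangle(\xi)$ vanishes by hypothesis \eqref{ConditionNonResonantf2}, every index $(i,j,\nu)$ contributing to $w_2(\xi)$ is non-resonant, i.e.\ $\ssomega_i \pm \ssomega_j \neq \pm\ssomega_\nu$, and the denominators $\ssomega_\nu^2 - (\ssomega_i \pm \ssomega_j)^2$ appearing in $\mathfrak{F}_0$ arise naturally from inverting $L_1$ mode by mode.

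Substituting the expansion of $q_\perp$ into the $Q$-equation $(1-\ssomega^2)\mathfrak{A}v = Q\mathfrak{f}(v+q_\perp)$, the left-hand side equals $\mp \epsilon^3 \mathfrak{A}\xi$. On the right, the $O(\epsilon^2)$ contribution is $\epsilon^2 \langle\mathfrak{f}^{(2)}\rangle(\xi) = 0$ and, at order $\epsilon^3$, there are exactly two surviving pieces: the standard cubic average $\langle\mathfrak{f}^{(3)}\rangle(\xi)$ and the backreaction $Q\bigl(d\mathfrak{f}^{(2)}(\xi)\cdot w_2(\xi)\bigr)$ coming from the bilinear cross-term in the expansion of $\mathfrak{f}^{(2)}(\epsilon\xi + \epsilon^2 w_2)$. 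Writing this cross-term out in Fourier using \eqref{def:f2F} and the explicit form of $w_2(\xi)$, the projection $Q$ onto $\ker L_1$ enforces the resonance relations $\ssomega_i \pm \ssomega_j \pm \ssomega_\kappa \pm \ssomega_m = 0$, and one recognises exactly the term $\mathfrak{F}_0(\xi)$ in the statement. Dividing through by $\epsilon^3$, the $Q$-equation reduces to
\begin{align*}
\textswab{M}_\pm(\xi) + \epsilon\, R(\epsilon,\xi,\ssomega) = 0,
\end{align*}
with $R$ Lipschitz in $\xi$, uniformly for $\epsilon$ small and for $\ssomega$ close to $1$ inside $\mathcal{W}_\alpha$.

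The last step is an implicit function argument: the hypothesis $\ker d\textswab{M}_\pm(\xi_0) = \{0\}$ together with the compact injection $\mathcal{D}(\mathfrak{A}) \hookrightarrow l_s^2$ makes $d\textswab{M}_\pm(\xi_0)$ Fredholm of index zero, hence invertible, so one solves for $\xi(\epsilon) \to \xi_0$ and reconstructs the full periodic orbit via the $P$-fixed point; the periodicity, regularity, and closeness to the linearised flow stated in the conclusion then follow from the quantitative bounds at each step. The main obstacle, as always in this circle of ideas, is the small-divisor control in the $P$-equation: the denominators $|\ssomega^2 l^2 - \ssomega_j^2|$ can only be bounded below when $\ssomega$ lies in $\mathcal{W}_\alpha$, which restricts the admissible $\epsilon$ to the Cantor-like set $\mathcal{E}_\alpha$ and forces the seed $\xi_0$ to live in $l_{s+3}^2$ rather than $l_{s+2}^2$, in order to absorb the derivative loss incurred when $L_\ssomega^{-1}$ is applied inside the backreaction. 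Verifying that $\mathfrak{F}_0$ is a bounded map on $l_s^2$, as invoked in the statement, amounts to showing convergence of the double resonance sums, which reduces to quantitative decay of the Fourier constants $\overline{\mathfrak{C}}_{ijm}$ inherited from the smoothness of the original non-linearity.
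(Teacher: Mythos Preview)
Your proposal is correct and follows essentially the same Lyapunov--Schmidt strategy as the paper: solve the $P$-equation by contraction using the Diophantine bound, expand the $Q$-equation to cubic order picking up the backreaction $\mathfrak{F}_0$ from $d\mathfrak{f}^{(2)}(\xi)\cdot L_1^{-1}P\mathfrak{f}^{(2)}(\xi)$, and close with an implicit function argument based on the Fredholm property of $d\textswab{M}_\pm(\xi_0)$. Two minor points of bookkeeping: the paper's scaling is $\ssomega^2 = 1 + 2\beta$ with $\beta = \pm\epsilon^2$ (so your sign convention $\ssomega^2 = 1 \mp \epsilon^2$ is flipped relative to the labelling of $\textswab{M}_\pm$ in the conclusion), and the paper makes explicit the splitting $L_\ssomega^{-1} = L_1^{-1} + (L_\ssomega^{-1} - L_1^{-1})$ via a separate error term $\mathfrak{R}_\epsilon(\xi,\ssomega)$ whose estimate (Lemma~\ref{LemmaEstimateR}) is precisely where the extra regularity $\xi_0 \in l_{s+3}^2$ enters, through a factor $l^2$ in the difference of the resolvents---this is the mechanism you allude to at the end.
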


The rest of this section is devoted to the proof of the theorem above.  
\subsection{Preliminaries} \label{SubsectionPreliminaries}
The core of the proof follows that of \cite{MR1819863}. Let $0<\alpha< 1/ 3$ and pick a frequency $\ssomega \in \mathcal{W}_{\alpha}$. We are looking for a solution to \eqref{BambusiMainPDE} with frequency $\ssomega$, that is
\begin{align} \label{def:q}
	u(t):=q(\ssomega t).
\end{align}
For any integer $k \geq 0$, we define the Banach space 
	\begin{align*}
	\mathcal{H}^{k}_s   \subseteq H^{k}([0,2\pi];l_s^2)
\end{align*} 
consisting of spacetime functions
\begin{align*}
	q(t)= \sum_{j=0}^{\infty} q^{j}(t) e_{j}=
	 \sum_{j=0}^{\infty} \left(\sum_{l=0}^{\infty}q^{lj} \cos(lt) \right) e_{j}
\end{align*} 
such that the norm
\begin{align*}
	\| q\|_{\mathcal{H}^{k}_s}^2 := \sum_{j=0}^{\infty} j^{2s} \left(2 |q^{0j}|^2  + \sum_{l=1}^{\infty} |q^{lj}|^2 (1+l^2)^k  \right)
\end{align*}
is finite. In particular, we aim towards constructing $q$ in the Hilbert space $\mathcal{H}^{1}_s $. To do so, we substitute \eqref{def:q} into \eqref{BambusiMainPDE} and obtain the non--linear equation
\begin{align}\label{BambusiMainPDE2}
	L_{\ssomega}q=f(q) ,
\end{align}
where
\begin{align*}
L_{\ssomega}:\mathcal{D}(L_{\ssomega})\subset \mathcal{H}^{1}_s \longrightarrow \mathcal{H}^{1}_s, \Hquad  	L_{\ssomega}q:=\ssomega^2 \frac{d^2}{dt^2} q  +\mathfrak{A}q.
\end{align*}
Now, we are looking for a solution with frequency close to one. For this reason, we split $\mathcal{H}^{1}_s$ into
\begin{align*}
	\mathcal{H}^{1}_s=K \oplus R, \Hquad  K:=\ker(L_1), \Hquad R:=K^{\perp},
\end{align*}
and write
\begin{align*}
	q \in \mathcal{H}^{1}_s, \Hquad q=v+q_{\perp}, \Hquad v\in K,\Hquad q_{\perp} \in R.
\end{align*} 
Taking into account the fact that $K$ is generated by $\{\cos(\ssomega_{j}t): j\geq 0 \}$, since
\begin{align*}
	v\in K \Longleftrightarrow v(t)=\{v^{j}(t)=c ^{j} \cos(\ssomega_{j}t),  \Hquad j\geq 0 \},
\end{align*}
for some constants $c ^{j} $ , the latter simply means that we split $q=\{q^j : j \geq 0\} \in \mathcal{H}^{1}_s$ into
\begin{align*}
	  q^{j}(t)=v^{j}(t)+q_{\perp}^{j}(t), \Hquad v^{j}(t)=c ^{j}  \cos(\ssomega_{j}t), \Hquad q_{\perp}^{j}(t)=\sum_{l \neq \ssomega_{j}} d^{jl} \cos(l  t),
\end{align*}
for some constants $c ^{j} $ and $ d^{jl} $. In addition, we define the associated projections
\begin{align*}
        &  P:\mathcal{H}^{1}_s \longrightarrow R, \Hquad P(q) =P(v+q_{\perp}):=q_{\perp},\\
	  &   Q:\mathcal{H}^{1}_s \longrightarrow K, \Hquad Q(q ) =Q(v+q_{\perp}):=v,
\end{align*}
and project \eqref{BambusiMainPDE2} onto $R$ and $K$ respectively. We obtain the following coupled non--linear system
\begin{align}
	L_{\ssomega} q_{\perp} &= 
	 P  f(v+q_{\perp}) , \label{PEquation} \\
	-2\beta A v &= 
	 Q  f(v+q_{\perp}) , \label{QEquation} 
\end{align} 
where we also set
\begin{align}\label{OmegaEquation}
	\ssomega^2=1+2\beta.
\end{align}
As is usual in this setting, we refer to \eqref{PEquation} and \eqref{QEquation} as the $P-$equation and $Q-$equation respectively.
\subsection{Solution to the P--equation}
As we will now see, the Diophantine condition $\ssomega \in \mathcal{W}_{\alpha}$ guarantees the existence of a solution to the $P-$equation.

\begin{lemma}[Solution to the $P-$equation, Lemma 4.6 in \cite{MR1819863}]\label{LemmaSolutionPequation}
	Let $0< \alpha < 1/ 3$ and pick $\ssomega \in \mathcal{W}_{\alpha}$. Then, the operator $L_{\ssomega}$ restricted to $R$ admits a bounded inverse 
	\begin{align*}
		L_{\ssomega}^{-1}: \mathcal{H}^1_s \cap R \longrightarrow \mathcal{H}^1_s \cap R
	\end{align*}
	such that
	\begin{align*}
		\| L_{\ssomega}^{-1} \| \leq c_0 \alpha^{-1},
	\end{align*}
	for some positive constant $c_0$. Moreover, there exists $\rho=\rho(\alpha)>0$  and a $C^1-$function 
	\begin{align*}
		q_{\perp}:B_{\rho} \longrightarrow R, \Hquad  
		v \longmapsto q_{\perp}(v)
	\end{align*} 
	that solves the $P-$equation, where $B_{\rho}$ denotes the ball of radius $\rho$ in $K$ centered at zero. Furthermore, we have the estimates
	\begin{align*} 
		\|q_{\perp}(v) \|_{\mathcal{H}^1_s}  \lesssim_{\alpha} \| v \|_{\mathcal{H}^1_s}^{2}, \Hquad 
		\|q_{\perp}(v) - L_{\ssomega}^{-1} P \mathfrak{f}^{(2)} (v) \|_{\mathcal{H}^1_s}  \lesssim_{\alpha} \| v \|_{\mathcal{H}^1_s}^{3}. 
	\end{align*} 
\end{lemma}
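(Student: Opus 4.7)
The plan is to pass to the Fourier-in-time representation, in which $L_{\ssomega}$ is a diagonal multiplier, invert it on $R$ via the Diophantine condition, and then solve the full $P$-equation by a Banach fixed-point argument in a small ball of $R \subset \mathcal{H}^{1}_{s}$.

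First I would establish the bound $\|L_{\ssomega}^{-1}\|_{R\to R}\lesssim \alpha^{-1}$. On the basis $\{\cos(lt)\mathfrak{e}_{j}\}_{l,j\ge 0}$ of $\mathcal{H}^{1}_{s}$, the operator $L_{\ssomega}$ acts as multiplication by $\ssomega_{j}^{2}-\ssomega^{2}l^{2}=(\ssomega_{j}-\ssomega l)(\ssomega_{j}+\ssomega l)$. The kernel $K$ corresponds exactly to the indices with $l=\ssomega_{j}\in\mathbb{N}$, so on $R$ we always have $l\neq \ssomega_{j}$. For $l\geq 1$ and $\ssomega\in \mathcal{W}_{\alpha}$ the Diophantine condition gives $|\ssomega l-\ssomega_{j}|\geq \alpha/l$, and since for $\ssomega$ near one one has $\ssomega_{j}+\ssomega l\geq \ssomega l\gtrsim l$, the product satisfies $|\ssomega_{j}^{2}-\ssomega^{2}l^{2}|\gtrsim \alpha$, uniformly in $(l,j)$; the case $l=0$ is harmless since $\ssomega_{j}^{2}\geq \ssomega_{0}^{2}>0$. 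As $L_{\ssomega}^{-1}$ is a diagonal multiplier and the weight $j^{2s}(1+l^{2})$ in the $\mathcal{H}^{1}_{s}$-norm is unaffected by it, $\|L_{\ssomega}^{-1}f\|_{\mathcal{H}^{1}_{s}}\leq c_{0}\alpha^{-1}\|f\|_{\mathcal{H}^{1}_{s}}$ follows at once.

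Next I would solve the $P$-equation by contraction. Define $T_{v}:R\to R$ by $T_{v}(q_{\perp}):=L_{\ssomega}^{-1}Pf(v+q_{\perp})$ with $f=\mathfrak{f}^{(2)}+\mathfrak{f}^{(3)}$. Since $l_{s}^{2}$ is a Banach algebra for $s$ sufficiently large and $\mathcal{H}^{1}_{s}\hookrightarrow C^{0}([0,2\pi];l_{s}^{2})$, the polynomial maps $\mathfrak{f}^{(2)}$ and $\mathfrak{f}^{(3)}$ are smooth on $\mathcal{H}^{1}_{s}$ with $\|f(w)\|_{\mathcal{H}^{1}_{s}}\lesssim \|w\|^{2}+\|w\|^{3}$ and Lipschitz differential. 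For $\rho=\rho(\alpha)$ small enough, $T_{v}$ leaves invariant the closed ball of radius, say, $2c_{0}\alpha^{-1}\|v\|^{2}$ in $R$ and is a contraction there, producing a unique fixed point $q_{\perp}(v)$; the $C^{1}$ dependence on $v$ follows from the implicit function theorem applied to $\Psi(v,q_{\perp})=q_{\perp}-L_{\ssomega}^{-1}Pf(v+q_{\perp})$, whose partial differential in $q_{\perp}$ at $(0,0)$ is the identity.

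The two estimates are then immediate. The bound $\|q_{\perp}(v)\|_{\mathcal{H}^{1}_{s}}\lesssim_{\alpha}\|v\|^{2}$ follows from inserting $q_{\perp}=T_{v}(q_{\perp})$ once and using that $f$ vanishes to order two. For the refined estimate, write
\begin{align*}
q_{\perp}(v)-L_{\ssomega}^{-1}P\mathfrak{f}^{(2)}(v)=L_{\ssomega}^{-1}P\bigl[\mathfrak{f}^{(2)}(v+q_{\perp})-\mathfrak{f}^{(2)}(v)\bigr]+L_{\ssomega}^{-1}P\mathfrak{f}^{(3)}(v+q_{\perp}),
\end{align*}
bound the first summand by $c_{0}\alpha^{-1}\|q_{\perp}\|(\|v\|+\|q_{\perp}\|)\lesssim_{\alpha}\|v\|^{3}$ using $\|q_{\perp}\|\lesssim_{\alpha}\|v\|^{2}$, and the second by $c_{0}\alpha^{-1}(\|v\|+\|q_{\perp}\|)^{3}\lesssim_{\alpha}\|v\|^{3}$. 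The only genuinely delicate step is the uniform small-divisor estimate for $L_{\ssomega}^{-1}$ on $R$; everything else reduces to a standard contraction in a Banach algebra, and this is precisely where the restriction $\ssomega\in\mathcal{W}_{\alpha}$ and the constant $c_{0}\alpha^{-1}$ enter.
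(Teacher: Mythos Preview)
Your proposal is correct and follows essentially the same approach as the paper: diagonalize $L_{\ssomega}$ in the Fourier basis, use the Diophantine condition to get the $\alpha^{-1}$ bound on $L_{\ssomega}^{-1}|_{R}$, and then run a contraction argument. The only cosmetic differences are that the paper centers the contraction ball at $L_{\ssomega}^{-1}P\mathfrak{f}^{(2)}(v)$ with radius $\delta\|v\|_{\mathcal{H}^1_s}^{3}$ (so the refined estimate comes for free), and obtains the $C^{1}$ regularity by a direct expansion rather than by invoking the implicit function theorem; both variants are standard and equivalent.
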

\begin{proof}
Apart from the $C^1$ regularity of $q_\perp$ (which is stated only as Lipschitz in \cite{MR1819863}), the proof coincides with the one of Lemma 4.6 in \cite{MR1819863}, where the $f^{(0)}$ in \cite{MR1819863} is replaced by $\mathfrak{f}^{(2)}$. Once a Lipschitz solution $q_\perp$ has been found, one can read--off the $C^1$ regularity of $q_\perp$ based on the regularity of $\mathfrak{f}$. However, for the convenience of the reader, we give a proof below of the construction of $q_\perp$. Let $0< \alpha < 1/ 3$ and pick $\ssomega \in \mathcal{W}_{\alpha}$. The eigenvalues of $L_{\ssomega}$ are given by
	\begin{align}\label{EigenvaluesofLomega}
		\lambda_{jl}=\ssomega_{j}^2-l^2 \ssomega^2 = (\ssomega_{j}-l\ssomega)(\ssomega_{j}+l\ssomega).
	\end{align}
	Then, for all $(l,j) \in \mathbb{N}^2$ with $l\geq 1$ and $l\neq \ssomega_{j}$, we have
	\begin{align*}
		|\lambda_{jl}|\geq \frac{\alpha}{l} (\ssomega_{j}+l\ssomega) \geq   \alpha  \ssomega 
		 \geq \frac{\alpha}{2}.
	\end{align*}
	Therefore, $L_{\ssomega}|_{R}$ has a bounded inverse and there exists a positive constant $c_0$ such that
	\begin{align*}
		\| L_{\ssomega}^{-1} \| \leq c_0 \alpha^{-1}.
	\end{align*} 
	 In addition, we let $\epsilon>0$ be sufficiently small, $\| v\|_{\mathcal{H}^1_s } \leq \epsilon$, $\delta >0$ sufficiently large, define the closed ball of radius $\delta \|v \|_{\mathcal{H}^1_s}^{3}$ centered at $L_{\ssomega}^{-1} P \mathfrak{f}^{(2)} (v)$, that is
	\begin{align*}
		B:=\{w \in \mathcal{H}^1_s: \Hquad \|w- L_{\ssomega}^{-1} P \mathfrak{f}^{(2)} (v)\|_{\mathcal{H}^1_s} \leq \delta \| v\|_{\mathcal{H}^1_s}^{3} \},
	\end{align*}
	and rewrite the the $P-$equation in the fixed point formulation as follows
	\begin{align*}
		 q_{\perp} &=\mathcal{F}(q_{\perp}):=  L_{\ssomega}^{-1}\left[ P \mathfrak{f}^{(2)} (v)+P\left(  \mathfrak{f}^{(2)} (v+q_{\perp})- \mathfrak{f}^{(2)} (v) \right)+P  \mathfrak{f}^{(3)} (v+q_{\perp})  \right]  .
	\end{align*}
	Next, we show that $\mathcal{F}$ maps the closed ball to itself. Indeed, for all $w \in B$, we have
	\begin{align*}
		\|w \|_{\mathcal{H}^1_s } & \leq \|w - L_{\ssomega}^{-1} P \mathfrak{f}^{(2)} (v) \|_{\mathcal{H}^1_s }+\|L_{\ssomega}^{-1} P \mathfrak{f}^{(2)} (v) \|_{\mathcal{H}^1_s } \\
		& \leq \delta \| v\|_{\mathcal{H}^1_s }^{3} +\|L_{\ssomega}^{-1}  \| \| \mathfrak{f}^{(2)} (v) \|_{\mathcal{H}^1_s } \\
		& \leq \delta \| v\|_{\mathcal{H}^1_s }^{3} + c_0 \alpha^{-1}k_s \|v \|_{\mathcal{H}^1_s }^2 \\
		& \leq c_1 \|v \|_{\mathcal{H}^1_s }^2
	\end{align*}
	and Lemma \ref{LipschitzModel3} implies
\begin{align*}
	\|   \mathfrak{f}^{(2)} (v+w)- \mathfrak{f}^{(2)} (v) \|_{\mathcal{H}^1_s } & \leq k_s \left(\|v+w \|_{\mathcal{H}^1_s }+\|v\|_{\mathcal{H}^1_s }\right) \|w\|_{\mathcal{H}^1_s } \\
	& \leq k_s \left(\|w \|_{\mathcal{H}^1_s }+2\|v\|_{\mathcal{H}^1_s }\right) \|w\|_{\mathcal{H}^1_s } \\
	& \leq c_1  k_s \left(c_1 \|v \|_{\mathcal{H}^1_s }^2+2\|v\|_{\mathcal{H}^1_s }\right) \|v \|_{\mathcal{H}^1_s }^2 \\
	& \leq c_2   \|v \|_{\mathcal{H}^1_s }^3, \\
	\|   \mathfrak{f}^{(3)} (v+w)  \|_{\mathcal{H}^1_s } & \leq k_s \|v+w \|_{\mathcal{H}^1_s }^3 \\
	& \lesssim k_s \left( \|v \|_{\mathcal{H}^1_s }^3 + c_1^3 \|v \|_{\mathcal{H}^1_s }^6  \right) \\
	& \leq c_3 \|v \|_{\mathcal{H}^1_s }^3.
\end{align*}
	Hence, we infer
	\begin{align*}
		\| \mathcal{F}(w)-L_{\ssomega}^{-1} P \mathfrak{f}^{(2)} (v)\|_{\mathcal{H}^1_s } & =
		\| L_{\ssomega}^{-1} \left[  P\left(  \mathfrak{f}^{(2)} (v+w)- \mathfrak{f}^{(2)} (v) \right)+P  \mathfrak{f}^{(3)} (v+w)  \right]  \|_{\mathcal{H}^1_s } \\
		&\leq \| L_{\ssomega}^{-1}\| \left[ \|   \mathfrak{f}^{(2)} (v+w)- \mathfrak{f}^{(2)} (v) \|_{\mathcal{H}^1_s } + \|  \mathfrak{f}^{(3)} (v+w)  \|_{\mathcal{H}^1_s } \right] \\
		& \leq  c_0 \alpha^{-1} \left[ c_2 \|v \|_{\mathcal{H}^1_s }^3 + c_3 \|v \|_{\mathcal{H}^1_s }^3  \right] \\
		& \leq \delta \|v \|_{\mathcal{H}^1_s }^3,
	\end{align*}
	by choosing $\delta$ sufficiently large. The contraction property follows similarly. For the $C^1$ regularity, we set $\mathfrak{F}^{(2)}:= L_{\ssomega}^{-1} P \mathfrak{f}^{(2)}$ and, for $v, v+h \in B_{\rho}$, we have
	\begin{align*}
		q_\perp(v)= \mathfrak{F}^{(2)}(v+q_\perp(v)), \Hquad q_\perp(v+h)= \mathfrak{F}^{(2)}(v+h+q_\perp(v+h)),
		\end{align*}
	so that 
	\begin{align*}
		q_\perp(v+h) &= \mathfrak{F}^{(2)}(v+q_\perp(v))+ d\mathfrak{F}^{(2)}_{v+q_\perp(v)} \left( h+q_\perp(v+h)-q_\perp(v)\right)+\smallO (h+q_\perp(v+h)-q_\perp(v)) \\
		&=q_\perp(v)+d\mathfrak{F}^{(2)}_{v+q_\perp(v)} \left( h+q_\perp(v+h)-q_\perp(v)\right)+ \smallO (h) ,
		\end{align*}
			using that $q_{\perp}$ is Lipschitz. Assuming that $\rho$ is small enough, we can ensure that $ \| d\mathfrak{F}^{(2)}_{v+q_\perp(v)} \|_{\mathcal{H}^1_s } \leq c \| v\|_{\mathcal{H}^1_s } <  1-\delta$, uniformly in $v$, and hence
\begin{align*}
	q_\perp(v+h)	= 	q_\perp(v)+(\text{Id}- d\mathfrak{F}^{(2)}_{v+q_\perp(v)})^{-1} d\mathfrak{F}^{(2)}_{v+q_\perp(v)} \left( h\right)+\smallO (h),
\end{align*}
so that $q_\perp(v)$ is $C^1$ with differential $(\text{Id}- d\mathfrak{F}^{(2)}_{v+q_\perp(v)})^{-1} d\mathfrak{F}^{(2)}_{v+q_\perp(v)}$. 
	
\end{proof}
 
\subsection{Solution to the Q--equation}\label{SubsectionSolutionQEquation}
Next, we turn our attention to the existence of a solution to the $Q-$equation. Firstly, we define the following two Banach spaces of initial data,
\begin{align*}
	\mathcal{Q}:= \left\{\xi = \sum_{j=0}^{\infty} \xi^j e_j: \Hquad  \|  \xi \|_{\mathcal{Q}}^2< \infty 	\right\}\simeq l_{s+1}^2  \subseteq l_s^2,
\end{align*}
endowed with the norm
\begin{align*}
	\|  \xi \|_{\mathcal{Q}}^2 := 
	\sum_{j=0}^{\infty} j^{2s} \left| \xi^j \right|^2+
	\sum_{j=0}^{\infty} j^{2s} \left|\ssomega_{j} \xi^j \right|^2
	  \simeq
		\sum_{j=0}^{\infty} j^{2(s+1)} \left| \xi^j \right|^2  = |\xi|^2_{s+1},
\end{align*}
and 
\begin{align*}
	\mathcal{D}(\mathfrak{A}):= \left\{\xi = \sum_{j=0}^{\infty} \xi^j e_j: \Hquad  \|  \xi \|_{\mathcal{D}(\mathfrak{A})}^2< \infty 	\right\}\simeq l_{s+2}^2  \subseteq l_s^2,
\end{align*}
endowed with the norm
\begin{align*}
	\|  \xi \|_{\mathcal{D}(\mathfrak{A})}^2 := 
	|\xi|_{s}^2 + |\mathfrak{A} \xi|_{s}^2=
	\sum_{j=0}^{\infty} j^{2s} \left| \xi^j \right|^2+
	\sum_{j=0}^{\infty} j^{2s} \left|\ssomega_{j}^2 \xi^j \right|^2
	  \simeq
		\sum_{j=0}^{\infty} j^{2(s+2)} \left| \xi^j \right|^2  = |\xi|^2_{s+2},
\end{align*}
since $\ssomega_{j} \sim j$, as $j \rightarrow \infty$. We call the Hilbert space $(\mathcal{Q},\|\cdot \|_{\mathcal{Q}})$ the configuration space.  In fact, $\mathcal{Q}$ is isomorphic to $K:=\ker (L_{1})$ and the isomorphism is given by the linear flow 
	\begin{align*}
		I:\mathcal{Q}\longrightarrow K,\Hquad \Hquad (I(x))(t):=\pPhi^{t}(x).
	\end{align*} 
Also, recall the Banach space the Banach space of spacetime functions 
\begin{align*}
	\mathcal{H}^{k}_s := \left\{q(t)= \sum_{j=0}^{\infty} q^{j}(t) e_{j}=
	 \sum_{j=0}^{\infty} \left(\sum_{l=0}^{\infty}q^{lj} \cos(lt) \right) e_{j}: \Hquad \| q\|_{\mathcal{H}^{k}_s}^2 <\infty \right\} \subseteq H^{k}([0,2\pi];l_s^2)
\end{align*}
endowed with the norm
\begin{align*}
	\| q\|_{\mathcal{H}^{k}_s}^2 := \sum_{j=0}^{\infty} j^{2s} \left(2 |q^{0j}|^2  + \sum_{l=1}^{\infty} |q^{lj}|^2 (1+l^2)^k  \right).
\end{align*}
Notice that, since
\begin{align*}
	I(\xi)(t)=\sum_{j=0}^{\infty} (I(\xi)(t))^j e_j
	=\sum_{j=0}^{\infty} (\Phi^{t}(\xi))^j e_j = \sum_{j=0}^{\infty} \xi^j \cos(\ssomega_{j }t) e_j 
\end{align*}
and $\ssomega_j  \neq 0$, for all integers $j \geq 0$, we have that
\begin{align}
	\| I(\xi) \|_{\mathcal{H}^{0}_s }^2 &=\sum_{j=0}^{\infty} j^{2s}  |\xi^{j}|^2   =    | \xi |_{s}^2, \nonumber  \\
	\| I(\xi) \|_{\mathcal{H}^{1}_s }^2 &=\sum_{j=0}^{\infty} j^{2s}  |\xi^{j}|^2 (1+\ssomega_{j}^2) =  \sum_{j=0}^{\infty} j^{2s} \left( |\xi^{j}|^2+ |\ssomega_{j} \xi^{j}|^2   \right)  \simeq | \xi |_{s+1}^2  \simeq  \| \xi \|_{\mathcal{Q}}^2,\label{HkNormWithQ} \\
	\| I(\xi) \|_{\mathcal{H}^{2}_s }^2 &=\sum_{j=0}^{\infty} j^{2s}  |\xi^{j}|^2 (1+\ssomega_{j}^2)^2 \simeq  \sum_{j=0}^{\infty} j^{2s} \left( |\xi^{j}|^2+ |\ssomega_{j}^2 \xi^{j}|^2   \right)  \simeq | \xi |_{s+2}^2  \simeq  \| \xi \|_{\mathcal{D}(\mathfrak{A})}^2.\label{HkNormWithDA}
\end{align}

Secondly, we prove the following averaging identity that generalizes the one in Lemma 4.7 in Bambusi--Paleari \cite{MR1819863} from vector fields $F: \mathcal{Q}  \rightarrow \mathcal{Q}  $ to $F: \mathcal{H}^k_s \rightarrow \mathcal{H}^k_s $.
\begin{lemma}[Averaging Identity]\label{LemmaIdentity}
	Let $F: \mathcal{H}^k_s \rightarrow \mathcal{H}^k_s $ be any vector field. Then, for all $x\in l_s^2$, we have
	\begin{align*}
	      \langle F \rangle(x):=\frac{1}{2\pi }\int_{0}^{2\pi }   \Phi^{t}  \left[F \left(w \right)\right]    dt = \frac{1}{2}I^{-1} Q \left[F(I(x)) \right].
	\end{align*}
\end{lemma}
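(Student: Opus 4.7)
The plan is to compare the two sides of the identity after Fourier-expanding in $t$. View $F(I(x))\in\mathcal{H}^k_s$ and expand it as
$$F(I(x))(t)=\sum_{j=0}^{\infty}g^j(x,t)\,e_j,\qquad g^j(x,t)=\sum_{l=0}^{\infty}g^{lj}(x)\cos(lt),$$
where the coefficients $g^{lj}(x)$ are well defined by the definition of $\mathcal{H}^k_s$. By the same calculation as in \eqref{EigenvaluesofLomega} with $\ssomega=1$, the operator $L_1=\tfrac{d^2}{dt^2}+\mathfrak{A}$ has eigenvalue $\ssomega_j^2-l^2$ on $\cos(lt)\,e_j$, so its kernel $K$ consists precisely of those series whose $j$-th component is supported on $l=\ssomega_j$. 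The projection $Q$ therefore keeps only these diagonal terms,
$$Q[F(I(x))](t)=\sum_{j=0}^{\infty}g^{\ssomega_j,j}(x)\cos(\ssomega_j t)\,e_j,$$
and applying the isomorphism $I^{-1}$ strips off the oscillation, giving
$$\tfrac{1}{2}I^{-1}Q[F(I(x))]=\tfrac{1}{2}\sum_{j=0}^{\infty}g^{\ssomega_j,j}(x)\,e_j.$$

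For the left-hand side, since $\Phi^t$ acts on $l_s^2$ by multiplying the $j$-th coefficient by $\cos(\ssomega_j t)$, I would compute componentwise
$$\bigl(\Phi^t[F(I(x))(t)]\bigr)^j=g^j(x,t)\cos(\ssomega_j t)=\sum_{l=0}^{\infty}g^{lj}(x)\cos(lt)\cos(\ssomega_j t).$$
Integrating over $[0,2\pi]$ and invoking the orthogonality relation $\int_0^{2\pi}\cos(lt)\cos(\ssomega_j t)\,dt=\pi\,\delta_{l,\ssomega_j}$, valid since $\ssomega_j\in\mathbb{N}_{>0}$ by the resonant spectrum assumption, only the term $l=\ssomega_j$ survives and yields
$$\frac{1}{2\pi}\int_0^{2\pi}\bigl(\Phi^t[F(I(x))(t)]\bigr)^j\,dt=\tfrac{1}{2}g^{\ssomega_j,j}(x),$$
matching the $j$-th component of $\tfrac{1}{2}I^{-1}Q[F(I(x))]$ obtained above. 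Summing in $j$ against the basis $\{e_j\}$ gives the claimed equality in $l_s^2$.

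The proof is essentially a Fourier orthogonality calculation, and the only mild technical point is to justify the termwise integration in $l$ above. This follows from the hypothesis $F(I(x))\in\mathcal{H}^k_s\hookrightarrow L^2([0,2\pi];l_s^2)$, which ensures convergence in the appropriate sense, so that Fubini's theorem applies to swap the $t$-integral and the $l$-sum. Extending from the $\mathcal{Q}$-valued situation treated in Lemma 4.7 of \cite{MR1819863} to vector fields $F\colon\mathcal{H}^k_s\to\mathcal{H}^k_s$ requires no new idea, since the argument depends only on the spectral decomposition of $L_1$ and on the action of $\Phi^t$ on individual Fourier modes, both of which are unchanged.
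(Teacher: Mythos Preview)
Your proof is correct and follows essentially the same approach as the paper: both expand $F(I(x))$ in the cosine basis of $\mathcal{H}^k_s$, identify the action of $Q$ as selecting the diagonal terms $l=\ssomega_j$, and then match this against the time average by the orthogonality relation $\int_0^{2\pi}\cos(lt)\cos(\ssomega_j t)\,dt=\pi\delta_{l,\ssomega_j}$. Your version is organized slightly differently (computing both sides separately rather than chaining equalities) and is a bit more explicit about the Fubini justification, but the argument is the same.
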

\begin{proof}
	Let $F: \mathcal{H}^k_s \rightarrow\mathcal{H}^k_s$ be a vector field in $\mathcal{H}^k_s$ (not necessarily in $\mathcal{Q}$), pick any $x \in l_s^2$ and set $w=I(x)$. By the definition of the Banach space $ \mathcal{H}^k_s$, we have
\begin{align*} 
	F \left(w \right)&= \sum_{m=0}^{\infty}\left(F \left(w \right) \right)^m e_m  =  \sum_{m=0}^{\infty}\left( \sum_{l=0}^{\infty} \left(F \left(w \right) \right)_{l}^m \cos(lt)\right)e_m ,
\end{align*}	
	where
\begin{align*}
	 \left(F \left(w \right) \right)_{l}^m  = \frac{1}{\pi } \int_{0}^{2\pi }  \left(F \left(w \right) \right)^m \cos(l t) dt.
\end{align*}
Then, the definition of the linear flow together with the definition of the projection $Q$ yield
\begin{align*}	
Q \left[	F \left(w \right) \right] &= \sum_{m=0}^{\infty}\left( Q \left[F \left(w \right)\right] \right)^m 
e_m  = \sum_{m=0}^{\infty}   \left(F \left(w \right)\right)_{\omega_{m}}^{m} \cos(\ssomega_{m}t)   e_m ,\\  
	 I^{-1}Q \left[	F \left(w \right) \right] &=\sum_{m=0}^{\infty}   \left(F \left(w \right)\right)_{\ssomega_{m}}^{m}   e_m,\\
	 \frac{1}{2} I^{-1}Q \left[	F \left(w \right) \right] &=\frac{1}{2}\sum_{m=0}^{\infty}    \left(F \left(w \right)\right)_{\ssomega_{m}}^{m}  e_m \\ 
&=\frac{1}{2}\sum_{m=0}^{\infty}   \left(  \frac{1}{\pi }\int_{0}^{2\pi } \left(F \left(w \right)\right)^{m} \cos(\ssomega_{m} t) dt \right)  e_m \\
&= \sum_{m=0}^{\infty}   \left(  \frac{1}{2\pi }\int_{0}^{2\pi } \left(F \left(w \right)\right)^{m} \cos(\ssomega_{m} t) dt \right)  e_m \\
&= \sum_{m=0}^{\infty}   \left(  \frac{1}{2\pi }\int_{0}^{2\pi } \left( \Phi^{t}  \left[F \left(w \right)\right]  \right)^m dt \right)  e_m \\
&=  \frac{1}{2\pi }\int_{0}^{2\pi } \sum_{m=0}^{\infty}   \left( \left( \Phi^{t}  \left[F \left(w \right)\right]  \right)^m  e_m  \right) dt  \\
&=  \frac{1}{2\pi }\int_{0}^{2\pi }   \Phi^{t}  \left[F \left(w \right)\right]    dt :=\langle F \rangle(x),
\end{align*}
that competes the proof.
\end{proof}
Then, we express the $Q-$equation in the configuration space introduced above.
\begin{lemma}[The $Q-$equation in the configuration space]
	Let $\rho >0$ and $q_{\perp}: B_\rho  \subset K \rightarrow R$ be the solution map to the P--equation derived in Lemma \ref{LemmaSolutionPequation}. Also, let $x \in l_{s+2}^2$ and set $v = I(x) \in B_\rho$. Then, the $Q-$equation \eqref{QEquation} for $v$ is equivalent to
	\begin{align}\label{NewQequation}
		\beta \mathfrak{A} x + \langle  \mathfrak{f} \rangle(x)= - \frac{1}{2} I^{-1} Q \left[ 
		 \mathfrak{f}  \left( I(x)+ q_{\perp}(I(x)) \right)- \mathfrak{f}\left(I(x) \right) \right].
	\end{align}
\end{lemma}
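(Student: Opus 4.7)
The plan is to reduce the $Q$-equation \eqref{QEquation} to the claimed identity \eqref{NewQequation} by a direct algebraic manipulation, relying on the isomorphism $I:\mathcal{Q}\to K$ and the averaging identity of Lemma \ref{LemmaIdentity}. The key observation is that the linear operator $\mathfrak{A}$ (acting coordinate-wise by multiplication by $\ssomega_{j}^2$) commutes with the identification $I$ and preserves the kernel $K$.

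First, I would verify that $\mathfrak{A}$ intertwines with $I$: for any $x\in l_{s+2}^2$, writing $v=I(x)(t)=\sum_{j\geq 0}x^j\cos(\ssomega_j t)\,e_j$, we have
\begin{align*}
(\mathfrak{A} v)(t)=\sum_{j\geq 0}\ssomega_j^2 x^j\cos(\ssomega_j t)\,e_j = I(\mathfrak{A} x)(t),
\end{align*}
which also shows $\mathfrak{A} v\in K$, so $Q(\mathfrak{A} v)=\mathfrak{A} v$ and consequently $I^{-1}Q(\mathfrak{A} v)=\mathfrak{A} x$. The assumption $x\in l_{s+2}^2\simeq \mathcal{D}(\mathfrak{A})$ ensures, via \eqref{HkNormWithDA}, that $v\in \mathcal{H}^{2}_s$ and that $\mathfrak{A} v$ is a well-defined element of $\mathcal{H}^{0}_s$.

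Next, starting from the $Q$-equation $-2\beta \mathfrak{A} v = Q\,\mathfrak{f}(v+q_{\perp}(v))$, I would split the right-hand side as
\begin{align*}
Q\,\mathfrak{f}(v+q_{\perp}(v))=Q\,\mathfrak{f}(v)+Q\bigl[\mathfrak{f}(v+q_{\perp}(v))-\mathfrak{f}(v)\bigr],
\end{align*}
apply $-\tfrac{1}{2}I^{-1}$ to both sides, and use the intertwining property above to rewrite the left-hand side as $\beta \mathfrak{A} x$. The first piece on the right is handled by Lemma \ref{LemmaIdentity} applied to the vector field $F=\mathfrak{f}$ evaluated at $w=I(x)$, giving $-\tfrac{1}{2}I^{-1}Q\,\mathfrak{f}(I(x))=-\langle\mathfrak{f}\rangle(x)$. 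Rearranging yields exactly \eqref{NewQequation}. The converse direction is obtained by simply applying $-2I$ (and the commutation property) to \eqref{NewQequation}; since $I$ is a linear isomorphism between $\mathcal{Q}$ and $K$ and $\mathfrak{A}$ preserves $K$, the two equations are equivalent.

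I do not anticipate a genuine obstacle here: all non-trivial analytical work is already packaged in Lemma \ref{LemmaSolutionPequation} (which provides $q_{\perp}$) and Lemma \ref{LemmaIdentity} (which converts time averages into $I^{-1}Q$). The only subtleties are bookkeeping ones, namely making sure that $v=I(x)$ indeed lies in the ball $B_\rho$ where $q_\perp$ is defined (which is controlled by $\|v\|_{\mathcal{H}^1_s}\simeq\|x\|_{\mathcal{Q}}\simeq |x|_{s+1}$ and requires $|x|_{s+1}$ to be small, implicit in the assumption $x\in l_{s+2}^2$ together with a smallness hypothesis that will be invoked later), and that each term in the manipulation lives in the correct function space so that applying $I^{-1}$ and the projection $Q$ makes sense.
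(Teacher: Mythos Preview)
Your proposal is correct and follows essentially the same approach as the paper: split $Q\mathfrak{f}(v+q_\perp(v))=Q\mathfrak{f}(v)+Q[\mathfrak{f}(v+q_\perp(v))-\mathfrak{f}(v)]$, use the intertwining $\mathfrak{A} I(x)=I\mathfrak{A} x$, apply $-\tfrac{1}{2}I^{-1}$, and invoke the averaging identity of Lemma~\ref{LemmaIdentity}. Your write-up is slightly more explicit about the commutation of $\mathfrak{A}$ with $I$ and about the converse direction, but the argument is the same.
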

\begin{proof}
Let $\rho >0$ and $q_{\perp}: B_\rho  \subset K \rightarrow R$ be the solution map to the $P-$equation derived in Lemma \ref{LemmaSolutionPequation}. Also, let $x \in \mathcal{Q}$ and set $v = I(x) \in B_\rho$. Then, we rewrite the $Q-$equation given in \eqref{QEquation}, that is
\begin{align*}
	-2\beta A v =  Q  f(v+q_{\perp})  ,
\end{align*}
as follows
\begin{align*}
	- 2\beta \mathfrak{A} I(x)  - Q  \mathfrak{f} ( I(x)  )  =   Q  \mathfrak{f} (I(x) +q_{\perp}(I(x) ))  - Q  \mathfrak{f}(I(x) )  .
\end{align*}
Since $\mathfrak{A} I(x)= I \mathfrak{A} x$, the latter is equivalent to
\begin{align*}
	 - 2\beta I  \mathfrak{A} x - Q  \mathfrak{f}( I(x)  ) =   Q \left[ \mathfrak{f}( I(x) +q_{\perp}( I(x) ))  -   \mathfrak{f}( I(x) ) \right] 
\end{align*}
and, by applying $-(1/2)I^{-1}$ to both sides, we get	
\begin{align*}
	  \beta    \mathfrak{A} x + \frac{1}{2} I^{-1} Q  \mathfrak{f} I(x) )  = -\frac{1}{2} I^{-1}  Q \left[ \mathfrak{f}( I(x) +q_{\perp}( I(x) ))  -   \mathfrak{f}( I(x) ) \right] .
\end{align*}
Now, the claim follows by the averaging identity due to Lemma \ref{LemmaIdentity}.
\end{proof}
It remains to show that there exists a solution to \eqref{NewQequation}. To this end, we define
 \begin{align}\label{XiandBetaIntermsofe}
 	x=\epsilon \xi, \Hquad |\beta|=\epsilon^2
 \end{align}
and \eqref{NewQequation} becomes
\begin{align*}
	& \pm \epsilon^2 \mathfrak{A} (\epsilon \xi) +  \langle \mathfrak{f}  \rangle(\epsilon \xi)  
	= -\frac{1}{2} I^{-1} Q \left[ 
		 \mathfrak{f}  \left( I(\epsilon \xi)+ q_{\perp}(I(\epsilon \xi)) \right)- \mathfrak{f}\left(I(\epsilon \xi) \right) \right] .
\end{align*}
On the one hand, \eqref{Splitingf} and \eqref{ConditionNonResonantf2} yield
\begin{align*}
	\pm \epsilon^2 \mathfrak{A} (\epsilon \xi) +  \langle \mathfrak{f}  \rangle(\epsilon \xi) =
	\pm \epsilon^3 \mathfrak{A} \xi + \epsilon^2 \langle \mathfrak{f}^{(2)}  \rangle( \xi)+ \epsilon^3 \langle \mathfrak{f}^{(3)}  \rangle( \xi) =
	\epsilon^3 \left( \pm  \mathfrak{A} \xi +     \langle \mathfrak{f}^{(3)}  \rangle( \xi) \right).
\end{align*}
On the other hand, \eqref{Splitingf}, \eqref{ConditionNonResonantf2} and the averaging identity from Lemma \ref{LemmaIdentity} yield
\begin{align*}
	& \frac{1}{2} I^{-1} Q \left[ 
		 \mathfrak{f}  \left( I(\epsilon \xi)+ q_{\perp}(I(\epsilon \xi)) \right)- \mathfrak{f}\left(I(\epsilon \xi) \right) \right] = \\
	&	 \frac{1}{2} I^{-1} Q \left[ 
		 \mathfrak{f}^{(2)}  \left( I(\epsilon \xi)+ q_{\perp}(I(\epsilon \xi)) \right)- \mathfrak{f}^{(2)} \left(I(\epsilon \xi) \right) \right] +
		  \frac{1}{2} I^{-1} Q \left[ 
		 \mathfrak{f}^{(3)}  \left( I(\epsilon \xi)+ q_{\perp}(I(\epsilon \xi)) \right)- \mathfrak{f}^{(3)} \left(I(\epsilon \xi) \right) \right]= \\
&	 \frac{1}{2} I^{-1} Q \left[ 
		 \mathfrak{f}^{(2)}  \left( I(\epsilon \xi)+ q_{\perp}(I(\epsilon \xi)) \right)  \right]  -\epsilon ^2 \langle \mathfrak{f}^{(2)} \rangle(\xi)  +
		  \frac{1}{2} I^{-1} Q \left[ 
		 \mathfrak{f}^{(3)}  \left( I(\epsilon \xi)+ q_{\perp}(I(\epsilon \xi)) \right) - \mathfrak{f}^{(3)}(I(\epsilon \xi)) \right]  = \\
 &	 \frac{1}{2} I^{-1} Q \left[ 
		 \mathfrak{f}^{(2)}  \left( I(\epsilon \xi)+ q_{\perp}(I(\epsilon \xi)) \right)  \right]  +
		  \frac{1}{2} I^{-1} Q \left[ 
		 \mathfrak{f}^{(3)}  \left( I(\epsilon \xi)+ q_{\perp}(I(\epsilon \xi)) \right) - \mathfrak{f}^{(3)}(I(\epsilon \xi)) \right] = \\		 
	& 	 \frac{1}{2} I^{-1} Q \left[ 
		 \mathfrak{f}^{(2)}  \left( I(\epsilon \xi)+ L_{\ssomega}^{-1} P \mathfrak{f}^{(2)} (I(\epsilon \xi)) \right)  \right]+  \epsilon^3 \mathfrak{G}_{\epsilon}(\xi),
\end{align*} 
where we set
\begin{align*}
	\mathfrak{G}_{\epsilon}(\xi) &:= \epsilon^{-3}\Bigg[ \frac{1}{2} I^{-1} Q \left[ 
		 \mathfrak{f}^{(2)}  \left( I(\epsilon \xi)+ q_{\perp}(I(\epsilon \xi)) \right)   -
		 \mathfrak{f}^{(2)}  \left( I(\epsilon \xi)+ L_{\ssomega}^{-1} P \mathfrak{f}^{(2)} (I(\epsilon \xi)) \right)  \right]\nonumber \\
		 &   +
		  \frac{1}{2} I^{-1} Q \left[ 
		 \mathfrak{f}^{(3)}  \left( I(\epsilon \xi)+ q_{\perp}(I(\epsilon \xi)) \right) - \mathfrak{f}^{(3)}(I(\epsilon \xi)) \right]  \Bigg]. \nonumber
\end{align*}
In addition, we apply the averaging identity and the notation from Lemma \ref{LemmaIdentity} to the map $v  \rightarrow \mathfrak{f}^{(2)}  \left( v+ L_{1}^{-1} P \mathfrak{f}^{(2)}(v) \right)$ that is a vector field from $\mathcal{H}^1_s $ to $\mathcal{H}^1_s $ to obtain
\begin{align*}
	& \frac{1}{2} I^{-1} Q \left[ 
		 \mathfrak{f}  \left( I(\epsilon \xi)+ q_{\perp}(I(\epsilon \xi)) \right)- \mathfrak{f}\left(I(\epsilon \xi) \right) \right] = \\
		 &  \frac{1}{2} I^{-1} Q \left[ 
		 \mathfrak{f}^{(2)}  \left( I(\epsilon \xi)+ L_{\ssomega}^{-1} P \mathfrak{f}^{(2)} (I(\epsilon \xi)) \right)  \right]+  \epsilon^3 \mathfrak{G}_{\epsilon}(\xi) = \\
		 &  \frac{1}{2} I^{-1} Q \left[ 
		 \mathfrak{f}^{(2)}  \left( I(\epsilon \xi)+ L_{1}^{-1} P \mathfrak{f}^{(2)} (I(\epsilon \xi)) \right)  \right]+  \epsilon^3 \mathfrak{G}_{\epsilon}(\xi) + \epsilon^3  \mathfrak{R}_{\epsilon}(\xi,\ssomega) = \\
		 &\frac{1}{2} \int_0^{2 \pi} \Phi^{-t} \left(   \mathfrak{f}^{(2)}  \left( \Phi^t(\epsilon \xi)+ L_{1}^{-1} P \mathfrak{f}^{(2)}(\Phi^t(\epsilon \xi)) \right)\right)dt +  \epsilon^3 \mathfrak{G}_{\epsilon}(\xi) +  \epsilon^3 \mathfrak{R}_{\epsilon}(\xi,\ssomega) = \\
		 &\langle  
\mathfrak{f}^{(2)}  \left( ( \cdot ) + L_{1}^{-1} P \mathfrak{f}^{(2)} ( \cdot ) \right)   \rangle(\epsilon \xi) +  \epsilon^3 \mathfrak{G}_{\epsilon}(\xi) +  \epsilon^3 \mathfrak{R}_{\epsilon}(\xi,\ssomega)  = \\
&   \epsilon^3 \left(\mathfrak{F}_{\epsilon}( \xi)+   \mathfrak{G}_{\epsilon}(\xi) + \mathfrak{R}_{\epsilon}(\xi,\ssomega) \right) ,
\end{align*}
where we set
\begin{align*}
	 \mathfrak{R}_{\epsilon}(\xi,\ssomega)  &:=  \epsilon^{-3} \frac{1}{2} I^{-1} Q \left[ 
		 \mathfrak{f}^{(2)}  \left( I(\epsilon \xi)+ L_{\ssomega}^{-1} P \mathfrak{f}^{(2)} (I(\epsilon \xi)) \right)  -  
		 \mathfrak{f}^{(2)}  \left( I(\epsilon \xi)+ L_{1}^{-1} P \mathfrak{f}^{(2)} (I(\epsilon \xi)) \right)  \right], \\
		 \mathfrak{F}_{\epsilon}( \xi) &:= 
		 \epsilon^{-3}	 \langle  
		 \mathfrak{f}^{(2)}  \left( ( \cdot ) + L_{1}^{-1} P \mathfrak{f}^{(2)} ( \cdot ) \right)   
		 \rangle(\epsilon \xi) .
\end{align*}
In conclusion, the Q--equation \eqref{NewQequation} can be written equivalently, for $\epsilon >0$ sufficiently small, as
\begin{align}\label{NewQequationConfiguration}
	 \pm  \mathfrak{A} \xi +    \langle \mathfrak{f}^{(3)}  \rangle( \xi)  =  -\left( \mathfrak{F}_{\epsilon}( \xi)+   \mathfrak{G}_{\epsilon}(\xi) + \mathfrak{R}_{\epsilon}(\xi,\ssomega)\right).
\end{align}
However, instead of \eqref{NewQequationConfiguration}, we focus on a modified version, namely 
\begin{align}\label{ModifiedNewQequationConfiguration}
	 \pm  \mathfrak{A} \xi +    \langle \mathfrak{f}^{(3)}  \rangle( \xi)  =  -\left( \mathfrak{F}_{\epsilon}( \xi)+   \mathfrak{G}_{\epsilon}(\xi) \pm  \frac{2\epsilon^2}{\ssomega^2-1}\mathfrak{R}_{\epsilon}(\xi,\ssomega)\right).
\end{align}
Notice that \eqref{NewQequationConfiguration} coincides with \eqref{ModifiedNewQequationConfiguration} provided that $\ssomega^2-1= \pm2\epsilon^2$.
\begin{remark}
	Since $\mathfrak{f}^{(2)}$ is differentiable, quadratic and $\langle \mathfrak{f}^{(2)}\rangle(\xi) =0$, for all initial data $\xi\in l_{s+3}^2$, it follows that $\mathfrak{F}_{\epsilon}( \xi)$ is differentiable and $\| \mathfrak{F}_{\epsilon}(\xi) \|_{\mathcal{Q}}  \lesssim \epsilon$. Later, (Section \ref{se:fe}), we compute the exact expressions of $\mathfrak{F}_{0}( \xi)$ for general initial data (Lemma \ref{LemmaDefinitionMathfrakF}),
	 $\mathfrak{F}_{\epsilon}( \xi)$ for small $\epsilon$ close to zero and 1--mode initial data (Lemma \ref{LemmaComputemathfrakFforsfmallepsilon}),
	as well as the differential $d\mathfrak{F}_{0}( \xi)$ at the 1--mode initial data (Lemma \ref{LemmaDifferentialMathFrakF0LongComputation}).
\end{remark}

In the following, we estimate the error terms. To begin with, we estimate $\mathfrak{R}_{\epsilon}(\xi,\ssomega)$.
\begin{lemma}[Estimate for $\mathfrak{R}_{\epsilon}(\xi,\ssomega)$ and $d_{\xi}\mathfrak{R}_{\epsilon}(\xi,\ssomega)$]\label{LemmaEstimateR}
	Let $0< \alpha<1/ 3$ and pick any $\ssomega \in \mathcal{W}_{\alpha}$. Also, let $\xi \in l_{s+3}^2$ be any initial data. Then, we have
	\begin{align*}
		\| \mathfrak{R}_{\epsilon}(\xi,\ssomega) \|_{\mathcal{Q}}  \lesssim  |\ssomega^2-1|, \Hquad \|d_{\xi} \mathfrak{R}_{\epsilon}(\xi,\ssomega) \|_{\mathcal{Q}}  \lesssim  |\ssomega^2-1|.
	\end{align*}
\end{lemma}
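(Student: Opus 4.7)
The plan is to exploit the resolvent identity relating $L_{\ssomega}^{-1}$ and $L_1^{-1}$ on the range $R$, together with the fact that $\mathfrak{f}^{(2)}$ is quadratic with Lipschitz differential. Since
$L_{\ssomega}-L_1 = (\ssomega^2-1)\tfrac{d^2}{dt^2}$ on $\mathcal{H}^1_s$, one has the algebraic identity
\begin{align*}
L_{\ssomega}^{-1} - L_1^{-1} \;=\; -(\ssomega^2-1)\, L_{\ssomega}^{-1} \tfrac{d^2}{dt^2}\, L_1^{-1},
\end{align*}
valid on $R$. Thus, writing $g_{\epsilon}(\xi):=P\,\mathfrak{f}^{(2)}(I(\epsilon\xi))\in R$,
\begin{align*}
\bigl(L_{\ssomega}^{-1}-L_1^{-1}\bigr) g_{\epsilon}(\xi) \;=\; -(\ssomega^2-1)\, L_{\ssomega}^{-1}\tfrac{d^2}{dt^2} L_1^{-1} g_{\epsilon}(\xi).
\end{align*}

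First I would control the operator $\mathcal{K}_{\ssomega}:=L_{\ssomega}^{-1}\tfrac{d^2}{dt^2}L_1^{-1}$ on $\mathcal{H}^{1}_s\cap R$. On Fourier side its symbol is $-l^2/[(\ssomega_j^2-l^2\ssomega^2)(\ssomega_j^2-l^2)]$. Using that $\ssomega_j\in \mathbb{N}$ and $\ssomega_j\neq l$ implies $|\ssomega_j^2-l^2|\geq \ssomega_j+l$, and the Diophantine bound $|\ssomega_j^2-l^2\ssomega^2|\geq \alpha\,\ssomega$ from the proof of Lemma \ref{LemmaSolutionPequation}, one obtains a uniform (in $\ssomega\in\mathcal{W}_\alpha$) bound $|\mathcal{K}_{\ssomega}|\lesssim_\alpha 1$ as an operator from $\mathcal{H}^{1}_{s'}\cap R$ into $\mathcal{H}^{1}_{s}\cap R$ for $s'$ slightly larger than $s$ (absorbing the loss of $t$-derivatives). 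Composing, this yields
\begin{align*}
\bigl\|\bigl(L_{\ssomega}^{-1}-L_1^{-1}\bigr) g_{\epsilon}(\xi)\bigr\|_{\mathcal{H}^1_s}\;\lesssim_{\alpha}\; |\ssomega^2-1|\;\|g_{\epsilon}(\xi)\|_{\mathcal{H}^1_{s'}}.
\end{align*}
Since $\mathfrak{f}^{(2)}$ is a bounded quadratic map in these spaces (by the algebra property used in the preliminaries), $\|g_{\epsilon}(\xi)\|_{\mathcal{H}^1_{s'}}\lesssim \|I(\epsilon\xi)\|_{\mathcal{H}^1_{s'}}^2\lesssim \epsilon^2\,|\xi|_{s+3}^2$, which is why we assume $\xi\in l_{s+3}^2$.

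Next I would insert this into the definition of $\mathfrak{R}_\epsilon$. Set
\begin{align*}
a_\epsilon := I(\epsilon\xi)+L_{\ssomega}^{-1}P\mathfrak{f}^{(2)}(I(\epsilon\xi)),\qquad b_\epsilon := I(\epsilon\xi)+L_{1}^{-1}P\mathfrak{f}^{(2)}(I(\epsilon\xi)),
\end{align*}
so that both $\|a_\epsilon\|_{\mathcal{H}^1_s}, \|b_\epsilon\|_{\mathcal{H}^1_s}\lesssim \epsilon$ and $a_\epsilon-b_\epsilon$ is the quantity just estimated. Since $\mathfrak{f}^{(2)}$ has Lipschitz differential and is quadratic, $\|\mathfrak{f}^{(2)}(a_\epsilon)-\mathfrak{f}^{(2)}(b_\epsilon)\|_{\mathcal{H}^1_s}\lesssim \max(\|a_\epsilon\|,\|b_\epsilon\|)\,\|a_\epsilon-b_\epsilon\|_{\mathcal{H}^1_s}\lesssim \epsilon\cdot|\ssomega^2-1|\,\epsilon^2$. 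Using $\|I^{-1}Q\,\cdot\,\|_{\mathcal{Q}}\lesssim \|\cdot\|_{\mathcal{H}^1_s}$ from \eqref{HkNormWithQ} and dividing by $\epsilon^3$, we conclude $\|\mathfrak{R}_\epsilon(\xi,\ssomega)\|_{\mathcal{Q}}\lesssim |\ssomega^2-1|$.

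For the bound on $d_\xi\mathfrak{R}_\epsilon$, I would differentiate the defining expression in $\xi$ and repeat the same reasoning: the chain rule brings down one factor of $d\mathfrak{f}^{(2)}$ (linear in its argument, hence $O(\epsilon)$) composed with the derivative of $a_\epsilon-b_\epsilon$ with respect to $\xi$, which by linearity of $I$ and the same resolvent identity still carries a factor $|\ssomega^2-1|$ and one less power of $\epsilon$ than before. The Lipschitz differential hypothesis on $\mathfrak{f}^{(2)}$ ensures that the remainder terms arising when rewriting the difference of differentials are of the same order. All factors of $\epsilon$ then balance and the $\epsilon^{-3}$ prefactor gives the stated bound. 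The main obstacle is the first step: verifying that $\mathcal{K}_{\ssomega}$ is bounded uniformly in $\ssomega\in\mathcal{W}_\alpha$ while precisely tracking the $t$-derivative losses against the extra regularity provided by taking $\xi\in l_{s+3}^2$; once this is done, everything else is a routine application of the product and chain rules combined with Lemma \ref{LemmaSolutionPequation}.
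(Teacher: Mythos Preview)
Your overall strategy coincides with the paper's: use the resolvent identity to factor out $|\ssomega^2-1|$, then combine with the Lipschitz bound for $\mathfrak{f}^{(2)}$ and the smallness $\|a_\epsilon\|_{\mathcal{H}^1_s},\|b_\epsilon\|_{\mathcal{H}^1_s}\lesssim\epsilon$. The gap is in the first step, precisely where you flag it as the main obstacle: the claim that $\mathcal{K}_{\ssomega}=L_{\ssomega}^{-1}\tfrac{d^2}{dt^2}L_1^{-1}$ is bounded from $\mathcal{H}^1_{s'}\cap R$ to $\mathcal{H}^1_s\cap R$ by taking $s'>s$ is false. The symbol of $\mathcal{K}_{\ssomega}$ is diagonal in the spatial index $j$ and grows in the time index $l$ (at best like $l$, after the Diophantine bound and $|\ssomega_j^2-l^2|\geq \ssomega_j+l$). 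Since the $\mathcal{H}^k_s$ norm weighs $j$ by $j^{2s}$ and $l$ by $(1+l^2)^k$, raising $s$ does nothing to control growth in $l$; you must raise $k$. On a generic element of $R$ there is no a priori relation between $l$ and $j$, so no trade is possible at the level of $g_\epsilon(\xi)$ itself.

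The paper proceeds exactly this way: it shows
\[
\bigl\|(L_{\ssomega}^{-1}-L_1^{-1})P\,\mathfrak{f}^{(2)}(I(\epsilon\xi))\bigr\|_{\mathcal{H}^1_s}\;\lesssim_\alpha\;|\ssomega^2-1|\,\bigl\|\mathfrak{f}^{(2)}(I(\epsilon\xi))\bigr\|_{\mathcal{H}^3_s},
\]
losing two time derivatives (via the crude bounds $|\ssomega_j^2-l^2\ssomega^2|\gtrsim\alpha$ and $|\ssomega_j^2-l^2|\geq 1$). The link with $\xi\in l_{s+3}^2$ then appears one step upstream: $I(\epsilon\xi)$ lies in the kernel $K$, where $l=\ssomega_j\sim j$, so that $\|I(\epsilon\xi)\|_{\mathcal{H}^3_s}\simeq \epsilon\,|\xi|_{s+3}$ (this is the $k=3$ analogue of \eqref{HkNormWithQ}--\eqref{HkNormWithDA}). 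With this correction---replacing your $\mathcal{H}^1_{s'}$ by $\mathcal{H}^3_s$ and using the $K$-isomorphism to convert the extra time regularity into extra spatial regularity of $\xi$---your argument goes through and matches the paper's.
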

\begin{proof}
Let $0< \alpha<1/ 3$ and pick any $\ssomega \in \mathcal{W}_{\alpha}$. Also, let $\xi \in l_{s+3}^2$ be any initial data. Firstly, we pick any $\epsilon>0$, set $v=I(\epsilon \xi) $ and compute 
\begin{align*}
	\mathfrak{f}^{(2)}(v)&=\sum_{j=0}^{\infty} (\mathfrak{f}^{(2)}(v))^j e_j = \sum_{j=0}^{\infty}\left( \sum_{l=0}^{\infty} (\mathfrak{f}^{(2)}(v))_{l}^{j} \cos(lt) \right)e_j, \\ 
	P\mathfrak{f}^{(2)}(v)&=\sum_{j=0}^{\infty}\left( \sum_{ \substack{ l=0\\l \neq \ssomega_{j}  }}^{\infty}  (\mathfrak{f}^{(2)}(v))_{l}^{j} \cos(lt) \right)e_j, \\
	L_{\ssomega}^{-1} P\mathfrak{f}^{(2)}(v)&=\sum_{j=0}^{\infty}\left( \sum_{ \substack{ l=0\\l \neq \ssomega_{j}  }}^{\infty} \frac{1}{\ssomega_{j}^2-l^2 \ssomega^2} (\mathfrak{f}^{(2)}(v))_{l}^{j} \cos(lt) \right)e_j, \\
   \left( L_{\ssomega}^{-1}  -L_{1}^{-1} \right) P\mathfrak{f}^{(2)}(v)    & = 
	 \sum_{j=0}^{\infty}\left(  \sum_{ \substack{ l=0\\l \neq \ssomega_{j}  }}^{\infty} \left( \frac{1}{\ssomega_{j}^2-l^2 \ssomega^2}- \frac{1}{\ssomega_{j}^2-l^2  }\right) (\mathfrak{f}^{(2)}(v))_{l}^{j} \cos(lt) \right)e_j  \\
	&=\sum_{j=0}^{\infty}\left(  \sum_{ \substack{ l=0\\l \neq \ssomega_{j}  }}^{\infty}  \frac{l^2 (\ssomega^2-1)}{(\ssomega_{j}^2-l^2 \ssomega^2)(\ssomega_{j}^2-l^2)}  (\mathfrak{f}^{(2)}(v))_{l}^{j} \cos(lt) \right)e_j.
\end{align*}
Secondly, we note that 
\begin{align}\label{AuxilirlyBoundsNo2forLemma211}
	\| L_{\ssomega}^{-1} P\mathfrak{f}^{(2)}(I(\epsilon \xi)) \|_{\mathcal{H}^{1}_s }\lesssim \epsilon^2  ,\Hquad \| L_{1}^{-1} P\mathfrak{f}^{(2)}(I(\epsilon \xi)) \|_{\mathcal{H}^{1}_s } \lesssim \epsilon^2  .
\end{align}
These can be easily proved by using the Diophantine condition, the elementary inequality $|\ssomega_{j}^2-l^2| \geq 1$ (since $\ssomega \in \mathcal{W}_{\alpha}$, both $\ssomega_{j}^2 \geq 1$ and $l^2 \geq 0$ are integers with $\ssomega_{j} \neq l$), the Lipschitz estimate 
\begin{align*}
	\|  \mathfrak{f}^{(2)} (u)  \|_{\mathcal{H}^k_s} \lesssim_{s}  \|u \|_{\mathcal{H}^k_s}^2,
\end{align*}
for all $u \in \mathcal{H}^k_s$ with $\|u \|_{\mathcal{H}^k_s} \leq \epsilon$ that follows from Lemma \ref{LipschitzModel3}, together with \eqref{HkNormWithQ}. Indeed, we infer
\begin{align*}
	\|L_{\ssomega}^{-1} P\mathfrak{f}^{(2)}(I(\epsilon \xi )) \|_{\mathcal{H}^{1}_s}^2 &= \sum_{j=0}^{\infty} j^{2s}   \sum_{l=0}^{\infty} \left| \frac{1}{\ssomega_{j}^2-l^2 \ssomega^2} (\mathfrak{f}^{(2)}(I(\epsilon \xi )))_{l}^{j} \right|^2 (1+l^2)  \\
	& \lesssim_{\alpha}  \sum_{j=0}^{\infty} j^{2s}   \sum_{l=0}^{\infty} \left|   (\mathfrak{f}^{(2)}(I(\epsilon \xi )))_{l}^{j} \right|^2 (1+l^2)  \\
	& \leq \| \mathfrak{f}^{(2)}(I(\epsilon \xi )) \|_{\mathcal{H}^{1}_s}^2 \lesssim   \|  I(\epsilon \xi )  \|_{\mathcal{H}^{1}_s}^4  = \epsilon^4 \|  I( \xi )  \|_{\mathcal{H}^{1}_s}^4  \\
	& = \epsilon^4 \| \xi \|_{\mathcal{Q}}^4 \leq \epsilon^4 | \xi |_{s+1}^4\leq \epsilon^4 | \xi |_{s+3}^4 , \\
	\|L_{1}^{-1} P\mathfrak{f}^{(2)}(I(\epsilon \xi )) \|_{\mathcal{H}^{1}_s}^2 &= \sum_{j=0}^{\infty} j^{2s}   \sum_{l=0}^{\infty} \left| \frac{1}{\ssomega_{j}^2-l^2  } (\mathfrak{f}^{(2)}(I(\epsilon \xi )))_{l}^{j} \right|^2 (1+l^2)  \\
	& \lesssim   \sum_{j=0}^{\infty} j^{2s}   \sum_{l=0}^{\infty} \left|  (\mathfrak{f}^{(2)}(I(\epsilon \xi )))_{l}^{j} \right|^2 (1+l^2) \\
	& \leq \| \mathfrak{f}^{(2)}(I(\epsilon \xi )) \|_{\mathcal{H}^{1}_s}^2 \lesssim   \|  I(\epsilon \xi )  \|_{\mathcal{H}^{1}_s}^4  = \epsilon^4 \|  I( \xi )  \|_{\mathcal{H}^{1}_s}^4  \\
	& = \epsilon^4 \| \xi \|_{\mathcal{Q}}^4 \leq \epsilon^4 | \xi |_{s+1}^4\leq \epsilon^4 | \xi |_{s+3}^4 .
\end{align*}   
Next, we use the above together with the Lipschitz estimate for $\mathfrak{f}^{(2)}$ (Lemma \ref{LipschitzModel3}) and the fact that $I^{-1}: \mathcal{H}^1_s \longrightarrow \mathcal{Q}$ to obtain
\begin{align*}
& \left\|	\frac{1}{2} I^{-1} Q \left[\mathfrak{f}^{(2)}(I(\epsilon \xi)+L_{\ssomega}^{-1} P \mathfrak{f}^{(2)}(I(\epsilon \xi))  ) \right]-\frac{1}{2} I^{-1} Q \left[\mathfrak{f}^{(2)}(I(\epsilon \xi)+L_{1}^{-1} P \mathfrak{f}^{(2)}(I(\epsilon \xi))  ) \right] \right\|_{\mathcal{Q} } =  \\
& \left\|	\frac{1}{2} I^{-1} Q \left[\mathfrak{f}^{(2)}(I(\epsilon \xi)+L_{\ssomega}^{-1} P \mathfrak{f}^{(2)}(I(\epsilon \xi))  )  - \mathfrak{f}^{(2)}(I(\epsilon \xi)+L_{1}^{-1} P \mathfrak{f}^{(2)}(I(\epsilon \xi))  ) \right] \right\|_{\mathcal{Q} }\lesssim \\
& \left\| \mathfrak{f}^{(2)}( I(\epsilon \xi)+L_{\ssomega}^{-1} P \mathfrak{f}^{(2)}(I(\epsilon \xi))  )  - \mathfrak{f}^{(2)}( I(\epsilon \xi)+L_{1}^{-1} P \mathfrak{f}^{(2)}(I(\epsilon \xi))  )  \right\|_{\mathcal{H}^{1}_s } \lesssim \\
& \left( \left\| I(\epsilon \xi)+L_{\ssomega}^{-1} P \mathfrak{f}^{(2)}(I(\epsilon \xi))\right\|_{\mathcal{H}^{1}_s } + \left\| I(\epsilon \xi)+L_{1}^{-1} P \mathfrak{f}^{(2)}(I(\epsilon \xi))\right\|_{\mathcal{H}^{1}_s } \right)\cdot \\
& \cdot  \left\| L_{\ssomega}^{-1} P\mathfrak{f}^{(2)}(I(\epsilon \xi))-L_{1}^{-1} P\mathfrak{f}^{(2)}(I(\epsilon \xi)) \right\|_{\mathcal{H}^{1}_s } \lesssim
 \epsilon \left\| \left( L_{\ssomega}^{-1}  -L_{1}^{-1} \right) P\mathfrak{f}^{(2)}(I(\epsilon \xi)) \right\|_{\mathcal{H}^{1}_s }.
\end{align*}
Once again, the Diophantine condition, the elementary inequality $|\ssomega_{j}^2-l^2| \geq 1$, the Lipschitz estimate 
\begin{align*}
	\|  \mathfrak{f}^{(2)} (u)  \|_{\mathcal{H}^k_s} \lesssim_{s}  \|u \|_{\mathcal{H}^k_s}^2,
\end{align*}
for all $u \in \mathcal{H}^k_s$ with $\|u \|_{\mathcal{H}^k_s} \leq \epsilon$ that follows from Lemma \ref{LipschitzModel3}, together with \eqref{HkNormWithQ}, imply that
\begin{align*}
   \left\| \left( L_{\ssomega}^{-1}  -L_{1}^{-1} \right) P\mathfrak{f}^{(2)}(I(\epsilon \xi)) \right\|_{\mathcal{H}^{1}_s } ^2 &=|\ssomega^2-1|^2 \sum_{j=0}^{\infty} j^{2s}   \sum_{l=1}^{\infty} \left| \frac{l^2  }{(\ssomega_{j}^2-l^2 \ssomega^2)(\ssomega_{j}^2-l^2)}  (\mathfrak{f}^{(2)}(I(\epsilon \xi)))_{l}^{j} \right|^2 (1+l^2)  \\
	& \lesssim_{\alpha}|\ssomega^2-1|^2  \sum_{j=0}^{\infty} j^{2s}   \sum_{l=1}^{\infty} \left|  l^2    (\mathfrak{f}^{(2)}(I(\epsilon\xi)))_{l}^{j} \right|^2 (1+l^2)  \\
	& \lesssim  |\ssomega^2-1|^2 \sum_{j=0}^{\infty} j^{2s}   \sum_{l=1}^{\infty} \left|      (\mathfrak{f}^{(2)}(I( \epsilon \xi)))_{l}^{j} \right|^2 (1+l^2)^{3}  \\
	& \leq |\ssomega^2-1|^2 \| \mathfrak{f}^{(2)}(I( \epsilon \xi)) \|_{\mathcal{H}^{3}_s}^2 
	 \lesssim  |\ssomega^2-1|^2 \| I(\epsilon \xi) \|_{\mathcal{H}^{3}_s}^4  \\
	 &=|\ssomega^2-1|^2 \epsilon^4 \| I( \xi) \|_{\mathcal{H}^{3}_s}^4 \lesssim |\ssomega^2-1|^2 \epsilon^4 |\xi |_{s+3}^2 
\end{align*}   
which in turn yields
\begin{align*}
   \left\| \left( L_{\ssomega}^{-1}  -L_{1}^{-1} \right) P\mathfrak{f}^{(2)}(I(\epsilon \xi)) \right\|_{\mathcal{H}^{1}_s }  \lesssim |\ssomega^2-1|\epsilon^2,
  \end{align*}
  due to \eqref{OmegaEquation} and \eqref{XiandBetaIntermsofe} respectively. Finally, putting all together yields
\begin{align*}
	 & \left\| \mathfrak{R}_{\epsilon}(\xi,\ssomega) \right\|_{\mathcal{Q}  }=  \\   & \epsilon^{-3}\left\| \frac{1}{2} I^{-1} Q \left[ 
		 \mathfrak{f}^{(2)}  \left( I(\epsilon \xi)+ L_{\ssomega}^{-1} P \mathfrak{f}^{(2)} (I(\epsilon \xi)) \right)  -  
		 \mathfrak{f}^{(2)}  \left( I(\epsilon \xi)+ L_{1}^{-1} P \mathfrak{f}^{(2)} (I(\epsilon \xi)) \right)  \right]  
		 \rangle(\epsilon \xi) \right\|_{\mathcal{Q}  } \lesssim \\
		 &\epsilon^{-3} \epsilon \left\| \left( L_{\ssomega}^{-1}  -L_{1}^{-1} \right) P\mathfrak{f}^{(2)}(I(\epsilon \xi)) \right\|_{\mathcal{H}^{1}_s } \lesssim \epsilon^{-3} \epsilon |\ssomega^2-1|\epsilon^2  = |\ssomega^2-1|,
\end{align*}
that completes the proof. The estimate for the differential follows similarly.
\end{proof}
Next, we estimate $\mathfrak{G}_{\epsilon}(\xi)$ and its differential.

\begin{lemma}[Estimate for $\mathfrak{G}_{\epsilon}(\xi)$ and $ d_\xi \mathfrak{G}_{\epsilon}(\xi)$]\label{LemmaEstimateG}
	Let $\xi \in l_{s+3}^2$ be any initial data. Then, $\mathfrak{G}_{\epsilon}(\xi)$ is continuously differentiable with respect to $\xi$ and we have
	\begin{align*}
		\| \mathfrak{G}_{\epsilon}(\xi) \|_{\mathcal{Q}}  \lesssim \epsilon, \Hquad \| d_\xi \mathfrak{G}_{\epsilon}(\xi) \|_{\mathcal{Q}}  \lesssim \epsilon.
			\end{align*} 
\end{lemma}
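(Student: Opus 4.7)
The plan is to control $\mathfrak{G}_\epsilon(\xi)$ by decomposing it into the quadratic backreaction correction and the cubic contribution, and estimating each using the Lipschitz bounds for $\mathfrak{f}^{(2)}$ and $\mathfrak{f}^{(3)}$ (from Lemma \ref{LipschitzModel3}) together with the sharp estimate $\|q_\perp(v) - L_\ssomega^{-1} P\mathfrak{f}^{(2)}(v)\|_{\mathcal{H}^1_s} \lesssim_\alpha \|v\|_{\mathcal{H}^1_s}^3$ from Lemma \ref{LemmaSolutionPequation}. Throughout, I would set $v = I(\epsilon\xi)$ and record that $\|v\|_{\mathcal{H}^1_s} \lesssim \epsilon \|\xi\|_{\mathcal{Q}}$ by \eqref{HkNormWithQ}, that $\|q_\perp(v)\|_{\mathcal{H}^1_s} \lesssim_\alpha \epsilon^2$, and that $\|L_\ssomega^{-1} P\mathfrak{f}^{(2)}(v)\|_{\mathcal{H}^1_s} \lesssim \epsilon^2$ as already established in the proof of Lemma \ref{LemmaEstimateR}. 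The operator $I^{-1}Q$ is bounded from $\mathcal{H}^1_s$ into $\mathcal{Q}$, so one only needs bounds in $\mathcal{H}^1_s$ on the two differences inside the brackets.

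For the first difference, I would apply the quadratic Lipschitz estimate of Lemma \ref{LipschitzModel3} to $\mathfrak{f}^{(2)}$: its Lipschitz constant on a ball of radius $\lesssim \epsilon$ is itself of order $\epsilon$, while the difference of the arguments equals $q_\perp(v) - L_\ssomega^{-1} P\mathfrak{f}^{(2)}(v)$, which is $O_\alpha(\epsilon^3)$. This yields an $\mathcal{H}^1_s$-bound of order $\epsilon \cdot \epsilon^3 = \epsilon^4$ for the first bracket. For the second difference, the cubic Lipschitz estimate gives a Lipschitz constant of order $\epsilon^2$ on a ball of radius $\lesssim \epsilon$, multiplied by $\|q_\perp(v)\|_{\mathcal{H}^1_s} \lesssim \epsilon^2$, producing again an $\mathcal{H}^1_s$-bound of order $\epsilon^4$. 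After multiplication by $\epsilon^{-3}$ and application of $I^{-1}Q$, one obtains $\|\mathfrak{G}_\epsilon(\xi)\|_{\mathcal{Q}} \lesssim \epsilon$.

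For the differential $d_\xi \mathfrak{G}_\epsilon(\xi)$, the strategy is the same after differentiating the composition using the chain rule. The key ingredients are: the $C^1$ regularity of $q_\perp$ established in Lemma \ref{LemmaSolutionPequation}, with $dq_\perp$ of operator norm $O(\epsilon)$ and $dq_\perp - dL_\ssomega^{-1} P d\mathfrak{f}^{(2)} = O(\epsilon^2)$ (obtained by differentiating the fixed-point equation for $q_\perp$ and using Neumann series); the fact that $d\mathfrak{f}^{(2)}$ evaluated on a ball of radius $\epsilon$ has operator norm $O(\epsilon)$ and is Lipschitz with constant $O(1)$; and the analogous bound $\|d\mathfrak{f}^{(3)}\| = O(\epsilon^2)$ with Lipschitz constant $O(\epsilon)$. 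Combining these in the expression for $d_\xi \mathfrak{G}_\epsilon(\xi)$ reproduces the scaling $\epsilon^{-3}\cdot \epsilon^4 = \epsilon$.

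The main subtlety, and the one that I expect to be the most delicate step, is handling the differential of the quadratic piece: the naive Lipschitz-in-difference bound is not enough because one must simultaneously capture the cancellation coming from replacing $q_\perp$ by its leading-order approximation $L_\ssomega^{-1} P\mathfrak{f}^{(2)}$. I would therefore write
\begin{align*}
d_\xi\bigl[\mathfrak{f}^{(2)}(v+q_\perp(v)) - \mathfrak{f}^{(2)}(v+L_\ssomega^{-1}P\mathfrak{f}^{(2)}(v))\bigr]
\end{align*}
as the sum of (i) the difference of $d\mathfrak{f}^{(2)}$ evaluated at the two arguments, acting on $(\mathrm{Id} + dq_\perp)\cdot \epsilon\, dI(\xi)$, and (ii) $d\mathfrak{f}^{(2)}$ at the second argument applied to $\bigl(dq_\perp - dL_\ssomega^{-1}P d\mathfrak{f}^{(2)}\bigr)\cdot \epsilon\, dI(\xi)$. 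Each contribution is then bounded by $\epsilon \cdot \epsilon^3 = \epsilon^4$ using the estimates above, and the cubic term is controlled identically by Lipschitz continuity of $d\mathfrak{f}^{(3)}$. Multiplying by $\epsilon^{-3}$ and invoking boundedness of $I^{-1}Q:\mathcal{H}^1_s\to\mathcal{Q}$ gives the claimed bound $\|d_\xi\mathfrak{G}_\epsilon(\xi)\|_{\mathcal{Q}} \lesssim \epsilon$, which simultaneously establishes continuous differentiability.
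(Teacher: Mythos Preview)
Your proposal is correct and follows essentially the same route as the paper: the estimate for $\mathfrak{G}_\epsilon(\xi)$ uses the Lipschitz bounds for $\mathfrak{f}^{(2)},\mathfrak{f}^{(3)}$ together with $\|q_\perp(v)-L_\ssomega^{-1}P\mathfrak{f}^{(2)}(v)\|\lesssim\|v\|^3$ and $\|q_\perp(v)\|\lesssim\|v\|^2$, and the differential is handled via exactly the decomposition you describe (the paper writes it with auxiliary variables $X,Y,Z,U,V,W$ but the split into ``difference of base points'' plus ``difference of directions'' is the same). The only minor addition in the paper is that the bound $\|d(q_\perp-L_\ssomega^{-1}P\mathfrak{f}^{(2)})_{I(\epsilon\xi)}[I(\epsilon h)]\|\lesssim\epsilon^3$ is stated explicitly alongside the other $q_\perp$ estimates rather than derived on the fly from the Neumann series, but your argument for it is correct.
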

\begin{proof}
	Let $\xi \in  l_{s+3}^2$ be any initial data and recall that 
\begin{align*}
	\mathfrak{G}_{\epsilon}(\xi) &:= \epsilon^{-3}\Bigg[ \frac{1}{2} I^{-1} Q \left[ 
		 \mathfrak{f}^{(2)}  \left( I(\epsilon \xi)+ q_{\perp}(I(\epsilon \xi)) \right)   -
		 \mathfrak{f}^{(2)}  \left( I(\epsilon \xi)+ L_{\ssomega}^{-1} P \mathfrak{f}^{(2)} (I(\epsilon \xi)) \right)  \right]\nonumber \\
		 &   +
		  \frac{1}{2} I^{-1} Q \left[ 
		 \mathfrak{f}^{(3)}  \left( I(\epsilon \xi)+ q_{\perp}(I(\epsilon \xi)) \right) - \mathfrak{f}^{(3)}(I(\epsilon \xi)) \right]  \Bigg].
\end{align*}	
The claim follows by Lemmata \ref{LipschitzModel3} and \ref{LemmaSolutionPequation} together with \eqref{HkNormWithQ} and the fact that $I^{-1}: \mathcal{H}^1_s \longrightarrow \mathcal{Q}$. Indeed, since
	\begin{align}\label{AuxiliralyBoundsforLemma211}
 \left \| I(\epsilon \xi)   \right \|_{\mathcal{H}^1_s} \lesssim \epsilon \left \| \xi  \right \|_{\mathcal{Q}}  ,  \Hquad 	\left \| q_{\perp}(I(\epsilon \xi))  \right \|_{\mathcal{H}^1_s} \lesssim \left \| I(\epsilon \xi)   \right \|_{\mathcal{H}^1_s}^2 \lesssim \epsilon^2\left \| \xi  \right \|_{\mathcal{Q}}^2 ,
	\end{align}
 we can estimate
	\begin{align*}
	&	\left \|\frac{1}{2} I^{-1} Q \left[ 
		 \mathfrak{f}^{(2)}  \left( I(\epsilon \xi)+ q_{\perp}(I(\epsilon \xi)) \right)   -
		 \mathfrak{f}^{(2)}  \left( I(\epsilon \xi)+ L_{\ssomega}^{-1} P \mathfrak{f}^{(2)} (I(\epsilon \xi)) \right)  \right] \right \|_{\mathcal{Q} } \lesssim \\
	&	 \left \|  
		 \mathfrak{f}^{(2)}  \left( I(\epsilon \xi)+ q_{\perp}(I(\epsilon \xi)) \right)   -
		 \mathfrak{f}^{(2)}  \left( I(\epsilon \xi)+ L_{\ssomega}^{-1} P \mathfrak{f}^{(2)} (I(\epsilon \xi)) \right)    \right \|_{\mathcal{H}^1_s} \lesssim \\
   &	\epsilon \left \|  
		   q_{\perp}(I(\epsilon \xi))     -
		  L_{\ssomega}^{-1} P \mathfrak{f}^{(2)} (I(\epsilon \xi))    \right \|_{\mathcal{H}^1_s} \lesssim  
		\epsilon \left \|  
		   I(\epsilon \xi)     \right \|_{\mathcal{H}^1_s}^3 \lesssim \epsilon^4
	\end{align*}
	and
	\begin{align*}
	 & \left\|	\frac{1}{2} I^{-1} Q \left[ 
		 \mathfrak{f}^{(3)}  \left( I(\epsilon \xi)+ q_{\perp}(I(\epsilon \xi)) \right) - \mathfrak{f}^{(3)}(I(\epsilon \xi)) \right]   \right \|_{\mathcal{Q}} \lesssim  \\
		& \left\|	 
		 \mathfrak{f}^{(3)}  \left( I(\epsilon \xi)+ q_{\perp}(I(\epsilon \xi)) \right) - \mathfrak{f}^{(3)}(I(\epsilon \xi))    \right \|_{\mathcal{H}^1_s} \lesssim  \\
		 & \left[ \left\|	 
		 I(\epsilon \xi)+ q_{\perp}(I(\epsilon \xi))  \right \|_{\mathcal{H}^1_s} ^2 +
		 \left\|	 
		 I(\epsilon \xi)    \right \|_{\mathcal{H}^1_s}^2 \right] 
		  \left\|	 
	 q_{\perp}(I(\epsilon \xi))     \right \|_{\mathcal{H}^1_s}  \lesssim  \\
	 & \left[  
		 \left\|	 
		 I(\epsilon \xi)    \right \|_{\mathcal{H}^1_s}^2 +\left\|	 
		 q_{\perp}(I(\epsilon \xi))  \right \|_{\mathcal{H}^1_s} ^2 \right] 
		  \left\|	 
	 q_{\perp}(I(\epsilon \xi))     \right \|_{\mathcal{H}^1_s}   \lesssim \epsilon^4.
	\end{align*}
The estimate for the differential follows similarly. Indeed, we pick any $\xi \in l_{s+3}^2$ and $\overline{\xi}=\xi+\overline{\epsilon}h\in  l_{s+3}^2$ for some $\overline{\epsilon}>0$ and  $h \in l_{s+3}^2$. We have that
\begin{align*}
	&\epsilon^{-3}\frac{1}{2} I^{-1} Q \left[ 
		 \mathfrak{f}^{(2)}  \left( I(\epsilon \overline{\xi})+ q_{\perp}(I(\epsilon \overline{\xi})) \right) \right] = \\
		& \epsilon^{-3}\frac{1}{2} I^{-1} Q \left[ 
		 \mathfrak{f}^{(2)}  \left( I(\epsilon \xi +  \overline{\epsilon}\cdot \epsilon h)+ q_{\perp}(I(\epsilon \xi +\overline{\epsilon}\cdot  \epsilon  h)) \right) \right]= \\
		 &\epsilon^{-3}\frac{1}{2} I^{-1} Q \left[ 
		 \mathfrak{f}^{(2)}  \left(  I(\epsilon\xi) +  \overline{\epsilon}I(\epsilon h)+ q_{\perp}(I(\epsilon \xi )+ \overline{\epsilon}I(\epsilon  h)) \right) \right] = \\
		 &\epsilon^{-3}\frac{1}{2} I^{-1} Q \left[ 
		 \mathfrak{f}^{(2)}  \left(  I(\epsilon\xi) +  \overline{\epsilon}I(\epsilon h)+ q_{\perp}(I(\epsilon \xi )) + \overline{\epsilon}(dq_{\perp})_{I(\epsilon \xi )}[I(\epsilon  h)]  +\mathcal{O}(\overline{\epsilon}^2) \right)\right] = \\
		  &\epsilon^{-3}\frac{1}{2} I^{-1} Q \left[ 
		 \mathfrak{f}^{(2)}  \left(  
		 I(\epsilon\xi)+q_{\perp}(I(\epsilon \xi ))
		 +  \overline{\epsilon} \left( 
		 I(\epsilon h)+(dq_{\perp})_{I(\epsilon \xi )}[I(\epsilon  h)] 
		 \right)
		 +\mathcal{O}(\overline{\epsilon}^2) \right) \right] = \\
		 &\epsilon^{-3}\frac{1}{2} I^{-1} Q \left[ 
		 \mathfrak{f}^{(2)}  \left(  
		 I(\epsilon\xi)+q_{\perp}(I(\epsilon \xi ))
		 \right)
		 +  \overline{\epsilon} 
		 (d  \mathfrak{f}^{(2)}   )_{ I(\epsilon\xi)+q_{\perp}(I(\epsilon \xi )) }
		 \left[ 
		 I(\epsilon h)+(dq_{\perp})_{I(\epsilon \xi )}[I(\epsilon  h)] 
		 \right]
		 +\mathcal{O}(\overline{\epsilon}^2)  \right].
\end{align*}
Similarly, we set $\mathfrak{F}^{(2)}:=L_{\ssomega}^{-1} P  \mathfrak{f}^{(2)}$ and obtain
\begin{align*}
	& \epsilon^{-3}\frac{1}{2} I^{-1} Q \left[ 
		 \mathfrak{f}^{(2)}  \left( I(\epsilon \overline{\xi})+ L_{\ssomega}^{-1} P  \mathfrak{f}^{(2)}  (I(\epsilon \overline{\xi})) \right) \right] = \\
		 & \epsilon^{-3}\frac{1}{2} I^{-1} Q \left[ 
		 \mathfrak{f}^{(2)}  \left( I(\epsilon \overline{\xi})+ \mathfrak{F}^{(2)} (I(\epsilon \overline{\xi})) \right) \right] = \\
		 & \epsilon^{-3}\frac{1}{2} I^{-1} Q \left[ 
		 \mathfrak{f}^{(2)}  \left(  
		 I(\epsilon\xi)+\mathfrak{F}^{(2)} (I(\epsilon \xi ))
		 \right)
		 +  \overline{\epsilon} 
		 (d  \mathfrak{f}^{(2)}   )_{ I(\epsilon\xi)+\mathfrak{F}^{(2)}(I(\epsilon \xi )) }
		 \left[ 
		 I(\epsilon h)+(d \mathfrak{F}^{(2)})_{I(\epsilon \xi )}[I(\epsilon  h)] 
		 \right]
		 +\mathcal{O}(\overline{\epsilon}^2)  \right]
\end{align*}
and
\begin{align*}
	& \epsilon^{-3} \frac{1}{2} I^{-1} Q \left[ 
		 \mathfrak{f}^{(3)}  \left( I(\epsilon \overline{\xi})+ q_{\perp}(I(\epsilon \overline{\xi})) \right) \right] = \\
		 &\epsilon^{-3}\frac{1}{2} I^{-1} Q \left[ 
		 \mathfrak{f}^{(3)}  \left(  
		 I(\epsilon\xi)+q_{\perp}(I(\epsilon \xi ))
		 \right)
		 +  \overline{\epsilon} 
		 (d  \mathfrak{f}^{(3)}   )_{ I(\epsilon\xi)+q_{\perp}(I(\epsilon \xi )) }
		 \left[ 
		 I(\epsilon h)+(dq_{\perp})_{I(\epsilon \xi )}[I(\epsilon  h)] 
		 \right]
		 +\mathcal{O}(\overline{\epsilon}^2)  \right]
\end{align*}
as well as
\begin{align*}
	& \epsilon^{-3} \frac{1}{2} I^{-1} Q \left[ \mathfrak{f}^{(3)}(I(\epsilon \overline{\xi})) \right] = \\
	&\epsilon^{-3}\frac{1}{2} I^{-1} Q \left[ 
		 \mathfrak{f}^{(3)}  \left(  
		 I(\epsilon\xi) 
		 \right)
		 +  \overline{\epsilon} 
		 (d  \mathfrak{f}^{(3)}   )_{ I(\epsilon\xi)  }
		 \left[ 
		 I(\epsilon h) 
		 \right]
		 +\mathcal{O}(\overline{\epsilon}^2)  \right].
\end{align*}
Consequently, we have
\begin{align*}
	 d\mathfrak{G}_{\epsilon}(\xi)[h]&=  
	 \epsilon^{-3}\frac{1}{2} I^{-1} Q \left[ 
		 (d  \mathfrak{f}^{(2)}   )_{ I(\epsilon\xi)+q_{\perp}(I(\epsilon \xi )) }
		 \left[ 
		 I(\epsilon h)+(dq_{\perp})_{I(\epsilon \xi )}[I(\epsilon  h)] 
		 \right]  \right] \\
		 &-\epsilon^{-3}\frac{1}{2} I^{-1} Q \left[ 
		 (d  \mathfrak{f}^{(2)}   )_{ I(\epsilon\xi)+\mathfrak{F}^{(2)}(I(\epsilon \xi )) }
		 \left[ 
		 I(\epsilon h)+(d \mathfrak{F}^{(2)})_{I(\epsilon \xi )}[I(\epsilon  h)] 
		 \right]  \right] \\
		 &+\epsilon^{-3}\frac{1}{2} I^{-1} Q \left[  
		 (d  \mathfrak{f}^{(3)}   )_{ I(\epsilon\xi)+q_{\perp}(I(\epsilon \xi )) }
		 \left[ 
		 I(\epsilon h)+(dq_{\perp})_{I(\epsilon \xi )}[I(\epsilon  h)] 
		 \right]   \right] \\
		 &-\epsilon^{-3}\frac{1}{2} I^{-1} Q \left[  
		 (d  \mathfrak{f}^{(3)}   )_{ I(\epsilon\xi)  }
		 \left[ 
		 I(\epsilon h) 
		 \right]   \right].
\end{align*}
Recall that, according to Lemma \ref{LemmaSolutionPequation} together with \eqref{HkNormWithQ}, we have
	\begin{align*} 
		\|q_{\perp}(I(\epsilon\xi)) \|_{\mathcal{H}^1_s}  &\lesssim \| I(\epsilon\xi) \|_{\mathcal{H}^1_s}^{2}\simeq \epsilon^2 |\xi|_{s+1}^2 \simeq  \epsilon^2 \|\xi \|_{\mathcal{Q}}^2, \\
		\|q_{\perp}(I(\epsilon\xi)) -   \mathfrak{F}^{(2)} (I(\epsilon\xi)) \|_{\mathcal{H}^1_s}  &\lesssim \| I(\epsilon\xi) \|_{\mathcal{H}^1_s}^{3} \simeq \epsilon^3 |\xi|_{s+1}^3 \simeq  \epsilon^3 \|\xi \|_{\mathcal{Q}}^3, \\
		\|d \left(q_{\perp}  -   \mathfrak{F}^{(2)}\right)_{ (I(\epsilon\xi))}[I(\epsilon h)] \|_{\mathcal{H}^1_s}  &\lesssim \| I(\epsilon\xi) \|_{\mathcal{H}^1_s}^{2} \| I(\epsilon h) \|_{\mathcal{H}^1_s} \simeq \epsilon^2 |\xi|_{s+1}^2 \epsilon | h |_{s+1} \simeq  \epsilon^3 \|\xi \|_{\mathcal{Q}}^2\| h \|_{\mathcal{Q}}, \\
		\|\left(d q_{\perp}   \right)_{ (I(\epsilon\xi))}[I(\epsilon h)] \|_{\mathcal{H}^1_s}  &\lesssim \| I(\epsilon\xi) \|_{\mathcal{H}^1_s}  \| I(\epsilon h) \|_{\mathcal{H}^1_s} \simeq \epsilon  |\xi|_{s+1} \epsilon | h |_{s+1} \simeq  \epsilon^2 \|\xi \|_{\mathcal{Q}} \| h \|_{\mathcal{Q}},
	\end{align*} 
for all $\xi \in l_{s+3}^2$ and $h \in l_{s+3}^2$. Now, we set 
\begin{align*}
X &= I(\epsilon\xi)+q_{\perp}(I(\epsilon \xi )) , \\
Y &=I(\epsilon\xi)+\mathfrak{F}^{(2)}(I(\epsilon \xi )), \\
	U &=I(\epsilon h)+(dq_{\perp})_{I(\epsilon \xi )}[I(\epsilon  h)], \\
	V&=I(\epsilon h)+(d \mathfrak{F}^{(2)})_{I(\epsilon \xi )}[I(\epsilon  h)] 
\end{align*} 
and Lemma \ref{LipschitzModel3} yields
\begin{align*}
	&\left \| (d  \mathfrak{f}^{(2)}   )_{X}
		 \left[ 
		 U
		 \right]   -  
		 (d  \mathfrak{f}^{(2)}   )_{ Y }
		 \left[ 
		 V
		 \right] \right \|_{\mathcal{H}^1_s}= \\
	&\left \| (d  \mathfrak{f}^{(2)}   )_{X}
		 \left[ 
		 U
		 \right]- (d  \mathfrak{f}^{(2)}   )_{Y}
		 \left[ 
		 U
		 \right]  +
		 (d  \mathfrak{f}^{(2)}   )_{Y}
		 \left[ 
		 U
		 \right]   -  
		 (d  \mathfrak{f}^{(2)}   )_{ Y }
		 \left[ 
		 V
		 \right]\right \|_{\mathcal{H}^1_s}  \leq \\
&\left \| (d  \mathfrak{f}^{(2)}   )_{X}
		 \left[ 
		 U
		 \right]- (d  \mathfrak{f}^{(2)}   )_{Y}
		 \left[ 
		 U
		 \right]\right \|_{\mathcal{H}^1_s}  +
		 \left \|
		 (d  \mathfrak{f}^{(2)}   )_{Y}
		 \left[ 
		 U
		 \right]   -  
		 (d  \mathfrak{f}^{(2)}   )_{ Y }
		 \left[ 
		 V
		 \right]\right \|_{\mathcal{H}^1_s}  \leq \\
&\left \| 
\left(
(d  \mathfrak{f}^{(2)}   )_{X} - (d  \mathfrak{f}^{(2)}   )_{Y}
\right)
		 \left[ 
		 U
		 \right]\right \|_{\mathcal{H}^1_s}  +
		 \left \| 
		 (d  \mathfrak{f}^{(2)}   )_{ Y }
		 \left[ 
		 U-V
		 \right]\right \|_{\mathcal{H}^1_s}  \lesssim \\
&\left \|  
X-Y  \right \|_{\mathcal{H}^1_s}  \left \|
		 U \right \|_{\mathcal{H}^1_s}  +
		\left \|  Y  \right \|_{\mathcal{H}^1_s} 
		 \left \|   
		 U-V \right \|_{\mathcal{H}^1_s}  \lesssim \\	 
&\epsilon^3 \| \xi\|_{\mathcal{Q}}^3 \cdot \epsilon  \| h \|_{\mathcal{Q}} +\epsilon \| \xi\|_{\mathcal{Q}} \cdot \epsilon^3 \| \xi\|_{\mathcal{Q}}^2 \| h \|_{\mathcal{Q}} \lesssim \epsilon^4 \| h \|_{\mathcal{Q}} ,
\end{align*}
for all $\xi \in l_{s+3}^2$ and $h \in l_{s+3}^2$. Similarly, we set
\begin{align*}
	Z&=I(\epsilon \xi), \\
	W&=I(\epsilon h),
\end{align*}
and Lemma \ref{LipschitzModel3} yields
\begin{align*}
	&\left \| (d  \mathfrak{f}^{(3)}   )_{X}
		 \left[ 
		 U
		 \right]   -  
		 (d  \mathfrak{f}^{(3)}   )_{ Z }
		 \left[ 
		 W
		 \right] \right \|_{\mathcal{H}^1_s}= \\
	&\left \| (d  \mathfrak{f}^{(3)}   )_{X}
		 \left[ 
		 U
		 \right]- (d  \mathfrak{f}^{(3)}   )_{Z}
		 \left[ 
		 U
		 \right]  +
		 (d  \mathfrak{f}^{(3)}   )_{Z}
		 \left[ 
		 U
		 \right]   -  
		 (d  \mathfrak{f}^{(3)}   )_{ Z }
		 \left[ 
		 W
		 \right]\right \|_{\mathcal{H}^1_s}  \leq \\
&\left \| (d  \mathfrak{f}^{(3)}   )_{X}
		 \left[ 
		 U
		 \right]- (d  \mathfrak{f}^{(3)}   )_{Z}
		 \left[ 
		 U
		 \right]\right \|_{\mathcal{H}^1_s}  +
		 \left \|
		 (d  \mathfrak{f}^{(3)}   )_{Z}
		 \left[ 
		 U
		 \right]   -  
		 (d  \mathfrak{f}^{(3)}   )_{ Z }
		 \left[ 
		 W
		 \right]\right \|_{\mathcal{H}^1_s}  \leq \\
&\left \| 
\left(
(d  \mathfrak{f}^{(3)}   )_{X} - (d  \mathfrak{f}^{(3)}   )_{Z}
\right)
		 \left[ 
		 U
		 \right]\right \|_{\mathcal{H}^1_s}  +
		 \left \| 
		 (d  \mathfrak{f}^{(3)}   )_{ Z }
		 \left[ 
		 U-W
		 \right]\right \|_{\mathcal{H}^1_s}  \lesssim \\
&\left \|  
X-Z \right \|_{\mathcal{H}^1_s}^2  \left \|
		 U \right \|_{\mathcal{H}^1_s}  +
		\left \|  Z  \right \|_{\mathcal{H}^1_s}^2 
		 \left \|   
		 U-W \right \|_{\mathcal{H}^1_s}  \lesssim \\	 
&\left(\epsilon^2 \| \xi\|_{\mathcal{Q}}^2\right)^2 \cdot \epsilon  \| h \|_{\mathcal{Q}} +\left(\epsilon \| \xi\|_{\mathcal{Q}} \right)^2 \cdot \epsilon^2 \| \xi\|_{\mathcal{Q}}  \| h \|_{\mathcal{Q}} \lesssim \epsilon^4 \| h \|_{\mathcal{Q}} ,
\end{align*}
for all $\xi \in l_{s+3}^2$ and $h \in l_{s+3}^2$. Putting all together, yields
\begin{align*}
	\|d_{\xi} \mathfrak{G}_{\epsilon}(\xi)[h] \|_{\mathcal{Q} } \lesssim \epsilon \| h \|_{\mathcal{Q}}, 
\end{align*}
for all $\xi \in l_{s+3}^2$ and $h \in l_{s+3}^2$, that completes the proof.
 \end{proof}
It remains to show that there exists a solution to \eqref{ModifiedNewQequationConfiguration}, that is
\begin{align*} 
	 \pm  \mathfrak{A} \xi +    \langle \mathfrak{f}^{(3)}  \rangle( \xi)  =  -\left( \mathfrak{F}_{\epsilon}( \xi)+   \mathfrak{G}_{\epsilon}(\xi) \pm  \frac{2\epsilon^2}{\ssomega^2-1}\mathfrak{R}_{\epsilon}(\xi,\ssomega)\right).
\end{align*}
that we rewrite as
\begin{align}\label{NewQequationConfigurationNo2}
	\textswab{M}_{\pm}(\xi)   =  \mathfrak{H}_{\epsilon}(\xi)   ,
\end{align}
where we introduced the \textit{modified operator}
\begin{align*}
	\textswab{M}_{\pm}(\xi) &= \pm  \mathfrak{A} \xi +    \langle \mathfrak{f}^{(3)}  \rangle( \xi) + \mathfrak{F}_{0}( \xi) 
\end{align*} 
 and set
\begin{align*}
	\mathfrak{H}_{\epsilon}(\xi):=\mathfrak{F}_{0}( \xi)-\mathfrak{F}_{\epsilon}( \xi)- \mathfrak{G}_{\epsilon}(\xi) \mp \frac{2\epsilon^2}{\ssomega^2-1}\mathfrak{R}_{\epsilon}(\xi,\ssomega).
\end{align*}
Note that Lemmata \ref{LemmaEstimateR}, \ref{LemmaEstimateG} and the smoothness\footnote{In Lemma \ref{LemmaDefinitionMathfrakF} we show that $\mathfrak{F}_{\epsilon}( \xi)$ is smooth with respect to $\epsilon$. As one can see in the proof of Lemma \ref{LemmaDefinitionMathfrakF}, $\mathfrak{F}_{\epsilon}( \xi)$ is in fact linear with respect to $\epsilon$. See \eqref{FeLinear}. } of $\mathfrak{F}_{\epsilon}( \xi)$ with respect to $\epsilon$ yield 
\begin{align*}  
 	\| \mathfrak{H}_{\epsilon}(\xi)\|_{\mathcal{Q}}  \lesssim \epsilon, \Hquad \| d_{\xi}\mathfrak{H}_{\epsilon}(\xi)\|_{\mathcal{Q}}  \lesssim \epsilon.
\end{align*}
The following result constitutes the main modification of Bambusi--Paleari's theorem.
\begin{lemma}[Solution to the $Q-$equation]\label{LemmaSolutionQEquation}
	Let $\xi_0 \in l_{s+3}^2$ be a non--degenerate  zero of the \textit{modified operator}
	\begin{align*}
		 \textswab{M}_{\pm}(x) &=   \pm \mathfrak{A} x +     \langle \mathfrak{f}^{(3)}  \rangle( x)+\mathfrak{F}_{0}( x) ,
	\end{align*}
 that is
	\begin{align*}
	 \textswab{M}_{\pm}(\xi_0)=0, \Hquad \ker \left( d  \textswab{M}_{\pm}(\xi_0)\right)=\{ 0 \}.
	\end{align*} 
	Then, there exists a positive $\epsilon_0$ and a Lipschitz map 
	\begin{align*}
		\xi : [0,\epsilon_0 ) \longrightarrow l_{s+3}^2, \Hquad 
		\epsilon \longmapsto \xi(\epsilon)
	\end{align*}
	that solves the $Q-$equation \eqref{NewQequationConfiguration} with the plus sign. 
	Furthermore, we have the estimate
	\begin{align*}
		|\xi(\epsilon)-\xi_0 |_{s+3} \lesssim \epsilon.
	\end{align*}
\end{lemma}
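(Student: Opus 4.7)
The plan is to view the $Q$-equation \eqref{NewQequationConfigurationNo2} as an $\epsilon$-perturbation of the problem $\textswab{M}_{\pm}(\xi)=0$ centred at $\xi_0$, and to solve it by a quantitative implicit function / contraction argument in $l^2_{s+3}$.

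As a preliminary observation, by Lemmata \ref{LemmaEstimateR} and \ref{LemmaEstimateG} together with the smooth dependence of $\mathfrak{F}_{\epsilon}$ on $\epsilon$, the right-hand side satisfies
\begin{equation*}
\|\mathfrak{H}_{\epsilon}(\xi)\|_{\mathcal{Q}}\lesssim \epsilon,\qquad \|d_{\xi}\mathfrak{H}_{\epsilon}(\xi)\|_{\mathcal{Q}}\lesssim \epsilon,
\end{equation*}
uniformly for $\xi$ in a fixed ball of $l^2_{s+3}$, and in particular $\mathfrak{H}_{0}\equiv 0$. Hence the pair $(\xi_0,0)$ solves $F(\xi,\epsilon):=\textswab{M}_{\pm}(\xi)-\mathfrak{H}_{\epsilon}(\xi)=0$ by the hypothesis $\textswab{M}_{\pm}(\xi_0)=0$.

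The next step is to invert the linearisation $L:=d_{\xi}F(\xi_0,0)=d\textswab{M}_{\pm}(\xi_0)$. Since $\ssomega_j\sim j$, $\mathfrak{A}$ is a bounded linear isomorphism $l^2_{s+3}\to l^2_{s+1}$ with bounded inverse $\mathfrak{A}^{-1}:l^2_{s+1}\to l^2_{s+3}$. Writing
\begin{equation*}
L=\pm\mathfrak{A}+K,\qquad K:=d\langle\mathfrak{f}^{(3)}\rangle(\xi_0)+d\mathfrak{F}_{0}(\xi_0),
\end{equation*}
I would factorise $L=\pm\mathfrak{A}\bigl(\mathrm{Id}\pm\mathfrak{A}^{-1}K\bigr)$. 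The polynomial (cubic) structure of $\langle\mathfrak{f}^{(3)}\rangle$ and of $\mathfrak{F}_{0}$, combined with the algebra property of the $l^2_s$-scale for $s$ large enough, shows that $K$ is bounded $l^2_{s+3}\to l^2_{s+3}$. Composed with $\mathfrak{A}^{-1}:l^2_{s+3}\to l^2_{s+5}$, this produces an operator $\mathfrak{A}^{-1}K:l^2_{s+3}\to l^2_{s+5}$, which is compact as an endomorphism of $l^2_{s+3}$ via the compact embedding $l^2_{s+5}\hookrightarrow l^2_{s+3}$. The Fredholm alternative, combined with the non-degeneracy $\ker L=\{0\}$, then upgrades injectivity to bijectivity of $\mathrm{Id}\pm\mathfrak{A}^{-1}K$ on $l^2_{s+3}$, and consequently $L$ is a topological isomorphism $l^2_{s+3}\to l^2_{s+1}$ with bounded inverse $L^{-1}$.

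With $L^{-1}$ in hand, I would solve for $\xi=\xi_0+\eta$ by a fixed-point argument. Rewrite the equation as
\begin{equation*}
\eta=L^{-1}\bigl[\,\mathfrak{H}_{\epsilon}(\xi_0+\eta)-\mathcal{N}(\eta)\,\bigr]=:\mathcal{T}_{\epsilon}(\eta),\qquad \mathcal{N}(\eta):=\textswab{M}_{\pm}(\xi_0+\eta)-\textswab{M}_{\pm}(\xi_0)-L\eta.
\end{equation*}
Because $\textswab{M}_{\pm}-(\pm\mathfrak{A})$ is a cubic polynomial map, Taylor expansion yields $\|\mathcal{N}(\eta)\|_{\mathcal{Q}}\lesssim |\eta|_{s+3}^{2}+|\eta|_{s+3}^{3}$, while $\|\mathfrak{H}_{\epsilon}(\xi_0+\eta)\|_{\mathcal{Q}}\lesssim \epsilon$ on any fixed ball. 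For $\epsilon\in[0,\epsilon_0)$ with $\epsilon_0$ small enough and a sufficiently large $C>0$, $\mathcal{T}_{\epsilon}$ therefore maps the closed ball $\{|\eta|_{s+3}\le C\epsilon\}$ into itself and is a contraction there, producing a unique solution $\eta(\epsilon)$. The Lipschitz dependence of $\mathfrak{H}_{\epsilon}$ on $\epsilon$ (tracked through Lemmata \ref{LemmaEstimateR} and \ref{LemmaEstimateG}) transfers through $L^{-1}$ and the contraction principle to give a Lipschitz map $\epsilon\mapsto \xi(\epsilon):=\xi_0+\eta(\epsilon)$ satisfying the quantitative bound $|\xi(\epsilon)-\xi_0|_{s+3}\lesssim \epsilon$. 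The main obstacle is the invertibility step, because the non-degeneracy assumption provides only injectivity; closing this requires a careful accounting of the mapping properties of $\langle\mathfrak{f}^{(3)}\rangle$ and $\mathfrak{F}_{0}$ across the $l^2_{s}$-scale so that $\mathfrak{A}^{-1}K$ is genuinely compact and the Fredholm alternative applies.
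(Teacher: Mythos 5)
Your proposal is correct and takes essentially the same route as the paper: the paper likewise linearises at $\xi_0$, argues that $d\textswab{M}_{\pm}(\xi_0)=\pm\mathfrak{A}+K$ is Fredholm of index zero because $K$ maps $l^2_{s+3}\to l^2_{s+3}$ boundedly and hence is compact into $l^2_{s+1}$, upgrades injectivity (the non-degeneracy hypothesis) to bijectivity, and then invokes the implicit function theorem, which is precisely the contraction argument you have spelled out. Your closing caveat about the invertibility step is already resolved by the compactness argument you give; the only difference is cosmetic, in that the paper cites the implicit function theorem directly rather than writing out the fixed-point iteration.
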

\begin{proof}
	The proof follows from the implicit function theorem and is similar to the one of Proposition 4.8 in Bambusi--Paleari \cite{MR1819863}. Let $\xi_0 \in l_{s+3}^2$ be a non--degenerate  zero of the modified operator $\widetilde{\mathcal{M}}_{\pm}$ and  define the map
	\begin{align*}
	\mathcal{G}:	\mathbb{R} \times l_{s+3}^2 \longrightarrow \mathcal{H}_{s}^1, \Hquad 
	(\epsilon,\xi) \longmapsto \mathcal{G}(\epsilon,\xi):= \textswab{M}_{\pm}(\xi)  - \mathfrak{H}_{\epsilon}(\xi)
	\end{align*}
	and note that  it is Lipschitz, differentiable at $\epsilon=0$ and it vanishes at $(\epsilon,\xi)=(0,\xi_0)$. It remains to show that its differential with respect to $\xi$ at $(0,\xi_0)$,
	\begin{align*}
	d\mathcal{G}(0,\xi_0):	l_{s+3}^2 \longrightarrow \mathcal{H}_{s}^1, \Hquad X \longmapsto d\mathcal{G}(0,\xi_0)(X)=d \textswab{M}_{\pm}(\xi_0)(X),
	\end{align*}
	 is an isomorphism. Equivalently, this means that, for all $Y \in \mathcal{H}_{s}^1$, there exists $X \in  l_{s+3}^2$ that solves the equation
	 \begin{align*}
	 	d \textswab{M}_{\pm}(\xi_0)(X) =Y.
	 \end{align*}
	 Now, the operator
	 \begin{align*}
	 	d \textswab{M}_{\pm}(\xi_0) =\pm \mathfrak{A} +     d\langle \mathfrak{f}^{(3)}  \rangle( \xi_0)+d\mathfrak{F}_{0}( \xi_0)
	 \end{align*}
	  is a Fredholm operator since it is the sum of a Fredholm and a compact operator due to the fact that $\mathfrak{f}^{(3)}$ and $\mathfrak{F}_{\epsilon}( \xi)$  are bounded on $l_{s+3}^2$, and that they are differentiable with bounded differential. Since the defect index of $d \textswab{M}_{\pm}(\xi_0)$ is $0$ from the non--degeneracy condition, it follows that it is an isomorphism, and thus we can apply the implicit function theorem. Note finally, that the range of $\epsilon$, defined by $\epsilon_0$, does not depend on $\ssomega$, since $\mathcal{G}$ depends continuously on $\ssomega$ and all the necessary bounds hold uniformly with respect to $\ssomega$. 
\end{proof} 
Finally, we prove Theorem \ref{TheoremModificationofBambusiPaleari}.
\begin{proof}[Proof of Theorem \ref{TheoremModificationofBambusiPaleari}]
Let $\ssomega \in \mathcal{W}_{\alpha}$ be fixed. Then, according to Lemmata \ref{LemmaSolutionPequation} and \ref{LemmaSolutionQEquation}, there exists $\epsilon_0>0$ so that the map
\begin{align*}
	[0,\epsilon_0) \ni \epsilon \longmapsto  \left(\epsilon I(\xi(\epsilon)),q_{\perp}\left(\epsilon I(\xi(\epsilon))\right) \right)
\end{align*}
solves both the P--equation \eqref{PEquation} and the Q--equation \eqref{QEquation}. Furthermore, pick $\ssomega_{\star}$ so that $\epsilon(\ssomega_{\star})=\epsilon_0$. Then, the function
\begin{align*}
	\epsilon^2(\ssomega):= \pm \frac{\ssomega^2-1}{2}
\end{align*}
solves \eqref{OmegaEquation} and the map
\begin{align*}
	\ssomega \longmapsto  \left(\epsilon I(\xi(\epsilon(\ssomega))),q_{\perp}\left(\epsilon I(\xi(\epsilon(\ssomega)))\right) \right)
\end{align*}
defines a family of solutions to \eqref{BambusiMainPDE2} labelled by $\ssomega \in \mathcal{W}_{\alpha} \cap [1,1+\ssomega_{\star}]$ or $\ssomega \in \mathcal{W}_{\alpha} \cap [1-\ssomega_{\star},1]$. Finally, the map $\epsilon \longmapsto \epsilon (\ssomega)$ is one--to--one and hence this family can be also parametrized by $\epsilon \in \mathcal{E}_{\gamma}=\epsilon (\mathcal{W}_{\alpha} \cap [1,1+\ssomega_{\star}])$ or $\epsilon \in \mathcal{E}_{\gamma}=\epsilon (\mathcal{W}_{\alpha} \cap [1-\ssomega_{\star},1])$  that completes the proof. 
\end{proof}

\subsection{The function $\mathfrak{F}_{\epsilon}(\xi)$} \label{se:fe}

For future reference, we compute 
\begin{itemize}
	\item $\mathfrak{F}_{0}( \xi)$ for general initial data (Lemma \ref{LemmaDefinitionMathfrakF}),
	\item $\mathfrak{F}_{\epsilon}( \xi)$ for small $\epsilon$ close to zero and 1--mode initial data (Lemma \ref{LemmaComputemathfrakFforsfmallepsilon}),
	\item $d\mathfrak{F}_{0}( \xi)$ at the 1--mode initial data (Lemma \ref{LemmaDifferentialMathFrakF0LongComputation}).
\end{itemize}
To begin with, we compute $\mathfrak{F}_{0}( \xi)$ for general initial data.
\begin{lemma}[Computation of $\mathfrak{F}_{0}( \xi)$ for general initial data]\label{LemmaDefinitionMathfrakF}
	Let $\xi= \{\xi^m : m \geq 0\} \in l_{s+3}^2$. Then, for all integers $m \geq 0$, we have
	\begin{align*}
		\left( \mathfrak{F}_{0}( \xi) \right)^m =& \frac{9}{4} \sum_{\kappa,\nu \geq 0} \overline{\mathfrak{C}}_{\kappa \nu m} \sum_{ \substack{i,j \geq 0 \\ \ssomega_{i}-\ssomega_{j} \neq \pm \ssomega_{\nu}  }  } \frac{\overline{\mathfrak{C}}_{ij\nu }}{\ssomega_{\nu}^2-(\ssomega_{i}-\ssomega_{j})^2   } \xi^i \xi^j \xi^{\kappa} \sum_{\pm} \mathds{1}(\ssomega_{i}-\ssomega_{j}\pm \ssomega_{\kappa} \pm \ssomega_{m}=0) \\
		+ &\frac{9}{4} \sum_{\kappa,\nu \geq 0} \overline{\mathfrak{C}}_{\kappa \nu m} \sum_{ \substack{i,j \geq 0 \\ \ssomega_{i}+\ssomega_{j} \neq \pm \ssomega_{\nu}  }  } \frac{\overline{\mathfrak{C}}_{ij\nu }}{\ssomega_{\nu}^2-(\ssomega_{i}+\ssomega_{j})^2   } \xi^i \xi^j \xi^{\kappa} \sum_{\pm} \mathds{1}(\ssomega_{i}+\ssomega_{j}\pm \ssomega_{\kappa} \pm \ssomega_{m}=0).
	\end{align*}	
In addition, the function $\mathfrak{F}_{\epsilon}$ is smooth with respect to $\epsilon$.	
\end{lemma}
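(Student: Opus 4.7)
The plan is to combine the averaging identity of Lemma \ref{LemmaIdentity} with a Taylor expansion in $\epsilon$, exploiting that $I$ and $L_{1}^{-1}P$ are linear while $\mathfrak{f}^{(2)}$ is quadratic. Writing $B$ for the symmetric bilinear form associated to $\mathfrak{f}^{(2)}$ (so $\mathfrak{f}^{(2)}(u)=B(u,u)$ and $(B(u,v))^m=-3\sum_{ij}\overline{\mathfrak{C}}_{ijm}u^{i}v^{j}$), the key identity is $L_{1}^{-1}P\mathfrak{f}^{(2)}(I(\epsilon\xi))=\epsilon^{2}L_{1}^{-1}P\mathfrak{f}^{(2)}(I(\xi))$, so polarization gives the clean expansion
\begin{align*}
\mathfrak{f}^{(2)}\bigl(\epsilon I(\xi)+\epsilon^{2}L_{1}^{-1}P\mathfrak{f}^{(2)}(I(\xi))\bigr) = \epsilon^{2}\mathfrak{f}^{(2)}(I(\xi))+2\epsilon^{3}B\bigl(I(\xi),L_{1}^{-1}P\mathfrak{f}^{(2)}(I(\xi))\bigr)+\epsilon^{4}\mathfrak{f}^{(2)}\bigl(L_{1}^{-1}P\mathfrak{f}^{(2)}(I(\xi))\bigr).
\end{align*}

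Next I would apply $\frac{1}{2\epsilon^{3}}I^{-1}Q$ term by term and invoke Lemma \ref{LemmaIdentity} to recognise $\frac{1}{2}I^{-1}Q[\mathfrak{f}^{(2)}(I(\xi))]=\langle \mathfrak{f}^{(2)}\rangle(\xi)$. By the non--resonance hypothesis \eqref{ConditionNonResonantf2} this vanishes identically, which cancels the otherwise singular $\epsilon^{-1}$ contribution. The resulting identity
\begin{align*}
\mathfrak{F}_{\epsilon}(\xi)=I^{-1}Q\bigl[B\bigl(I(\xi),L_{1}^{-1}P\mathfrak{f}^{(2)}(I(\xi))\bigr)\bigr]+\frac{\epsilon}{2}I^{-1}Q\bigl[\mathfrak{f}^{(2)}\bigl(L_{1}^{-1}P\mathfrak{f}^{(2)}(I(\xi))\bigr)\bigr]
\end{align*}
is \emph{affine} in $\epsilon$, which at once proves smoothness in $\epsilon$ and identifies $\mathfrak{F}_{0}(\xi)$ with the first summand on the right.

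It remains to compute this first summand in closed form, which I expect to be mostly symbolic bookkeeping. Substituting $(I(\xi))^{j}(t)=\xi^{j}\cos(\ssomega_{j}t)$ into \eqref{def:f2F} and applying the product--to--sum identity $2\cos(a)\cos(b)=\cos(a-b)+\cos(a+b)$ yields
\begin{align*}
\mathfrak{f}^{(2)}(I(\xi))^{\nu}=-\tfrac{3}{2}\sum_{i,j\ge 0}\overline{\mathfrak{C}}_{ij\nu}\,\xi^{i}\xi^{j}\bigl[\cos((\ssomega_{i}-\ssomega_{j})t)+\cos((\ssomega_{i}+\ssomega_{j})t)\bigr].
\end{align*}
Because $L_{1}$ multiplies $\cos(Lt)e_{\nu}$ by $\ssomega_{\nu}^{2}-L^{2}$ with kernel at $L=\pm\ssomega_{\nu}$, the projector $P$ excises precisely those resonant modes and $L_{1}^{-1}$ divides each surviving cosine by $\ssomega_{\nu}^{2}-(\ssomega_{i}\mp\ssomega_{j})^{2}$, subject to the indicator $\mathds{1}(\ssomega_{i}\mp\ssomega_{j}\neq\pm\ssomega_{\nu})$.

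Finally, I would pair with the outer factor $-3\,\overline{\mathfrak{C}}_{\kappa\nu m}\,\xi^{\kappa}\cos(\ssomega_{\kappa}t)$ from $B(I(\xi),\cdot)^{m}$, apply product--to--sum a second time, and use that $I^{-1}Q$ extracts exactly the coefficient of $\cos(\ssomega_{m}t)$. Since $\cos(\alpha t)=\cos(\ssomega_{m}t)$ holds iff $\alpha=\pm\ssomega_{m}$, each of the two cosines produced contributes two matching sign choices, so every product $\cos(\ssomega_{\kappa}t)\cos((\ssomega_{i}\mp\ssomega_{j})t)$ contributes $\frac{1}{2}\sum_{\pm}\mathds{1}(\ssomega_{i}\mp\ssomega_{j}\pm\ssomega_{\kappa}\pm\ssomega_{m}=0)$ over the four independent sign choices. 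Collecting the prefactors $(-3)\cdot(-3/2)\cdot(1/2)=9/4$ and keeping the two $\mp$ branches separate yields exactly the two sums stated in the Lemma. The main subtlety to monitor is the sign and indicator bookkeeping in this last step; absolute convergence of all the multiple sums in $l^{2}_{s+3}$ follows from $\xi\in l^{2}_{s+3}$, the growth $\ssomega_{j}\sim j$, and the algebra--type bounds on $\mathfrak{f}^{(2)}$ already used in Section \ref{SubsectionPreliminaries}.
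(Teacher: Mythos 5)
Your proposal is correct and takes essentially the same route as the paper's proof: you split $\mathfrak{f}^{(2)}(\epsilon I(\xi) + \epsilon^2 L_1^{-1}P\mathfrak{f}^{(2)}(I(\xi)))$ into pure, cross, and $\mathcal{O}(\epsilon^4)$ pieces (the cross term $2B(I(\xi), L_1^{-1}P\mathfrak{f}^{(2)}(I(\xi)))$ is exactly the paper's $E(\xi)$), kill the $\epsilon^{-1}$ piece via the non-resonance hypothesis, observe $\mathfrak{F}_\epsilon$ is affine in $\epsilon$ (matching the paper's \eqref{FeLinear}), and then read off $\mathfrak{F}_0$ via product-to-sum identities and orthogonality of cosines. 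Your prefactor accounting $(-3)\cdot(-3/2)\cdot(1/2)=9/4$ and the description of $I^{-1}Q$ as extraction of the $\cos(\ssomega_m t)$ coefficient, with the four sign choices giving $\frac{1}{2}\sum_\pm\mathds{1}(\cdot)$, agree with the paper's computation.
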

\begin{proof}
	Let $\xi= \{\xi^m : m \geq 0\} \in l_{s+3}^2$ be any initial data, $\epsilon >0$, set $x=\epsilon \xi$ and pick any integer $m \geq 0$. Then, we compute
	\begin{align*}
		\left( \mathfrak{f}^{(2)} \left(\{u^k: k \geq 0\} \right) \right)^m &=
		-3 \sum_{i,j \geq 0} \overline{\mathfrak{C}}_{ijm} u^i u^j, \\
		\left(\mathfrak{f}^{(2)} \left(\pPhi^t (x) \right) \right)^m &=  
		-3 \sum_{i,j \geq 0} \overline{\mathfrak{C}}_{ijm} \left(\pPhi^t (x) \right)^i \left(\pPhi^t (x) \right)^j \\
	&=	-3 \sum_{i,j \geq 0} \overline{\mathfrak{C}}_{ijm} x^i x^j \cos(\ssomega_{i}t)\cos(\ssomega_{j}t) \\
	&=	- \frac{3}{2} \sum_{i,j \geq 0} \overline{\mathfrak{C}}_{ijm} x^i x^j \cos( (\ssomega_{i} - \ssomega_{j})t)
		- \frac{3}{2} \sum_{i,j \geq 0} \overline{\mathfrak{C}}_{ijm} x^i x^j \cos( (\ssomega_{i} + \ssomega_{j})t).
	\end{align*}
	Then, $(P\mathfrak{f}^{(2)}(\pPhi^t (x)) )^m$ is given by
	\begin{align*}
		- \frac{3}{2} \left[ \sum_{  \substack{ i,j \geq 0 \\ \ssomega_{i}-\ssomega_{j} \neq \pm \ssomega_{m}  }} \overline{\mathfrak{C}}_{ijm} x^i x^j \cos( (\ssomega_{i} - \ssomega_{j})t) +\sum_{  \substack{ i,j \geq 0 \\ \ssomega_{i}+\ssomega_{j} \neq \pm \ssomega_{m}  }} \overline{\mathfrak{C}}_{ijm} x^i x^j \cos( (\ssomega_{i} + \ssomega_{j})t) \right]
	\end{align*}
	and $(L_{\ssomega}^{-1} P\mathfrak{f}^{(2)}(\pPhi^t (x)) )^m$ reads
	\begin{align*}
		- \frac{3}{2} \left[ \sum_{  \substack{ i,j \geq 0 \\ \ssomega_{i}-\ssomega_{j} \neq \pm \ssomega_{m}  }} \frac{ \overline{\mathfrak{C}}_{ijm} }{ \lambda_{m,\ssomega_{i}-\ssomega_{j}} } x^i x^j \cos( (\ssomega_{i} - \ssomega_{j})t) +\sum_{  \substack{ i,j \geq 0 \\ \ssomega_{i}+\ssomega_{j} \neq \pm \ssomega_{m}  }} \frac{ \overline{\mathfrak{C}}_{ijm} }{ \lambda_{m,\ssomega_{i}+\ssomega_{j}} } x^i x^j \cos( (\ssomega_{i} + \ssomega_{j})t) \right],
	\end{align*}
	where we used the fact that the eigenvalues of $L_{\ssomega}$ are given by $\lambda_{ml}=\ssomega_{m}^2-l^2 \ssomega^2$. Hence, using the above together with the symmetries of the Fourier coefficients $\overline{\mathfrak{C}}_{\kappa \nu m} =\overline{\mathfrak{C}}_{\nu \kappa  m}  $ for all integers $\kappa,\nu,m \geq 0$, we deduce that $(\mathfrak{f}^{(2)} ( \pPhi^t (x)+L_{  1}^{-1} P \mathfrak{f}^{(2)} (\pPhi^t (x)  )  )   )^m$ is given by
	\begin{align*}
		& -3 \sum_{\kappa,\nu \geq 0} \overline{\mathfrak{C}}_{\kappa \nu m} \left(\pPhi^t (x)+L_{  1}^{-1} P \mathfrak{f}^{(2)} (\pPhi^t (x)  )  \right)^{\kappa}\left(\pPhi^t (x)+L_{  1}^{-1} P \mathfrak{f}^{(2)} (\pPhi^t (x)  )  \right)^{\nu}= \\
		& -3 \sum_{\kappa,\nu \geq 0} \overline{\mathfrak{C}}_{\kappa \nu m} 
		\left[ \left(\pPhi^t (x) \right)^{\kappa}+ \left(L_{  1}^{-1} P \mathfrak{f}^{(2)} (\pPhi^t (x)  )  \right)^{\kappa} \right] \left[ \left(\pPhi^t (x) \right)^{\nu}+ \left(L_{  1}^{-1} P \mathfrak{f}^{(2)} (\pPhi^t (x)  )  \right)^{\nu}  \right]= \\
		& -3 \sum_{\kappa,\nu \geq 0} \overline{\mathfrak{C}}_{\kappa \nu m} 
		 \left(\pPhi^t (x) \right)^{\kappa} \left(\pPhi^t (x) \right)^{\nu} -6  \sum_{\kappa,\nu \geq 0} \overline{\mathfrak{C}}_{\kappa \nu m} \left(\pPhi^t (x) \right)^{\kappa}   \left(L_{  1}^{-1} P \mathfrak{f}^{(2)} (\pPhi^t (x)  )  \right)^{\nu}  \\
		& -3  \sum_{\kappa,\nu \geq 0} \overline{\mathfrak{C}}_{\kappa \nu m}    \left(L_{  1}^{-1} P \mathfrak{f}^{(2)} (\pPhi^t (x)  )  \right)^{\kappa} \left(L_{  1}^{-1} P \mathfrak{f}^{(2)} (\pPhi^t (x)  )  \right)^{\nu} 
	\end{align*}
	and, by setting $x=\epsilon \xi$, we infer that $(\mathfrak{f}^{(2)} ( \pPhi^t (\epsilon \xi)+L_{  1}^{-1} P \mathfrak{f}^{(2)} (\pPhi^t (\epsilon \xi)  )  )   )^m$ equals to
	\begin{align} \label{eq:extrquad}
		\epsilon^2 \left(\mathfrak{f}^{(2)} \left(\pPhi^t (\xi) \right) \right)^m + \epsilon^3 \left( E(\xi)  \right)^m + \epsilon^4 \left( F(\xi)  \right)^m,
	\end{align}
	where 
	\begin{align*}
		\left( E(\xi)  \right)^m & =-6  \sum_{\kappa,\nu \geq 0} \overline{\mathfrak{C}}_{\kappa \nu m} \left(\pPhi^t (\xi) \right)^{\kappa}   \left(L_{  1}^{-1} P \mathfrak{f}^{(2)} (\pPhi^t (\xi)  )  \right)^{\nu}  , \\
		\left( F(\xi)  \right)^m & = -3  \sum_{\kappa,\nu \geq 0} \overline{\mathfrak{C}}_{\kappa \nu m}    \left(L_{  1}^{-1} P \mathfrak{f}^{(2)} (\pPhi^t (\xi)  )  \right)^{\kappa} \left(L_{  1}^{-1} P \mathfrak{f}^{(2)} (\pPhi^t (\xi)  )  \right)^{\nu} .
	\end{align*}
	In particular, we have
	\begin{align} \label{def:Eextrquad}
		\left( E(\xi)  \right)^m & =-6  \sum_{\kappa,\nu \geq 0} \overline{\mathfrak{C}}_{\kappa \nu m} \left(\pPhi^t (\xi) \right)^{\kappa}   \left(L_{  1}^{-1} P \mathfrak{f}^{(2)} (\pPhi^t (\xi)  )  \right)^{\nu} \\
		& = -6 \sum_{\kappa,\nu \geq 0} \overline{\mathfrak{C}}_{\kappa \nu m} \xi^{\kappa}\cos(  \ssomega_{\kappa}t)   \left(L_{  1}^{-1} P \mathfrak{f}^{(2)} (\pPhi^t (\xi)  )  \right)^{\nu} \\
		& = \left( E(\xi)  \right)_{-}^m + \left( E(\xi)  \right)_{+}^m,
	\end{align}
	where we set
	\begin{align*}
		 \left( E(\xi)  \right)_{-}^m & = 9\sum_{\kappa,\nu \geq 0} \overline{\mathfrak{C}}_{\kappa \nu m} \sum_{  \substack{ i,j \geq 0 \\ \ssomega_{i}-\ssomega_{j} \neq \pm \ssomega_{m}  }} \frac{ \overline{\mathfrak{C}}_{ijm} }{ \ssomega_{m}^2-(\ssomega_{i}-\ssomega_{j})^2 } \xi^i \xi^j\xi^{\kappa} \cos( (\ssomega_{i} - \ssomega_{j})t)\cos( \ssomega_{\kappa}  t), \\
		  \left( E(\xi)  \right)_{+}^m & = 9\sum_{\kappa,\nu \geq 0} \overline{\mathfrak{C}}_{\kappa \nu m} \sum_{  \substack{ i,j \geq 0 \\ \ssomega_{i}+\ssomega_{j} \neq \pm \ssomega_{m}  }} \frac{ \overline{\mathfrak{C}}_{ijm} }{ \ssomega_{m}^2-(\ssomega_{i}+\ssomega_{j})^2 } \xi^i \xi^j\xi^{\kappa} \cos( (\ssomega_{i} +\ssomega_{j})t)\cos( \ssomega_{\kappa}  t).
	\end{align*}
	Next, we firstly apply the linear flow and then the average in time to obtain 
	\begin{align}
	\left( \mathfrak{F}_{\epsilon}( \xi) \right)^m&:=  \epsilon^{-3} \left(
		 \langle  
		 \mathfrak{f}^{(2)}  \left( ( \cdot ) + L_{  1}^{-1} P \mathfrak{f}^{(2)} ( \cdot ) \right)   
		 \rangle(\epsilon \xi) \right)^m \nonumber \\
		 &=	  \frac{\epsilon^{-3}}{2\pi } \int_{0}^{2\pi }
		  \left( \pPhi^t \left( 
		 \mathfrak{f}^{(2)}  \left( \pPhi^t(\epsilon \xi ) + L_{  1}^{-1} P \mathfrak{f}^{(2)} \pPhi^t ( \epsilon \xi) \right)   
		  \right) \right)^m dt\nonumber  \\
		& = \frac{\epsilon^{-1}}{2\pi } \int_{0}^{2\pi } \left(\pPhi^t \left(\mathfrak{f}^{(2)} \left(\pPhi^t (\xi) \right) \right) \right)^m dt+
		\frac{1}{2\pi } \int_{0}^{2\pi } \left(\pPhi^t \left( E(\xi) \right) \right)^m dt +
\frac{\epsilon}{2\pi } \int_{0}^{2\pi } \left(\pPhi^t \left( F(\xi) \right) \right)^m dt\nonumber  \\
&= \epsilon^{-1} \langle \mathfrak{f}^{(2)} \rangle (\xi)  +\frac{1}{2\pi } \int_{0}^{2\pi } \left(\pPhi^t \left( E(\xi) \right) \right)^m dt +
\frac{\epsilon}{2\pi } \int_{0}^{2\pi } \left(\pPhi^t \left( F(\xi) \right) \right)^m dt\nonumber  \\
& =  \frac{1}{2\pi } \int_{0}^{2\pi } \left(\pPhi^t \left( E(\xi) \right) \right)^m dt +
\frac{\epsilon}{2\pi } \int_{0}^{2\pi } \left(\pPhi^t \left( F(\xi) \right) \right)^m dt,\label{FeLinear}
	\end{align}
	where we used the condition that $ \mathfrak{f}^{(2)} $ is non--resonant according to \eqref{ConditionNonResonantf2}. Then, the latter at $\epsilon=0$ boils down to
	\begin{align*}
		\mathfrak{F}_{0}( \xi) &= \frac{1}{2\pi } \int_{0}^{2\pi } \left(\pPhi^t \left( E(\xi) \right) \right)^m dt 
		 =  \frac{1}{2\pi } \int_{0}^{2\pi }  \left( E(\xi) \right)^m \cos(\ssomega_{m} t) dt \\
		& =  \frac{1}{2\pi } \int_{0}^{2\pi }  \left( E(\xi) \right)_{-}^m \cos(\ssomega_{m} t) dt +  \frac{1}{2\pi } \int_{0}^{2\pi }  \left( E(\xi) \right)_{+}^m \cos(\ssomega_{m} t) dt .
	\end{align*}
	Finally, we use the fact that
	\begin{align*}
		&  \int_{0}^{2\pi } \cos((\ssomega_{i}-\ssomega_{j})t) \cos(\ssomega_{\kappa}t)\cos(\ssomega_{m}t) dt = \\
		& \frac{1}{4} \sum_{\pm } \int_{0}^{2\pi } \cos((\ssomega_{i}-\ssomega_{j}\pm \ssomega_{\kappa} \pm \ssomega_{m})t) dt = 
		\frac{\pi }{2} \sum_{\pm } \mathds{1} (\ssomega_{i}-\ssomega_{j}\pm \ssomega_{\kappa} \pm \ssomega_{m} =0) 
	\end{align*}
	to compute
	\begin{align*}
		& \frac{1}{2\pi } \int_{0}^{2\pi }  \left( E(\xi) \right)_{-}^m \cos(\ssomega_{m} t) dt = \\
		&  \frac{9}{2\pi } \sum_{\kappa,\nu \geq 0} \overline{\mathfrak{C}}_{\kappa \nu m} \sum_{  \substack{ i,j \geq 0 \\ \ssomega_{i}-\ssomega_{j} \neq \pm \ssomega_{m}  }} \frac{ \overline{\mathfrak{C}}_{ijm} }{ \ssomega_{m}^2-(\ssomega_{i}-\ssomega_{j})^2 } \xi^i \xi^j\xi^{\kappa}  \int_{0}^{2\pi } \cos( (\ssomega_{i} - \ssomega_{j})t)\cos( \ssomega_{\kappa}  t) \cos(\ssomega_{m} t) dt = \\
		&  \frac{9}{4 }  \sum_{\kappa,\nu \geq 0} \overline{\mathfrak{C}}_{\kappa \nu m} \sum_{  \substack{ i,j \geq 0 \\ \ssomega_{i}-\ssomega_{j} \neq \pm \ssomega_{m}  }} \frac{ \overline{\mathfrak{C}}_{ijm} }{\ssomega_{m}^2-(\ssomega_{i}-\ssomega_{j})^2 } \xi^i \xi^j\xi^{\kappa}   \sum_{\pm } \mathds{1} (\ssomega_{i}-\ssomega_{j}\pm \ssomega_{\kappa} \pm \ssomega_{m} =0) .
	\end{align*}
	Similarly, we infer
	\begin{align*}
		& \frac{1}{2\pi } \int_{0}^{2\pi }  \left( E(\xi) \right)_{+}^m \cos(\ssomega_{m} t) dt = \\
		&  \frac{9}{4 }  \sum_{\kappa,\nu \geq 0} \overline{\mathfrak{C}}_{\kappa \nu m} \sum_{  \substack{ i,j \geq 0 \\ \ssomega_{i}+\ssomega_{j} \neq \pm \ssomega_{m}  }} \frac{ \overline{\mathfrak{C}}_{ijm} }{ \ssomega_{m}^2-(\ssomega_{i}+\ssomega_{j})^2 } \xi^i \xi^j\xi^{\kappa}   \sum_{\pm } \mathds{1} (\ssomega_{i}+\ssomega_{j}\pm \ssomega_{\kappa} \pm \ssomega_{m} =0) ,
	\end{align*}
	that completes the proof.
\end{proof}

Next, we compute $\mathfrak{F}_{\epsilon}( \xi)$ for small $\epsilon$ close to zero and 1--mode initial data. 
\begin{lemma}[Computation of $\mathfrak{F}_{\epsilon}(\xi)$ for small $\epsilon$ close to zero and 1--mode initial data]\label{LemmaComputemathfrakFforsfmallepsilon}
	Assume that $\ssomega_{n}=n+1$, for all integers $n \geq 0$. Let $\gamma  \geq 0$ be an integer, $\mathfrak{K}_{\gamma} \in \mathbb{R}$ and $\xi$ be the 1--mode initial data, that is
	\begin{align*}
		\xi^m :=\mathfrak{K}_{\gamma} \mathds{1}(m=\gamma), \Hquad m\geq 0.
	\end{align*}
	Then, for all integers $m\geq 0$, we have
	\begin{align*}
 	\left( \mathfrak{F}_{\epsilon}( \xi) \right)^m = \mathfrak{q}_{\gamma}    \mathfrak{K}_{\gamma} ^3\mathds{1}(m=\gamma)+ \mathcal{O}(\epsilon),
	\end{align*}
	where
	\begin{align*}
	 \mathfrak{q}_{\gamma}   :=	\frac{9}{4} \sum_{  \nu = 0}^{2\gamma}  \left(  \overline{\mathfrak{C}}_{\gamma \gamma \nu} \right)^2
  \left(\frac{2}{\ssomega_{\nu}^2 }  +  \frac{\mathds{1}(\ssomega_{\nu}^2 \neq (2\ssomega_{\gamma})^2) }{\ssomega_{\nu}^2-(2\ssomega_{\gamma})^2   } \right) 
	\end{align*}
\end{lemma}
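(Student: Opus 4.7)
The plan is to leverage the structure already established: by formula \eqref{FeLinear}, the map $\epsilon \mapsto \mathfrak{F}_{\epsilon}(\xi)$ is affine in $\epsilon$, so $(\mathfrak{F}_{\epsilon}(\xi))^{m}=(\mathfrak{F}_{0}(\xi))^{m}+\mathcal{O}(\epsilon)$ once boundedness of the correction term (involving $F(\xi)$) is noted. Hence it suffices to evaluate the closed formula for $\mathfrak{F}_{0}(\xi)$ provided by Lemma \ref{LemmaDefinitionMathfrakF} on the one-mode input $\xi^{i}=\mathfrak{K}_{\gamma}\mathds{1}(i=\gamma)$.

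After substitution, the factors $\xi^{i}\xi^{j}\xi^{\kappa}$ force $i=j=\kappa=\gamma$, and all triple sums collapse to single sums over $\nu$. The first contribution in Lemma \ref{LemmaDefinitionMathfrakF} carries $\ssomega_{i}-\ssomega_{j}=0$, so the exclusion $\ssomega_{i}-\ssomega_{j}\neq\pm\ssomega_{\nu}$ is automatic (since $\ssomega_{n}=n+1>0$), and the indicator $\sum_{\pm}\mathds{1}(\pm\ssomega_{\gamma}\pm\ssomega_{m}=0)$ equals $2\,\mathds{1}(m=\gamma)$ (two sign choices give $\ssomega_{m}=\ssomega_{\gamma}$). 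This produces
\[
\frac{9}{4}\cdot 2\sum_{\nu\ge 0}\overline{\mathfrak{C}}_{\gamma\nu\gamma}\frac{\overline{\mathfrak{C}}_{\gamma\gamma\nu}}{\ssomega_{\nu}^{2}}\,\mathfrak{K}_{\gamma}^{3}\,\mathds{1}(m=\gamma),
\]
and using the symmetry $\overline{\mathfrak{C}}_{\gamma\nu\gamma}=\overline{\mathfrak{C}}_{\gamma\gamma\nu}$ together with the selection rule $\overline{\mathfrak{C}}_{\gamma\gamma\nu}=0$ for $\nu>2\gamma$ (from Section \ref{SectionFourierConstants}) truncates the sum to $0\le\nu\le 2\gamma$ and yields the $\frac{2}{\ssomega_{\nu}^{2}}$ term in $\mathfrak{q}_{\gamma}$.

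The second contribution involves $\ssomega_{i}+\ssomega_{j}=2\ssomega_{\gamma}$, so the exclusion becomes $\ssomega_{\nu}\neq 2\ssomega_{\gamma}$, i.e.\ $\mathds{1}(\ssomega_{\nu}^{2}\neq(2\ssomega_{\gamma})^{2})$. The four sign choices in $\sum_{\pm}\mathds{1}(2\ssomega_{\gamma}\pm\ssomega_{\gamma}\pm\ssomega_{m}=0)$ give (i) one diagonal contribution at $m=\gamma$ and (ii) one off-diagonal contribution at $\ssomega_{m}=3\ssomega_{\gamma}$, i.e.\ $m=3\gamma+2$, while the two cases with only $+$ signs are impossible for positive eigenvalues. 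The apparent obstacle is the stray resonance at $m=3\gamma+2$, but it will be killed by the selection rules: since $\overline{\mathfrak{C}}_{\gamma\gamma\nu}\neq 0$ requires $\nu\le 2\gamma$, and $\overline{\mathfrak{C}}_{\gamma\nu m}\neq 0$ with $m=3\gamma+2$ requires $\nu\ge m-\gamma=2\gamma+2$, the two conditions are incompatible and the whole $m=3\gamma+2$ term drops out. Only the $m=\gamma$ piece survives, producing $\tfrac{9}{4}\sum_{\nu=0}^{2\gamma}(\overline{\mathfrak{C}}_{\gamma\gamma\nu})^{2}(\ssomega_{\nu}^{2}-(2\ssomega_{\gamma})^{2})^{-1}\mathds{1}(\ssomega_{\nu}^{2}\neq(2\ssomega_{\gamma})^{2})\,\mathfrak{K}_{\gamma}^{3}\,\mathds{1}(m=\gamma)$, which combined with the first contribution gives exactly $\mathfrak{q}_{\gamma}\,\mathfrak{K}_{\gamma}^{3}\,\mathds{1}(m=\gamma)$. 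The main technical point is therefore the verification that the spurious resonance at $m=3\gamma+2$ is annihilated by the support properties of the Fourier coefficients established earlier; everything else is bookkeeping of the signs in the indicator and a routine application of \eqref{FeLinear} to extract the $\mathcal{O}(\epsilon)$ remainder.
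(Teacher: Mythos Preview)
Your proposal is correct. The argument that the spurious resonance at $\ssomega_{m}=3\ssomega_{\gamma}$ is killed by the incompatible support conditions on $\overline{\mathfrak{C}}_{\gamma\gamma\nu}$ and $\overline{\mathfrak{C}}_{\gamma\nu m}$ is exactly the mechanism the paper uses, and your count of the sign choices in the two indicator sums is accurate.

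The route, however, differs from the paper's in a mildly interesting way. The paper does not invoke the general formula of Lemma~\ref{LemmaDefinitionMathfrakF}; instead it restarts from the definition of $\mathfrak{F}_{\epsilon}$, computes $\mathfrak{f}^{(2)}(\pPhi^{t}(\xi))$, $P\mathfrak{f}^{(2)}$, and $L_{1}^{-1}P\mathfrak{f}^{(2)}$ explicitly for the one-mode input, expands the quadratic nonlinearity of the sum, isolates the cubic-in-$\xi$ piece $E(\xi)$, and then averages against $\cos(\ssomega_{m}t)$. Your approach is more economical: you observe that all of this was already packaged into Lemma~\ref{LemmaDefinitionMathfrakF} and \eqref{FeLinear}, so one may simply substitute $\xi^{i}=\mathfrak{K}_{\gamma}\mathds{1}(i=\gamma)$ into the closed expression for $\mathfrak{F}_{0}$ and read off the answer. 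What the paper's approach buys is transparency about where each term comes from (useful when one later needs $d\mathfrak{F}_{0}$ at the one-mode); what your approach buys is brevity and the avoidance of a redundant computation. One small remark: the lemma statement posits $\ssomega_{n}=n+1$, whereas the paper's own proof uses $\ssomega_{n}=n+2$ (the actual YM eigenvalues), leading to the resonance at $m=3\gamma+4$ rather than your $m=3\gamma+2$; you followed the hypothesis as stated, and the support-incompatibility argument goes through identically in either normalization.
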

\begin{proof}
	Let $\gamma  \geq 0$ be an integer, define $\xi$ as the 1--mode initial data, that is
	\begin{align*}
		\xi^m :=\mathfrak{K}_{\gamma} \mathds{1}(m=\gamma), \Hquad m\geq 0.
	\end{align*}
	and pick any integer $m\geq 0$. Then, we compute
	\begin{align*}
		\left(  \mathfrak{f}^{(2)} \left( \{ u^k: k\geq 0\}\right)  \right)^m &=
		-3\sum_{ i,j \geq 0} \overline{\mathfrak{C}}_{ijm}u^i u^j, \\
		\left(  \mathfrak{f}^{(2)}  \left( \pPhi^t(\xi) \right) \right)^m &=
		-3\sum_{ i,j \geq 0} \overline{\mathfrak{C}}_{ijm}\left(  \pPhi^t(\xi) \right)^i \left(  \pPhi^t(\xi) \right)^j 
		=	-3\sum_{ i,j \geq 0} \overline{\mathfrak{C}}_{ijm} \xi^i \xi^j \cos(\ssomega_{i}t) \cos(\ssomega_{j}t)   \\
	&=	 	-3 \mathfrak{K}_{\gamma} ^2  \overline{\mathfrak{C}}_{\gamma \gamma m}   \cos^2(\ssomega_{\gamma}t) 
	=	 	-\frac{3}{2}  \mathfrak{K}_{\gamma} ^2 \overline{\mathfrak{C}}_{\gamma \gamma m} \left[1+   \cos(2\ssomega_{\gamma}t) \right].
\end{align*}
Recall the definition of the eigenvalues $\ssomega_{i}:=i+2$, for all integers $i\geq 0$. Then, we have
\begin{align*}
	\ssomega_{m}  \neq 0 \Longleftrightarrow  m  \geq 0, \Hquad 
	\ssomega_{m}  \neq 2\ssomega_{\gamma} \Longleftrightarrow m \neq 2 \gamma +2.
\end{align*}
Hence, we infer
\begin{align*}	 
\left(P  \mathfrak{f}^{(2)}  \left(\pPhi^t(\xi) \right) \right)^m  &= -\frac{3}{2}   \mathfrak{K}_{\gamma} ^2 \overline{\mathfrak{C}}_{\gamma \gamma m} \left[1+ \mathds{1}(m \neq 2\gamma+2) \cos(2\ssomega_{\gamma}t) \right], \\	 
\left(L_{\ssomega}^{-1}P  \mathfrak{f}^{(2)}  \left(\pPhi^t(\xi) \right) \right)^m  &= -\frac{3}{2}  \mathfrak{K}_{\gamma} ^2 \overline{\mathfrak{C}}_{\gamma \gamma m} \left[ \frac{1}{\lambda_{m,0}}+ \mathds{1}(m \neq 2\gamma+2) \frac{1}{\lambda_{m,2\ssomega_{\gamma}}}\cos(2\ssomega_{\gamma}t) \right]  \\
&= -\frac{3}{2}  \mathfrak{K}_{\gamma} ^2 \overline{\mathfrak{C}}_{\gamma \gamma m} \left[ \frac{1}{\ssomega_{m}^2 }+  \frac{\mathds{1}(m \neq 2\gamma+2)}{\ssomega_{m}^2-(2\ssomega_{\gamma})^2 \ssomega^2 }\cos(2\ssomega_{\gamma}t) \right],
\end{align*}
 where we used the fact that the eigenvalues of $L_{\ssomega}$ are given by \eqref{EigenvaluesofLomega}, i.e. $\lambda_{ml}=\ssomega_{m}^2-l^2 \ssomega^2 $. In addition, we set $x =\epsilon \xi$ and $\left(	 \mathfrak{f}^{(2)}  \left(\pPhi^t(x)+L_{  1}^{-1}P  \mathfrak{f}^{(2)}  \left(\pPhi^t(x) \right) \right) \right)^m $ is given by 
\begin{align*} 
& -3\sum_{\kappa, \nu  \geq 0} \overline{\mathfrak{C}}_{\kappa \nu  m}\left(\pPhi^t(\epsilon \xi)+L_{  1}^{-1}P  \mathfrak{f}^{(2)}  \left(\pPhi^t(\epsilon \xi) \right) \right)^{\kappa} \left(\pPhi^t(\epsilon \xi)+L_{  1}^{-1}P  \mathfrak{f}^{(2)}  \left(\pPhi^t(\epsilon \xi) \right) \right)^{\nu }= \\
& -3\sum_{\kappa, \nu  \geq 0} \overline{\mathfrak{C}}_{\kappa \nu  m}\left[ \left(\pPhi^t(\epsilon \xi) \right)^{\kappa} + \left(L_{  1}^{-1}P  \mathfrak{f}^{(2)}  \left(\pPhi^t(\epsilon \xi) \right) \right)^{\kappa} \right] \left[ \left(\pPhi^t(\epsilon \xi) \right)^{\nu } + \left( L_{  1}^{-1}P  \mathfrak{f}^{(2)}  \left(\pPhi^t(\epsilon \xi) \right)  \right)^{\nu } \right]= \\
& -3\sum_{\kappa, \nu  \geq 0} \overline{\mathfrak{C}}_{\kappa \nu  m}\Big[ \left(\pPhi^t(\epsilon \xi) \right)^{\kappa}\left(\pPhi^t(\epsilon \xi) \right)^{\nu } +  
 \left(\pPhi^t(\epsilon \xi) \right)^{\kappa} \left(L_{  1}^{-1}P  \mathfrak{f}^{(2)}  \left(\pPhi^t(\epsilon \xi) \right) \right)^{\nu } \\
 &+ \left(\pPhi^t(\epsilon \xi) \right)^{\nu } \left(L_{  1}^{-1}P  \mathfrak{f}^{(2)}  \left(\pPhi^t(\epsilon \xi) \right) \right)^{\kappa} +
 \left(L_{  1}^{-1}P  \mathfrak{f}^{(2)}  \left(\pPhi^t(\epsilon \xi) \right) \right)^{\kappa}
 \left(L_{  1}^{-1}P  \mathfrak{f}^{(2)}  \left(\pPhi^t(\epsilon \xi) \right) \right)^{\nu }
\Big]= \\
& -3\sum_{\kappa, \nu  \geq 0} \overline{\mathfrak{C}}_{\kappa \nu  m} \left(\pPhi^t(\epsilon \xi) \right)^{\kappa}\left(\pPhi^t(\epsilon \xi) \right)^{\nu } -6\sum_{\kappa, \nu  \geq 0} \overline{\mathfrak{C}}_{\kappa \nu  m}  
 \left(\pPhi^t(\epsilon \xi) \right)^{\kappa} \left(L_{  1}^{-1}P  \mathfrak{f}^{(2)}  \left(\pPhi^t(\epsilon \xi) \right) \right)^{\nu } \\
 & -3\sum_{\kappa, \nu  \geq 0} \overline{\mathfrak{C}}_{\kappa \nu  m}
 \left(L_{  1}^{-1}P  \mathfrak{f}^{(2)}  \left(\pPhi^t(\epsilon \xi) \right) \right)^{\kappa}
 \left(L_{  1}^{-1}P  \mathfrak{f}^{(2)}  \left(\pPhi^t(\epsilon \xi) \right) \right)^{\nu } 
 = \\
& \epsilon^2 \left(  \mathfrak{f}^{(2)} \left( \pPhi^t( \xi) \right) \right)^m + \epsilon^3 \left(E(\xi) \right)^{m} + \epsilon^4 \left(F(\xi) \right)^{m},
\end{align*}
where we set
\begin{align*}
  \left(F(\xi)\right)^{m} &:=-3\sum_{\kappa, \nu  \geq 0} \overline{\mathfrak{C}}_{\kappa \nu  m}
 \left(L_{  1}^{-1}P  \mathfrak{f}^{(2)}  \left(\pPhi^t(\xi) \right) \right)^{\kappa}
 \left(L_{  1}^{-1}P  \mathfrak{f}^{(2)}  \left(\pPhi^t(\xi) \right) \right)^{\nu }, \\
 \left(E(\xi)\right)^{m} &:= -6\sum_{\kappa, \nu  \geq 0} \overline{\mathfrak{C}}_{\kappa \nu  m}  
 \left(\pPhi^t(\xi) \right)^{\kappa} \left(L_{  1}^{-1}P  \mathfrak{f}^{(2)}  \left(\pPhi^t(\xi) \right) \right)^{\nu } \\
 & = 9 \mathfrak{K}_{\gamma} ^2 \sum_{\kappa, \nu  \geq 0} \overline{\mathfrak{C}}_{\kappa \nu  m}  
\xi^{\kappa}\cos(\ssomega_{\kappa}t) \overline{\mathfrak{C}}_{\gamma \gamma \nu} \left[ \frac{1}{\ssomega_{\nu}^2 }+  \frac{\mathds{1}(\nu \neq 2\gamma+2)}{\ssomega_{\nu}^2-(2\ssomega_{\gamma})^2   }\cos(2\ssomega_{\gamma}t) \right] \\
& = 9 \mathfrak{K}_{\gamma} ^3 \sum_{  \nu  \geq 0} \overline{\mathfrak{C}}_{ \gamma \nu  m}   \overline{\mathfrak{C}}_{\gamma \gamma \nu} 
\left[ \frac{1}{\ssomega_{\nu}^2 } \cos(\ssomega_{\gamma}t)+  \frac{\mathds{1}(\nu \neq 2\gamma+2)}{\ssomega_{\nu}^2-(2\ssomega_{\gamma})^2   }\cos(2\ssomega_{\gamma}t) \cos(\ssomega_{\gamma}t) \right] \\
& = 9 \mathfrak{K}_{\gamma} ^3 \sum_{  \nu  \geq 0} \overline{\mathfrak{C}}_{ \gamma \nu  m}   \overline{\mathfrak{C}}_{\gamma \gamma \nu} 
\left[ \left(\frac{1}{\ssomega_{\nu}^2 }  +\frac{1}{2}  \frac{\mathds{1}(\nu \neq 2\gamma+2)}{\ssomega_{\nu}^2-(2\ssomega_{\gamma})^2   } \right)\cos \left( \ssomega _{\gamma }t \right)+\frac{1}{2} \frac{\mathds{1}(\nu \neq 2\gamma+2)}{\ssomega_{\nu}^2-(2\ssomega_{\gamma})^2   }\cos \left(3 \ssomega _{\gamma }t \right) \right].
\end{align*}
Now, we firstly apply the linear flow and then the average in time to obtain that 
\begin{align*}
	\mathfrak{F}_{\epsilon}( \xi) &:=  \epsilon^{-3}
		 \langle  
		 \mathfrak{f}^{(2)}  \left( ( \cdot ) + L_{1 }^{-1} P \mathfrak{f}^{(2)} ( \cdot ) \right)   
		 \rangle(\epsilon \xi) \\
		 &=  \frac{\epsilon^{-3}}{2\pi }  \int_{0}^{2\pi } \left(\pPhi^t  \left(	 \mathfrak{f}^{(2)}  \left(\pPhi^t(\epsilon \xi)+L_{1 }^{-1}P  \mathfrak{f}^{(2)}  \left(\pPhi^t(\epsilon \xi) \right) \right) \right) \right)^m   
  dt \\
	&= \frac{\epsilon^{-1}}{2\pi }  \int_{0}^{2\pi } \left( \pPhi^{t}\left(  \mathfrak{f}^{(2)} \left( \pPhi^t( \xi) \right) \right) \right)^m dt + \frac{1 }{2\pi }  \int_{0}^{2\pi } \left( \pPhi^{t} \left(E(\xi) \right) \right)^{m} dt +  \frac{\epsilon }{2\pi }  \int_{0}^{2\pi } \left( \pPhi^{t} \left(F(\xi) \right) \right)^{m} dt.
\end{align*}
On the one hand, due to \eqref{ConditionNonResonantf2}, we have
\begin{align*}
	\frac{1}{2\pi }  \int_{0}^{2\pi } \left( \pPhi^{t}\left(  \mathfrak{f}^{(2)} \left( \pPhi^t( \xi) \right) \right) \right)^m dt = \langle  \mathfrak{f}^{(2)}  \rangle(\xi) = 0.
\end{align*}
On the other hand, we use the fact that 
\begin{align*}
\frac{1}{2\pi}	 \int_{0}^{2\pi } \cos(a t)\cos(bt)  dt  =  \frac{1 }{2}    \mathds{1} (|a|=|b|), 
\end{align*}
for all integers $a,b \geq 0$ together with
\begin{align*}
	\ssomega _{m} =\ssomega _{\gamma } \Longleftrightarrow m=\gamma, \Hquad 
	\ssomega _{m} =3\ssomega _{\gamma } \Longleftrightarrow m=3 \gamma +4, 
\end{align*}
to compute
\begin{align*}
&	\frac{1 }{2\pi }  \int_{0}^{2\pi } \left( \pPhi^{t} \left(E(\xi) \right) \right)^{m} dt =
	\frac{1 }{2\pi }  \int_{0}^{2\pi }  \left(E(\xi) \right) ^{m} \cos(\ssomega_{m}t) dt  \\
	& =   9 \mathfrak{K}_{\gamma} ^3 \sum_{  \nu  \geq 0} \overline{\mathfrak{C}}_{ \gamma \nu  m}   \overline{\mathfrak{C}}_{\gamma \gamma \nu} 
\Bigg[ \left(\frac{1}{\ssomega_{\nu}^2 }  +\frac{1}{2}  \frac{\mathds{1}(\nu \neq 2\gamma+2)}{\ssomega_{\nu}^2-(2\ssomega_{\gamma})^2   } \right) \frac{1 }{2\pi }  \int_{0}^{2\pi }\cos \left( \ssomega _{\gamma }t \right)\cos \left( \ssomega _{m }t \right)dt \\
& +\frac{1}{2} \frac{\mathds{1}(\nu \neq 2\gamma+2)}{\ssomega_{\nu}^2-(2\ssomega_{\gamma})^2   } \frac{1 }{2\pi }  \int_{0}^{2\pi }\cos \left(3 \ssomega _{\gamma }t \right)\cos \left(\ssomega _{m }t \right)dt \Bigg] \\
& = 9 \mathfrak{K}_{\gamma} ^3 \sum_{  \nu  \geq 0} \overline{\mathfrak{C}}_{ \gamma \nu  m}   \overline{\mathfrak{C}}_{\gamma \gamma \nu} 
\Bigg[  \left(\frac{1}{2\ssomega_{\nu}^2 }  +\frac{1}{4}  \frac{\mathds{1}(\nu \neq 2\gamma+2)}{\ssomega_{\nu}^2-(2\ssomega_{\gamma})^2   } \right)   \mathds{1}(m=\gamma) \\
& +\frac{1}{4} \frac{\mathds{1}(\nu \neq 2\gamma+2)}{\ssomega_{\nu}^2-(2\ssomega_{\gamma})^2   }  \mathds{1}(m=3\gamma+4) \Bigg] \\
& = \frac{9}{4} \mathfrak{K}_{\gamma} ^3 \sum_{  \nu  \geq 0}  \left(  \overline{\mathfrak{C}}_{\gamma \gamma \nu} \right)^2
  \left(\frac{2}{\ssomega_{\nu}^2 }  +  \frac{\mathds{1}(\nu \neq 2\gamma+2)}{\ssomega_{\nu}^2-(2\ssomega_{\gamma})^2   } \right)   \mathds{1}(m=\gamma) \\
& + \frac{9}{4} \mathfrak{K}_{\gamma} ^3 \sum_{ \substack{ \nu  \geq 0 \\\nu \neq 2\gamma+2 }}  \frac{\overline{\mathfrak{C}}_{ \gamma ,\nu  ,3\gamma+4}   \overline{\mathfrak{C}}_{\gamma \gamma \nu} }{\ssomega_{\nu}^2-(2\ssomega_{\gamma})^2   }  \mathds{1}(m=3\gamma+4).
\end{align*}
Finally, we note that 
 \begin{align*}
 	\overline{\mathfrak{C}}_{ \gamma ,\nu  ,3\gamma+4}   \overline{\mathfrak{C}}_{\gamma \gamma \nu}=0,
 \end{align*}
for all integers $\gamma \geq 0$ and $\nu \geq 0$. This follows immediately from the fact that $\overline{\mathfrak{C}}_{ijm}=0$ for all integers $i,j,m \geq 0$ with $i+j<m$ due to Lemma \ref{LemmaClosedFormulasModel3YMCbarijm} below. Specifically, we have
\begin{itemize}
	\item $\nu > 2\gamma \Longrightarrow \overline{\mathfrak{C}}_{\gamma \gamma \nu}=0$,
	\item $0\leq \nu \leq 2\gamma \Longrightarrow \gamma+\nu \leq 3\gamma <3\gamma+4 \Longrightarrow \overline{\mathfrak{C}}_{ \gamma ,\nu  ,3\gamma+4}=0 $.
\end{itemize} 
Consequently, we conclude that
 \begin{align*}
 	\frac{1 }{2\pi }  \int_{0}^{2\pi } \left( \pPhi^{t} \left(E(\xi) \right) \right)^{m} dt  &=	\frac{9}{4} \mathfrak{K}_{\gamma} ^3 \sum_{  \nu = 0}^{2\gamma}  \left(  \overline{\mathfrak{C}}_{\gamma \gamma \nu} \right)^2
  \left(\frac{2}{\ssomega_{\nu}^2 }  +  \frac{1}{\ssomega_{\nu}^2-(2\ssomega_{\gamma})^2   } \right)   \mathds{1}(m=\gamma) ,
 \end{align*}
 that completes the proof.
\end{proof} 
Finally, we compute the differential of $\mathfrak{F}_{0}( \xi)$ at the 1--mode initial data.
\begin{lemma}[Differential of $\mathfrak{F}_{0}( \xi)$ at the 1--mode initial data]\label{LemmaDifferentialMathFrakF0LongComputation}
	Let $\gamma  \geq 0$ be an integer, $\mathfrak{K}_{\gamma} \in \mathbb{R}$ and $\xi$ be the 1--mode initial data, that is
	\begin{align*}
		\xi^m :=\mathfrak{K}_{\gamma} \mathds{1}(m=\gamma), \Hquad m\geq 0.
	\end{align*}
	Then, for all $h \in l_{s+3}^2$ and integers $m\geq 0$, we have that 
	\begin{align*}
		\left(	d  \mathfrak{F}_{0}( \xi)[h] \right)^m & =\mathfrak{K}_{\gamma}^{2} \left[  \mathfrak{a}_{\gamma m}   h^{m} +  \mathds{1}(0\leq m\leq 2\gamma) \mathfrak{b}_{\gamma m} h^{2\gamma-m}   \right],
	\end{align*}
	where
	\begin{align*}
		 \mathfrak{a}_{\gamma m}  &=  \frac{9}{2}   \sum_{ \substack{ \nu = 0  }}^{m+\gamma}    \frac{\left(\overline{\mathfrak{C}}_{\gamma \nu m} \right)^2 }{\ssomega_{\nu}^2-(\ssomega_{m}+\ssomega_{\gamma})^2  }  +\frac{9}{2}  \sum_{ \substack{ \nu = 0 \\ \nu \neq \pm(m-\gamma)-2  }}^{m+\gamma}   \frac{\left(\overline{\mathfrak{C}}_{m\gamma \nu } \right)^2}{\ssomega_{\nu}^2-(\ssomega_{m}-\ssomega_{\gamma})^2  }  
	+  \frac{9}{4}  \sum_{\substack{ \nu = 0  } }^{2\gamma}   
		 \frac{\overline{\mathfrak{C}}_{m \nu m}\overline{\mathfrak{C}}_{\gamma\gamma \nu }}{\ssomega_{\nu}^2 }   ,   \\
		\mathfrak{b}_{\gamma m}  &= \frac{9}{4}   \sum_{\substack{ \nu = 0  } }^{2\gamma}  
		 \frac{\overline{\mathfrak{C}}_{2\gamma-m, \nu, m} \overline{\mathfrak{C}}_{\gamma\gamma \nu }}{\ssomega_{\nu}^2-(2\ssomega_{\gamma})^2  }  +\frac{9}{2} \sum_{ \substack{ \nu = 0 \\ \nu \neq \pm(m-\gamma)-2  }}^{m+\gamma}   \frac{\overline{\mathfrak{C}}_{\gamma \nu m} \overline{\mathfrak{C}}_{2\gamma-m,\gamma ,\nu }}{\ssomega_{\nu}^2-(\ssomega_{2\gamma-m}-\ssomega_{\gamma})^2  } .
	\end{align*}
\end{lemma}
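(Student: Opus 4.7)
The starting point is the explicit formula for $\mathfrak{F}_0(\xi)$ established in Lemma \ref{LemmaDefinitionMathfrakF}, which expresses $(\mathfrak{F}_0(\xi))^m$ as a cubic homogeneous polynomial in the components $\{\xi^k\}$, decomposed into a ``minus'' part involving $\ssomega_i-\ssomega_j$ and a ``plus'' part involving $\ssomega_i+\ssomega_j$, each weighted by Fourier constants $\overline{\mathfrak{C}}$, resonant denominators and indicator functions $\sum_\pm\mathds{1}(\cdots=0)$. Since this is a cubic polynomial of the shape $\sum K^m_{ij\kappa\nu}\xi^i\xi^j\xi^\kappa$ with coefficients symmetric under $i\leftrightarrow j$ (inherited from $\overline{\mathfrak{C}}_{ij\nu}=\overline{\mathfrak{C}}_{ji\nu}$), the plan is simply to apply the product rule and then specialize to the $1$-mode initial data $\xi^k=\mathfrak{K}_\gamma\delta_{k\gamma}$.

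The differential $d\mathfrak{F}_0(\xi)[h]$ splits into three classes of terms according to whether the derivative acts on $\xi^i$, $\xi^j$, or $\xi^\kappa$; by the $i\leftrightarrow j$ symmetry, the first two combine into a single ``inner derivative'' contribution (picking up a factor of two), while the third gives an ``outer derivative'' contribution. Once we substitute $\xi=\mathfrak{K}_\gamma e_\gamma$, two of the three free indices are forced to equal $\gamma$, leaving a single free index (paired with $h$) and the internal summation variable $\nu$. For each of the four sign choices inside $\sum_\pm\mathds{1}(\ssomega_i\pm\ssomega_j\pm\ssomega_\kappa\pm\ssomega_m=0)$, the resonance condition translates (using that $\ssomega_n$ is affine in $n$) into a linear equation on the free index, solving to one of $a=m$, $a=2\gamma-m$, $a=m+2\gamma+2c$ or $a=m-2\gamma-2c$.

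The next step is to rule out the two ``exotic'' candidate indices by invoking the support property of the Fourier coefficients from Lemma \ref{LemmaClosedFormulasModel3YMCbarijm}: $\overline{\mathfrak{C}}_{ij\nu}=0$ unless $|i-j|\le\nu\le i+j$. A direct check shows that for $a=m\pm(2\gamma+2c)$ the two triangle inequalities required by the product $\overline{\mathfrak{C}}_{\gamma\nu m}\overline{\mathfrak{C}}_{a\gamma\nu}$ are incompatible, hence these terms drop out identically. Only the surviving values $a=m$ (always) and $a=2\gamma-m$ (for $0\le m\le 2\gamma$) contribute, producing respectively the coefficient $\mathfrak{a}_{\gamma m}$ of $h^m$ and the coefficient $\mathfrak{b}_{\gamma m}$ of $h^{2\gamma-m}$. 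Matching each surviving sign case to the right part of $\mathfrak{F}_0$ yields: for $\mathfrak{a}_{\gamma m}$, one summand from the plus-inner derivative (denominator $\ssomega_\nu^2-(\ssomega_m+\ssomega_\gamma)^2$), one from the minus-inner derivative (denominator $\ssomega_\nu^2-(\ssomega_m-\ssomega_\gamma)^2$), and one from the minus-outer derivative (denominator $\ssomega_\nu^2$); for $\mathfrak{b}_{\gamma m}$, one summand from the plus-outer derivative and one from the minus-inner derivative. The non-resonance restriction $\ssomega_i-\ssomega_j\neq\pm\ssomega_\nu$ appearing in Lemma \ref{LemmaDefinitionMathfrakF} translates into the excluded value $\nu\neq\pm(m-\gamma)-2$ in exactly the sums where the diophantine denominator $\ssomega_\nu^2-(\ssomega_m-\ssomega_\gamma)^2$ would be critical; for the outer-derivative contributions the condition is automatic (since there $\ssomega_i-\ssomega_j=0\neq\pm\ssomega_\nu$ and $\ssomega_\nu\neq 2\ssomega_\gamma$ already follows from the range $\nu\le 2\gamma$).

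The principal obstacle is the book-keeping of sign cases: each of the two parts of $\mathfrak{F}_0$ carries four combinations of $(\pm,\pm)$, each of the three derivative classes yields its own set, and the final assembly requires correct identification of which sign case feeds which summand of $\mathfrak{a}_{\gamma m}$ and $\mathfrak{b}_{\gamma m}$. As a consistency check, Euler's identity for the cubic homogeneous map $\mathfrak{F}_0$ gives $d\mathfrak{F}_0(\mathfrak{K}_\gamma e_\gamma)[e_\gamma]^\gamma=3(\mathfrak{F}_0(\mathfrak{K}_\gamma e_\gamma))^\gamma/\mathfrak{K}_\gamma$, which (using Lemma \ref{LemmaComputemathfrakFforsfmallepsilon}) imposes the relation $\mathfrak{a}_{\gamma\gamma}+\mathfrak{b}_{\gamma\gamma}=3\mathfrak{q}_\gamma$; verifying this identity from the closed formulas for $\mathfrak{a}_{\gamma m}$ and $\mathfrak{b}_{\gamma m}$ at $m=\gamma$ provides an effective internal check on the whole bookkeeping.
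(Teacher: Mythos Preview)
Your proposal is correct and follows essentially the same route as the paper: differentiate the cubic expression of Lemma~\ref{LemmaDefinitionMathfrakF} via the product rule, specialize to the one--mode $\xi=\mathfrak{K}_\gamma e_\gamma$, resolve each sign case of the resonance indicator to identify the surviving index of $h$, and eliminate the ``exotic'' indices $m\pm(2\gamma+4)$ by the triangle--type support constraint on $\overline{\mathfrak{C}}_{ij\nu}$ from Lemma~\ref{LemmaClosedFormulasModel3YMCbarijm}. The only notable addition is your Euler--identity consistency check $\mathfrak{a}_{\gamma\gamma}+\mathfrak{b}_{\gamma\gamma}=3\mathfrak{q}_\gamma$, which the paper does not record but which is a sound sanity test on the bookkeeping.
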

\begin{proof}
	Let $\gamma  \geq 0$ be an integer, $\mathfrak{K}_{\gamma} \in \mathbb{R}$ and $\xi$ to be the 1--mode initial data as above and pick any $h \in l_{s+3 }^2$, $\epsilon >0$ and integer $m\geq 0$. Then, we set $\hat{\xi}=\xi+\epsilon h$ and according to Lemma \ref{LemmaDefinitionMathfrakF}, we have
	\begin{align*}
		\left( \mathfrak{F}_{0}( \hat{\xi}) \right)^m =\left( \mathfrak{F}_{0-}( \hat{\xi})  \right)^m +\left( \mathfrak{F}_{0+}( \hat{\xi}) \right)^m,
	\end{align*}
	where
\begin{align*}
	\left(	\mathfrak{F}_{0-}( \hat{\xi}) \right)^m= & \frac{9}{4} \sum_{\kappa,\nu \geq 0} \overline{\mathfrak{C}}_{\kappa \nu m} \sum_{ \substack{i,j \geq 0 \\ \ssomega_{i}-\ssomega_{j} \neq \pm \ssomega_{\nu}  }  } \frac{\overline{\mathfrak{C}}_{ij\nu }}{\ssomega_{\nu}^2-(\ssomega_{i}-\ssomega_{j})^2  } \hat{\xi}^i \hat{\xi}^j \hat{\xi}^{\kappa} \sum_{\pm} \mathds{1}(\ssomega_{i}-\ssomega_{j}\pm \ssomega_{\kappa} \pm \ssomega_{m}=0), \\
		\left( \mathfrak{F}_{0+}( \hat{\xi})  \right)^m = &\frac{9}{4} \sum_{\kappa,\nu \geq 0} \overline{\mathfrak{C}}_{\kappa \nu m} \sum_{ \substack{i,j \geq 0 \\ \ssomega_{i}+\ssomega_{j} \neq \pm \ssomega_{\nu}  }  } \frac{\overline{\mathfrak{C}}_{ij\nu }}{\ssomega_{\nu}^2-(\ssomega_{i}+\ssomega_{j})^2  } \hat{\xi}^i \hat{\xi}^j \hat{\xi}^{\kappa} \sum_{\pm} \mathds{1}(\ssomega_{i}+\ssomega_{j}\pm \ssomega_{\kappa} \pm \ssomega_{m}=0).
	\end{align*} 
	We expand
	\begin{align*}
		\mathfrak{F}_{0-}( \hat{\xi}) &= \mathfrak{F}_{0-}(\xi) + \epsilon \cdot d \mathfrak{F}_{0-}(\xi)[h] + \mathcal{O}(\epsilon^2), \\
		\mathfrak{F}_{0+}( \hat{\xi}) &= \mathfrak{F}_{0+}(\xi) + \epsilon \cdot d \mathfrak{F}_{0+}(\xi)[h] + \mathcal{O}(\epsilon^2),
	\end{align*}
	where, using the definition of the 1--mode initial data, we obtain
	\begin{align*}
		\left( d \mathfrak{F}_{0-}(\xi)[h]  \right)^m &=
		\frac{9}{4} \sum_{\kappa,\nu \geq 0} \overline{\mathfrak{C}}_{\kappa \nu m} \sum_{ \substack{i,j \geq 0 \\ \ssomega_{i}-\ssomega_{j} \neq \pm \ssomega_{\nu}  }  } \frac{\overline{\mathfrak{C}}_{ij\nu }}{\ssomega_{\nu}^2-(\ssomega_{i}-\ssomega_{j})^2  }\cdot \\
		& \cdot  \left[
		  h^i  \xi^j \xi^\kappa 
		  + \xi^i h^j \xi^\kappa +
		  \xi^i \xi^j h^\kappa 
		\right]\sum_{\pm} \mathds{1}(\ssomega_{i}-\ssomega_{j}\pm \ssomega_{\kappa} \pm \ssomega_{m}=0) \\
		& =\mathfrak{K}_{\gamma}^2 \Bigg\{ \frac{9}{4} \sum_{ \nu \geq 0} \overline{\mathfrak{C}}_{\gamma \nu m} \sum_{ \substack{i \geq 0 \\ \ssomega_{i}-\ssomega_{\gamma} \neq \pm \ssomega_{\nu}  }  } \frac{\overline{\mathfrak{C}}_{i\gamma \nu }}{\ssomega_{\nu}^2-(\ssomega_{i}-\ssomega_{\gamma})^2  }  
		  h^i     \sum_{\pm} \mathds{1}(\ssomega_{i}-\ssomega_{\gamma}\pm \ssomega_{\gamma} \pm \ssomega_{m}=0) \\
		&+ \frac{9}{4} \sum_{ \nu \geq 0} \overline{\mathfrak{C}}_{\gamma \nu m} \sum_{ \substack{j \geq 0 \\ \ssomega_{\gamma}-\ssomega_{j} \neq \pm \ssomega_{\nu}  }  } \frac{\overline{\mathfrak{C}}_{\gamma j\nu }}{\ssomega_{\nu}^2-(\ssomega_{\gamma}-\ssomega_{j})^2  }   
		    h^j    \sum_{\pm} \mathds{1}(\ssomega_{\gamma}-\ssomega_{j}\pm \ssomega_{\gamma} \pm \ssomega_{m}=0)  \\
		&+ \frac{9}{4} \sum_{\kappa,\nu \geq 0} \overline{\mathfrak{C}}_{\kappa \nu m} 
		 \frac{\overline{\mathfrak{C}}_{\gamma\gamma \nu }}{\ssomega_{\nu}^2-(\ssomega_{\gamma}-\ssomega_{\gamma})^2  }   
		 h^\kappa   \sum_{\pm} \mathds{1}( \pm \ssomega_{\kappa} \pm \ssomega_{m}=0) \Bigg\}.
	\end{align*} 
Recall the definition of the eigenvalues $\ssomega_i:=i+2$ for all integers $i \geq 0$ and also recall that $m,i,j,\kappa,\nu,\gamma \geq 0$. Then, we have
\begin{align*}
	& \ssomega_{i}-\ssomega_{\gamma}
	\pm 
	\ssomega_{\gamma} 
	\pm \ssomega_{m}=0 \text{ and } \ssomega_{i}-\ssomega_{\gamma} \neq \pm \ssomega_{\nu} \Longleftrightarrow \\
	& \begin{dcases}
		 \ssomega_{i}-\ssomega_{\gamma}
	+ 
	\ssomega_{\gamma} 
	+ \ssomega_{m}=0 \text{ and } \ssomega_{i}-\ssomega_{\gamma} \neq \pm \ssomega_{\nu}   \\
	 \ssomega_{i}-\ssomega_{\gamma}
	+
	\ssomega_{\gamma} 
	- \ssomega_{m}=0 \text{ and } \ssomega_{i}-\ssomega_{\gamma} \neq \pm \ssomega_{\nu}   \\
	 \ssomega_{i}-\ssomega_{\gamma}
	-
	\ssomega_{\gamma} 
	+ \ssomega_{m}=0 \text{ and } \ssomega_{i}-\ssomega_{\gamma} \neq \pm \ssomega_{\nu}   \\
	 \ssomega_{i}-\ssomega_{\gamma}
	-
	\ssomega_{\gamma} 
	- \ssomega_{m}=0 \text{ and } \ssomega_{i}-\ssomega_{\gamma} \neq \pm \ssomega_{\nu}   
	\end{dcases} \Longleftrightarrow \\ 
	& \begin{dcases}
          i=m \text{ and }  \nu \neq  \pm(m-\gamma)-2      \\
           i=2 \gamma -m \text{ and }  m \leq 2\gamma \text{ and }    \nu  \neq \pm(m-\gamma) -2     \\
            i=2 \gamma +m+4 \text{ and }    \nu  \neq  2+m + \gamma    
	\end{dcases} 
\end{align*}
and
\begin{align*}
	& \ssomega_{\gamma}-\ssomega_{j}\pm \ssomega_{\gamma} \pm \ssomega_{m}=0 \text{ and } \ssomega_{\gamma}-\ssomega_{j} \neq \pm \ssomega_{\nu}  \Longleftrightarrow \\
	& \begin{dcases}
		\ssomega_{\gamma}-\ssomega_{j}
		+
		\ssomega_{\gamma} 
		+
		\ssomega_{m}=0 \text{ and } \ssomega_{\gamma}-\ssomega_{j} \neq \pm \ssomega_{\nu}  \\
		\ssomega_{\gamma}-\ssomega_{j}
		+
		\ssomega_{\gamma} 
		- 
		\ssomega_{m}=0 \text{ and } \ssomega_{\gamma}-\ssomega_{j} \neq \pm \ssomega_{\nu} \\
		\ssomega_{\gamma}-\ssomega_{j}
		- 
		\ssomega_{\gamma} 
		+ 
		\ssomega_{m}=0 \text{ and } \ssomega_{\gamma}-\ssomega_{j} \neq \pm \ssomega_{\nu}  \\
		\ssomega_{\gamma}-\ssomega_{j}
		-
		\ssomega_{\gamma} 
		- 
		\ssomega_{m}=0 \text{ and } \ssomega_{\gamma}-\ssomega_{j} \neq \pm \ssomega_{\nu} 
	\end{dcases} \Longleftrightarrow \\
	& \begin{dcases}
		j= 2 \gamma +m+4 \text{ and } \nu \neq  2+m+ \gamma  \\
	j= 2 \gamma -m \text{ and }  m \leq 2\gamma \text{ and } \nu \neq \pm(m-\gamma) -2  \\
		j=m \text{ and } \nu \neq \pm(m-\gamma) -2 
	\end{dcases}  
\end{align*}
and
\begin{align*}
	\pm \ssomega_{\kappa} \pm \ssomega_{m}=0 \Longleftrightarrow 
	\begin{dcases}
		m=-\kappa -4 \\
		m=\kappa
	\end{dcases} \Longleftrightarrow
	\kappa = m.
\end{align*}
Therefore, we infer
	\begin{align*}
		\mathfrak{K}_{\gamma}^{-2} \left( d \mathfrak{F}_{0-}(\xi)[h]  \right)^m
		&   =     \frac{9}{4} h^m \sum_{ \substack{ \nu = 0 \\ \nu \neq \pm(m-\gamma)-2  }}^{\infty}   \frac{\left(\overline{\mathfrak{C}}_{m\gamma \nu } \right)^2}{\ssomega_{\nu}^2-(\ssomega_{m}-\ssomega_{\gamma})^2  }  
		   \\
		  &   + \frac{9}{4}h^{2\gamma-m} \mathds{1}(0\leq m\leq 2\gamma) \sum_{ \substack{ \nu = 0 \\ \nu \neq \pm(m-\gamma)-2  }}^{\infty}   \frac{\overline{\mathfrak{C}}_{\gamma \nu m} \overline{\mathfrak{C}}_{2\gamma-m,\gamma ,\nu }}{\ssomega_{\nu}^2-(\ssomega_{2\gamma-m}-\ssomega_{\gamma})^2  }  
		   \\
		  &   + \frac{9}{4}h^{2\gamma+m+4}  \sum_{ \substack{ \nu = 0 \\ \nu \neq  2+m+\gamma  }}^{\infty}    \frac{\overline{\mathfrak{C}}_{\gamma \nu m}\overline{\mathfrak{C}}_{2\gamma+m+4,\gamma ,\nu }}{\ssomega_{\nu}^2-(\ssomega_{2\gamma+m+4}-\ssomega_{\gamma})^2  }  
		      \\
		&+ \frac{9}{4} h^{2\gamma+m+4}  \sum_{ \substack{\nu = 0 \\ \nu \neq 2+m+\gamma  }}^{\infty}  \frac{\overline{\mathfrak{C}}_{\gamma \nu m}\overline{\mathfrak{C}}_{\gamma ,2\gamma+m+4,\nu }}{\ssomega_{\nu}^2-(\ssomega_{\gamma}-\ssomega_{2\gamma+m+4})^2  }   
		 \\
		  & + \frac{9}{4}h^{2\gamma-m}   \mathds{1}(0\leq m\leq 2\gamma) \sum_{\substack{ \nu = 0 \\ \nu \neq \pm (m-\gamma)-2  }}^{\infty}  \frac{\overline{\mathfrak{C}}_{\gamma \nu m}\overline{\mathfrak{C}}_{\gamma ,2\gamma-m,\nu }}{\ssomega_{\nu}^2-(\ssomega_{\gamma}-\ssomega_{2\gamma-m})^2  }   
		     \\
		 &   + \frac{9}{4} h^m   \sum_{\substack{ \nu = 0 \\ \nu \neq \pm(m-\gamma)-2  }}^{\infty}   \frac{\left(\overline{\mathfrak{C}}_{\gamma \nu m}\right)^2}{\ssomega_{\nu}^2-(\ssomega_{\gamma}-\ssomega_{m})^2  }   
		       \\ 
		&+ \frac{9}{4} h^m  \sum_{ \nu =0}^{\infty}  
		 \frac{\overline{\mathfrak{C}}_{m \nu m}\overline{\mathfrak{C}}_{\gamma\gamma \nu }}{\ssomega_{\nu}^2 } .	
\end{align*} 
The latter can be written as
\begin{align*}
	&\mathfrak{K}_{\gamma}^{-2} \left( d \mathfrak{F}_{0-}(\xi)[h]  \right)^m  = 
	  h^m \left[
	\frac{9}{2}  \sum_{ \substack{ \nu = 0 \\ \nu \neq \pm(m-\gamma)-2  }}^{\infty}   \frac{\left(\overline{\mathfrak{C}}_{m\gamma \nu } \right)^2}{\ssomega_{\nu}^2-(\ssomega_{m}-\ssomega_{\gamma})^2  }  
	+  \frac{9}{4}   \sum_{ \nu =0}^{\infty} 
		 \frac{\overline{\mathfrak{C}}_{m \nu m}\overline{\mathfrak{C}}_{\gamma\gamma \nu }}{\ssomega_{\nu}^2 } 
	\right] \\\
	&+h^{2\gamma-m} \mathds{1}(0\leq m\leq 2\gamma) \left[
	 \frac{9}{2} \sum_{ \substack{ \nu = 0 \\ \nu \neq \pm(m-\gamma)-2  }}^{\infty}   \frac{\overline{\mathfrak{C}}_{\gamma \nu m} \overline{\mathfrak{C}}_{2\gamma-m,\gamma ,\nu }}{\ssomega_{\nu}^2-(\ssomega_{2\gamma-m}-\ssomega_{\gamma})^2  }   
	\right] \\
	&+ h^{2\gamma+m+4}\left[
	 \frac{9}{2}   \sum_{ \substack{ \nu = 0 \\ \nu \neq  2+m+\gamma  }}^{\infty}    \frac{\overline{\mathfrak{C}}_{\gamma \nu m}\overline{\mathfrak{C}}_{2\gamma+m+4,\gamma ,\nu }}{\ssomega_{\nu}^2-(\ssomega_{2\gamma+m+4}-\ssomega_{\gamma})^2  }      
	\right]  .
\end{align*}
Similarly, using the definition of the 1--mode initial data, we obtain
	\begin{align*}
		\left( d \mathfrak{F}_{0+}(\xi)[h]  \right)^m &=
		\frac{9}{4} \sum_{\kappa,\nu \geq 0} \overline{\mathfrak{C}}_{\kappa \nu m} \sum_{ \substack{i,j \geq 0 \\ \ssomega_{i}+\ssomega_{j} \neq \pm \ssomega_{\nu}  }  } \frac{\overline{\mathfrak{C}}_{ij\nu }}{\ssomega_{\nu}^2-(\ssomega_{i}+\ssomega_{j})^2  }\cdot \\
		& \cdot  \left[
		  h^i  \xi^j \xi^\kappa 
		  + \xi^i h^j \xi^\kappa +
		  \xi^i \xi^j h^\kappa 
		\right]\sum_{\pm} \mathds{1}(\ssomega_{i}+\ssomega_{j}\pm \ssomega_{\kappa} \pm \ssomega_{m}=0) \\
		& =\mathfrak{K}_{\gamma}^2 \Bigg\{ \frac{9}{4} \sum_{ \nu \geq 0} \overline{\mathfrak{C}}_{\gamma \nu m} \sum_{ \substack{i \geq 0 \\ \ssomega_{i}+\ssomega_{\gamma} \neq \pm \ssomega_{\nu}  }  } \frac{\overline{\mathfrak{C}}_{i\gamma \nu }}{\ssomega_{\nu}^2-(\ssomega_{i}+\ssomega_{\gamma})^2  }  
		  h^i     \sum_{\pm} \mathds{1}(\ssomega_{i}+\ssomega_{\gamma}\pm \ssomega_{\gamma} \pm \ssomega_{m}=0) \\
		&+ \frac{9}{4} \sum_{ \nu \geq 0} \overline{\mathfrak{C}}_{\gamma \nu m} \sum_{ \substack{j \geq 0 \\ \ssomega_{\gamma}+\ssomega_{j} \neq \pm \ssomega_{\nu}  }  } \frac{\overline{\mathfrak{C}}_{\gamma j\nu }}{\ssomega_{\nu}^2-(\ssomega_{\gamma}+\ssomega_{j})^2  }   
		    h^j    \sum_{\pm} \mathds{1}(\ssomega_{\gamma}+\ssomega_{j}\pm \ssomega_{\gamma} \pm \ssomega_{m}=0)  \\
		&+ \frac{9}{4} \sum_{\substack{ \kappa,\nu \geq 0  }}\overline{\mathfrak{C}}_{\kappa \nu m}  
		 \frac{ \mathds{1}(\ssomega_{\gamma}+\ssomega_{\gamma} \neq \pm \ssomega_{\nu} )\overline{\mathfrak{C}}_{\gamma\gamma \nu }}{\ssomega_{\nu}^2-(\ssomega_{\gamma}+\ssomega_{\gamma})^2  }   
		 h^\kappa   \sum_{\pm} \mathds{1}(\ssomega_{\gamma}+\ssomega_{\gamma} \pm \ssomega_{\kappa} \pm \ssomega_{m}=0) \Bigg\}.
	\end{align*} 
As before, recall the definition of the eigenvalues $\ssomega_i:=i+2$ for all integers $i \geq 0$ and also recall that $m,i,j,\kappa,\nu,\gamma \geq 0$. Then, we have
\begin{align*}
	& \ssomega_{i}+\ssomega_{\gamma}
	\pm 
	\ssomega_{\gamma} 
	\pm \ssomega_{m}=0 \text{ and } \ssomega_{i}+\ssomega_{\gamma} \neq \pm \ssomega_{\nu} \Longleftrightarrow \\
	& \begin{dcases}
		 \ssomega_{i}+\ssomega_{\gamma}
	+ 
	\ssomega_{\gamma} 
	+ \ssomega_{m}=0 \text{ and } \ssomega_{i}+\ssomega_{\gamma} \neq \pm \ssomega_{\nu}   \\
	 \ssomega_{i}+\ssomega_{\gamma}
	+
	\ssomega_{\gamma} 
	- \ssomega_{m}=0 \text{ and } \ssomega_{i}+\ssomega_{\gamma} \neq \pm \ssomega_{\nu}   \\
	 \ssomega_{i}+\ssomega_{\gamma}
	-
	\ssomega_{\gamma} 
	+ \ssomega_{m}=0 \text{ and } \ssomega_{i}+\ssomega_{\gamma} \neq \pm \ssomega_{\nu}   \\
	 \ssomega_{i}+\ssomega_{\gamma}
	-
	\ssomega_{\gamma} 
	- \ssomega_{m}=0 \text{ and } \ssomega_{i}+\ssomega_{\gamma} \neq \pm \ssomega_{\nu}   
	\end{dcases} \Longleftrightarrow \\ 
	& \begin{dcases}
          i=-4+m-2\gamma \text{ and } m \geq 4+2\gamma \text{ and } \nu \neq \pm(m-\gamma)-2      \\
           i=m \text{ and }    \nu \neq 2+m+\gamma  
	\end{dcases} 
\end{align*}
and
\begin{align*}
	& \ssomega_{\gamma}+\ssomega_{j}\pm \ssomega_{\gamma} \pm \ssomega_{m}=0 \text{ and } \ssomega_{\gamma}+\ssomega_{j} \neq \pm \ssomega_{\nu}  \Longleftrightarrow \\
	& \begin{dcases}
		\ssomega_{\gamma}+\ssomega_{j}
		+
		\ssomega_{\gamma} 
		+
		\ssomega_{m}=0 \text{ and } \ssomega_{\gamma}+\ssomega_{j} \neq \pm \ssomega_{\nu}  \\
		\ssomega_{\gamma}+\ssomega_{j}
		+
		\ssomega_{\gamma} 
		- 
		\ssomega_{m}=0 \text{ and } \ssomega_{\gamma}+\ssomega_{j} \neq \pm \ssomega_{\nu} \\
		\ssomega_{\gamma}+\ssomega_{j}
		- 
		\ssomega_{\gamma} 
		+ 
		\ssomega_{m}=0 \text{ and } \ssomega_{\gamma}+\ssomega_{j} \neq \pm \ssomega_{\nu}  \\
		\ssomega_{\gamma}+\ssomega_{j}
		-
		\ssomega_{\gamma} 
		- 
		\ssomega_{m}=0 \text{ and } \ssomega_{\gamma}+\ssomega_{j} \neq \pm \ssomega_{\nu} 
	\end{dcases} \Longleftrightarrow \\
	& \begin{dcases}
		j= -4+m-2\gamma  \text{ and }m\geq 4+2\gamma \text{ and } \nu \neq \pm(m-\gamma)-2  \\
	j= m   \text{ and } \nu \neq 2+m+\gamma  
	\end{dcases}  
\end{align*}
and
\begin{align*}
&	\ssomega_{\gamma}+\ssomega_{\gamma} \pm \ssomega_{\kappa} \pm \ssomega_{m}=0 \text{ and } \ssomega_{\gamma}+\ssomega_{\gamma} \neq \pm \ssomega_{\nu}   \Longleftrightarrow \\
& \begin{dcases}
		\ssomega_{\gamma}+\ssomega_{\gamma}
		+
		\ssomega_{\kappa} 
		+
		\ssomega_{m}=0 \text{ and } \ssomega_{\gamma}+\ssomega_{\gamma} \neq \pm \ssomega_{\nu}  \\
		\ssomega_{\gamma}+\ssomega_{\gamma}
		+
		\ssomega_{\kappa} 
		- 
		\ssomega_{m}=0 \text{ and } \ssomega_{\gamma}+\ssomega_{\gamma} \neq \pm \ssomega_{\nu} \\
		\ssomega_{\gamma}+\ssomega_{\gamma}
		- 
		\ssomega_{\kappa} 
		+ 
		\ssomega_{m}=0 \text{ and } \ssomega_{\gamma}+\ssomega_{\gamma} \neq \pm \ssomega_{\nu}  \\
		\ssomega_{\gamma}+\ssomega_{\gamma}
		-
		\ssomega_{\kappa} 
		- 
		\ssomega_{m}=0 \text{ and } \ssomega_{\gamma}+\ssomega_{\gamma} \neq \pm \ssomega_{\nu} 
	\end{dcases} \Longleftrightarrow \\
	& \begin{dcases}
		\kappa= -4+m-2\gamma  \text{ and } m\geq 4+2\gamma \text{ and } \nu \neq 2+2\gamma \\
	\kappa = 4+m+2\gamma   \text{ and } \nu \neq 2+2\gamma  \\
	\kappa = 2\gamma-m  \text{ and } m\leq 2\gamma \text{ and } \nu \neq 2+2\gamma
	\end{dcases}  
\end{align*}
Therefore, we infer
		\begin{align*}
		\mathfrak{K}_{\gamma}^{-2} \left( d \mathfrak{F}_{0+}(\xi)[h]  \right)^m 
		& =  \frac{9}{4} h^{-4+m-2\gamma}     \mathds{1}(m \geq 4+2\gamma)  \sum_{ \substack{ \nu = 0 \\ \nu \neq \pm(m-\gamma)-2 }}^{\infty}   \frac{\overline{\mathfrak{C}}_{\gamma \nu m}\overline{\mathfrak{C}}_{-4+m-2\gamma,\gamma, \nu }}{\ssomega_{\nu}^2-(\ssomega_{-4+m-2\gamma}+\ssomega_{\gamma})^2  }  
		 \\
		& + \frac{9}{4}  h^m   \sum_{ \substack{ \nu = 0 \\ \nu \neq 2+m+\gamma}}^{\infty}    \frac{\left(\overline{\mathfrak{C}}_{\gamma \nu m} \right)^2 }{\ssomega_{\nu}^2-(\ssomega_{m}+\ssomega_{\gamma})^2  }  
		  \\
		&+ \frac{9}{4}  h^{-4+m-2\gamma}  \mathds{1}(m \geq 4+2\gamma) \sum_{\substack{ \nu = 0 \\ \nu \neq \pm (m-\gamma)-2 } }^{\infty}   \frac{ \overline{\mathfrak{C}}_{\gamma \nu m}\overline{\mathfrak{C}}_{\gamma, -4+m-2\gamma,\nu }}{\ssomega_{\nu}^2-(\ssomega_{\gamma}+\ssomega_{-4+m-2\gamma})^2  }   
		     \\
		  &+ \frac{9}{4} h^m     \sum_{\substack{ \nu = 0 \\ \nu \neq 2+m+\gamma } }^{\infty}  \frac{\left( \overline{\mathfrak{C}}_{\gamma \nu m}\right)^2  }{\ssomega_{\nu}^2-(\ssomega_{\gamma}+\ssomega_{m})^2  }   
		     \\  
		&+ \frac{9}{4}h^{-4+m-2\gamma}  \mathds{1}(m \geq 4+2\gamma)  \sum_{\substack{ \nu = 0 \\ \nu \neq 2+2\gamma} }^{\infty}  
		 \frac{ \overline{\mathfrak{C}}_{-4+m-2\gamma, \nu, m}  \overline{\mathfrak{C}}_{\gamma\gamma \nu }}{\ssomega_{\nu}^2-( 2\ssomega_{\gamma})^2  }   
		 \\
		 &+ \frac{9}{4} h^{4+m+2\gamma} \sum_{\substack{ \nu = 0 \\ \nu \neq 2+2\gamma} }^{\infty}   
		 \frac{ \overline{\mathfrak{C}}_{4+m+2\gamma, \nu ,m} \overline{\mathfrak{C}}_{\gamma\gamma \nu }}{\ssomega_{\nu}^2-( 2\ssomega_{\gamma})^2  }   
		   \\
		 &+ \frac{9}{4}h^{2\gamma-m}  \mathds{1}(0\leq m\leq 2\gamma)  \sum_{\substack{ \nu = 0 \\ \nu \neq 2+2\gamma} }^{\infty}  
		 \frac{\overline{\mathfrak{C}}_{2\gamma-m, \nu, m} \overline{\mathfrak{C}}_{\gamma\gamma \nu }}{\ssomega_{\nu}^2-(2\ssomega_{\gamma})^2  }   .
	\end{align*} 
	The latter can be written as
\begin{align*}
	& \mathfrak{K}_{\gamma}^{-2} \left( d \mathfrak{F}_{0+}(\xi)[h]  \right)^m  = 
	  h^m \left[
	  \frac{9}{2}   \sum_{ \substack{ \nu = 0 \\ \nu \neq 2+m+\gamma}}^{\infty}    \frac{\left(\overline{\mathfrak{C}}_{\gamma \nu m} \right)^2 }{\ssomega_{\nu}^2-(\ssomega_{m}+\ssomega_{\gamma})^2  }   
	\right]+ h^{4+m+2\gamma}\left[
	\frac{9}{4}   \sum_{\substack{ \nu = 0 \\ \nu \neq 2+2\gamma} }^{\infty}   
		 \frac{ \overline{\mathfrak{C}}_{4+m+2\gamma, \nu ,m} \overline{\mathfrak{C}}_{\gamma\gamma \nu }}{\ssomega_{\nu}^2-( 2\ssomega_{\gamma})^2  } 
	\right] \\
	&+ h^{-4+m-2\gamma} \mathds{1}(m \geq 4+2\gamma) \Bigg[
	  \frac{9}{2}  \sum_{\substack{ \nu = 0 \\ \nu \neq \pm (m-\gamma)-2 } }^{\infty}   \frac{ \overline{\mathfrak{C}}_{\gamma \nu m}\overline{\mathfrak{C}}_{\gamma, -4+m-2\gamma,\nu }}{\ssomega_{\nu}^2-(\ssomega_{\gamma}+\ssomega_{-4+m-2\gamma})^2  }+ \frac{9}{4}   \sum_{\substack{ \nu = 0 \\ \nu \neq 2+2\gamma} }^{\infty}  
		 \frac{ \overline{\mathfrak{C}}_{-4+m-2\gamma, \nu, m}  \overline{\mathfrak{C}}_{\gamma\gamma \nu }}{\ssomega_{\nu}^2-( 2\ssomega_{\gamma})^2  }  
	\Bigg] \\
	&+h^{2\gamma-m} \mathds{1}(0\leq m\leq 2\gamma) \left[
	 \frac{9}{4}   \sum_{\substack{ \nu = 0 \\ \nu \neq 2+2\gamma} }^{\infty}  
		 \frac{\overline{\mathfrak{C}}_{2\gamma-m, \nu, m} \overline{\mathfrak{C}}_{\gamma\gamma \nu }}{\ssomega_{\nu}^2-(2\ssomega_{\gamma})^2  }  
	\right]  .
\end{align*}
Putting all together, yields that
\begin{align*}
	\mathfrak{K}_{\gamma}^{-2} \left( d \mathfrak{F}_{0}(\xi)[h]  \right)^m = \mathfrak{K}_{\gamma}^{-2} \left[ \left( d \mathfrak{F}_{0-}(\xi)[h]  \right)^m + \left( d \mathfrak{F}_{0+}(\xi)[h]  \right)^m  \right]
\end{align*}
equals to
\begin{align*}
	&  h^{m} \left[
	\frac{9}{2}   \sum_{ \substack{ \nu = 0 \\ \nu \neq 2+m+\gamma}}^{\infty}    \frac{\left(\overline{\mathfrak{C}}_{\gamma \nu m} \right)^2 }{\ssomega_{\nu}^2-(\ssomega_{m}+\ssomega_{\gamma})^2  }  +\frac{9}{2}  \sum_{ \substack{ \nu = 0 \\ \nu \neq \pm(m-\gamma)-2  }}^{\infty}   \frac{\left(\overline{\mathfrak{C}}_{m\gamma \nu } \right)^2}{\ssomega_{\nu}^2-(\ssomega_{m}-\ssomega_{\gamma})^2  }  
	+  \frac{9}{4}  \sum_{ \nu =0}^{\infty}  
		 \frac{\overline{\mathfrak{C}}_{m \nu m}\overline{\mathfrak{C}}_{\gamma\gamma \nu }}{\ssomega_{\nu}^2 } 
	\right] \\
	&+ h^{2\gamma-m} \mathds{1}(0\leq m\leq 2\gamma) \left[
	 \frac{9}{4}   \sum_{\substack{ \nu = 0 \\ \nu \neq 2+2\gamma} }^{\infty}  
		 \frac{\overline{\mathfrak{C}}_{2\gamma-m, \nu, m} \overline{\mathfrak{C}}_{\gamma\gamma \nu }}{\ssomega_{\nu}^2-(2\ssomega_{\gamma})^2  }  +\frac{9}{2} \sum_{ \substack{ \nu = 0 \\ \nu \neq \pm(m-\gamma)-2  }}^{\infty}   \frac{\overline{\mathfrak{C}}_{\gamma \nu m} \overline{\mathfrak{C}}_{2\gamma-m,\gamma ,\nu }}{\ssomega_{\nu}^2-(\ssomega_{2\gamma-m}-\ssomega_{\gamma})^2  }
	\right] \\ 
	&+ h^{4+m+2\gamma} \left[
	\frac{9}{4}   \sum_{\substack{ \nu = 0 \\ \nu \neq 2+2\gamma} }^{\infty}   
		 \frac{ \overline{\mathfrak{C}}_{4+m+2\gamma, \nu ,m} \overline{\mathfrak{C}}_{\gamma\gamma \nu }}{\ssomega_{\nu}^2-( 2\ssomega_{\gamma})^2  } +\frac{9}{2}   \sum_{ \substack{ \nu = 0 \\ \nu \neq  2+m+\gamma  }}^{\infty}    \frac{\overline{\mathfrak{C}}_{\gamma \nu m}\overline{\mathfrak{C}}_{2\gamma+m+4,\gamma ,\nu }}{\ssomega_{\nu}^2-(\ssomega_{2\gamma+m+4}-\ssomega_{\gamma})^2  }   
	\right] \\
	&+ h^{-4+m-2\gamma} \mathds{1}(m \geq 4+2\gamma) \left[
	 \frac{9}{2}  \sum_{\substack{ \nu = 0 \\ \nu \neq \pm (m-\gamma)-2 } }^{\infty}   \frac{ \overline{\mathfrak{C}}_{\gamma \nu m}\overline{\mathfrak{C}}_{\gamma, -4+m-2\gamma,\nu }}{\ssomega_{\nu}^2-(\ssomega_{\gamma}+\ssomega_{-4+m-2\gamma})^2  }+ \frac{9}{4}   \sum_{\substack{ \nu = 0 \\ \nu \neq 2+2\gamma} }^{\infty}  
		 \frac{ \overline{\mathfrak{C}}_{-4+m-2\gamma, \nu, m}  \overline{\mathfrak{C}}_{\gamma\gamma \nu }}{\ssomega_{\nu}^2-( 2\ssomega_{\gamma})^2  } 
	\right].
\end{align*}
Finally, we simplify the formula above and show that 
\begin{align}
	& \overline{\mathfrak{C}}_{4+m+2\gamma, \nu ,m} \overline{\mathfrak{C}}_{\gamma\gamma \nu }=\overline{\mathfrak{C}}_{\gamma \nu m}\overline{\mathfrak{C}}_{2\gamma+m+4,\gamma ,\nu }=0, \label{FourierCancel1} \\
	& \overline{\mathfrak{C}}_{\gamma \nu m}\overline{\mathfrak{C}}_{\gamma, -4+m-2\gamma,\nu }= \overline{\mathfrak{C}}_{-4+m-2\gamma, \nu, m}  \overline{\mathfrak{C}}_{\gamma\gamma \nu }=0, \label{FourierCancel2}
\end{align}
for all $\gamma,\nu, m\geq 0$ and $\gamma,\nu \geq 0$, $m\geq 2\gamma+4$ respectively. These follow immediately from the fact that $\overline{\mathfrak{C}}_{ijm}=0$ for all integers $i,j,m \geq 0$ with $i+j<m$ (Lemma \ref{LemmaClosedFormulasModel3YMCbarijm}). Specifically, we have
\begin{itemize}
	\item $\nu > 2\gamma \Longrightarrow \overline{\mathfrak{C}}_{\gamma \gamma \nu}=0$,
	\item $0\leq \nu \leq 2\gamma \Longrightarrow  m + \nu \leq m+ 2\gamma   <  4+m+2\gamma \Longrightarrow \overline{\mathfrak{C}}_{4+m+2\gamma, \nu ,m}=0 $,
\end{itemize}
as well as
\begin{itemize}
	\item $\nu > \gamma+m \Longrightarrow \overline{\mathfrak{C}}_{\gamma \nu m}=0$,
	\item $0\leq \nu \leq \gamma+m \Longrightarrow \gamma+\nu \leq 2\gamma+m <2\gamma+m+4 \Longrightarrow \overline{\mathfrak{C}}_{2\gamma+m+4,\gamma ,\nu }=0 $,
\end{itemize}
that prove \eqref{FourierCancel1}. Similarly, we have
\begin{itemize}
	\item $m > \gamma+\nu  \Longrightarrow \overline{\mathfrak{C}}_{\gamma \nu m}=0$,
	\item $0\leq m \leq  \gamma+\nu  \Longrightarrow -4+m-2\gamma+\gamma =  -4+m-\gamma  \leq -4+\nu < \nu \Longrightarrow \overline{\mathfrak{C}}_{\gamma, -4+m-2\gamma,\nu }=0 $,
\end{itemize}
as well as
\begin{itemize}
	\item $\nu > 2\gamma \Longrightarrow \overline{\mathfrak{C}}_{\gamma \gamma \nu}=0$,
	\item $0\leq \nu \leq 2\gamma \Longrightarrow -4+m-2\gamma+\nu\leq -4+m <m \Longrightarrow \overline{\mathfrak{C}}_{-4+m-2\gamma,\nu,m}=0 $,
\end{itemize}
that prove \eqref{FourierCancel2}. Therefore, $\mathfrak{K}_{\gamma}^{-2} \left( d \mathfrak{F}_{0}(\xi)[h]  \right)^m $ boils down to
\begin{align*}
	&  h^{m} \left[
	\frac{9}{2}   \sum_{ \substack{ \nu = 0 \\ \nu \neq 2+m+\gamma}}^{\infty}    \frac{\left(\overline{\mathfrak{C}}_{\gamma \nu m} \right)^2 }{\ssomega_{\nu}^2-(\ssomega_{m}+\ssomega_{\gamma})^2  }  +\frac{9}{2}  \sum_{ \substack{ \nu = 0 \\ \nu \neq \pm(m-\gamma)-2  }}^{\infty}   \frac{\left(\overline{\mathfrak{C}}_{m\gamma \nu } \right)^2}{\ssomega_{\nu}^2-(\ssomega_{m}-\ssomega_{\gamma})^2  }  
	+  \frac{9}{4}  \sum_{ \nu =0}^{\infty}  
		 \frac{\overline{\mathfrak{C}}_{m \nu m}\overline{\mathfrak{C}}_{\gamma\gamma \nu }}{\ssomega_{\nu}^2 } 
	\right] \\
	&+ h^{2\gamma-m} \mathds{1}(0\leq m\leq 2\gamma) \left[
	 \frac{9}{4}   \sum_{\substack{ \nu = 0 \\ \nu \neq 2+2\gamma} }^{\infty}  
		 \frac{\overline{\mathfrak{C}}_{2\gamma-m, \nu, m} \overline{\mathfrak{C}}_{\gamma\gamma \nu }}{\ssomega_{\nu}^2-(2\ssomega_{\gamma})^2  }  +\frac{9}{2} \sum_{ \substack{ \nu = 0 \\ \nu \neq \pm(m-\gamma)-2  }}^{\infty}   \frac{\overline{\mathfrak{C}}_{\gamma \nu m} \overline{\mathfrak{C}}_{2\gamma-m,\gamma ,\nu }}{\ssomega_{\nu}^2-(\ssomega_{2\gamma-m}-\ssomega_{\gamma})^2  }
	\right]. 
\end{align*}
Using the same argument as above (Lemma \ref{LemmaClosedFormulasModel3YMCbarijm}), we also infer 
\begin{itemize}
	\item $\nu > 2\gamma \Longrightarrow \overline{\mathfrak{C}}_{\gamma \gamma \nu}=0$,
	\item $\nu > \gamma+m \Longrightarrow \overline{\mathfrak{C}}_{\gamma \nu m}=0$.
\end{itemize}
Hence, the latter reduces to
\begin{align*}
	&  h^{m} \left[
	\frac{9}{2}   \sum_{ \substack{ \nu = 0  }}^{m+\gamma}    \frac{\left(\overline{\mathfrak{C}}_{\gamma \nu m} \right)^2 }{\ssomega_{\nu}^2-(\ssomega_{m}+\ssomega_{\gamma})^2  }  +\frac{9}{2}  \sum_{ \substack{ \nu = 0 \\ \nu \neq \pm(m-\gamma)-2  }}^{m+\gamma}   \frac{\left(\overline{\mathfrak{C}}_{m\gamma \nu } \right)^2}{\ssomega_{\nu}^2-(\ssomega_{m}-\ssomega_{\gamma})^2  }  
	+  \frac{9}{4}  \sum_{\substack{ \nu = 0  } }^{2\gamma}   
		 \frac{\overline{\mathfrak{C}}_{m \nu m}\overline{\mathfrak{C}}_{\gamma\gamma \nu }}{\ssomega_{\nu}^2 } 
	\right] \\
	&+ h^{2\gamma-m} \mathds{1}(0\leq m\leq 2\gamma) \left[
	 \frac{9}{4}   \sum_{\substack{ \nu = 0  } }^{2\gamma}  
		 \frac{\overline{\mathfrak{C}}_{2\gamma-m, \nu, m} \overline{\mathfrak{C}}_{\gamma\gamma \nu }}{\ssomega_{\nu}^2-(2\ssomega_{\gamma})^2  }  +\frac{9}{2} \sum_{ \substack{ \nu = 0 \\ \nu \neq \pm(m-\gamma)-2  }}^{m+\gamma}   \frac{\overline{\mathfrak{C}}_{\gamma \nu m} \overline{\mathfrak{C}}_{2\gamma-m,\gamma ,\nu }}{\ssomega_{\nu}^2-(\ssomega_{2\gamma-m}-\ssomega_{\gamma})^2  }
	\right] ,
\end{align*}
that completes the proof.
\end{proof}

 \section{The linear eigenvalue problems}\label{SectionEigenvalueproblems}
Next, we the study the linear eigenvalue problems where the linearized operators are given by \eqref{DefinitionLinearOperator}. As it turns out, in all three models, the associated eigenfunctions are given by Jacobi polynomials and this is a common feature with the Einstein-Klein-Gordon system in spherical symmetry \cite{Maliborski:2013jca}.  
 
  \subsection{Conformal cubic wave equation in spherical symmetry}
  Consider the Hilbert space
  \begin{align}\label{HilbertSpaceModel1}
  	L^2 ( [0,\pi ];\sin^2(x)dx )
  \end{align}
   associated with the inner product
  \begin{align*}
  	(f|g) :=\frac{2}{\pi } \int_{0}^{\pi } f(x) g(x) \sin^2 (x) dx.
  \end{align*}
For the conformal wave equation in spherical symmetry, the operator that governs the solutions to the linearized equation is given by 
\begin{align*}
	 Lu &:= - \frac{1}{\sin^2 (x) } \partial_{x} \left(
	\sin^2 (x) \partial_{x} u
	\right) +  u , 
\end{align*}
endowed with the domain $\mathcal{D}(L)$ defined by
\begin{align*}
	\mathcal{D}(L):=\{ u \in L^2((0,\pi); \sin^2(x)dx): \Hquad   Lu \in L^2((0,\pi); \sin^2(x)dx)\}.
\end{align*}
The operator $L$ is generated by the closed sesquilinear form $a$ defined on 
\begin{align*}
	H^1((0,\pi);\sin^2(x)dx) \times H^1((0,\pi);\sin^2(x)dx)
\end{align*}
that is given by
\begin{align*}
	a\left(u ,v\right):= \int_{0}^{\pi }\left(  \partial_{x}\overline{u} \cdot\partial_{x}v +   \overline{u}  \cdot v  \right)\sin^2(x) dx,
\end{align*}
where the bar denotes the complex conjugate, and
\begin{align*}
	 a(u,u) = \frac{2}{\pi}\|u \|_{H^1((0,\pi);\sin^2(x)dx)}^2.
\end{align*}
In particular, $L$ is self-adjoint on $\mathcal{D}(L)$. Now, the eigenvalue problem $Lu=\omega^2 u$ reads 
\begin{align*}
	 \partial_{x} \left(
	\sin^2 (x) \partial_{x} u
	\right) + (\omega^2-1)\sin^2 (x) u = 0
\end{align*}
and by setting 
\begin{align*}
	u(x)=v(y), \Hquad y=\cos(x)
\end{align*}
it becomes
\begin{align*}
	(1-y^2)v^{\prime \prime}(y) -3y v^{\prime }(y) + (\omega^2-1)  v(y)=0 
\end{align*}
that has non--trivial solutions if and only if the solutions given by the  Chebyshev polynomials of the second kind \cite{MR0372517}, that is $v(y)=U_{n}(y)$, associated with the eigenvalues
\begin{align*}
	  \omega_{n}^2  :=(n+1)^2,\Hquad n \geq 0.
\end{align*}
Hence, the solutions to the eigenvalue problem $Lu=\omega^2 u$ are given by
 \begin{align}\label{EigenfunctionsModel1WS}
	 e_{n}(x) := U_{n }(\cos(x)) 
	 ,\Hquad n \geq 0 .
\end{align}
In addition, the set $\{ e_{n}: n \geq 0 \}$ forms an orthonormal and complete basis for the weighted $L^2 $ in \eqref{HilbertSpaceModel1}. In fact, their orthogonality follows by the orthogonality of the Chebyshev polynomials of the second kind since
\begin{align*}
	\left( e_{n} | e_{m} \right) &=\frac{2}{\pi } \int_{0}^{\pi }   e_{n}(x)  e_{m}(x) \sin^2 (x) dx \\
	&=\frac{2}{\pi } \int_{0}^{\pi } U_{n}(y) U_{m}(y) (1-y^2) \frac{dy}{\sqrt{1-y^2}} \\
	&=\frac{2}{\pi } \int_{0}^{\pi } U_{n}(y) U_{m}(y) \sqrt{1-y^2} dy \\
	&= \mathds{1}(n=m),
\end{align*}
for any $n,m \geq 0$.

 \subsection{Conformal cubic wave equation out of spherical symmetry}
Consider the Hilbert space
\begin{align}\label{HilbertSpaceModel2}
	L^2 ( [0,\pi ];\sin^2(x)dx )
\end{align}
associated with the inner product
 \begin{align*}
 \langle	 f|g \rangle :=  \int_{0}^{\frac{\pi }{2}} f(x)g(x) \sin(2x) dx.
 \end{align*}
For the conformal cubic wave equation out of spherical symmetry, the operator that governs the solutions to the linearized equation is given by 
\begin{align*}
	 \mathsf{L}^{(\mu_1,\mu_2)}u &:=-\partial_{x}^2 u - \left(\frac{\cos x}{\sin x} - \frac{\sin x}{\cos x} \right) \partial_{x} u + \left( 
	\frac{\mu_1^2}{\sin^2 x} + \frac{\mu_2^2}{\cos^2 x}+1
	\right)u \\
	&= -\frac{1}{\sin(2x)} \partial_{x} \left(\sin(2x) \partial_{x}u \right)+ \left( 
	\frac{\mu_1^2}{\sin^2 x} + \frac{\mu_2^2}{\cos^2 x}+1
	\right)u
\end{align*}
endowed with the domain $\mathcal{D}(\mathsf{L^{(\mu_1,\mu_2)}})$ defined by 
\begin{align*}
	\mathcal{D}(\mathsf{L^{(\mu_1,\mu_2)}}):=\{ u \in L^2((0,\pi/2); \sin(2x)dx): \Hquad   \mathsf{L}^{(\mu_1,\mu_2)}u  \in L^2((0,\pi/2); \sin(2x)dx)\}.
\end{align*}
The operator $\mathsf{L}^{(\mu_1,\mu_2)}$ is generated by the closed sesquilinear form $\mathsf{a}$ defined on 
\begin{align*}
	H^1((0,\pi/2);\sin(2x)dx) \times H^1((0,\pi/2);\sin(2x)dx)
\end{align*}
that is given by    
\begin{align*}
	\mathsf{a}\left(u ,v\right):= \int_{0}^{\pi }\left(  \partial_{x}\overline{u} \cdot\partial_{x}v + \left( 
	\frac{\mu_1^2}{\sin^2 x} + \frac{\mu_2^2}{\cos^2 x}+1
	\right)  \overline{u}  \cdot v  \right)\sin(2x) dx,
\end{align*}
where the bar denotes the complex conjugate. and Hardy's inequality yields
\begin{align*}
	 \mathsf{a}(u,u) \simeq \|u \|_{H^1((0,\pi/2);\sin(2x)dx)}^2.
\end{align*}
In particular, $\mathsf{L^{(\mu_1,\mu_2)}}$ is self-adjoint on $\mathcal{D}(\mathsf{L^{(\mu_1,\mu_2)}})$.  Now, the eigenvalue problem $ \mathsf{L}^{(\mu_1,\mu_2)}u=\somega^2 u$ reads 
\begin{align*}
	\partial_{x}^2 u + \left(\frac{\cos x}{\sin x} - \frac{\sin x}{\cos x} \right) \partial_{x} u - \left( 
	\frac{\mu_1^2}{\sin^2 x} + \frac{\mu_2^2}{\cos^2 x}+1
	-\omega^2  \right)u=0.
\end{align*}
and by setting
\begin{align*}
	u(x)=v(y), \Hquad v(y)=(1-y)^{\frac{\mu_1}{2}}(1+y)^{\frac{\mu_2}{2}}w(y), \Hquad  y=\cos(2x)
\end{align*}
it becomes 
\begin{align*}
	(1-y^2) w^{\prime \prime}(y) + \left[(\mu_2-\mu_1)-(2+\mu_1+\mu_2)y \right]  w^{\prime} (y)+
	\frac{1}{4}\left[
	 \omega^2  - (1+\mu_1 +\mu_2)^2
	\right]w(y)=0
\end{align*}
that has non--trivial solutions if and only if the solutions are given by the Jacobi polynomial with parameters $(\mu_1,\mu_2)$ and degree $n \geq 0$, that is $w(y)=P_{n}^{ (\mu_1, \mu_2)}(y)$, associated with the eigenvalues
 associated with the eigenvalues
\begin{align*}
	\left(  \somega_{n}^{(\mu_{1},\mu_{2})} \right)  ^2:=(2n+1+\mu_1+\mu_2)^2,\Hquad n \geq 0  
\end{align*}
Hence, the solutions to the eigenvalue problem $\mathsf{L}^{(\mu_1,\mu_2)}u=\somega^2 u$ are given by
\begin{align}\label{EigenfunctionsModel2WH}
	\mathsf{e}_{n}^{(\mu_{1},\mu_{2})} (x)=\mathsf{N}_{n}^{(\mu_{1},\mu_{2})}  (1-\cos(2x))^{\frac{\mu_1}{2}}(1+\cos(2x))^{\frac{\mu_2}{2}}P_{n}^{ ( \mu_1, \mu_2)}(\cos(2x)), \Hquad n \geq 0,
\end{align}
where the normalization constant reads
\begin{align}\label{NormalizationConstantModel2CH}
	\mathsf{N}_{n}^{(\mu_{1},\mu_{2})}  :=\sqrt{\frac{
	\somega_{n}^{(\mu_{1},\mu_{2})} }{2^{\mu_1 +\mu_2}} \frac{\Gamma(n+1)\Gamma(n+\mu_1+\mu_2+1)}{\Gamma(n+\mu_1+1)\Gamma(n+\mu_2+1)} }, \Hquad  n \geq 0.
\end{align}
In addition, the set $\{ \mathsf{e}_{n}^{(\mu_{1},\mu_{2})}: n \geq 0 \}$ forms an orthonormal and complete basis for the weighted $L^2 $ in \eqref{HilbertSpaceModel2}. In fact, their orthogonality follows by the orthogonality of the Jacobi polynomials since 
\begin{align*}
	\langle	 \mathsf{e}_{n}^{(\mu_{1},\mu_{2})} | \mathsf{e}_{m}^{(\mu_{1},\mu_{2})} \rangle  &=  \int_{0}^{\frac{\pi}{2} } \mathsf{e}_{n}^{(\mu_{1},\mu_{2})}(x) \mathsf{e}_{m}^{(\mu_{1},\mu_{2})} (x) \sin(2x) dx \\
	&  =\frac{1}{2} \mathsf{N}_{n}\mathsf{N}_{m} \int_{-1}^{1 } 
	(1-y)^{\mu_1}(1+y)^{\mu_2}P_{n}^{ ( \mu_1, \mu_2)}(y)
	P_{m}^{ ( \mu_1, \mu_2)}(y) dy \\
	&  =   \mathsf{N}_{n} ^2 
	\frac{2^{\mu_1+\mu_2}}{2n+\mu_1+\mu_2+1} \frac{\Gamma(n+\mu_1+1)\Gamma(n+\mu_2+1)}{\Gamma(n+\mu_1+\mu_2+1) \Gamma(n+1)} \mathds{1}(n=m) \\
	&  =   \mathds{1}(n=m),
\end{align*}
for any $n,m \geq 0$.

  \subsection{Yang--Mills equation in spherical symmetry}
Consider the Hilbert space
\begin{align}\label{HilbertSpaceModel3}
	L^2 ( [0,\pi ];\sin^4(x)dx )
\end{align}
associated with the inner product
 \begin{align*}
  [f|g] := \int_{0}^{\pi } f(x) g(x)  \sin^4(x) dx.
 \end{align*}
For the Yang--Mills equation in spherical symmetry, the operator that governs the solutions to the linearized equation is given by 
\begin{align*}
	\mathfrak{L}u:=- \frac{1}{\sin^4 x}\partial_x \left( \sin^4 x \partial_x u \right)+4u
\end{align*}
endowed with the domain $\mathcal{D}(\mathfrak{L})$ defined by 
\begin{align*}
	\mathcal{D}(\mathfrak{L}):=\{ u \in L^2((0,\pi); \sin^4 x dx): \Hquad  \mathfrak{L} u \in L^2((0,\pi); \sin^4 xdx)\}.
\end{align*}
The operator $\mathfrak{L}$ is generated by the closed sesquilinear form $\mathfrak{a}$ defined on 
\begin{align*}
	H^1((0,\pi);\sin^4 xdx) \times H^1((0,\pi);\sin^4 x dx)
\end{align*}
that is given by    
\begin{align*}
\mathfrak{a}\left(u ,v\right):= \int_{0}^{\pi }\left(  \partial_{x}\overline{u} \cdot\partial_{x}v +   4\overline{u}  \cdot v  \right)\sin^4 (x) dx,
\end{align*}
where the bar denotes the complex conjugate, and 
\begin{align*}
	\mathfrak{a}(u,u) \simeq \|u \|_{H^1((0,\pi);\sin^4 xdx)}^2.
\end{align*}
In particular, $\mathfrak{L}$ is self-adjoint on $\mathcal{D}(\mathfrak{L})$. Now, the eigenvalue problem $\mathfrak{L}u=\ssomega^2 u$ reads 
\begin{align*}
	   \partial_{x}^2 u +\frac{4}{\tan(x)}\partial_{x}u+(\omega^2 -4)u = 0
\end{align*}
and by setting 
\begin{align*}
	u(x)=w(y), \Hquad  y=\cos(x) 
\end{align*}
it becomes
\begin{align*}
	\left(1-y^2\right) w^{\prime \prime}(y)-5 y w^{\prime }(y)+\left(\omega ^2-4\right) w(y) = 0
\end{align*}
that has non--trivial solutions if and only if the solutions given by the Jacobi polynomials with parameters $(3/2,3/2)$ and degree $n$, that is $w(y)=P_{n}^{(3/2,3/2)}(y)$, associated with the eigenvalues
\begin{align*}
	 \ssomega_{n} ^2:=(n+2)^2,\Hquad n \geq 0.
\end{align*}
Hence, the solutions to the eigenvalue problem $\mathfrak{L}u=\ssomega^2 u$ are given by
 \begin{align}\label{EigenfunctionsModel3YM}
	\mathfrak{e}_{n}(x) := \mathfrak{N}_{n}  P_{n}^{\left(\frac{3}{2},\frac{3}{2} \right)}(\cos(x)) ,\Hquad n \geq 0 ,
\end{align}
where the normalization constant reads
\begin{align}\label{NormalizationConstantModel3YM}
	\mathfrak{N}_{n}  :=\frac{ \sqrt{
	\ssomega_{n}\Gamma(1+n)\Gamma(4+n)}}{2\sqrt{2}\Gamma \left(\frac{5}{2}+n \right)}, \Hquad  n \geq 0.
\end{align}
In addition, the set $\{\mathfrak{e}_{n}: n \geq 0 \}$ forms an orthonormal and complete basis for the weighted $L^2$ in \eqref{HilbertSpaceModel3}. In fact, their orthogonality follows by the orthogonality of the Jacobi polynomials since 
\begin{align*}
	\left[ \mathfrak{e}_{n}  |\mathfrak{e}_{m} \right] &= \int_{0}^{\pi } \mathfrak{e}_{n}(x) \mathfrak{e}_{m}(x)  \sin^4(x) dx \\ 
	& =\mathfrak{N}_{n} \mathfrak{N}_{m} \int_{-1}^{1 } P_{n}^{\left(\frac{3}{2},\frac{3}{2} \right)}(y) P_{m}^{\left(\frac{3}{2},\frac{3}{2} \right)}(y) \left( 1-y^2\right)^{2} \frac{dy}{\sqrt{1-y^2}} \\
	& =\mathfrak{N}_{n} \mathfrak{N}_{m} \int_{-1}^{1 } P_{n}^{\left(\frac{3}{2},\frac{3}{2} \right)}(y) P_{m}^{\left(\frac{3}{2},\frac{3}{2} \right)}(y) \left( 1-y^2\right)^{\frac{3}{2}} dy \\
	&= \mathfrak{N}_{n}  ^2 \frac{2^{4}}{2n+4} \frac{ \left(\Gamma\left(n+\frac{5}{2} \right) \right)^2 }{\Gamma\left(n+4 \right)\Gamma\left(n+1 \right)}\mathds{1}(n=m)  \\  
	&=\mathds{1}(n=m) ,  
\end{align*}
for all $n,m\geq 0$.

\section{The PDEs in Fourier space} \label{SectionPDEsinFourierSpace}

In this section, we express the partial differential equations \eqref{AllModelsinOnePDE},
\begin{align*} 
	\left( \partial_{t}^2 + \mathbf{L} \right) u = \mathbf{f}(x,u), \Hquad (t,x)\in \mathbb{R} \times I,
\end{align*}
in the Fourier space to obtain infinite dimensional systems of coupled, non-linear harmonic oscillators and we provide basic estimates for the non-linearities. Here, the non--linearities are given by \eqref{Definitionnon-linearily}, namely\begin{align*}
	\mathbf{f}(x,u) & = 
	\begin{dcases}
	 - u^3	, & \text{ for CW}  \\
	  - u^3	, & \text{ for WH} \\
	   -3 u^2 -\sin^2(x) u^3 ,& \text{ for YM}
	\end{dcases} 
\end{align*}
Let $u(t,\cdot)$ be a solution to any of the three models, 
\begin{itemize}
	\item CW: \eqref{Model1WSphericalSymmetry}--\eqref{DefinitionLinearOperatorModel1WSphericalSymmetry},
	 \item WH:  \eqref{Model1CWOUTSphericalSymmetryHOPFanstaz}--\eqref{DefinitionLinearOperatorModel1CWOUTSphericalSymmetryHOPF} together with the ansatz \eqref{AnsatzHof},
	 \item YM: \eqref{Model2YM}--\eqref{DefinitionLinearOperatorModel2YM},
\end{itemize} 
and recall that the set of the associated eigenfunctions, 
\begin{itemize}
\item CW: $\{  e_{n}: n\geq 0\}$ given by \eqref{EigenfunctionsModel1WS},
 \item WH: $\{ \mathsf{e}_{n}^{(\mu_1,\mu_2)}: n\geq 0\}$ given by \eqref{EigenfunctionsModel2WH},
 \item YM: $\{ \mathfrak{e}_{n} : n\geq 0\}$ given by \eqref{EigenfunctionsModel3YM},
\end{itemize}
forms an orthonormal and complete basis of the Hilbert spaces
\begin{itemize}
	\item CW: $L^2 ( [0,\pi ];\sin^2(x)dx )$,
	\item WH: $L^2 ( [0,\pi/2 ];\sin(2x)dx )$,
	  \item YM: $L^2 ( [0,\pi ];\sin^4(x)dx )$.
\end{itemize}
Then, we expand $u(t,\cdot)$ in terms of the eigenfunctions and substitute the expression into \eqref{AllModelsinOnePDE} to find infinite systems of non--linear harmonic oscillators.
\subsection{Conformal cubic wave equation in spherical symmetry}
For the conformal cubic wave equation in spherical symmetry, we expand 
\begin{align*}
	u(t,x) = \sum_{n=0}^{\infty}  u^{n}(t)  e_{n}(x),
\end{align*}
and use the expansion
\begin{align}\label{ExpnasionsFourierModel1CS}
	 e_{i}  e_{j}  e_{k} =  \sum_{m=0}^{\infty}  C_{ijkm}  e_{m}
\end{align}
 to find the following infinite system of non--linear harmonic oscillators 
\begin{align}\label{PDEFourierSpaceModel1WS}
	& \ddot{u}^{m}(t) + \left(A u(t)\right)^m   = \left( f( \{ u^j(t): j \geq 0 \} ) \right)^m, \Hquad m \geq 0
\end{align}
 where the dots denote derivatives with respect to time and 
\begin{align}
	\left(A u(t)\right)^m &:= \omega_{m}^2  u^m(t),\nonumber \\ 
	  \left( f( \{ u^j(t): j \geq 0 \} ) \right)^m &:= - \sum_{i,j,k=0}^{\infty}  C_{ijkm} 
	 u^{i}(t) u^{j}(t)  u^{k}(t)\label{DefinitionNonLinearityfModel1} .
\end{align}

\subsection{Conformal cubic wave equation out of spherical symmetry}
For the conformal cubic wave equation out of spherical symmetry, we expand  
\begin{align*}
	u(t,x) = \sum_{n=0}^{\infty} u^{n}(t) \mathsf{e}_{n}^{(\mu_1,\mu_2)}(x),
\end{align*}
 and use the expansion
\begin{align}\label{ExpnasionsFourierModel2CH}
	\mathsf{e}_{i}^{(\mu_1,\mu_2)} \mathsf{e}_{j}^{(\mu_1,\mu_2)} \mathsf{e}_{k}^{(\mu_1,\mu_2)} =  \sum_{m=0}^{\infty} {}\mathsf{C}_{ijkm}^{(\mu_1,\mu_2)} \mathsf{e}_{m}^{(\mu_1,\mu_2)}
\end{align}
 to find the following infinite system of non--linear harmonic oscillators 
\begin{align}\label{PDEFourierSpaceModel2CH}
	& \ddot{u}^{m}(t) + \left(\mathsf{A} u(t)\right)^m  = \left( \mathsf{f}( \{ u^j(t): j \geq 0 \} ) \right)^m  , \Hquad m \geq 0
\end{align}
  where the dots denote derivatives with respect to time and
\begin{align}
	\left(\mathsf{A} u(t)\right)^m   &:= \left(\somega_n^{(\mu_1,\mu_2)} \right)^2 u^m(t),\nonumber  \\ 
	  \left( \mathsf{f}( \{ u^j(t): j \geq 0 \} ) \right)^m &:= - \sum_{i,j,k=0}^{\infty}  \mathsf{C}_{ijkm}^{(\mu_1,\mu_2)} 
	u^{i}(t)u^{j}(t) u^{k}(t) .\label{DefinitionNonLinearityfModel2}
\end{align}

\subsection{Yang--Mills equation in spherical symmetry}\label{SubsetionYMSysteminFourier}
For the Yang--Mills equation in spherical symmetry, we expand  \begin{align*}
	u(t,x) = \sum_{n=0}^{\infty} u^{n}(t) \mathfrak{e}_{n} (x),
\end{align*}
and use the expansions
\begin{align}\label{ExpnasionsFourierModel3YM}
 \mathfrak{e}_{i}(x)\mathfrak{e}_{j}(x)   =  \sum_{m=0}^{\infty} \overline{\mathfrak{C}}_{ijm} \mathfrak{e}_{m}(x), \Hquad  \sin^2(x)\mathfrak{e}_{i}(x)\mathfrak{e}_{j} (x)\mathfrak{e}_{k}(x) =  \sum_{m=0}^{\infty} \mathfrak{C}_{ijkm} \mathfrak{e}_{m}(x)
\end{align}
 to find the following infinite system of non--linear harmonic oscillators 
\begin{align}\label{PDEFourierSpaceModel3YM}
	& \ddot{u}^{m}(t) + \left(\mathfrak{A} u(t)\right)^m =\left( \mathfrak{f}( \{ u^j(t): j \geq 0 \} ) \right)^m  , \Hquad m \geq 0
\end{align}
  where the dots denote derivatives with respect to time and
\begin{align*}
	 \left(\mathfrak{A} u(t)\right)^m &:= \ssomega_{m}^2 u^m(t), \\ 
	 \left( \mathfrak{f}( \{ u^j(t): j \geq 0 \} ) \right)^m  &:=   
  \left( \mathfrak{f}^{(2)}( \{ u^j(t): j \geq 0 \} ) \right)^m + \left( \mathfrak{f}^{(3)}( \{ u^j(t): j \geq 0 \} ) \right)^m ,
\end{align*}  
with
\begin{align}
	 \left( \mathfrak{f}^{(2)}( \{ u^j(t): j \geq 0 \} ) \right)^m  &:=-3\sum_{i,j=0}^{\infty}\overline{\mathfrak{C}}_{ijm}  u^{i}(t)u^{j}(t)  ,\label{DefinitionNonLinearityfModel3a} \\
 \left( \mathfrak{f}^{(3)}( \{ u^j(t): j \geq 0 \} ) \right)^m  &:=- \sum_{i,j,k=0}^{\infty}  \mathfrak{C}_{ijkm} u^{i}(t)u^{j}(t)u^{k}(t).\label{DefinitionNonLinearityfModel3b}
\end{align}
 
 \subsection{Lipschitz bounds}
Recall Section \ref{SectionMethodBambusiPaleari} where we define the Banach space 
	\begin{align*}
	\mathcal{H}^{k}_s   \subseteq H^{k}([0,2\pi];l_s^2)
\end{align*} 
consisting of spacetime functions of the form
\begin{align*}
	q(t)= \sum_{j=0}^{\infty} q^{j}(t) e_{j}=
	 \sum_{j=0}^{\infty} \left(\sum_{l=0}^{\infty}q^{lj} \cos(lt) \right) e_{j}
\end{align*} 
such that the norm
\begin{align*}
	\| q\|_{\mathcal{H}^{k}_s}^2 := \sum_{j=0}^{\infty} j^{2s} \left(2 |q^{0j}|^2  + \sum_{l=1}^{\infty} |q^{lj}|^2 (1+l^2)^k  \right)
\end{align*}
is finite. Note that
\begin{align*}
	\| q\|_{\mathcal{H}^{k}_s}^2 = \frac{1}{\pi } \int_{0}^{2\pi } \sum_{\lambda=0}^k |q^{(\lambda)}(t) |_{s}^2 dt,
\end{align*}
where $q^{(\lambda)}(t)$ denotes the $\lambda-$th derivative of $q(t)$ with respect to $t$. Next, we show that the non--linear terms we consider satisfy the following Lipschitz bounds and we begin by considering the conformal cubic wave equation in spherical symmetry.

\begin{lemma}[Lipschitz bounds for the CW]\label{LipschitzModel1}
	Let $f$ be given by \eqref{DefinitionNonLinearityfModel1}. Then, for all integers $k \geq 0$ and $s\ge 2$, there exists a positive constant (depending only on $k$ and $s$) such that 
 	\begin{align*}  
 		\|  f (u)- f (v) \|_{\mathcal{H}^{k}_s  } &\lesssim  \left(\|u \|_{\mathcal{H}^{k}_s  }^2 +\|v\|_{\mathcal{H}^{k}_s  }^2 \right) \|u -v\|_{\mathcal{H}^{k}_s  }, \\
 		\left\|df(u)[h] -df(v)[h] \right \|_{\mathcal{H}^{k}_s} & \lesssim \left( \left \| u  \right \|_{\mathcal{H}^{k}_s } +\left \| v \right \|_{\mathcal{H}^{k}_s } \right)\left \| h \right \|_{\mathcal{H}^{k}_s } \left \| u -v \right \|_{\mathcal{H}^{k}_s },
 	\end{align*}
 	for all $u,v,h \in \mathcal{H}^{k}_s  $ with $\|u\|_{\mathcal{H}^{k}_s  } \leq \epsilon$, $\|v\|_{\mathcal{H}^{k}_s  } \leq \epsilon$ and  $\|h\|_{\mathcal{H}^{k}_s  } \leq \epsilon$.
\end{lemma}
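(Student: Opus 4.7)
The plan is to reduce the Fourier-space estimates to physical-space estimates via the eigenbasis expansion \eqref{ExpnasionsFourierModel1CS}. Since the eigenvalues verify $\omega_n \sim n$, one has a natural identification
\[
\mathcal{H}^k_s \;\simeq\; H^k\bigl([0,2\pi];\, \mathcal{X}^s\bigr), \qquad \mathcal{X}^s := \bigl\{w = \textstyle\sum_j w^j e_j : \sum_j j^{2s}|w^j|^2 < \infty\bigr\},
\]
where in physical space $\mathcal{X}^s$ is essentially $L^2((0,\pi);\sin^2 x\, dx)$ intersected with the domain of the $s/2$-th power of $L$. The norm $\|\cdot\|_{\mathcal{X}^s}$ is equivalent, uniformly in the endpoints, to a weighted Sobolev norm on $(0,\pi)$. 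Under this identification, the Fourier-space non-linearity $(f(u))^m = -\sum_{ijk} C_{ijkm} u^i u^j u^k$ corresponds exactly to the pointwise (in $(t,x)$) map $u \mapsto -u^3$.

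First, I would establish that $\mathcal{X}^s$ is a Banach algebra for $s \geq 2$. The key point is the 1D Sobolev embedding $H^s(0,\pi) \hookrightarrow L^\infty(0,\pi)$ for $s > 1/2$, together with the fact that the weighted space $\mathcal{X}^s$ controls $L$-derivatives up to order $s$, giving a product estimate $\|fg\|_{\mathcal{X}^s} \lesssim \|f\|_{\mathcal{X}^s}\|g\|_{\mathcal{X}^s}$. Combined with the standard algebra property of $H^k([0,2\pi])$ in time (for $k \geq 1$; the $k = 0$ case is handled by fiber-wise multiplication together with the $\mathcal{X}^s \hookrightarrow L^\infty$ embedding which upgrades $L^2_t$ to an algebra when the target itself is an algebra with $L^\infty$-control), one obtains the product estimate
\[
\|fg\|_{\mathcal{H}^k_s} \;\lesssim_{k,s}\; \|f\|_{\mathcal{H}^k_s}\|g\|_{\mathcal{H}^k_s}.
\]

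Next, I would exploit the pointwise factorizations
\[
f(u) - f(v) = -(u-v)(u^2 + uv + v^2), \qquad df(u)[h] - df(v)[h] = -3(u-v)(u+v)h,
\]
which follow from $f(u) = -u^3$. Applying the product estimate twice to each term, together with the smallness hypothesis $\|u\|_{\mathcal{H}^k_s}, \|v\|_{\mathcal{H}^k_s} \leq \epsilon$, yields
\[
\|f(u)-f(v)\|_{\mathcal{H}^k_s} \lesssim \bigl(\|u\|_{\mathcal{H}^k_s}^2 + \|v\|_{\mathcal{H}^k_s}^2\bigr)\|u-v\|_{\mathcal{H}^k_s},
\]
and the analogous bound for the differential.

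The main technical obstacle is the first step: verifying that the weighted space $\mathcal{X}^s$ is genuinely a Banach algebra despite the degenerate measure $\sin^2 x\, dx$ and the boundary conditions implicit in the definition of $L$. Since the eigenfunctions $e_n = U_n(\cos x)$ are smooth on $[0,\pi]$ and the weight is smooth, one can transfer the problem to a Sobolev space of functions on $[-1,1]$ via the change of variables $y = \cos x$ and use the classical algebra property there; alternatively, one can commute powers of $L$ with multiplication using the Leibniz rule $L(fg) = (Lf)g + f(Lg) - 2\partial_x f \cdot \partial_x g + fg$, iterate, and control lower-order terms via Sobolev embedding. Both routes require $s \geq 2$ to close.
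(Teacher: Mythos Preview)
Your overall strategy---identify $\mathcal{H}^k_s$ with $H^k([0,2\pi];\mathcal{X}^s)$ in physical space, prove $\mathcal{X}^s$ is a Banach algebra, then use the pointwise factorizations $u^3-v^3=(u-v)(u^2+uv+v^2)$ and $3(u^2-v^2)h$---is exactly the paper's approach, including the $H^1_t\hookrightarrow L^\infty_t$ argument for $k\ge 1$. The one place where the paper proceeds differently is the ``main technical obstacle'' you single out: rather than changing variables to $[-1,1]$ or commuting powers of $L$ through products by hand, the paper observes that the weighted space $L^2((0,\pi);\sin^2x\,dx)$ together with the operator $L=-\Delta^{\mathrm{ss}}_{\mathbb{S}^3}+1$ is precisely the spherically symmetric sector of $L^2(\mathbb{S}^3)$ with $-\Delta_{\mathbb{S}^3}+1$, so that $\mathcal{X}^s$ is canonically identified with the spherically symmetric subspace of the standard $H^s(\mathbb{S}^3)$. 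The algebra property then comes for free from the classical fact that $H^s$ on a $3$-dimensional compact manifold is an algebra for $s>3/2$, which is why the integer threshold $s\ge 2$ appears. This route is shorter and sidesteps the boundary and weight issues entirely; your alternatives would also work but require more bookkeeping (in particular, the change of variables $y=\cos x$ lands you in a Chebyshev-weighted space on $[-1,1]$ whose algebra property is not as immediately citable).
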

\begin{remark}[Regularity of the initial data for the CW model]
	 As stated above, for the CW model, we require $s\in  \mathbb{N}$ with $s \ge 2$. This means that the space of initial data  $ \mathcal{Q}  \simeq l_{s+1}^2$ is at least $l_{3}^2$ (Theorem \ref{OrigivalversionTheoremBambusi}). 
\end{remark}
\begin{proof}
	Let $s \geq 2$ be an integer and pick any $u=\{u^j: j \geq 0\} \in l^2_s$. We also denote by $u$ the corresponding function in the physical space, that is
 	\begin{align*}
 		u(x):= \sum_{j =0}^{\infty} u_j e_j(x)
 	\end{align*}
 	and recall that the operator in physical space is given by
 	\begin{align*}
 		L u:= 	- \frac{1}{\sin^2 (x) } \partial_{x} \left(
 		\sin^2 (x) \partial_{x} u
 		\right) +  u
 	\end{align*}
 	On the one hand, for any integer $s \geq 1$, we define the Sobolev space $H^s_{CW} $ space for spherically symmetric functions,
 	\begin{align*}
 		\| u \|^2_{H^s_{\text{CW}}} := \int_0^\pi u L^s u  \sin^2 x  dx   
 		=\sum_{j=0}^{+\infty} \omega_{j}^{2s} |u_j|^2 \simeq | u |_{s},
 	\end{align*}
 	since $\omega_{j} \simeq j$. On the other hand, note that, with a slight abuse of notation (we denote by $u$ the original variable as well as the spherically symmetric version of it), we have that the Sobolev space above is equivalent to the standard Sobolev on $\mathbb{S}^3$, defined by
 	\begin{align*}
 		\| u \|^2_{H^s(\mathbb{S}^3)} = \int_{\mathbb{S}^3} u (- \Delta^s_{\mathbb{S}^3}u)d \text{vol}_{\mathbb{S}^3} + \|u \|_{L^2(\mathbb{S}^3)}^2, 
 	\end{align*}
 	that is
 	\begin{align*}
 		\| u \|^2_{H^s_{CW} } \simeq \| u \|^2_{H^s(\mathbb{S}^3)}.
 	\end{align*}
 	Here, $\Delta_{\mathbb{S}^3}$ stands for the standard Laplacian on $\mathbb{S}^3$ for the round metric and the standard volume form $d \text{vol}_{\mathbb{S}^3}$. This equivalence yields that $H^s_{CW} $ is an algebra, since $H^s (\mathbb{S}^3)$ is an algebra provided that $s \geq 3/2$. Then, we pick an integer $s \geq 2$, $u,v \in l_s^2$ and the algebra property, the triangular inequality together with Plancherel's theorem yield
\begin{align*}
	\left|f(u) \right|_s 	&= \left \| u^3 \right \|_{H^s_{CW} } \lesssim \left \| u \right \|_{H^s_{CW} }^3 =   \left  | u \right  |_{s }^3 , \\
	\left|f(u) -f(v) \right|_s 	&= \left \| u^3 -v^3 \right \|_{H^s_{CW} } =\left \| (u -v)(u^2+uv+v^2) \right \|_{H^s_{CW} } \\
	&  \lesssim \left \| u -v \right \|_{H^s_{CW} } \left(\left \| u^2 \right \|_{H^s_{CW} } +\left \| u v \right \|_{H^s_{CW} }+\left \| v^2 \right \|_{H^s_{CW} }   \right) \\
	& \lesssim \left \| u -v \right \|_{H^s_{CW} } \left(\left \| u \right \|_{H^s_{CW} }^2 +\left \| u  \right \|_{H^s_{CW} }\left \|  v \right \|_{H^s_{CW} }+\left \| v \right \|_{H^s_{CW} }^2   \right) \\
	& \lesssim \left \| u -v \right \|_{H^s_{CW} } \left(\left \| u \right \|_{H^s_{CW} }^2  +\left \| v \right \|_{H^s_{CW} }^2   \right) \\
	& = \left | u -v \right |_{s } \left(\left | u \right |_{s }^2  +\left | v \right |_{s}^2   \right), \\
	\left|df(u)[h] -df(v)[h] \right|_s 	&= \left \|df(u)[h] -df(v)[h]  \right \|_{H^s_{CW} } = \left \|d(u^3)[h] -d(v^3)[h]  \right \|_{H^s_{CW} } \\
	& = \left \|3u^2 h -3v^2 h  \right \|_{H^s_{CW} }  \lesssim \left \| u^2 -v^2  \right \|_{H^s_{CW} } \left \| h \right \|_{H^s_{CW} } \\
	& \lesssim \left \| (u-v)(u+v)  \right \|_{H^s_{CW} } \left \| h \right \|_{H^s_{CW} } \\
	&  \lesssim \left \| u -v \right \|_{H^s_{CW} } \left \| u + v \right \|_{H^s_{CW} }\left \| h \right \|_{H^s_{CW} } \\
	&  \lesssim \left \| u -v \right \|_{H^s_{CW} } \left( \left \| u  \right \|_{H^s_{CW} } +\left \| v \right \|_{H^s_{CW} } \right)\left \| h \right \|_{H^s_{CW} } \\
	&  \lesssim \left | u -v \right |_{s } \left( \left | u  \right |_{s } +\left | v \right |_{s } \right)\left | h \right |_{s }.
\end{align*}
This proves the claim for $k=0$, Finally, we present the proof for $k=1$. In this case, Plancherel's theorem yields
\begin{align*}
	\left \|  f(u) \right\|_{\mathcal{H}^1_s} 	&= \left \|  f(u) \right\|_{H^1_{t} l_s^2} =\left \|  f(u) \right\|_{L^2_{t} l_s^2} +\left \| \partial_{t} f(u) \right\|_{L^2_{t} l_s^2}  =\left \|  f(u) \right\|_{L^2_{t} H^s_x} +\left \| \partial_{t} f(u) \right\|_{L^2_{t} H^s_x}.
\end{align*}
Furthermore, the algebra property, Holder's inequality together with the embedding $  H^1  \hookrightarrow  L^{\infty}$ yield
\begin{align*}
	\left \|  f(u) \right\|_{L^2_{t} H^s_x} &=\left \|  u^3 \right\|_{L^2_{t} H^s_x} = \left \|  \left \|  u^3 \right\|_{H^s_x} \right\|_{L^2_{t}}  \lesssim \left \|  \left \|  u \right\|_{H^s_x}^3 \right\|_{L^2_{t}}
	 \leq 
	\left \|  \left \|  u \right\|_{H^s_x} ^2 \right\|_{L^{\infty}_{t}} \left \|  \left \|  u \right\|_{H^s_x} \right\|_{L^2_{t}} \\
	&=\left \|  \left \|  u \right\|_{H^s_x}  \right\|_{L^{\infty}_{t}}^2 \left \|  \left \|  u \right\|_{H^s_x} \right\|_{L^2_{t}}  \lesssim
	\left \|  \left \|  u \right\|_{H^s_x} \right\|_{H^{1}_{t}} ^2 \left \|  \left \|  u \right\|_{H^s_x} \right\|_{L^2_{t}}\\
	& = \|u \|_{H^1_t H^s_x}^2  \|u \|_{H^0_t H^s_x} \leq \|u \|_{H^1_t H^s_x}^3, \\
	\left \|\partial_{t}  f(u) \right\|_{L^2_{t} H^s_x} &=\left \| 3 u^2 \partial_{t} u \right\|_{L^2_{t} H^s_x} \simeq  \left \|  \left \|  u^2 \partial_{t} u\right\|_{H^s_x} \right\|_{L^2_{t}}  \lesssim \left \|  \left \|  u \right\|_{H^s_x}^2 \left \|  \partial_{t}u \right\|_{H^s_x}  \right\|_{L^2_{t}} \\
	& \leq \left \|  \left \|  u \right\|_{H^s_x}^2   \right\|_{L^{\infty}_{t}} \left \|   \left \|  \partial_{t}u \right\|_{H^s_x}  \right\|_{L^2_{t}}
	= \left \|  \left \|  u \right\|_{H^s_x}   \right\|_{L^{\infty}_{t}} ^2 \left \|   \left \|  \partial_{t}u \right\|_{H^s_x}  \right\|_{L^2_{t}} \\
	& \lesssim \left \|  \left \|  u \right\|_{H^s_x}   \right\|_{H^{1}_{t}} ^2 \left \|   \left \|   u \right\|_{H^s_x}  \right\|_{H^1_{t}} \leq    \left \|  u    \right\|_{H^{1}_{t} H^s_x } ^3
\end{align*}
and hence
\begin{align*}
	\left \|  f(u) \right\|_{\mathcal{H}^1_s} 	\lesssim  \left \|  u    \right\|_{\mathcal{H}^1_s} ^3.
\end{align*}
All the other bounds follow similarly.
\end{proof} 

Next, we consider the conformal cubic wave equation out of spherical symmetry.

\begin{lemma}[Lipschitz bounds for the CH]\label{LipschitzModel2}
	Let $\mathsf{f}$ be given by \eqref{DefinitionNonLinearityfModel2}. Then, for all integers $k \geq 0$ and $s\ge 2$, there exists a positive constant (depending only on $k$ and $s$) such that 
 	\begin{align*}  
 		\|  \mathsf{f} (u)- \mathsf{f} (v) \|_{\mathcal{H}^{k}_s  } &\lesssim  \left(\|u \|_{\mathcal{H}^{k}_s  }^2 +\|v\|_{\mathcal{H}^{k}_s  }^2 \right) \|u -v\|_{\mathcal{H}^{k}_s  }, \\
 		\left\|d\mathsf{f}(u)[h] -d\mathsf{f}(v)[h] \right \|_{\mathcal{H}^{k}_s} & \lesssim \left( \left \| u  \right \|_{\mathcal{H}^{k}_s } +\left \| v \right \|_{\mathcal{H}^{k}_s } \right)\left \| h \right \|_{\mathcal{H}^{k}_s } \left \| u -v \right \|_{\mathcal{H}^{k}_s },
 	\end{align*}
 	for all $u,v,h \in \mathcal{H}^{k}_s  $ with $\|u\|_{\mathcal{H}^{k}_s  } \leq \epsilon$, $\|v\|_{\mathcal{H}^{k}_s  } \leq \epsilon$ and  $\|h\|_{\mathcal{H}^{k}_s  } \leq \epsilon$.
\end{lemma}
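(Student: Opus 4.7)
The plan is to mimic the strategy of Lemma \ref{LipschitzModel1}, reducing all estimates to the algebra property of a standard Sobolev space on the round $\mathbb{S}^3$. The novelty compared with the CW case is that the natural variable $u(\eta)$ for the CH model is a function on the quotient of $\mathbb{S}^3$ by the Hopf action, so before invoking algebra I will embed scalars into $\mathbb{S}^3$ through the ansatz map
\begin{align*}
\iota : u(\eta) \longmapsto \chi(\eta,\xi_1,\xi_2) := u(\eta)\, e^{i\mu_1\xi_1}e^{i\mu_2\xi_2}.
\end{align*}

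First, for each integer $s\ge 1$ I would define the Sobolev space $H^s_{\mathrm{CH}}$ spectrally by
\begin{align*}
\|u\|_{H^s_{\mathrm{CH}}}^2 := \sum_{n=0}^{\infty}\bigl(\somega_n^{(\mu_1,\mu_2)}\bigr)^{2s}|u_n|^2\simeq |u|_s^{2},
\end{align*}
using $\somega_n^{(\mu_1,\mu_2)}\sim n$. Because $\iota(\mathsf{e}_n^{(\mu_1,\mu_2)})$ is, by construction, an eigenfunction of $-\Delta_{\mathbb{S}^3}+1$ with eigenvalue $(\somega_n^{(\mu_1,\mu_2)})^2$ (this is exactly the computation that led to \eqref{DefinitionLinearOperatorModel1CWOUTSphericalSymmetryHOPF}, the singular potentials $\mu_1^2/\sin^2\eta$ and $\mu_2^2/\cos^2\eta$ being precisely what the round Laplacian produces on the mode $(\mu_1,\mu_2)$), one obtains the isometric (up to a constant depending on $(\mu_1,\mu_2),s$) identification
\begin{align*}
\|u\|_{H^s_{\mathrm{CH}}}\simeq \|\iota(u)\|_{H^s(\mathbb{S}^3)}.
\end{align*}
Hence $\iota$ embeds $H^s_{\mathrm{CH}}$ as a closed subspace of $H^s(\mathbb{S}^3)$.

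Second, I would exploit the algebra property of $H^s(\mathbb{S}^3)$, valid for $s>3/2$ and in particular for $s\ge 2$. The key observation is that for real-valued functions $u_1,u_2,u_3$,
\begin{align*}
u_1 u_2 u_3 \,e^{i\mu_1\xi_1}e^{i\mu_2\xi_2} \;=\; \overline{\iota(u_1)}\cdot\iota(u_2)\cdot\iota(u_3),
\end{align*}
so the cubic product stays inside the ansatz subspace and satisfies
\begin{align*}
\|u_1 u_2 u_3\|_{H^s_{\mathrm{CH}}}\simeq \bigl\|\overline{\iota(u_1)}\,\iota(u_2)\,\iota(u_3)\bigr\|_{H^s(\mathbb{S}^3)}\lesssim \prod_{k=1}^{3}\|u_k\|_{H^s_{\mathrm{CH}}}.
\end{align*}
Feeding this into the factorizations $u^3-v^3=(u-v)(u^2+uv+v^2)$ and $d\mathsf{f}(u)[h]-d\mathsf{f}(v)[h]=-3(u-v)(u+v)h$ yields the two claimed bounds for $k=0$. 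The upgrade to general $k\ge 1$ proceeds exactly as at the end of the proof of Lemma \ref{LipschitzModel1}, by writing $\|\cdot\|_{\mathcal{H}^k_s}^2\simeq \sum_{\lambda\le k}\|\partial_t^\lambda(\cdot)\|_{L^2_t H^s_x}^2$, distributing time derivatives via Leibniz, and controlling the top-order factor in $L^\infty_t H^s_x$ through $H^1_t\hookrightarrow L^\infty_t$.

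The only delicate point is the constant in the norm equivalence $\|u\|_{H^s_{\mathrm{CH}}}\simeq\|\iota(u)\|_{H^s(\mathbb{S}^3)}$, which a priori depends on $(\mu_1,\mu_2)$; this dependence is harmless here since $(\mu_1,\mu_2)$ are fixed throughout, but one should note it for the uniform estimates later. Apart from this bookkeeping, every step reduces to the CW argument once the embedding $\iota$ is in place.
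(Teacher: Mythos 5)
Your proposal is correct and is in spirit the same argument the paper invokes; the paper's own proof is just the one-liner ``The proof coincides with the one of Lemma~\ref{LipschitzModel1},'' leaving implicit exactly the bookkeeping you spell out. What you add, and what is genuinely needed, is the embedding $\iota$ together with the observation that the cubic product only lands back in the ansatz subspace if one factor is conjugated, i.e.\ $u_1u_2u_3\,e^{i\mu_1\xi_1}e^{i\mu_2\xi_2}=\overline{\iota(u_1)}\,\iota(u_2)\,\iota(u_3)$. This is not a cosmetic point: the naive product $\iota(u_1)\iota(u_2)\iota(u_3)$ excites the $(3\mu_1,3\mu_2)$ angular sector and the norm equivalence $\|u\|_{H^s_{\mathrm{CH}}}\simeq\|\iota(u)\|_{H^s(\mathbb{S}^3)}$ would no longer apply to it, so without the conjugation trick one cannot read the algebra bound for the cubic term directly off $H^s(\mathbb{S}^3)$. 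The trick is of course consistent with the origin of the nonlinearity $|\phi|^2\phi=\bar\phi\phi\phi$ and with the isometry of complex conjugation on $H^s(\mathbb{S}^3)$, as you note. Two small checks you leave implicit but which hold: (i) $\iota$ intertwines $\mathsf{L}^{(\mu_1,\mu_2)}$ with $-\Delta_{\mathbb{S}^3}+1$ because $\partial_{\xi_i}^2$ acting on $e^{i\mu_i\xi_i}$ reproduces the singular potentials, and the inner products match up to the finite factor $(2\pi)^2$ coming from integrating out $\xi_1,\xi_2$; (ii) the eigenvalues $(2n+1+\mu_1+\mu_2)^2$ are of the form $(\ell+1)^2$ with $\ell=2n+\mu_1+\mu_2\in\mathbb{N}_0$, so $\iota(\mathsf{e}_n^{(\mu_1,\mu_2)})$ really are eigenfunctions of the round Laplacian. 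The upgrade to $k\ge 1$ by Leibniz and $H^1_t\hookrightarrow L^\infty_t$ is the same as in the CW proof. Your remark about the $(\mu_1,\mu_2)$-dependence of the equivalence constant is well taken and harmless here.
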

\begin{remark}[Regularity of the initial data for the CH model]
	 As stated above, for the CH model, we require $s\in  \mathbb{N}$ with $s \ge 2$. This means that the space of initial data  $ \mathcal{Q}  \simeq l_{s+1}^2$ is at least $l_{3}^2$ (Theorem \ref{OrigivalversionTheoremBambusi}). 
\end{remark}
\begin{proof}
The proof coincides with the one of Lemma \ref{LipschitzModel1}.
\end{proof}

Finally, we consider the Yang--Mills equation in spherical symmetry.

\begin{lemma}[Lipschitz bounds for the YM]\label{LipschitzModel3}
	Let $\mathfrak{f}^{(2)}$ and $\mathfrak{f}^{(3)}$ be given by \eqref{DefinitionNonLinearityfModel3a} and \eqref{DefinitionNonLinearityfModel3b} respectivel. Then, for all integers $k \geq 0$ and $s\ge 3$, there exists a positive constant (depending only on $k$ and $s$) such that 
 	\begin{align*} 
 		\|  \mathfrak{f}^{(2)} (u)- \mathfrak{f}^{(2)} (v) \|_{\mathcal{H}^{k}_s  } &\lesssim \left(\|u \|_{\mathcal{H}^{k}_s  }+\|v\|_{\mathcal{H}^{k}_s  }\right) \|u -v\|_{\mathcal{H}^{k}_s  }, \\
 		\left\|d\mathfrak{f}^{(2)}(u)[h] -d\mathfrak{f}^{(2)}(v)[h] \right \|_{\mathcal{H}^{k}_s} & \lesssim \left \| h \right \|_{\mathcal{H}^{k}_s } \left \| u -v \right \|_{\mathcal{H}^{k}_s }, \\
 		\|  \mathfrak{f}^{(3)} (u)- \mathfrak{f}^{(3)} (v) \|_{\mathcal{H}^{k}_s  } &\lesssim  \left(\|u \|_{\mathcal{H}^{k}_s  }^2 +\|v\|_{\mathcal{H}^{k}_s  }^2 \right) \|u -v\|_{\mathcal{H}^{k}_s  }, \\
 		\left\|d\mathfrak{f}^{(2)}(u)[h] -d\mathfrak{f}^{(2)}(v)[h] \right \|_{\mathcal{H}^{k}_s} & \lesssim \left( \left \| u  \right \|_{\mathcal{H}^{k}_s } +\left \| v \right \|_{\mathcal{H}^{k}_s } \right)\left \| h \right \|_{\mathcal{H}^{k}_s } \left \| u -v \right \|_{\mathcal{H}^{k}_s },
 	\end{align*}
 	for all $u,v,h \in \mathcal{H}^{k}_s  $ with $\|u\|_{\mathcal{H}^{k}_s  } \leq \epsilon$, $\|v\|_{\mathcal{H}^{k}_s  } \leq \epsilon$ and  $\|h\|_{\mathcal{H}^{k}_s  } \leq \epsilon$.
\end{lemma}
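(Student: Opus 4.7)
The plan is to mirror the strategy of Lemma \ref{LipschitzModel1} exactly, with the only substantive modification being the identification of the relevant weighted Sobolev space with radial Sobolev spaces on $\mathbb{S}^5$ rather than $\mathbb{S}^3$. Specifically, the weight $\sin^4(x)\,dx$ is (up to a multiplicative constant) the radial volume element on $\mathbb{S}^5$, and the principal part $-\frac{1}{\sin^4 x}\partial_x(\sin^4 x\,\partial_x u)$ is precisely the radial part of $-\Delta_{\mathbb{S}^5}$. Consistently, the eigenfunctions $\mathfrak{e}_n$ are expressed through $P_n^{(3/2,3/2)}$, which are (up to normalization) Gegenbauer polynomials $C_n^{(2)}$, i.e.\ the radial spherical harmonics on $\mathbb{S}^5$.

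First, I would introduce the Sobolev norm
\begin{align*}
    \| u \|^2_{H^s_{\text{YM}}} := [u \mid \mathfrak{L}^s u] = \sum_{j=0}^{\infty} \ssomega_j^{2s} |u^j|^2 \simeq |u|_s^2,
\end{align*}
since $\ssomega_j \simeq j$. By the identification described above, $\| u \|_{H^s_{\text{YM}}} \simeq \| u \|_{H^s(\mathbb{S}^5)}$ for radial functions, and hence $H^s_{\text{YM}}$ is an algebra as soon as $s > 5/2$, which is guaranteed by the hypothesis $s\geq 3$. This is precisely where the stronger regularity requirement (compared to $s\geq 2$ for CW and CH) enters: it reflects the higher-dimensional embedding $H^s(\mathbb{S}^5) \hookrightarrow L^\infty$.

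Next I would treat $\mathfrak{f}^{(2)}(u) = -3u^2$ and $\mathfrak{f}^{(3)}(u) = -\sin^2(x) u^3$ separately. For $\mathfrak{f}^{(2)}$, the algebra property immediately yields
\begin{align*}
    \| \mathfrak{f}^{(2)}(u) - \mathfrak{f}^{(2)}(v)\|_{H^s_{\text{YM}}} = 3\| (u-v)(u+v) \|_{H^s_{\text{YM}}} \lesssim \bigl(\|u\|_{H^s_{\text{YM}}}+\|v\|_{H^s_{\text{YM}}}\bigr)\| u-v\|_{H^s_{\text{YM}}},
\end{align*}
and analogously for the differential $d\mathfrak{f}^{(2)}(u)[h] = -6uh$. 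For $\mathfrak{f}^{(3)}$, one argues exactly as in Lemma \ref{LipschitzModel1} after observing that multiplication by $\sin^2 x$ is a bounded operator on $H^s(\mathbb{S}^5)$: indeed $\sin^2 x$ extends smoothly to $\mathbb{S}^5$ (it equals $1-\cos^2 x$ with $\cos x$ a linear coordinate in the ambient $\mathbb{R}^6$), so it acts as a smooth multiplier and contributes only a harmless constant to the estimates. The cubic and differential bounds for $\mathfrak{f}^{(3)}$ then reduce to the CW/CH cubic estimates already established.

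Finally, extending these $|\cdot|_s$ estimates to the space-time norm $\mathcal{H}^k_s$ is carried out exactly as at the end of the proof of Lemma \ref{LipschitzModel1}: writing $\| q\|_{\mathcal{H}^k_s}^2 = \frac{1}{\pi}\int_0^{2\pi}\sum_{\lambda=0}^{k}|q^{(\lambda)}(t)|_s^2\,dt$, one distributes time-derivatives via Leibniz, uses the spatial algebra property inside $L^2_t$, and applies the Sobolev embedding $H^1_t \hookrightarrow L^\infty_t$ to trade one $L^2_t$ factor for $L^\infty_t$. The main obstacle I anticipate is not analytical but rather bookkeeping: carefully tracking the $\sin^2(x)$ factor in $\mathfrak{f}^{(3)}$ through the Leibniz expansion and verifying that the smooth-multiplier bound is uniform in $k$; once this is organized, the rest is routine since $s\geq 3$ gives all the algebra properties we need on both space and spacetime.
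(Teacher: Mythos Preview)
Your proposal is correct and matches the paper's own proof essentially step for step: the paper likewise identifies $\Delta_{\text{YM}}$ with the radial Laplacian on $\mathbb{S}^5$, defines $\|u\|_{H^s_{\text{YM}}}^2 = \sum_j \ssomega_j^{2s}|u^j|^2 \simeq |u|_s^2 \simeq \|u\|_{H^s(\mathbb{S}^5)}^2$, invokes the algebra property for $s>5/2$, and then simply refers back to Lemma~\ref{LipschitzModel1}. Your explicit remark that $\sin^2 x$ is a smooth multiplier on $\mathbb{S}^5$ (since $\cos x$ is a linear ambient coordinate) fills in a detail the paper leaves implicit, but otherwise the arguments are the same.
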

\begin{remark}[Regularity of the initial data for the YM model]
	 As stated above, for the YM model, we require $s \in  \mathbb{N}$ with $s \ge 3$. This means that the space of initial data  $  l_{s+3}^2$ is at least $l_{6}^2$ (Lemma \ref{LemmaEstimateR}, Theorem \ref{TheoremModificationofBambusiPaleari}). 
\end{remark}
\begin{proof}
Let $s \geq 3  $ be an integer and pick any $u =\{u^j: j \geq 0 \}\in l^2_s$. We also denote by $u$ the corresponding function in the physical space, that is
 	\begin{align*}
 		u(x):= \sum_{j = 0}^{\infty} u_j \mathfrak{e}_j(x)
 	\end{align*}
 	and recall that the operator in physical space is given by
 	\begin{align*}
 		\mathfrak{L} u:= 	- \frac{1}{\sin^4 (x) } \partial_{x} \left(
 		\sin^4 (x) \partial_{x} u\right) + 4 u.
 	\end{align*}
 	In the following, we claim that the operator 
 	\begin{align*}
 		\Delta_{\text{YM}} u := \frac{1}{\sin^4 (x) } \partial_{x} \left( \sin^4 (x) \partial_{x} u \right)
 	\end{align*}
 	coincides with the Laplace--Beltrami operator $\Delta_{\mathbb{S}^5} u$ on the sphere $\mathbb{S}^5 \hookrightarrow \mathbb{R}^6$ restricted to a class of symmetric functions. Indeed, we endow $\mathbb{S}^5$ with the round metric and consider the standard Eulerian coordinates $\left(x_1=x, x_2, x_3,x_4,x_5 \right) \in (0, \pi)^4 \times (0, 2 \pi)$, so that  $y =( y^1, y^2,y^3,y^4,y^5, y^6) \in \mathbb{S}^5$ with 
 	\begin{eqnarray*}
 		y^1&=& \cos x_1, \\
 		y^2&=& \sin x_1 \cos x_2, \\
 		y^3&=& \sin x_1 \sin x_2 \cos x_3, \\
 		y^4&=& \sin x_1 \sin x_2 \sin x_3 \cos x_4, \\
 		y^5&=& \sin x_1 \sin x_2 \sin x_3 \sin x_4   \cos x_5, \\ 
 		y^6&=& \sin x_1 \sin x_2\sin x_3\sin x_4 \sin x_5. 
 	\end{eqnarray*}
 	The metric element in these coordinates is given by the standard round metric on $\mathbb{S}^5$. Then, for a function $u$ defined on $\mathbb{S}^5$ that is invariant under all rotations around the $y^6-$axis, the operator $\Delta_{\text{YM}} u$ coincides with $\Delta_{\mathbb{S}^5} u$. We denote such functions on $\mathbb{S}^5$ below as spherically-symmetric. On the one hand, for any integer $s \geq 3$, we define the Sobolev space $H^s_{\text{YM}} $ for spherically symmetric functions as follows
 	\begin{align*}
 		\| u \|^2_{H^k_{\text{YM}} } := \int_0^\pi u \mathfrak{L}^k u  \sin^4 x  dx = \sum_{j=0}^{\infty} \ssomega_{j}^{2k} |u_j|^2 \simeq | u |^2_{k},
 	\end{align*} 
 	since $\ssomega_j \simeq j$.  On the other hand, note that, with a slight abuse of notation (we denote by $u$ the original variable as well as the spherically symmetric version of it), we have that the Sobolev space above is equivalent to the standrad Sobolev space on $\mathbb{S}^5$, defined by
 	\begin{align*}
 		\| u \|_{H^s\left(\mathbb{S}^5 \right)}^2= \int_{\mathbb{S}^5} u(- \Delta^s_{\mathbb{S}^5}u) d \text{vol}_{\mathbb{S}^5} + \| u \|^2_{L^2(\mathbb{S}^5)}, 
 	\end{align*}
 	that is
 	\begin{align*}
 		\| q \|_{H^s_{\text{YM}} }^2	\simeq \| q \|_{H^s (\mathbb{S}^5)}^2.
 	\end{align*} 
 	Here, $\Delta_{\mathbb{S}^5}$ stands for the standard Laplacian on $\mathbb{S}^5$ for the round metric and the standard volume form $dvol_{\mathbb{S}^5}$. This equivalence, yields that $H^s_{\text{YM}} $ is an algebra, since $H^s (\mathbb{S}^5)$ is an algebra provided that $s \geq 5/2$. We pick an integer $s \geq 3$ and the rest of the proof coincides with the one of Lemma \ref{LipschitzModel1}.
\end{proof}

 \section{The Fourier coefficients} 	\label{SectionFourierConstants}
This section is devoted to the study of the Fourier coefficients, as defined by \eqref{ExpnasionsFourierModel1CS}, \eqref{ExpnasionsFourierModel2CH} and  \eqref{ExpnasionsFourierModel3YM}. Since the eigenfunctions are given by Jacobi polynomials and since the Fourier coefficients involve products of the eigenfunctions, these are a priori complicated integrals, depending on the indices of the eigenfunctions. Nonetheless, we will derive here explicit closed formulas for the various Fourier coefficients on resonant indices.
\subsection{Conformal cubic wave equation in spherical symmetry}
In this case, the Fourier coefficients are given by \eqref{ExpnasionsFourierModel1CS}, that is
	\begin{align*}
		C_{ijkm}: &= (e_i e_j e_k|e_m) \\
		 & =\frac{2}{\pi } \int_{0}^{\pi }  e_{i}(x)e_{j}(x)e_{k}(x)e_{m}(x) \sin^2 (x)  dx \\
		 &=\frac{2}{\pi} \int_{-1}^{1} U_{i}(y)U_{j}(y)U_{k}(y)U_{m}(y) \sqrt{1-y^2}  d y,
	\end{align*}
where we used the definition of the Chebyshev polynomials of the second kind,
	\begin{align*}
		e_{n}(x) :=U_{n}(\cos(x))=\frac{\sin(\omega_{n}x)}{\sin(x)}, 
	\end{align*}
for all $ n \in \{i,j,k,m\}$	. Next, we call \textit{resonant} a quadruple $(i,j,k,m)$ of indices such that
\begin{align*}
	\omega_{i} \pm \omega_{j} \pm \omega_{k} \pm \omega_{m} =0
\end{align*}
and study the Fourier coefficients on resonant indices.

\subsubsection{Vanishing Fourier coefficients}
Firstly, we show that the Fourier coefficients vanish on some resonant indices.
\begin{lemma}[Vanishing Fourier coefficients on resonant indices]\label{LemmaVanishingFourierModel1CW}
For any integers $i,j,k,m \geq 0$, we have
\begin{align*}
	  \omega_{i} + \omega_{j} + \omega_{k} - \omega_{m}  &= 0
	  \Longrightarrow 
	  C_{ijkm}=0     ,\\
	  \omega_{i} + \omega_{j} - \omega_{k} + \omega_{m}  &= 0
	  \Longrightarrow 
	  C_{ijkm}=0     ,\\
	  \omega_{i} - \omega_{j} + \omega_{k} + \omega_{m}  &= 0
	  \Longrightarrow 
	  C_{ijkm}=0     ,\\
	   -\omega_{i} + \omega_{j} + \omega_{k} + \omega_{m}  &= 0
	  \Longrightarrow 
	  C_{ijkm}=0     .
\end{align*}
\end{lemma}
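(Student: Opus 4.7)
The plan is to exploit the fact that the eigenfunctions $e_n$ are polynomials in $\cos x$ of degree exactly $n$, together with orthogonality. First, I would observe from the integral representation of $C_{ijkm}$ that it is fully symmetric under arbitrary permutations of its four indices. This reduces the four resonance conditions in the lemma to a single case, say $\omega_m = \omega_i + \omega_j + \omega_k$, after relabeling.

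Next, since $e_n(x) = U_n(\cos x)$ is a polynomial in $\cos x$ of degree $n$, the product $e_i(x) e_j(x) e_k(x)$ is a polynomial in $\cos x$ of degree $i+j+k$. The family $\{e_0, e_1, \dots, e_{i+j+k}\}$ spans the space of such polynomials, so one may write
\begin{align*}
e_i e_j e_k = \sum_{l=0}^{i+j+k} \lambda_l\, e_l
\end{align*}
for some coefficients $\lambda_l$. Then, by the orthonormality of $\{e_n\}_{n\ge 0}$ in $L^2([0,\pi];\sin^2 x\, dx)$, one gets
\begin{align*}
C_{ijkm} = (e_i e_j e_k \mid e_m) = \lambda_m\, \mathds{1}(m \le i+j+k),
\end{align*}
so in particular $C_{ijkm} = 0$ whenever $m > i+j+k$.

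Finally, I would check that each of the four resonance conditions, after using the symmetry of $C_{ijkm}$, reduces to precisely such a strict inequality. For instance, $\omega_i + \omega_j + \omega_k = \omega_m$ together with $\omega_n = n+1$ gives $m = i+j+k+2 > i+j+k$, and the lemma follows. The three remaining conditions are handled identically after permuting indices. There is no real obstacle here; the argument is essentially an observation, and the main thing to be careful about is simply to verify the permutation symmetry of $C_{ijkm}$ before invoking it to reduce to a single case.
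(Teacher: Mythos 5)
Your proof is correct and follows essentially the same strategy as the paper's: reduce to the case $\omega_m = \omega_i+\omega_j+\omega_k$ via the permutation symmetry of $C_{ijkm}$, note that $e_i e_j e_k$ is a polynomial of degree $i+j+k$ in $\cos x$ while $m=i+j+k+2$, and conclude by orthogonality. The only cosmetic difference is that you phrase the orthogonality step as an expansion in the $e_l$ basis rather than directly as a weighted inner product against $U_m$, which amounts to the same thing.
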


 \begin{proof}
 	Let $i,j,k,m$ be positive integers such that $\omega_{i} + \omega_{j} + \omega_{k} - \omega_{m} = 0$. Then, $m=2+i+j+k$ and according to the computation above, we have
\begin{align*}
		C_{ijkm} = \int_{-1}^{1} R_{N}(y) U_{m}(y) \sqrt{1-y^2}  d y	,
\end{align*} 
where
\begin{align*}
	R_{N}(y):= \frac{2}{\pi}  U_{i}(y)U_{j}(y)U_{k}(y)
\end{align*}
is a polynomial of degree $N=i+j+k<m$ and hence the Fourier coefficient vanishes since $U_{m}(y)$ forms an orthonormal and complete basis with respect to the weight $\sqrt{1-y^2} $. The other results now follow immediately using the symmetries of the Fourier coefficients with respect to $i,j,k,m$. 
 \end{proof}
 \subsubsection{Non--vanishing Fourier coefficients}
Secondly, we study the non--vanishing Fourier coefficients on resonant indices. In order to deal with these constants, one needs a computationally efficient formula. In the spherically symmetric case, where the basis consists of the Chebyshev polynomials, there exists the addition formula \cite{MR0372517}
\begin{align}\label{AdditionTheoremChebyshev}
U_{p}(y)U_{q}(y) =
\sum_{\substack{  r= |q-p| \\ \text{step 2} } }^{p+q} U_{r}(y) =\sum_{s=0}^{\min(p,q)} U_{|q-p|+2s}(y),
\end{align}	
for all $p, q \geq 0$. Consequently, one obtains the closed formula
\begin{align}\label{ClosedformulaFourierModel1CW}
		C_{ijkm}  & = \sum_{\substack{r = |j-i| \\ \text{step }2} }^{j+i}
	\sum_{\substack{ s= |m-k| \\ \text{step }2} }^{m+k}
       \mathds{1} \left( r = s \right),
\end{align}	
for any integers $i,j,k,m \geq 0$. Indeed, we implement the addition formula above together with the orthogonality property for the Chebyshev polynomials with respect to the weight $\sqrt{1-y^2}$ to obtain
	\begin{align*}
		C_{ijkm} &  = \frac{2}{\pi} \int_{-1}^{1} U_{i}(y)U_{j}(y)U_{k}(y)U_{m}(y) \sqrt{1-y^2}  d y  \\
		 &=\frac{2}{\pi}\sum_{\substack{  r= |j-i| \\ \text{step 2} } }^{i+j} 
		 \sum_{\substack{  s= |m-k| \\ \text{step 2} } }^{m+k} \int_{-1}^{1}U_{r}(y) U_{s}(y)\sqrt{1-y^2}  d y  \\
		  & = \sum_{\substack{r =|j-i| \\ \text{step }2} }^{j+i}
	\sum_{\substack{ s=|m-k| \\ \text{step }2} }^{m+k}
       \mathds{1} \left( r = s \right).
	\end{align*}	 
Next, we use \eqref{ClosedformulaFourierModel1CW} to derive closed formulas for the non-vanishing Fourier coefficients.
\begin{lemma}[Non--vanishing Fourier coefficients on resonant indices: closed formulas]\label{LemmaClosedformulaFourierModel1CW}
For any integers $i,j,k,m \geq 0$, we have
\begin{align*}	  
	  & \omega_{i} + \omega_{j} - \omega_{k} - \omega_{m} = 0
	  \Longrightarrow 
	  C_{ijkm}= \omega_{\min \{i,j,k,m\}}    ,\\
	  & \omega_{i} - \omega_{j} + \omega_{k} - \omega_{m} = 0
	  \Longrightarrow 
	  C_{ijkm}= \omega_{\min \{i,j,k,m\}}    ,\\
	   & \omega_{i} - \omega_{j} - \omega_{k} + \omega_{m} = 0
	  \Longrightarrow 
	  C_{ijkm}= \omega_{\min \{i,j,k,m\}}  .  
\end{align*}
\end{lemma}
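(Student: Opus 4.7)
The plan is to exploit two facts established just above the statement: (i) the Fourier coefficient $C_{ijkm}$ is fully symmetric in its four indices, since it is the $L^{2}([-1,1];\sqrt{1-y^{2}}dy)$ integral of the product $U_{i}U_{j}U_{k}U_{m}$, and (ii) the closed formula \eqref{ClosedformulaFourierModel1CW} obtained from the Chebyshev addition formula \eqref{AdditionTheoremChebyshev}. The full symmetry means that the three resonance relations of the statement are all permutations of the same condition, namely $\omega_{a}+\omega_{b}=\omega_{c}+\omega_{d}$ for some relabeling $\{a,b,c,d\}=\{i,j,k,m\}$. Therefore, it suffices to prove the first case, $\omega_{i}+\omega_{j}=\omega_{k}+\omega_{m}$, and then invoke symmetry for the other two.

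Using $\omega_{n}=n+1$, the hypothesis rewrites as the single identity $i+j=k+m$. Plugging into \eqref{ClosedformulaFourierModel1CW},
\begin{align*}
C_{ijkm} \;=\; \#\Bigl(\bigl\{|j-i|,|j-i|+2,\dots,j+i\bigr\}\cap\bigl\{|m-k|,|m-k|+2,\dots,m+k\bigr\}\Bigr).
\end{align*}
The key observation is that both arithmetic progressions have the same step $2$, the same upper endpoint $i+j=k+m$, and the same parity (because $|j-i|=(i+j)-2\min(i,j)$ has the same parity as $i+j$, and similarly for $k,m$). Hence the intersection is the longer of the two, i.e., the progression whose lower endpoint is $\max(|j-i|,|m-k|)$.

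Next I identify this count with $\omega_{\min\{i,j,k,m\}}$. Without loss of generality assume $|j-i|\le|m-k|$; then the intersection is $\{|m-k|,|m-k|+2,\dots,k+m\}$, whose cardinality equals
\begin{align*}
\frac{(k+m)-|m-k|}{2}+1 \;=\; \min(k,m)+1 \;=\; \omega_{\min(k,m)}.
\end{align*}
It remains to show $\min(k,m)=\min\{i,j,k,m\}$ under the assumption $|j-i|\le|m-k|$. Since $i+j=k+m$, we have $2\min(i,j)=(i+j)-|j-i|\ge(k+m)-|m-k|=2\min(k,m)$, so $\min(k,m)\le\min(i,j)$, giving the desired equality. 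The symmetric case $|m-k|\le|j-i|$ is identical with the roles of $(i,j)$ and $(k,m)$ swapped. The only subtlety worth highlighting is the parity matching of the two progressions, but this is automatic from the resonance $i+j=k+m$; once that is noted, the proof reduces to elementary counting and there is no real obstacle.
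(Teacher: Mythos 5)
Your proof is correct and follows essentially the same route as the paper: both use the closed formula \eqref{ClosedformulaFourierModel1CW} derived from the Chebyshev addition formula, and reduce the question to counting a common arithmetic progression. Your organization is slightly cleaner — you dispatch all three resonance conditions at once via the full permutation symmetry of $C_{ijkm}$ and then identify the intersection by matching endpoints and parities, whereas the paper reindexes the double sum after a somewhat awkwardly stated WLOG ordering — but the underlying computation is the same.
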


\begin{proof}
	Let $i,j,k,m \geq 0$ be integers such that $\omega_{i} + \omega_{j} - \omega_{k} - \omega_{m} = 0$ or equivalently, $m=i+j-k$ and assume for simplicity that $i\leq j\leq k \leq m$. Then,  \eqref{ClosedformulaFourierModel1CW} yields
	\begin{align*} 
		C_{ijkm}  & = \sum_{\substack{r = j-i \\ \text{step }2} }^{j+i}
	\sum_{\substack{ s= i+j-2 k\\ \text{step }2} }^{i + j}
       \mathds{1} \left( r = s \right) 
        = \sum_{\substack{p = 0 \\ \text{step }2} }^{2i}
	\sum_{\substack{ q= 0\\ \text{step }2} }^{2k}
       \mathds{1} \left( i+j-p = i+j-q \right) \\
       & = \sum_{\substack{p = 0 \\ \text{step }2} }^{2i}
	\sum_{\substack{ q= 0\\ \text{step }2} }^{2k}
       \mathds{1} \left( p =  q \right) 
        = \sum_{ t = 0   }^{i}
	\sum_{ \tau= 0  }^{k}
       \mathds{1} \left( t =  \tau \right)    = \sum_{ t = 0   }^{i}
	\left( \sum_{ \tau= 0  }^{i}
       \mathds{1} \left( t =  \tau \right)+
	\sum_{ \tau= i+1  }^{k}
       \mathds{1} \left( t =  \tau \right) \right) \\
       &=\sum_{ t = 0   }^{i}
	  \sum_{ \tau= 0  }^{i}
       \mathds{1} \left( t =  \tau \right)  = i+1 = \omega_{i}.
\end{align*}	
All the other results follow immediately by the symmetries of the Fourier coefficients with respect to $i,j,k$ and $m$, that completes the proof. 
\end{proof} 
\subsection{Conformal cubic wave equation out of spherical symmetry}
In this case, the Fourier coefficients are given by \eqref{ExpnasionsFourierModel2CH}, that is
 \begin{align*}
		\mathsf{C}_{ijkm}^{(\mu_1,\mu_2)}: &= \left\langle \mathsf{e}_i^{(\mu_1,\mu_2)} \mathsf{e}_j^{(\mu_1,\mu_2)}\mathsf{e}_k^{(\mu_1,\mu_2)} \Big| \mathsf{e}_m^{(\mu_1,\mu_2)}\right\rangle  \\
		&  =\int_{0}^{\frac{\pi }{2}} \mathsf{e}_{i}^{(\mu_1,\mu_2)}(x) \mathsf{e}_{j}^{(\mu_1,\mu_2)}(x) \mathsf{e}_{k}^{(\mu_1,\mu_2)} (x) \mathsf{e}_{m}^{(\mu_1,\mu_2)} (x) \sin(2x) dx,  \\
		& =
		\frac{1}{2}\prod_{\lambda_{1} \in \{i,j,k,m\}} \mathsf{N}_{\lambda_{1}}^{(\mu_1,\mu_2)}  \int_{-1}^{1} (1-x)^{2\mu_1} (1+x)^{2\mu_2}
		\prod_{\lambda_{2} \in \{i,j,k,m\}} P_{\lambda_{2}}^{(\mu_1,\mu_2)}(x)  dx,
\end{align*} 
where the normalization constant $\mathsf{N}_{\lambda_{1}}^{(\mu_1,\mu_2)}$ is given by \eqref{NormalizationConstantModel2CH}. As before, we call \textit{resonant} a quadruple $(i,j,k,m)$ of indices such that
\begin{align*}
	\somega_{i}^{(\mu_1,\mu_2)} \pm \somega_{j}^{(\mu_1,\mu_2)} \pm \somega_{k}^{(\mu_1,\mu_2)} \pm \somega_{m}^{(\mu_1,\mu_2)} =0
\end{align*}
and study the Fourier coefficients on resonant indices.
\subsubsection{Vanishing Fourier coefficients}
Firstly, we show that the Fourier coefficients vanish on some resonant indices.

\begin{lemma}[Vanishing Fourier coefficients on resonant indices]\label{LemmaVanisginFourierModel2CH}
For any integers $\mu_1,\mu_2 \geq 0$ and $i,j,k,m \geq 0$, we have
\begin{align*}
	  \somega_{i}^{(\mu_1,\mu_2)} + \somega_{j}^{(\mu_1,\mu_2)} + \somega_{k}^{(\mu_1,\mu_2)} - \somega_{m}^{(\mu_1,\mu_2)}  & = 0
	  \Longrightarrow 
	  \mathsf{C}_{ijkm}^{(\mu_1,\mu_2)}=0     ,\\
	   \somega_{i}^{(\mu_1,\mu_2)} + \somega_{j}^{(\mu_1,\mu_2)} - \somega_{k}^{(\mu_1,\mu_2)} + \somega_{m}^{(\mu_1,\mu_2)}  &= 0
	  \Longrightarrow 
	  \mathsf{C}_{ijkm}^{(\mu_1,\mu_2)}=0     ,\\
	   \somega_{i}^{(\mu_1,\mu_2)} - \somega_{j}^{(\mu_1,\mu_2)} + \somega_{k}^{(\mu_1,\mu_2)} + \somega_{m}^{(\mu_1,\mu_2)}  &= 0
	  \Longrightarrow 
	  \mathsf{C}_{ijkm}^{(\mu_1,\mu_2)}=0     ,\\
	 - \somega_{i}^{(\mu_1,\mu_2)} + \somega_{j}^{(\mu_1,\mu_2)} + \somega_{k}^{(\mu_1,\mu_2)} + \somega_{m}^{(\mu_1,\mu_2)} & = 0
	  \Longrightarrow 
	  \mathsf{C}_{ijkm}^{(\mu_1,\mu_2)}=0 .   
\end{align*}
\end{lemma}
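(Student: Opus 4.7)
The plan is to reduce the vanishing to the standard orthogonality of Jacobi polynomials $P_n^{(\mu_1,\mu_2)}$ with respect to the weight $(1-y)^{\mu_1}(1+y)^{\mu_2}$ on $[-1,1]$. First I would unwind the Fourier coefficient: using the change of variable $y=\cos(2x)$ and the explicit formula \eqref{EigenfunctionsModel2WH}, one finds exactly the expression already written in the excerpt,
\[
\mathsf{C}_{ijkm}^{(\mu_1,\mu_2)} = \tfrac{1}{2}\prod_{\lambda\in\{i,j,k,m\}}\!\mathsf{N}_\lambda^{(\mu_1,\mu_2)}\int_{-1}^{1}(1-y)^{2\mu_1}(1+y)^{2\mu_2}\prod_{n\in\{i,j,k,m\}}\!P_n^{(\mu_1,\mu_2)}(y)\,dy.
\]
Since this integrand is manifestly symmetric in all four indices $(i,j,k,m)$, it suffices to treat one of the four implications; the remaining three then follow by merely relabeling the indices.

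For the first implication, I would translate the resonance $\somega_i^{(\mu_1,\mu_2)}+\somega_j^{(\mu_1,\mu_2)}+\somega_k^{(\mu_1,\mu_2)}=\somega_m^{(\mu_1,\mu_2)}$ via $\somega_n^{(\mu_1,\mu_2)}=2n+1+\mu_1+\mu_2$ into the equivalent degree identity
\[
m=i+j+k+1+\mu_1+\mu_2.
\]
Because $\mu_1,\mu_2$ are nonnegative integers, the function $(1-y)^{\mu_1}(1+y)^{\mu_2}$ is a polynomial of degree $\mu_1+\mu_2$, so that
\[
Q(y):=(1-y)^{\mu_1}(1+y)^{\mu_2}\,P_i^{(\mu_1,\mu_2)}(y)\,P_j^{(\mu_1,\mu_2)}(y)\,P_k^{(\mu_1,\mu_2)}(y)
\]
is itself a polynomial of degree $\mu_1+\mu_2+i+j+k=m-1<m$. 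Splitting the weight as $(1-y)^{2\mu_1}(1+y)^{2\mu_2}=(1-y)^{\mu_1}(1+y)^{\mu_2}\cdot(1-y)^{\mu_1}(1+y)^{\mu_2}$ and absorbing the second factor into $Q$, the integral becomes
\[
\int_{-1}^{1}(1-y)^{\mu_1}(1+y)^{\mu_2}\,Q(y)\,P_m^{(\mu_1,\mu_2)}(y)\,dy,
\]
which vanishes by the orthogonality of $P_m^{(\mu_1,\mu_2)}$ against all polynomials of degree strictly less than $m$.

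I do not expect any real obstacle here: the whole argument is a one-line degree count combined with Jacobi orthogonality, entirely analogous in spirit to the proof of Lemma \ref{LemmaVanishingFourierModel1CW} for the CW model. The only delicate point is that one crucially uses $\mu_1,\mu_2\in\mathbb{N}\cup\{0\}$ in order for $(1-y)^{\mu_1}(1+y)^{\mu_2}$ to be a polynomial and for $Q$ to have a well-defined degree; this is consistent with Assumptions \ref{AssumptionsRef} for the CH case.
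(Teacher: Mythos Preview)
Your proposal is correct and follows essentially the same approach as the paper: both reduce the integral to the form $\int_{-1}^{1}(1-y)^{\mu_1}(1+y)^{\mu_2}\,Q(y)\,P_m^{(\mu_1,\mu_2)}(y)\,dy$ with $Q$ a polynomial of degree $i+j+k+\mu_1+\mu_2=m-1<m$, and then invoke Jacobi orthogonality, with the remaining three implications handled by symmetry in $(i,j,k,m)$.
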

\begin{proof}
Let $\mu_1,\mu_2 \geq 0$ and $i,j,k,m \geq 0$ be integers such that $\somega_{i}^{(\mu_1,\mu_2)} + \somega_{j}^{(\mu_1,\mu_2)} + \somega_{k}^{(\mu_1,\mu_2)} - \somega_{m}^{(\mu_1,\mu_2)} = 0$. Then, $m=i+j+k+\mu_1+\mu_2+1$ and according to the computation above, we have
	\begin{align*} 
	\mathsf{C}_{ijkm}^{(\mu_1,\mu_2)}= \int_{-1}^{1} \mathsf{R}_{N}^{(\mu_1,\mu_2)}(x)P_{m}^{(\mu_1,\mu_2)}(x) (1-x)^{\mu_1} (1+x)^{\mu_2}dx,
\end{align*}
where
\begin{align*}
	\mathsf{R}_{N}^{(\mu_1,\mu_2)}(x):=\frac{1}{2}\prod_{\lambda_{1} \in \{i,j,k,m\}} \mathsf{N}_{\lambda_{1}}^{(\mu_1,\mu_2)}  (1-x)^{\mu_1}
	 (1+x)^{\mu_2} \prod_{\lambda_{2} \in \{i,j,k\}} P_{\lambda_{2}}^{(\mu_1,\mu_2)}(x) 
\end{align*}
is a polynomial of degree $N=i+j+k+\mu_1+\mu_2<m$ and hence the Fourier coefficient vanishes since $P_{m}^{(\mu_1,\mu_2)}(x)$ forms an orthonormal and complete basis with respect to the weight $(1-x)^{\mu_1} (1+x)^{\mu_2}$. The other results now follow immediately using the symmetries of the Fourier coefficients with respect to $i,j,k,m$.  
\end{proof}

\subsubsection{Non--vanishing Fourier coefficients}\label{SectionAsymptoticsAppendix}
Next, we study the non--vanishing Fourier coefficients.  In principle, in order to deal with these constants, one needs a computationally efficient formula as in the spherically symmetric case. However, out of spherically symmetry, where the basis consists of the Jacobi polynomials, although there exists the addition formula   
\begin{align*}
	P_{p}^{(\mu_1,\mu_2)}(x)P_{q}^{(\mu_1,\mu_2)}(x)=\sum_{r=|p-q|}^{p+q} L(p,q,r)P_{r}^{(\mu_1,\mu_2)}(x), 
\end{align*}
for all $p,q \geq 0$, similar to \eqref{AdditionTheoremChebyshev}, the linearization coefficients $L(p,q,r)$ remain unknown in closed form for generic values of $\mu_1$ and $\mu_2$ and hence a closed formula similar to \eqref{ClosedformulaFourierModel1CW} is not available in general. We note that Rahman\footnote{
According to \cite{MR3527696}, there was a minor typo in Rahman's published result; in the linearization coefficient in \cite{MR634149} , the term $(-\alpha-\beta-2m)$ should be replaced by the Pochhammer symbol $(-\alpha-\beta-2m)_k$. The corrected linearization formula is given in \cite{MR3527696}.}
(page 919 in \cite{MR634149}) was able to prove that the linearization coefficients of Jacobi polynomials can be represented as a well--posed hypergeometric function ${}_9 F_8 (1)$. On the other hand, in the special case where $\mu_1=\mu_2 $, the Jacobi polynomials reduce to Gegenbauer polynomials for which the linearization coefficients are given by well-posed and closed formulas \cite{MR2723248, MR1820610, MR0372517}. Hence, we restrict ourselves to the case 
\begin{align*}
	\mu_1=\mu_2 :=\mu   
\end{align*} 
and also denote by $\gamma$ the index referring to the fixed choice of the 1--mode initial data. Then, we derive closed formulas for the non--vanishing Fourier coefficients for a resonant pair of indices $(i,j,k,m)$ by using the formula (equation (20) in \cite{MR1820610}) for the Gegenbauer polynomials,
\begin{align}\label{CombinationOflinearizationANDconnection}
	 \left( C_{m}^{\left(\mu +\frac{1}{2} \right)}(x) \right)^2 &=  \sum_{\lambda=0}^{m} L_{\mu m}(\lambda) C_{2\lambda}^{\left(2 \mu +\frac{1}{2} \right)}(x),
\end{align}
where the coefficients are given by
\begin{align}\label{DefinitionLCombinationOflinearizationANDconnection}
	L_{\mu m}(\lambda):= \frac{(2 \mu +1)_m}{\Gamma (m+1)} \frac{\left(\frac{1}{2}\right)_{\lambda } \left(\frac{1}{2}\right)_{m-\lambda } \left(\mu +\frac{1}{2}\right)_{\lambda } (\lambda +2 \mu +1)_m}{\Gamma (m-\lambda +1) (\mu +1)_{\lambda } \left(2 \mu +\frac{1}{2}\right)_{2 \lambda } \left(2 \lambda +2 \mu +\frac{3}{2}\right)_{m-\lambda }},
\end{align}
valid of any real $x\in [-1,1]$ and integers $\mu,m,\lambda \geq 0$ and
\begin{align*}
	(a)_n := \frac{\Gamma(a+n)}{\Gamma(a)},
\end{align*}
stands for the Pochhammer's symbol defined for any $a \in \mathbb{R}$ with $a \notin \{0,-1,-2,\dots\} $ and $n \in \mathbb{N}$. Notice that \eqref{CombinationOflinearizationANDconnection} is a combination of a linearization and a connection formula for Gegenbauer polynomials. Specifically, we establish the following result.

\begin{lemma}[Non--vanishing Fourier coefficients on resonant indices: closed formulas]\label{LemmaUniformAsymptoticFormulasModel2CH}
Let $\gamma \geq 0$ and $m \geq \gamma$ be any integers. Then, we have
\begin{align*}
	\mathsf{C}_{\gamma \gamma mm}^{(\mu,\mu)}    
	    & =  \frac{1}{2}      \sum_{\lambda=0}^{\gamma} \mathsf{M}_{ \gamma}^{(\mu)} (\lambda) \mathsf{M}_{ m}^{(\mu)} (\lambda) \xi_{\lambda}(\mu),
\end{align*}
where
\begin{align*}
	\xi_{ \lambda}(\mu):=\frac{\pi  2^{1-4 \mu } \Gamma (2 \lambda +4 \mu +1)}{(4 \lambda +4 \mu +1) \Gamma (2 \lambda +1) \left( \Gamma \left(2 \mu +\frac{1}{2}\right) \right)^2}
\end{align*}
and
\begin{align*}
	\mathsf{M}_{ m}^{(\mu)} (\lambda) &=\frac{1}{2 \pi ^{3/2}} \frac{(4 \lambda +4 \mu +1) \Gamma \left(\lambda +\frac{1}{2}\right) \Gamma \left(2 \mu +\frac{1}{2}\right) \Gamma \left(\lambda +\mu +\frac{1}{2}\right)}{\Gamma (\lambda +\mu +1) \Gamma (\lambda +2 \mu +1)} \cdot \\
	& \cdot \frac{(2 \mu +2 m+1) \Gamma \left(m-\lambda +\frac{1}{2}\right) \Gamma (m+\lambda +2 \mu +1)}{\Gamma (m-\lambda +1) \Gamma \left(m+\lambda +2 \mu +\frac{3}{2}\right)}.
\end{align*}
\end{lemma}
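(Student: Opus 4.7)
The plan is to reduce the integral defining $\mathsf{C}_{\gamma\gamma mm}^{(\mu,\mu)}$ to a sum over products of expansion coefficients, exploiting the fact that, when $\mu_1=\mu_2=\mu$, the Jacobi polynomials $P_n^{(\mu,\mu)}$ are proportional to the Gegenbauer polynomials $C_n^{(\mu+1/2)}$, for which closed linearization/connection formulas are available.

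First I would unfold the definition of the Fourier coefficient to obtain
\begin{align*}
  \mathsf{C}_{\gamma\gamma mm}^{(\mu,\mu)}
  = \frac{1}{2}\bigl(\mathsf{N}_\gamma^{(\mu,\mu)}\bigr)^{2}\bigl(\mathsf{N}_m^{(\mu,\mu)}\bigr)^{2}
  \int_{-1}^{1}(1-x^2)^{2\mu}\,\bigl(P_\gamma^{(\mu,\mu)}(x)\bigr)^{2}\bigl(P_m^{(\mu,\mu)}(x)\bigr)^{2}\,dx,
\end{align*}
and then use the classical identity $P_n^{(\mu,\mu)}(x)=\frac{\Gamma(\mu+1)\Gamma(n+2\mu+1)}{\Gamma(2\mu+1)\Gamma(n+\mu+1)}\,C_n^{(\mu+1/2)}(x)$ to trade the Jacobi factors for Gegenbauer ones. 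Next, the crucial step is to apply formula \eqref{CombinationOflinearizationANDconnection}--\eqref{DefinitionLCombinationOflinearizationANDconnection} to both squares $\bigl(C_\gamma^{(\mu+1/2)}\bigr)^{2}$ and $\bigl(C_m^{(\mu+1/2)}\bigr)^{2}$; this turns the integrand into a double sum over indices $\lambda_1\in\{0,\dots,\gamma\}$ and $\lambda_2\in\{0,\dots,m\}$ of products $L_{\mu\gamma}(\lambda_1)\,L_{\mu m}(\lambda_2)\,C_{2\lambda_1}^{(2\mu+1/2)}(x)\,C_{2\lambda_2}^{(2\mu+1/2)}(x)$, multiplied by the weight $(1-x^2)^{2\mu}$.

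Then I would invoke the orthogonality of the Gegenbauer family $\{C_{2\lambda}^{(2\mu+1/2)}\}_{\lambda\ge 0}$ with respect to the weight $(1-x^2)^{2\mu}$ on $[-1,1]$, namely
\begin{align*}
  \int_{-1}^{1}(1-x^2)^{2\mu}\,C_{2\lambda_1}^{(2\mu+1/2)}(x)\,C_{2\lambda_2}^{(2\mu+1/2)}(x)\,dx
  = \mathds{1}(\lambda_1=\lambda_2)\,\eta_{\lambda_1}(\mu),
\end{align*}
for an explicit constant $\eta_\lambda(\mu)$ given by the standard normalization formula for Gegenbauer polynomials. This collapses the double sum to a single sum over $\lambda$, with the range forced to $0\le\lambda\le\gamma$ since $m\ge\gamma$. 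The result of the calculation is already of the form $\frac{1}{2}\sum_{\lambda=0}^{\gamma}(\cdots)_\gamma(\cdots)_m\,(\cdots)_\lambda$ advertised in the statement.

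The last step, which is the main computational obstacle rather than a conceptual one, is to distribute the prefactors $\bigl(\mathsf{N}_\gamma^{(\mu,\mu)}\bigr)^{2}\bigl(\mathsf{N}_m^{(\mu,\mu)}\bigr)^{2}$, the Jacobi-to-Gegenbauer conversion constants, the coefficient $L_{\mu n}(\lambda)$ from \eqref{DefinitionLCombinationOflinearizationANDconnection}, and the normalization $\eta_\lambda(\mu)$ so that the $n$-dependence (for $n\in\{\gamma,m\}$) gets packaged symmetrically into $\mathsf{M}_{n}^{(\mu)}(\lambda)$ while the $n$-independent part reproduces $\xi_\lambda(\mu)$. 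This requires patient manipulation of Pochhammer symbols and Gamma-function identities (duplication formula, shifts of the form $(a)_n=\Gamma(a+n)/\Gamma(a)$) to recognize that the resulting ratio of gamma functions is exactly the one written in the statement. No further analytic ingredient is needed beyond bookkeeping of these identities.
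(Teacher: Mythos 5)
Your proposal follows the same route as the paper: rewrite $P_n^{(\mu,\mu)}$ in terms of Gegenbauer polynomials, expand each square via formula \eqref{CombinationOflinearizationANDconnection}--\eqref{DefinitionLCombinationOflinearizationANDconnection}, collapse the resulting double sum by orthogonality of $\{C_{2\lambda}^{(2\mu+1/2)}\}$ with respect to $(1-x^2)^{2\mu}$, use $m\geq\gamma$ to truncate the $\lambda$-range, and repackage the prefactors into $\mathsf{M}_{\bullet}^{(\mu)}(\lambda)$ and $\xi_\lambda(\mu)$. One small slip worth correcting: the Jacobi-to-Gegenbauer conversion constant is $P_n^{(\mu,\mu)}(x)=\frac{\Gamma(2\mu+1)\Gamma(n+\mu+1)}{\Gamma(\mu+1)\Gamma(n+2\mu+1)}\,C_n^{(\mu+1/2)}(x)$ (DLMF 18.7.1), i.e.\ the reciprocal of the factor you wrote; with this fixed the bookkeeping closes exactly as in the paper.
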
 
\begin{proof} 
	Let $\gamma \geq 1$ be a fixed integer and pick any integer $m \geq \gamma$. Then, by the definition of the Fourier coefficient, the fact that the Jacobi polynomials with equal parameters can be written in terms of the Gegenbauer polynomials (equation 18.7.1 in \cite{MR2723248}),
\begin{align*}
	P_{m}^{(\mu,\mu)}(x) 
	=w_{m}^{(\mu)}C_{m}^{\left(\mu +\frac{1}{2} \right)}(x), \Hquad
	w_{m}^{(\mu)}:=\frac{\Gamma (2 \mu +1) \Gamma (m+\mu +1)}{\Gamma (\mu +1) \Gamma (m+2 \mu +1)},
\end{align*}  
together with \eqref{CombinationOflinearizationANDconnection}--\eqref{DefinitionLCombinationOflinearizationANDconnection}, we have
\begin{align*}
	 \mathsf{C}_{\gamma \gamma mm}^{(\mu,\mu)}  
	  & = \frac{1}{2} \left(  \mathsf{N}_{\gamma}^{(\mu,\mu)}  \right)^2 \left(  \mathsf{N}_{m}^{(\mu,\mu)} \right)^2 \int_{-1}^{1} (1-x)^{2\mu} (1+x)^{2\mu} \left(  P_{\gamma}^{(\mu,\mu)}(x) \right)^2  \left(  P_{m}^{(\mu,\mu)}(x) \right)^2 dx \\
	   & = \frac{1}{2} \left( w_{\gamma}^{(\mu)} \mathsf{N}_{\gamma}^{(\mu,\mu)}\right)^2 \left( w_{m}^{(\mu)} \mathsf{N}_{m}^{(\mu,\mu)}  \right)^2 \int_{-1}^{1} (1-x^2)^{2\mu}  \left(  C_{\gamma}^{\left(\mu +\frac{1}{2} \right)}(x) \right)^2  \left( C_{m}^{\left(\mu +\frac{1}{2} \right)}(x) \right)^2 dx \\
	   & = \frac{1}{2} \left( w_{\gamma}^{(\mu)} \mathsf{N}_{\gamma}^{(\mu,\mu)}\right)^2 \left( w_{m}^{(\mu)} \mathsf{N}_{m}^{(\mu,\mu)}  \right)^2 \sum_{\nu=0}^{\gamma} L_{\mu \gamma}(\nu)\sum_{\lambda=0}^{m} L_{\mu m}(\lambda)  \int_{-1}^{1} (1-x^2)^{2\mu} C_{2\nu}^{\left(2 \mu +\frac{1}{2} \right)}(x) C_{2\lambda}^{\left(2 \mu +\frac{1}{2} \right)}(x)dx.
\end{align*}
Now, the orthogonality of the Gegenbauer polynomials,
\begin{align*}
	 \int_{-1}^{1}C_{2\nu}^{\left(2 \mu +\frac{1}{2} \right)}(x) C_{2\lambda}^{\left(2 \mu +\frac{1}{2} \right)}(x) (1-x^2)^{2\mu} dx= \xi_{ \lambda}(\mu) \mathds{1}(\nu=\lambda),
\end{align*}
where
\begin{align*}
	\xi_{ \lambda}(\mu):=\frac{\pi  2^{1-4 \mu } \Gamma (2 \lambda +4 \mu +1)}{(4 \lambda +4 \mu +1) \Gamma (2 \lambda +1) \left( \Gamma \left(2 \mu +\frac{1}{2}\right) \right)^2},
\end{align*}
together with the fact that $0\leq  \gamma \leq m$, yields
\begin{align*}
	\mathsf{C}_{\gamma \gamma mm}^{(\mu,\mu)}  
	  & =  \frac{1}{2} \left( w_{\gamma}^{(\mu)} \mathsf{N}_{\gamma}^{(\mu,\mu)}\right)^2 \left( w_{m}^{(\mu)} \mathsf{N}_{m}^{(\mu,\mu)}  \right)^2 \sum_{\nu=0}^{\gamma} L_{\mu \gamma}(\nu)\sum_{\lambda=0}^{m} L_{\mu m}(\lambda) \xi_{ \lambda}(\mu) \mathds{1}(\nu=\lambda) \\
	    & =  \frac{1}{2} \left( w_{\gamma}^{(\mu)} \mathsf{N}_{\gamma}^{(\mu,\mu)}\right)^2 \left( w_{m}^{(\mu)} \mathsf{N}_{m}^{(\mu,\mu)}  \right)^2 \sum_{\nu=0}^{\gamma} L_{\mu \gamma}(\nu)\sum_{\lambda=0}^{\gamma} L_{\mu m}(\lambda) \xi_{ \lambda}(\mu) \mathds{1}(\nu=\lambda) \\
	     & =  \frac{1}{2} \left( w_{\gamma}^{(\mu)} \mathsf{N}_{\gamma}^{(\mu,\mu)}\right)^2 \left( w_{m}^{(\mu)} \mathsf{N}_{m}^{(\mu,\mu)}  \right)^2   \sum_{\lambda=0}^{\gamma} L_{\mu \gamma}(\lambda)L_{\mu m}(\lambda) \xi_{\lambda}(\mu)  \\ 
	    & =  \frac{1}{2}      \sum_{\lambda=0}^{\gamma} \mathsf{M}_{ \gamma}^{(\mu)} (\lambda) \mathsf{M}_{ m}^{(\mu)} (\lambda) \xi_{\lambda}(\mu),
\end{align*}
where we set
\begin{align*} 
	\mathsf{M}_{ m}^{(\mu)} (\lambda) &:=\left( w_{m}^{(\mu)} \mathsf{N}_{m}^{(\mu,\mu)}  \right)^2   L_{\mu m}(\lambda)  .
\end{align*}
Finally, a direct computation yields that 
\begin{align*} 
	\mathsf{M}_{ m}^{(\mu)} (\lambda) &=\frac{1}{2 \pi ^{3/2}} \frac{(4 \lambda +4 \mu +1) \Gamma \left(\lambda +\frac{1}{2}\right) \Gamma \left(2 \mu +\frac{1}{2}\right) \Gamma \left(\lambda +\mu +\frac{1}{2}\right)}{\Gamma (\lambda +\mu +1) \Gamma (\lambda +2 \mu +1)} \cdot \\
	& \cdot \frac{(2 \mu +2 m+1) \Gamma \left(m-\lambda +\frac{1}{2}\right) \Gamma (m+\lambda +2 \mu +1)}{\Gamma (m-\lambda +1) \Gamma \left(m+\lambda +2 \mu +\frac{3}{2}\right)}
\end{align*}
that completes the proof. 
\end{proof}
Next, we show that the closed formulas we derived above are in fact monotone with respect to $m$.
\begin{lemma}[Monotonicity of $\mathsf{M}_{m}^{(\mu)}(\lambda)$]\label{LemmaMonotonicityA}
	Let $\gamma \geq 0$, $m \geq \gamma$ and $0\leq  \lambda \leq \gamma$ be any integers. Then, the function $\mathsf{M}_{m}^{(\mu)}(\lambda)$ defined in Lemma \ref{LemmaUniformAsymptoticFormulasModel2CH} is decreasing with respect to $m$.
\end{lemma}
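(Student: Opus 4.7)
The plan is to reduce the monotonicity statement to a simple quadratic inequality by computing the ratio $\mathsf{M}_{m+1}^{(\mu)}(\lambda)/\mathsf{M}_{m}^{(\mu)}(\lambda)$ explicitly via the functional equation $\Gamma(z+1)=z\Gamma(z)$.

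First I would isolate the $m$-dependent factor. Inspecting the closed form in Lemma \ref{LemmaUniformAsymptoticFormulasModel2CH}, every factor on the first line of $\mathsf{M}_{m}^{(\mu)}(\lambda)$ is independent of $m$, so the dependence in $m$ is entirely contained in
\begin{align*}
F(m):=\frac{(2m+2\mu+1)\,\Gamma\!\left(m-\lambda+\tfrac{1}{2}\right)\Gamma(m+\lambda+2\mu+1)}{\Gamma(m-\lambda+1)\,\Gamma\!\left(m+\lambda+2\mu+\tfrac{3}{2}\right)}.
\end{align*}
Using $\Gamma(z+1)=z\Gamma(z)$ in the four Gamma factors, a direct computation gives
\begin{align*}
\frac{F(m+1)}{F(m)}=\frac{2m+2\mu+3}{2m+2\mu+1}\cdot\frac{m-\lambda+\tfrac{1}{2}}{m-\lambda+1}\cdot\frac{m+\lambda+2\mu+1}{m+\lambda+2\mu+\tfrac{3}{2}}.
\end{align*}
To treat this compactly, set $a:=m-\lambda+\tfrac{1}{2}$, $b:=m+\lambda+2\mu+1$, and $c:=2m+2\mu+1$. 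Then $c=a+b-\tfrac{1}{2}$ and the ratio becomes
\begin{align*}
\frac{F(m+1)}{F(m)}=\frac{(c+2)\,a\,b}{c\,\left(a+\tfrac{1}{2}\right)\left(b+\tfrac{1}{2}\right)}.
\end{align*}

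Next, I would show this ratio is at most one. Clearing denominators and expanding, the inequality $F(m+1)/F(m)\le 1$ is equivalent to $(c+2)ab\le c(a+\tfrac{1}{2})(b+\tfrac{1}{2})$, i.e.\ to $8ab\le 2c(a+b)+c$. Substituting $c=a+b-\tfrac{1}{2}$ and simplifying, this reduces to the sharp algebraic identity
\begin{align*}
2(a-b)^{2}\ge \tfrac{1}{2},\qquad\text{i.e.}\qquad |a-b|\ge \tfrac{1}{2}.
\end{align*}
Finally, one simply notes that $|a-b|=|{-2\lambda-2\mu-\tfrac{1}{2}}|=2\lambda+2\mu+\tfrac{1}{2}\ge\tfrac{1}{2}$, with strict inequality whenever $\lambda\ge 1$ or $\mu\ge 1$. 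This proves $\mathsf{M}_{m+1}^{(\mu)}(\lambda)\le \mathsf{M}_{m}^{(\mu)}(\lambda)$ for every $m\ge\gamma$, with strict decrease unless $\lambda=\mu=0$.

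There is no serious obstacle: the whole argument is a one-line Gamma recursion followed by elementary algebra. The only subtlety is the boundary case $\lambda=\mu=0$, where $F(m)$ is in fact constant in $m$ (a direct check gives $\mathsf{M}_{m}^{(0)}(0)=1$); in this degenerate case the statement must be read in the weak sense (non-increasing). For all remaining values of the parameters covered by Assumptions~\ref{AssumptionsRef}, the above computation yields strict monotonicity.
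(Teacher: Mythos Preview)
Your proof is correct and follows essentially the same approach as the paper: both isolate the $m$-dependent part of $\mathsf{M}_m^{(\mu)}(\lambda)$ and compare consecutive values via the recursion $\Gamma(z+1)=z\Gamma(z)$. The only cosmetic difference is that the paper computes the \emph{difference} $\mathsf{M}_{m+1}^{(\mu)}(\lambda)-\mathsf{M}_m^{(\mu)}(\lambda)$ directly as a single Gamma expression and observes its sign, whereas you compute the \emph{ratio} and reduce it to the inequality $2(a-b)^2\ge\tfrac12$. Your treatment of the boundary case $\lambda=\mu=0$ (where the sequence is constant, hence only weakly decreasing) is in fact more careful than the paper's, which asserts strict negativity without singling out the pole of $\Gamma(\lambda+\mu)$ at $\lambda=\mu=0$.
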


\begin{proof}
Let $\gamma \geq 0$, $m \geq \gamma$ and $0\leq  \lambda \leq \gamma$ be any integers. The claim follows immediately by computing the difference  $\mathsf{M}_{ m+1}^{(\mu)} (\lambda)-\mathsf{M}_{ m}^{(\mu)} (\lambda)$. Indeed,  the identity for the ration of two Gamma functions,  $\Gamma(x+1)=x\Gamma(x)$, valid for all $x\in \mathbb{R}$, yields that $\mathsf{M}_{ m+1}^{(\mu)} (\lambda)-\mathsf{M}_{ m}^{(\mu)} (\lambda)$ equals to
\begin{align*}
 	-\frac{(4 \lambda +4 \mu +1) \Gamma \left(\lambda +\frac{1}{2}\right) \Gamma \left(2 \mu +\frac{1}{2}\right) \Gamma \left(\lambda +\mu +\frac{3}{2}\right) \Gamma \left(m-\lambda +\frac{1}{2}\right) \Gamma (m+\lambda +2 \mu +1)}{\pi ^{3/2} \Gamma (\lambda +\mu ) \Gamma (\lambda +2 \mu +1) \Gamma (m-\lambda +2) \Gamma \left(m+\lambda +2 \mu +\frac{5}{2}\right)}
\end{align*}
that is strictly negative for all $m  $, $\lambda  $ and $\mu$, and hence $\mathsf{M}_{ m}^{(\mu)} (\lambda)$ is decreasing with respect to $m$, for all $\lambda$ and $\mu$, that completes the proof.
\end{proof}

\begin{remark}[Closed formulas for $\mathsf{C}_{\gamma \gamma mm}^{(\mu,\mu)}$ for small values of $\gamma$]
	Finally, we note that one can use Lemma \ref{LemmaUniformAsymptoticFormulasModel2CH} to find closed formulas for the Fourier coefficients provided that $\gamma$ is sufficiently small. For example, for $\gamma \in \{0,1\}$, we find that 
\begin{align*}
	\mathsf{C}_{00 mm}^{(\mu,\mu)} &=\frac{1}{2 \pi }(2 \mu +1)\left(\frac{\Gamma \left(\mu +\frac{1}{2}\right)}{\Gamma (\mu +1)}\right)^2 \frac{ (2 \mu +2 m+1) \Gamma \left(m+\frac{1}{2}\right) \Gamma (m+2 \mu +1)}{ \Gamma (m+1) \Gamma \left(m+2 \mu +\frac{3}{2}\right)}, \\
	\mathsf{C}_{11 mm}^{(\mu,\mu)} &= \frac{1}{8 \pi }(\mu +1) (2 \mu +1) (2 \mu +3) \left(\frac{\Gamma \left(\mu +\frac{1}{2}\right)}{\Gamma (\mu +2)}\right)^2 \cdot \\
	& \cdot \frac{(2 \mu +2 m+1) (-\mu +2 m (2 \mu +m+1)-1) \Gamma \left(m-\frac{1}{2}\right) \Gamma (m+2 \mu +1)}{\Gamma (m+1) \Gamma \left(m+2 \mu +\frac{5}{2}\right)}.
\end{align*}	
Figure \ref{Lemma74PicRef} illustrates the Fourier coefficients $\mathsf{C}_{\gamma\gamma mm}^{(\mu,\mu)} $ for $\mu=30$ and $\gamma \in \{0,1,2\}$ respectively  as $m $ varies within $\{1,2,\dots,50\}$.
\begin{figure}[h!]
\vspace{0.5cm}
    \centering
    \includegraphics[width=0.7\textwidth]{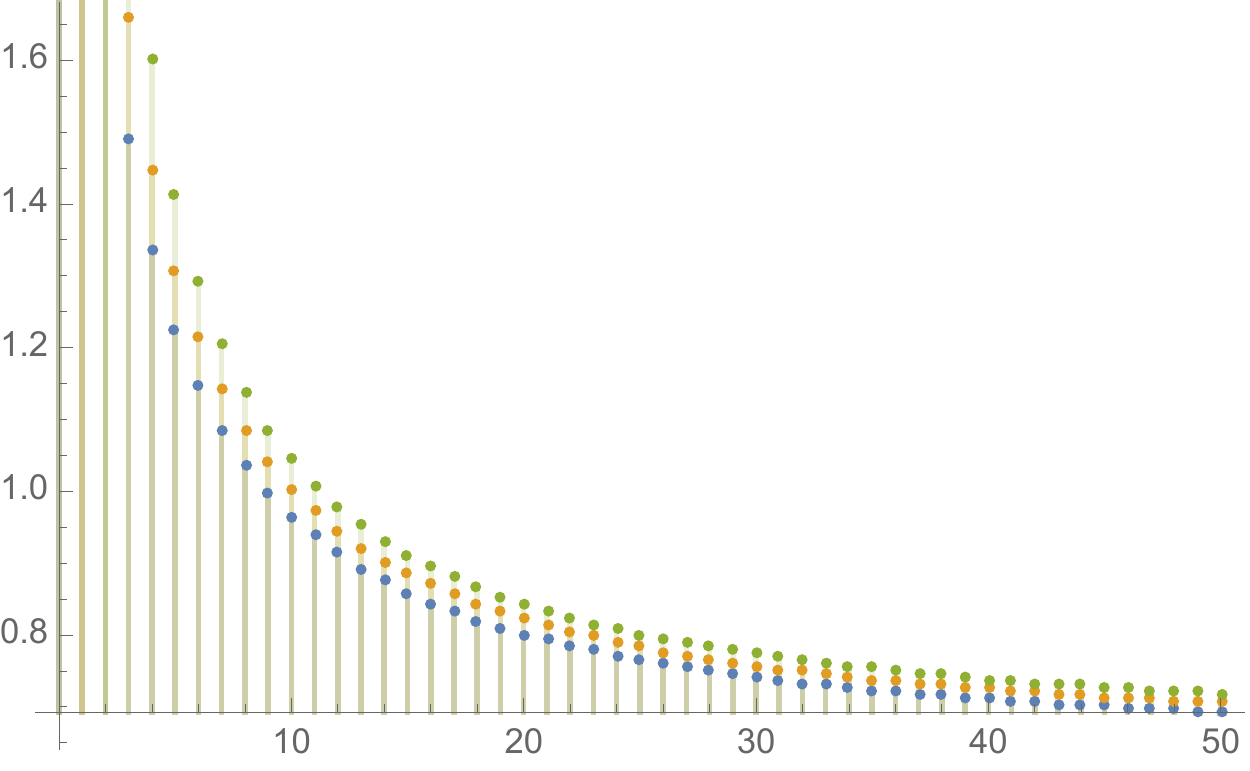}
    \caption{The Fourier coefficients $\mathsf{C}_{\gamma\gamma mm}^{(30,30)} $ for $\gamma =0$ (blue), $\gamma=1$ (orange) and $\gamma=2$ (green) as $m $ varies within the interval $[1,50]$. They are decreasing for $m\geq 2\gamma+1$.}
    \label{Lemma74PicRef}
\end{figure}
\end{remark}

\subsection{Yang--Mills equation in spherical symmetry}\label{SectionYangMills}
 In this case, the Fourier coefficients are given by \eqref{ExpnasionsFourierModel3YM}, that is
 \begin{align*}
		\overline{\mathfrak{C}}_{ijm} &: = \left[  \mathfrak{e}_i \mathfrak{e}_j \Big| \mathfrak{e}_m \right]  =  \int_{0}^{\pi }   \mathfrak{e}_{i}(x)\mathfrak{e}_{j}(x)\mathfrak{e}_{m}(x) \sin^4 (x)  dx \\
		 &=\prod_{\lambda_{1} \in \{i,j,m\}}  \mathfrak{N}_{\lambda_{1}}   \int_{-1}^{1 } \prod_{\lambda_{2} \in \{i,j,m\}} P_{\lambda_{2}}^{\left(\frac{3}{2},\frac{3}{2} \right)}(y)  \left(1-y^2 \right)^2 \frac{dy}{\sqrt{1-y^2}},\\
		\mathfrak{C}_{ijkm} &: = \left[ \sin^2 \cdot \mathfrak{e}_i \mathfrak{e}_j \mathfrak{e}_k    \Big| \mathfrak{e}_m \right]
		 =  \int_{0}^{\pi }  \mathfrak{e}_{i}(x)\mathfrak{e}_{j}(x)\mathfrak{e}_{k}(x)\mathfrak{e}_{m}(x)  \sin^6 (x)  dx \\
		 &= \prod_{\lambda_{1} \in \{i,j,k,m\}} \mathfrak{N}_{\lambda_{1}}   \int_{-1}^{1 } \prod_{\lambda_{2} \in \{i,j,k,m\}} P_{\lambda_{2}}^{\left(\frac{3}{2},\frac{3}{2} \right)}(y)  \left(1-y^2 \right)^3 \frac{dy}{\sqrt{1-y^2}},
	\end{align*}
where the normalization constant $\mathfrak{N}_{\lambda_{1}} $ is given by \eqref{NormalizationConstantModel3YM}. As before, we call \textit{resonant} a quadruple $(i,j,k,m)$ of indices such that 
\begin{align*}
	\ssomega_{i}  \pm \ssomega_{j} \pm \ssomega_{k}  \pm \ssomega_{m}  =0
\end{align*}
and study the Fourier coefficients on resonant indices.
\subsubsection{Vanishing Fourier coefficients}
Firstly, we show that the Fourier coefficients vanish on some resonant indices.
\begin{lemma}[Vanishing Fourier coefficients on resonant indices]\label{LemmaVanisginFourierModel3YM}
For any integers $i,j,k,m \geq 0$, we have
\begin{align*} 
	 \ssomega_{i} + \ssomega_{j}  - \ssomega_{m}   & = 0
	  \Longrightarrow  
	 \overline{\mathfrak{C}}_{ijm}=0     ,\\
	 \ssomega_{i} - \ssomega_{j}+  \ssomega_{m}  &  = 0
	  \Longrightarrow  
	   \overline{\mathfrak{C}}_{ijm}=0     ,\\
	 - \ssomega_{i} + \ssomega_{j}  + \ssomega_{m}  &  = 0 
	  \Longrightarrow  
	   \overline{\mathfrak{C}}_{ijm}=0 
\end{align*}
and
\begin{align*} 
	  \ssomega_{i} + \ssomega_{j} + \ssomega_{k} - \ssomega_{m} & = 0
	  \Longrightarrow 
	  \mathfrak{C}_{ijkm}=0     ,\\
	    \ssomega_{i} + \ssomega_{j} - \ssomega_{k} + \ssomega_{m} & = 0
	  \Longrightarrow 
	  \mathfrak{C}_{ijkm}=0     ,\\
	   \ssomega_{i} - \ssomega_{j} + \ssomega_{k} + \ssomega_{m} & = 0
	  \Longrightarrow 
	 \mathfrak{C}_{ijkm}=0     ,\\
	  - \ssomega_{i} + \ssomega_{j} + \ssomega_{k} + \ssomega_{m} & = 0
	  \Longrightarrow 
	 \mathfrak{C}_{ijkm} =0     .
\end{align*}
\end{lemma}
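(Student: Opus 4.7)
The plan is to mirror the strategy used for the CW and CH models (Lemmata \ref{LemmaVanishingFourierModel1CW} and \ref{LemmaVanisginFourierModel2CH}): substitute $y=\cos(x)$, identify the correct Jacobi weight, then use orthogonality of Jacobi polynomials together with the fact that the resonance condition forces the degree of the ``remaining'' polynomial factor to be strictly smaller than $m$.

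More precisely, recall $\ssomega_n = n+2$ and $\mathfrak{e}_n(x) = \mathfrak{N}_n P_n^{(3/2,3/2)}(\cos(x))$. After the substitution $y=\cos(x)$, one finds
\begin{align*}
\overline{\mathfrak{C}}_{ijm} &= \mathfrak{N}_i \mathfrak{N}_j \mathfrak{N}_m \int_{-1}^{1} P_i^{(3/2,3/2)}(y) P_j^{(3/2,3/2)}(y) P_m^{(3/2,3/2)}(y)\,(1-y^2)^{3/2}\,dy, \\
\mathfrak{C}_{ijkm} &= \mathfrak{N}_i \mathfrak{N}_j \mathfrak{N}_k \mathfrak{N}_m \int_{-1}^{1} (1-y^2)\, P_i^{(3/2,3/2)}(y) P_j^{(3/2,3/2)}(y) P_k^{(3/2,3/2)}(y) P_m^{(3/2,3/2)}(y)\,(1-y^2)^{3/2}\,dy,
\end{align*}
so both integrals are taken against the natural Jacobi weight $(1-y^2)^{3/2}$ for parameters $(3/2,3/2)$. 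The first step would be to convert each displayed resonance into an identity among indices using $\ssomega_n=n+2$. For the triple coefficient, $\ssomega_i+\ssomega_j-\ssomega_m=0$ becomes $m = i+j+2$; for the quartic coefficient, $\ssomega_i+\ssomega_j+\ssomega_k-\ssomega_m=0$ becomes $m = i+j+k+4$.

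Next I would invoke orthogonality. For $\overline{\mathfrak{C}}_{ijm}$, write $R(y) := \mathfrak{N}_i \mathfrak{N}_j P_i^{(3/2,3/2)}(y) P_j^{(3/2,3/2)}(y)$, a polynomial of degree $i+j$. In the resonant case $m=i+j+2 > i+j$, so $R$ lies in the span of $\{P_0^{(3/2,3/2)},\dots,P_{i+j}^{(3/2,3/2)}\}$ and is therefore orthogonal to $P_m^{(3/2,3/2)}$ with respect to $(1-y^2)^{3/2}dy$, forcing $\overline{\mathfrak{C}}_{ijm}=0$. For $\mathfrak{C}_{ijkm}$, the polynomial
\[
\widetilde R(y) := (1-y^2)\, \mathfrak{N}_i\mathfrak{N}_j\mathfrak{N}_k\, P_i^{(3/2,3/2)}(y) P_j^{(3/2,3/2)}(y) P_k^{(3/2,3/2)}(y)
\]
has degree $i+j+k+2$; in the resonant case this equals $m-2 < m$, so the same orthogonality argument against $P_m^{(3/2,3/2)}$ with weight $(1-y^2)^{3/2}$ gives $\mathfrak{C}_{ijkm}=0$.

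The remaining sub-cases are obtained for free by the symmetries of the Fourier coefficients: $\overline{\mathfrak{C}}_{ijm}$ is symmetric in all three indices (permuting factors of the integrand) and $\mathfrak{C}_{ijkm}$ is symmetric in all four. Thus, for instance, $\ssomega_i-\ssomega_j+\ssomega_m=0$ and $-\ssomega_i+\ssomega_j+\ssomega_m=0$ reduce to the first case after relabelling the indices, and likewise for the quartic coefficient. There is no serious obstacle here: the only thing to be careful about is that the weight $(1-y^2)^{5/2}$ appearing in $\mathfrak{C}_{ijkm}$ is \emph{not} a Jacobi weight, but the factorisation $(1-y^2)^{5/2} = (1-y^2)\cdot(1-y^2)^{3/2}$ absorbs the extra $(1-y^2)$ into the polynomial factor, after which the standard orthogonality of $\{P_n^{(3/2,3/2)}\}_{n\ge 0}$ applies verbatim.
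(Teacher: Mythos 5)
Your argument is correct and follows essentially the same route as the paper: reduce to the degree condition $\deg(\text{polynomial factor}) < m$ via $\ssomega_n = n+2$, absorb the extra $(1-y^2)$ into the polynomial so that the weight becomes the Jacobi weight $(1-y^2)^{3/2}$, apply orthogonality of $\{P_n^{(3/2,3/2)}\}$, and dispatch the remaining sign patterns by the index symmetries of the coefficients. No gap; nothing further is needed.
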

\begin{proof}
Let $i,j,k,m \geq 0$ be integers such that $\ssomega_{i} + \ssomega_{j} + \ssomega_{k} - \ssomega_{m}  = 0$. Then, $m=2 + i + j$ and according to the computation above, we have
\begin{align*}
		\overline{\mathfrak{C}}_{ijm} = \int_{-1}^{1} \overline{\mathfrak{R}}_{N}(y) P_{m}^{\left(\frac{3}{2},\frac{3}{2} \right)}(y) (1-y^2)^{\frac{3}{2}}  d y	,
\end{align*} 
where
\begin{align*}
	\overline{\mathfrak{R}}_{N}(y):= \prod_{\lambda_{1} \in \{i,j,m\}}  \mathfrak{N}_{\lambda_{1}} \prod_{\lambda_{2} \in \{i,j\}}     P_{\lambda_{2}}^{\left(\frac{3}{2},\frac{3}{2} \right)}(y)  
\end{align*}
is a polynomial of degree $N=i+j<m$ and hence the Fourier coefficient vanishes since $P_{m}^{\left(3/2,3/2 \right)}(y)$ forms an orthonormal and complete basis with respect to the weight $ (1-y^2)^{3/2}$. The other results now follow immediately using the symmetries of the Fourier coefficients with respect to $i,j,m$. The second result follows similarly. Let $i,j,k,m \geq 0$ be integers such that $\ssomega_{i} + \ssomega_{j} + \ssomega_{k} - \ssomega_{m} = 0$. Then, $m=i+j+k+4$ and according to the definition of the Fourier coefficients above, we have
\begin{align*}
		\mathfrak{C}_{ijkm} = \int_{-1}^{1} \mathfrak{R}_{N}(y) P_{m}^{\left(\frac{3}{2},\frac{3}{2} \right)}(y) (1-y^2)^{\frac{3}{2}}  d y,
\end{align*} 
where
\begin{align*}
	\mathfrak{R}_{N}(y):= (1-y^2) \prod_{\lambda_{1} \in \{i,j,k,m\}} \mathfrak{N}_{\lambda_{1}} \prod_{\lambda_{1} \in \{i,j,k\}}  P_{\lambda_{2}}^{\left(\frac{3}{2},\frac{3}{2} \right)}(y) 
\end{align*}
is a polynomial of degree $N=i+j+k+2<m$ and hence the Fourier coefficient vanishes since $P_{m}^{\left(3/2,3/2 \right)}(y)$ forms an orthonormal and complete basis with respect to the weight $ (1-y^2)^{3/2}$. The other results now follow immediately using the symmetries of the Fourier coefficients with respect to $i,j,k,m$.
\end{proof}

\subsubsection{Non--vanishing Fourier coefficients}
Next, we study the non--vanishing Fourier coefficients. In order to deal with these constants, one needs a computationally efficient formula as in the two previous cases. In the spherically symmetric case we consider here, the basis consists of the Jacobi polynomials with equal parameters and these are weighted Gegenbauer polynomials (equation 18.7.1 in \cite{MR2723248})
\begin{align*} 
	C_{n}^{(2)}(x):= \frac{(4)_{n}}{\left(\frac{5}{2} \right)_n}P_{n}^{\left(\frac{3}{2},\frac{3}{2}\right)}(x) =\frac{\Gamma \left(\frac{5}{2} \right)}{\Gamma \left(4 \right)} \frac{\Gamma \left(n+4 \right)}{\Gamma \left(n+\frac{5}{2} \right)}P_{n}^{\left(\frac{3}{2},\frac{3}{2}\right)}(x) = \frac{\sqrt{\pi }}{8} \frac{\Gamma \left(n+4 \right)}{\Gamma \left(n+\frac{5}{2} \right)}P_{n}^{\left(\frac{3}{2},\frac{3}{2}\right)}(x),
\end{align*}
for $n \in \{\gamma,m\}$ and $x\in [-1,1]$. The latter, together with the definition of the normalization constant $\mathfrak{N}_{n}$ from \eqref{NormalizationConstantModel3YM}, expresses the normalized Jacobi polynomials in terms of the normalized Gegenbauer polynomials as follows
 	\begin{align*} 
 	 \mathfrak{N}_{n} P_{n}^{\left(\frac{3}{2},\frac{3}{2} \right)}(x) =
 		\mathfrak{N}_{n} \frac{8}{\sqrt{\pi }} \frac{\Gamma \left(n+\frac{5}{2} \right)}{\Gamma \left(n+4 \right) } C_{n}^{(2)}(x)=
  \mathfrak{w}_{n} C_{n}^{(2)}(x) ,
 	\end{align*}
 	for all $n \in \{\gamma,m\}$, where the new normalization constant reads
 	\begin{align*}
 		\mathfrak{w}_{n}:=\sqrt{\frac{8}{\pi }}	\frac{1}{\sqrt{(n +1) (n +3)}}.
 	\end{align*} 
 	Consequently, the Fourier coefficients can be written in terms of the Gegenbauer polynomials as follows
\begin{align*}
		\overline{ \mathfrak{C}}_{ijm} &= \mathfrak{w}_{i}\mathfrak{w}_{j}\mathfrak{w}_{m} \int_{-1}^{1 } C_{i}^{(2)}(y) C_{j}^{(2)}(y) C_{m}^{(2)}(y) \left(1-y^2 \right)^{\frac{3}{2}} dy, \\
		\mathfrak{C}_{ijkm} & =\mathfrak{w}_{i}\mathfrak{w}_{j}\mathfrak{w}_{k}\mathfrak{w}_{m} \int_{-1}^{1 }C_{i}^{(2)}(y) C_{j}^{(2)}(y)C_{k}^{(2)}(y) C_{m}^{(2)}(y) \left(1-y^2 \right)^{\frac{5}{2}}dy.
		\end{align*} 
Then, we derive closed formulas for the non--vanishing Fourier coefficients for a resonant quadruple $(i,j,k,m)$ as follows: 	
\begin{itemize}
	\item Whenever a Gegenbauer polynomial has an order $\mu$ such that the weight $(1-y^2)^{\mu-1/2}$ (with respect to which it forms an orthonormal basis) does not coincide with the weights $(1-y^2)^{p}$ with $p\in \{3/2,5/2\}$ (that define the integrals above), we use the connection formula (18.18.16	in \cite{MR2723248})
	\begin{align}
 C^{(\mu)}_{n}\left(y\right) &=\sum_{\ell=0}^{\left\lfloor n/2\right\rfloor}\alpha_{n\mu\lambda}(\ell)C^{(\lambda)}%
_{n-2\ell}\left(y\right),\label{ConnectionFormula} 
	\end{align}   
	where the coefficients are given by
	\begin{align}
		\alpha_{n\mu\lambda}(\ell):=&\frac%
{\lambda+n-2\ell}{\lambda}\frac{{\left(\mu\right)_{n-\ell}}}{{\left(\lambda+1%
\right)_{n-\ell}}}\frac{{\left(\mu-\lambda\right)_{\ell}}}{\ell!}  \label{DefinitionAdditionCoefficient}.
	\end{align}
	In particular, we are going to use this only for $\lambda=\mu+1$. In this case, the latter is equivalent to the recurrence relation (18.9.7 in \cite{MR2723248})
	\[C^{(\mu)}_{n}\left(x\right)=\frac{\mu}{n+\mu}\left(C^{(\mu+1)}_{n}%
\left(x\right)-C^{(\mu+1)}_{n-2}\left(x\right)\right),\]
valid for all integers $\mu\geq 0$ and $n\geq 2$.
	\item Whenever a Gegenbauer polynomial is multiplied by itself, we use the addition formula (equation (19) in \cite{MR1820610} and 18.18.22 in \cite{MR2723248})
	\begin{align}
 \left( C^{(\lambda)}_{n}\left(y\right) \right)^2 &=\sum_{\ell=0}^{%
n}\beta_{n\lambda}(\ell)C^{(%
\lambda)}_{2\ell}\left(y\right),\label{AdditionFormulaYM}
	\end{align}
	where the coefficients are given by
	\begin{align}\label{DefinitionConnectionCoefficient2}
		\beta_{n\lambda}(\ell):=&\frac{1}{n!}\binom{n}{\ell} \frac{(2\ell)! (\lambda)_{\ell} (\lambda)_{n-\ell} (2\ell+2\lambda)_{n-\ell}}{\ell! (\ell+\lambda)_{\ell} (2\ell+\lambda+1)_{n-\ell}} .
	\end{align}
	\item Whenever two Gegenbauer polynomials of different degrees but of same order are multiplied, we use the addition formula (18.18.22 in \cite{MR2723248})
	\begin{align}
 C^{(\lambda)}_{m}\left(y\right)C^{(\lambda)}_{n}\left(y\right) &=\sum_{\ell=0}^{%
\min(m,n)} \zeta_{mn \lambda}(\ell) C^{(%
\lambda)}_{m+n-2\ell}\left(y\right), \label{LinearizationFormulaYM}
	\end{align}
	where the coefficients are given by
	\begin{align}
		\zeta_{mn \lambda}(\ell):=&\frac{(m+n+\lambda-2\ell)(m+n-2\ell)!}{(m+n+\lambda-\ell)\ell!\,(m-%
\ell)!\,(n-\ell)!}\*\frac{{\left(\lambda\right)_{\ell}}{\left(\lambda\right)_{%
m-\ell}}{\left(\lambda\right)_{n-\ell}}{\left(2\lambda\right)_{m+n-\ell}}}{{%
\left(\lambda\right)_{m+n-\ell}}{\left(2\lambda\right)_{m+n-2\ell}}}.\label{DefinitionConnectionCoefficient3}
	\end{align}
	\item Whenever a product of a monomial with a Gegenbauer polynomial is integrated, we use the formula (equation 18.17.37 in \cite{MR2723248})  
\begin{align}\label{DifficultIntegralGegenbauerPolynomialsYM}
	\int_{0}^{1}x^{z%
-1}C^{(\lambda)}_{n}\left(x\right)(1-x^{2})^{\lambda-\frac{1}{2}} dx=\frac{\pi\,2^{1-2\lambda-z}\Gamma\left(n+2\lambda\right)\Gamma%
\left(z\right)}{n!\Gamma\left(\lambda\right)\Gamma\left(\frac{1}{2}+\frac{1}{2%
}n+\lambda+\frac{1}{2}z\right)\Gamma\left(\frac{1}{2}+\frac{1}{2}z-\frac{1}{2}%
n\right)},
\end{align}
valid of all integers $\lambda \geq 0$ and real numbers $z>0$. Notice that this is the Mellin transform of the function $C^{(\lambda)}_{n}\left(x\right)(1-x^{2})^{\lambda-\frac{1}{2}}$ restricted to $[0,1]$.
\end{itemize}		
In particular, we will only need to study
\begin{itemize}
	\item $\overline{\mathfrak{C}}_{\gamma,\gamma,2\tau}$, for all integers , $\tau \in \{0,1,\dots,\gamma\}$,
	\item  $\overline{\mathfrak{C}}_{\gamma, 2\tau +m-\gamma  , m}$, for all integers $\tau \in \{0,1,\dots,\gamma\}$, $m\geq 2\gamma+1$, 
	\item   $\overline{\mathfrak{C}}_{m ,2\tau , m}$, for all integers $\tau \in \{0,1,\dots,\gamma\}$, $m\geq 2\gamma+1$, 
	\item $ \mathfrak{C}_{\gamma \gamma mm}$ for all integers $m \geq 2\gamma+1$.
\end{itemize}
To begin with, we focus on $\overline{\mathfrak{C}}_{ijm}$ for any integers $i,j,m \geq 0$ and establish the follow result. 
\begin{lemma}[Non--vanishing Fourier coefficients $\overline{\mathfrak{C}}_{ijm}$ on resonant indices: closed formula]\label{LemmaClosedFormulasModel3YMCbarijm}
For any integers $i,j,m \geq 0$, we have
		\begin{align*} 
		\overline{\mathfrak{C}}_{ijm} &=\frac{(i+j-m+2) (i-j+m+2) (-i+j+m+2) (i+j+m+6)}{4 \sqrt{2 \pi  (i+1) (i+3) (j+1) (j+3) (m+1) (m+3)}}\cdot \\
		& \cdot \mathds{1}\left( | j-m| \leq i\leq j+m \right) \mathds{1}\left( | i-m| \leq j\leq i+m \right) \mathds{1}\left( | i-j| \leq m\leq i+j \right)  \\
		&\cdot \mathds{1}(j+m-i\in 2\mathbb{N} \cup \{0\}) \mathds{1}(i+m-j\in 2\mathbb{N} \cup \{0\}) \mathds{1}(i+j-m\in 2\mathbb{N} \cup \{0\}).
		\end{align*}     
\end{lemma}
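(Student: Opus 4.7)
The plan is to reduce everything to integrals of Gegenbauer polynomials of order $\lambda=2$, which is the natural order for the weight $(1-y^2)^{3/2}$ appearing in the integral, and then exploit the linearization formula \eqref{LinearizationFormulaYM}--\eqref{DefinitionConnectionCoefficient3} together with orthogonality. Concretely, using the relation
$\mathfrak{N}_n P_n^{(3/2,3/2)}(y) = \mathfrak{w}_n C_n^{(2)}(y)$ recalled in the text, we rewrite
\[
\overline{\mathfrak{C}}_{ijm} = \mathfrak{w}_i\mathfrak{w}_j\mathfrak{w}_m \int_{-1}^{1} C_i^{(2)}(y)\,C_j^{(2)}(y)\,C_m^{(2)}(y)\,(1-y^2)^{3/2}\,dy.
\]
Since the weight $(1-y^2)^{3/2}$ is exactly the orthogonality weight of $\{C_n^{(2)}\}_n$, I expect the computation to be clean once the triple product is linearized.

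First I would apply \eqref{LinearizationFormulaYM} with $\lambda=2$ to expand
$C_i^{(2)}(y)C_j^{(2)}(y) = \sum_{\ell=0}^{\min(i,j)} \zeta_{ij,2}(\ell)\, C_{i+j-2\ell}^{(2)}(y)$. Inserting this into the integral and using orthogonality of the $C_n^{(2)}$, the only surviving contribution comes from the index $\ell$ such that $i+j-2\ell = m$, i.e.\ $\ell = (i+j-m)/2$. This selection immediately produces the three parity indicators $\mathds{1}(j+m-i \in 2\mathbb{N}\cup\{0\})$, $\mathds{1}(i+m-j \in 2\mathbb{N}\cup\{0\})$, $\mathds{1}(i+j-m \in 2\mathbb{N}\cup\{0\})$ (the last from the existence of such an integer $\ell$, the others from the symmetry of the Fourier coefficient in $i,j,m$), as well as the triangle inequalities from the range $0\le \ell \le \min(i,j)$ combined with symmetry. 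Schematically,
\[
\overline{\mathfrak{C}}_{ijm} = \mathfrak{w}_i\mathfrak{w}_j\mathfrak{w}_m\, \zeta_{ij,2}\!\left(\tfrac{i+j-m}{2}\right) \cdot \frac{\pi\,\Gamma(m+4)}{8\,m!\,(m+2)},
\]
where the last factor is the $L^2$-norm squared of $C_m^{(2)}$ with respect to $(1-y^2)^{3/2}$, obtained from the standard orthogonality identity for Gegenbauer polynomials.

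The remaining task, and the main obstacle, is the algebraic simplification of this expression into the symmetric closed form stated in the lemma. I would substitute $\ell=(i+j-m)/2$ into \eqref{DefinitionConnectionCoefficient3}, rewrite each Pochhammer symbol $(2)_n = (n+1)!$ and $(4)_n = (n+3)!/6$ to convert all factorials and Pochhammers into elementary products of consecutive integers, and check that the spurious factors $(m+3)(m+1)$ produced by both $\zeta_{ij,2}$ and the orthogonality constant cancel the corresponding $\sqrt{(m+1)(m+3)}$ coming from $\mathfrak{w}_m$, leaving only $1/\sqrt{(m+1)(m+3)}$ as in the statement. Analogous collapses should match the $(i-\ell+1)!,(j-\ell+1)!$ factors against $\mathfrak{w}_i,\mathfrak{w}_j$, producing the four linear factors $(i+j-m+2)$, $(i-j+m+2)$, $(-i+j+m+2)$, $(i+j+m+6)$ in the numerator together with the prefactor $\tfrac{1}{4\sqrt{2\pi}}$. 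Finally, the triangle and parity indicators written explicitly in the statement follow by symmetrizing the constraints produced in the orthogonality step.
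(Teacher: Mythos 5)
Your proposal follows exactly the route taken in the paper: write $\overline{\mathfrak{C}}_{ijm}$ in terms of Gegenbauer polynomials $C^{(2)}_n$, linearize $C^{(2)}_iC^{(2)}_j$ via \eqref{LinearizationFormulaYM}--\eqref{DefinitionConnectionCoefficient3}, select the single surviving term $\ell=(i+j-m)/2$ by orthogonality (your constant $\tfrac{\pi\,\Gamma(m+4)}{8\,m!\,(m+2)}$ indeed equals $\tfrac{\pi}{8}(m+1)(m+3)$), substitute into $\zeta_{ij,2}$, and symmetrize in $i,j,m$ to obtain the full set of parity and triangle indicators. This is the paper's proof in its entirety, so the proposal is correct and essentially identical.
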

\begin{proof}
The result follows immediately from \eqref{LinearizationFormulaYM}--\eqref{DefinitionConnectionCoefficient3} together with the orthogonality of the Gegenbauer polynomials,
\begin{align*}
	\int_{-1}^{1 }C^{(
2)}_{n}\left(y\right) C_{m}^{(2)}(y) \left(1-y^2 \right)^{\frac{3}{2}} 
dy =\frac{\pi }{8} (m+1) (m+3) \mathds{1}(m=n),
\end{align*}
for all integers $m,n \geq 0$. Indeed, for any integers $i,j,m \geq 0$, we have
\begin{align*}
	\overline{ \mathfrak{C}}_{ijm} &= \mathfrak{w}_{i}\mathfrak{w}_{j}\mathfrak{w}_{m} \int_{-1}^{1 } C_{i}^{(2)}(y) C_{j}^{(2)}(y) C_{m}^{(2)}(y) \left(1-y^2 \right)^{\frac{3}{2}} dy \\
	& =\mathfrak{w}_{i}\mathfrak{w}_{j}\mathfrak{w}_{m} \sum_{\ell=0}^{
\min(i,j)} \zeta_{ij2}(\ell)  \int_{-1}^{1 }C^{(
2)}_{i+j-2\ell}\left(y\right) C_{m}^{(2)}(y) \left(1-y^2 \right)^{\frac{3}{2}} 
dy \\
& =  \frac{\pi }{8} (m+1) (m+3) \mathfrak{w}_{i}\mathfrak{w}_{j}\mathfrak{w}_{m}\sum_{\ell=0}^{
\min(i,j)} \zeta_{ij2}(\ell) \mathds{1}(2\ell=i+j-m) .
\end{align*}
On the one hand, for all integers $i$, $j$ and $m$ such that $i+j-m \notin 2\mathbb{N} \cup \{0\}$, the Fourier coefficient vanishes. Furthermore, we have
\begin{align*}
	0\leq \frac{i+j-m}{2}\leq \min(i,j) \Longleftrightarrow |i-j|\leq m\leq i+j.
\end{align*}
Consequently, for all integers $i$, $j$ and $m$ such that the condition $|i-j|\leq m\leq i+j$ is not fulfilled, the Fourier coefficient vanishes. On the other hand, for all $i$, $j$ and $m$ such that both $i+j-m \in 2\mathbb{N} \cup \{0\}$ and  $|i-j|\leq m\leq i+j$ hold true, we compute
\begin{align*} 
\overline{ \mathfrak{C}}_{ijm} &=  \frac{\pi }{8} (m+1) (m+3) \mathfrak{w}_{i}\mathfrak{w}_{j}\mathfrak{w}_{m}   \sum_{\ell=0}^{
\min(i,j)} \zeta_{ij2}(\ell) \mathds{1}(2\ell=i+j-m)\\
& =  \frac{\pi }{8} (m+1) (m+3) \mathfrak{w}_{i}\mathfrak{w}_{j}\mathfrak{w}_{m}   \zeta_{ij2} \left(\frac{i+j-m}{2} \right) \sum_{\ell=0}^{
\min(i,j)} \mathds{1}(2\ell=i+j-m) \\
&= \frac{(i+j-m+2) (i-j+m+2) (-i+j+m+2) (i+j+m+6)}{4 \sqrt{2 \pi  (i+1) (i+3) (j+1) (j+3) (m+1) (m+3)}}  \sum _{l=0}^{\min (i,j)} \mathds{1}(i+j-m=2 \ell) \\
&= \frac{(i+j-m+2) (i-j+m+2) (-i+j+m+2) (i+j+m+6)}{4 \sqrt{2 \pi  (i+1) (i+3) (j+1) (j+3) (m+1) (m+3)}},
\end{align*}
where we used the fact that
\begin{align*}
	\sum _{l=0}^{\min (i,j)} \mathds{1}(i+j-m=2 \ell) =1,
\end{align*}
for all such $i$, $j$ and $m$. Finally, using the symmetries of the Fourier coefficient with respect to $i$, $j$ and $m$ completes the proof.
\end{proof}
 
Next, we apply the previous result to obtain closed formulas for the Fourier coefficient $\overline{\mathfrak{C}}_{ijm}$ on the particular resonant indices we are interested in. Specifically, we establish the following result.

\begin{lemma}[Non--vanishing Fourier coefficients $\overline{\mathfrak{C}}_{ijm}$ on particular resonant indices: closed formulas]\label{LemmaClosedFormulasModel3YMCbarijmSpecificijm}
Let $\gamma,\tau,m$ be integers such that $\gamma \geq 0$, $\tau \in \{0,1,\dots, \gamma\}$ and $m\geq 2\gamma+1$. Then, we have
	\begin{align*}
	\overline{\mathfrak{C}}_{\gamma,\gamma,2\tau} &=
	2 \sqrt{\frac{2}{\pi }} \frac{(\tau +1)^2 (\gamma -\tau +1) (\gamma +\tau +3)}{(\gamma +1) (\gamma +3) \sqrt{4 \tau  (\tau +2)+3}}, \\
	\overline{\mathfrak{C}}_{m ,2\tau , m} & =2 \sqrt{\frac{2}{\pi }} \frac{ (\tau +1)^2 (m-\tau +1) (m+\tau +3)}{(m+1) (m+3) \sqrt{4 \tau  (\tau +2)+3}}, \\
	\overline{\mathfrak{C}}_{\gamma, 2\tau +m-\gamma  , m}&=2 \sqrt{\frac{2}{\pi }}  \frac{(\tau +1) (\gamma -\tau +1) (m+\tau +3) (-\gamma +m+\tau +1)}{\sqrt{(\gamma +1) (\gamma +3) (m+1) (m+3) (-\gamma +m+2 \tau +1) (-\gamma +m+2 \tau +3)}}.
\end{align*}
\end{lemma}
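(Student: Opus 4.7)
The plan is to derive each of the three identities as a direct consequence of the closed formula for $\overline{\mathfrak{C}}_{ijm}$ given by Lemma \ref{LemmaClosedFormulasModel3YMCbarijm}. That lemma reduces the problem to an algebraic substitution, provided one first verifies that the triangle inequalities $|i-j|\leq m\leq i+j$ (and its permutations) as well as the parity conditions $i+j-m, i-j+m, -i+j+m \in 2\mathbb{N}\cup\{0\}$ are satisfied, so that the indicator factors collapse to $1$ and the Fourier coefficient is non--vanishing.

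For the first identity, I will set $(i,j,m)=(\gamma,\gamma,2\tau)$ with $\tau\in\{0,1,\dots,\gamma\}$. The triangle inequality becomes $0\leq 2\tau\leq 2\gamma$, which holds by the range of $\tau$, and the parities $2\gamma-2\tau,2\tau,2\tau$ are all even. Substituting into Lemma \ref{LemmaClosedFormulasModel3YMCbarijm} yields numerator factors
\[
(2(\gamma-\tau+1))\cdot (2(\tau+1))\cdot (2(\tau+1))\cdot (2(\gamma+\tau+3))
\]
and denominator $4\sqrt{2\pi(\gamma+1)^2(\gamma+3)^2(2\tau+1)(2\tau+3)}$, and using $(2\tau+1)(2\tau+3)=4\tau(\tau+2)+3$ together with $16/4=4$ and $4/\sqrt{2\pi}=2\sqrt{2/\pi}$ gives the stated closed form. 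For the second identity, with $(i,j,m)=(m,2\tau,m)$, the triangle inequality $|m-2\tau|\leq m\leq m+2\tau$ holds since $\tau\geq 0$, and the parity conditions hold trivially; the substitutions
\[
i+j-m+2=2(\tau+1),\ \  i-j+m+2=2(m-\tau+1),\ \  -i+j+m+2=2(\tau+1),\ \  i+j+m+6=2(m+\tau+3)
\]
yield the formula after the same simplification.

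For the third identity, with $(i,j,m)=(\gamma,2\tau+m-\gamma,m)$, the hypothesis $m\geq 2\gamma+1$ and $\tau\in\{0,\dots,\gamma\}$ ensures the triangle inequality since $2\tau+m-\gamma\geq m-\gamma\geq \gamma+1>0$ and $|\gamma-m|\leq 2\tau+m-\gamma\leq \gamma+m$. The parities of
\[
i+j-m = 2\tau,\qquad i-j+m = 2(\gamma-\tau),\qquad -i+j+m = 2(m-\gamma+\tau)
\]
are manifestly even. Substituting these values into Lemma \ref{LemmaClosedFormulasModel3YMCbarijm} produces the numerator $16(\tau+1)(\gamma-\tau+1)(m-\gamma+\tau+1)(m+\tau+3)$ and denominator $4\sqrt{2\pi(\gamma+1)(\gamma+3)(m+1)(m+3)(-\gamma+m+2\tau+1)(-\gamma+m+2\tau+3)}$, which, after using $4/\sqrt{2\pi}=2\sqrt{2/\pi}$, matches the claimed expression.

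There is no genuine obstacle: the entire content of the lemma resides in Lemma \ref{LemmaClosedFormulasModel3YMCbarijm}. The only care needed is the bookkeeping of the triangle and parity conditions for each choice of triple, and the consistent simplification of the surd $\sqrt{2\pi}$ prefactor into $2\sqrt{2/\pi}$. Once those checks are made, the three formulas follow by a one--line algebraic substitution.
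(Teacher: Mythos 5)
Your proof is correct and follows exactly the same route as the paper's: verify that the indicator conditions of Lemma \ref{LemmaClosedFormulasModel3YMCbarijm} are met and then substitute the specific triples. The paper's proof is in fact terser than yours (it simply asserts the Boolean conditions hold and invokes direct substitution), so your explicit bookkeeping of the triangle and parity constraints and of the prefactor simplification $4/\sqrt{2\pi}=2\sqrt{2/\pi}$ is a fair expansion of the argument.
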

\begin{proof}
	Let $\gamma,\tau,m$ be integers such that $\gamma \geq 0$, $\tau \in \{0,1,\dots, \gamma\}$ and $m\geq 2\gamma+1$. Firstly, notice that all the indices of the Fourier coefficients above satisfy all the conditions in the Booleans in Lemma \ref{LemmaClosedFormulasModel3YMCbarijm}. Then, the result follows immediately by Lemma \ref{LemmaClosedFormulasModel3YMCbarijm} by direct substitution. 
\end{proof}

Now, we focus on $ \mathfrak{C}_{\gamma \gamma mm}$ and derive the following result.
\begin{lemma}[Non--vanishing Fourier coefficients $\mathfrak{C}_{\gamma \gamma mm}$ on resonant indices: closed formulas]\label{LemmaClosedFormulasModel3YMCgammagammamm}
Let $\gamma \geq 0$ be a fixed integer. Then, for all $m \geq 2\gamma+1$, we have
		\begin{align*} 
		\mathfrak{C}_{\gamma \gamma mm}  =  \mathfrak{w}_{\gamma}^2 \mathfrak{w}_{m}^2  \sum_{\ell_2=0}^{\gamma}\sum_{\nu_2=0}^{\ell_2} \delta_{\gamma}(\ell_2,\nu_2)J_{m}(\ell_2,\nu_2)  
		\end{align*} 
		where
		 \begin{align*} 
		 \delta_{\gamma}(\ell_2,\nu_2)&:=  	\frac{( \ell_2 +1)^2 (-1)^{ \nu_2 } (\gamma - \ell_2 +1) (\gamma + \ell_2 +3) 2^{2 ( \ell_2 - \nu_2 )} \Gamma (2  \ell_2 - \nu_2 +2)}{(4  \ell_2  ( \ell_2 +2)+3) \Gamma ( \nu_2 +1) \Gamma (2  \ell_2 -2  \nu_2 +1)}, \\
		J_{m}(\ell_2,\nu_2) & =
		\frac{3 \sqrt{\pi } (5  \ell_2  (3 m (m+4)+1)+m (m+4) (4-15  \nu_2 )-5 ( \nu_2 -4)) \Gamma \left( \ell_2 - \nu_2 +\frac{1}{2}\right)}{8 \Gamma ( \ell_2 - \nu_2 +5)}   \\
		& +\sum_{\ell_1=2}^{\ell_2-\nu_2+1} \frac{\pi  \ell_1 \left(\ell_1+1\right)^2 \left(2 \ell_1-1\right) 4^{-\ell_2+\nu _2-2} \left(\ell_1-m-1\right) \left(\ell_1+m+3\right) \Gamma \left(2 \ell_2-2 \nu _2+1\right)}{\Gamma \left(-\ell_1+\ell_2-\nu _2+2\right) \Gamma \left(\ell_1+\ell_2-\nu _2+3\right)} \\
		& - \sum_{\ell_1=2}^{\ell_2-\nu_2}  \frac{\pi  \left(\ell_1+1\right)^2 \left(\ell_1+2\right) \left(2 \ell_1+5\right) 4^{-\ell_2+\nu _2-2} \left(\ell_1-m-1\right) \left(\ell_1+m+3\right) \Gamma \left(2 \ell_2-2 \nu _2+1\right)}{\Gamma \left(-\ell_1+\ell_2-\nu _2+1\right) \Gamma \left(\ell_1+\ell_2-\nu _2+4\right)}.
		\end{align*}   
\end{lemma}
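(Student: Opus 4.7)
The plan is to view $\mathfrak{C}_{\gamma\gamma mm}$ as the integral of the product of two squares of Gegenbauer polynomials against the weight $(1-y^2)^{5/2}$, and then to treat the two factors asymmetrically: we fully expand $(C_\gamma^{(2)})^2$ as a polynomial in $y$ (which yields the coefficients $\delta_\gamma(\ell_2,\nu_2)$ and explains the role of the summation $\sum_{\ell_2=0}^\gamma\sum_{\nu_2=0}^{\ell_2}$), while keeping $(C_m^{(2)})^2$ intact inside the integral, so that the residual task is to compute the family of integrals
\begin{align*}
J_m(\ell_2,\nu_2)=\int_{-1}^{1} y^{2(\ell_2-\nu_2)}\bigl(C_m^{(2)}(y)\bigr)^{2}(1-y^2)^{5/2}\,dy.
\end{align*}
Concretely, first I rewrite $\mathfrak{C}_{\gamma\gamma mm}=\mathfrak{w}_\gamma^2\mathfrak{w}_m^2\int_{-1}^1 (C_\gamma^{(2)})^2(C_m^{(2)})^2(1-y^2)^{5/2}\,dy$ using the conversion between normalized Jacobi and Gegenbauer polynomials already recorded above. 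Then the addition formula \eqref{AdditionFormulaYM}--\eqref{DefinitionConnectionCoefficient2} applied with $\lambda=2$ and $n=\gamma$ yields
\begin{align*}
\bigl(C_\gamma^{(2)}(y)\bigr)^2=\sum_{\ell_2=0}^\gamma \beta_{\gamma 2}(\ell_2)\,C_{2\ell_2}^{(2)}(y),\qquad \beta_{\gamma 2}(\ell_2)=\frac{(\ell_2+1)^2(\gamma-\ell_2+1)(\gamma+\ell_2+3)}{(2\ell_2+1)(2\ell_2+3)},
\end{align*}
and the explicit monomial expansion of $C_{2\ell_2}^{(2)}(y)$ (standard DLMF formula 18.5.10) expresses each $C_{2\ell_2}^{(2)}(y)$ as a sum of powers $y^{2(\ell_2-\nu_2)}$. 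A direct simplification of the product $\beta_{\gamma 2}(\ell_2)\cdot [\text{coefficient of } y^{2(\ell_2-\nu_2)}]$ precisely produces $\delta_\gamma(\ell_2,\nu_2)$ as written in the statement.

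The second, and main, step is the evaluation of $J_m(\ell_2,\nu_2)$. Set $k=\ell_2-\nu_2$. Applying the addition formula once more, now with $n=m$, gives $(C_m^{(2)})^2=\sum_{\ell_1=0}^m\beta_{m2}(\ell_1)C_{2\ell_1}^{(2)}$. For every $\ell_1\ge 1$ I would use the recurrence (DLMF 18.9.7)
\begin{align*}
C_{2\ell_1}^{(2)}(y)=\frac{1}{\ell_1+1}\bigl(C_{2\ell_1}^{(3)}(y)-C_{2\ell_1-2}^{(3)}(y)\bigr)
\end{align*}
to recast $J_m$ as a combination of integrals of the form $\int_0^1 y^{2k}C_{2p}^{(3)}(y)(1-y^2)^{5/2}\,dy$ (using parity to reduce $\int_{-1}^{1}$ to $2\int_0^1$). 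These integrals fit the Mellin transform formula \eqref{DifficultIntegralGegenbauerPolynomialsYM} with $\lambda=3$ and $z=2k+1$, whose denominator carries a factor $\Gamma(k-p+1)$; consequently the integrals vanish for $p\geq k+1$, which immediately truncates the sum over $\ell_1$ at $\ell_1\leq k+1$ and produces the range $2\leq\ell_1\leq k+1$ (resp.\ $2\leq\ell_1\leq k$) appearing in the two sums of $J_m(\ell_2,\nu_2)$. The $\ell_1=0$ and $\ell_1=1$ terms are evaluated separately; after combining them and collecting the common factor $\sqrt{\pi}\,\Gamma(k+\tfrac12)/\Gamma(k+5)$, the resulting polynomial in $m$, $\ell_2$, $\nu_2$ is exactly $3(5\ell_2(3m(m+4)+1)+m(m+4)(4-15\nu_2)-5(\nu_2-4))/8$, which gives the first term of $J_m(\ell_2,\nu_2)$.

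Finally, for $\ell_1\geq 2$ the two pieces $\beta_{m2}(\ell_1)\,\tfrac{2}{\ell_1+1}I(\ell_1)$ and $-\beta_{m2}(\ell_1)\,\tfrac{2}{\ell_1+1}I(\ell_1-1)$ (with $I(p):=\int_0^1 y^{2k}C_{2p}^{(3)}(1-y^2)^{5/2}\,dy$) are simplified via the duplication formula for $\Gamma$ and the identity $\beta_{m2}(\ell_1)(2\ell_1+1)(2\ell_1+3)=(\ell_1+1)^2(m-\ell_1+1)(m+\ell_1+3)$; this converts the powers of two and the ratios of Gamma functions produced by the Mellin formula into the expressions $4^{\nu_2-\ell_2-2}\Gamma(2\ell_2-2\nu_2+1)/[\Gamma(\pm\ell_1+\ell_2-\nu_2+\cdot)\Gamma(\ell_1+\ell_2-\nu_2+\cdot)]$ appearing in the two sums, the remaining rational factors $\ell_1(2\ell_1-1)$ and $(\ell_1+2)(2\ell_1+5)$ arising naturally from $(2\ell_1+5)!/[(2\ell_1)!(2\ell_1+2)]$ shifted by the recurrence's index change. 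The main obstacle is purely computational: bookkeeping the large number of Gamma-function identities and sign conventions when reindexing the $I(\ell_1-1)$ sum and recombining it with the $\ell_1=0,1$ contributions without error; I expect a short numerical cross-check at small $\gamma,k$ (e.g.\ $k=0$ and $k=1$) will confirm that the collected constants match the stated $\delta_\gamma(\ell_2,\nu_2)J_m(\ell_2,\nu_2)$ verbatim.
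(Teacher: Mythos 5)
Your proposal is correct and follows essentially the same route as the paper: convert to Gegenbauer polynomials via $\mathfrak{w}$, expand $(C_\gamma^{(2)})^2$ into monomials via the addition formula \eqref{AdditionFormulaYM}--\eqref{DefinitionConnectionCoefficient2} plus the explicit polynomial form of $C_{2\ell_2}^{(2)}$ (producing $\delta_\gamma$), and evaluate $J_m$ by expanding $(C_m^{(2)})^2$ through the same addition formula, promoting $\ell_1\geq 2$ terms to $C^{(3)}$ via the recurrence, and then applying the Mellin transform \eqref{DifficultIntegralGegenbauerPolynomialsYM} together with the degree-based vanishing to truncate the $\ell_1$-sums. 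The only cosmetic difference is that the paper handles $\ell_1=0,1$ via the explicit forms $C_0^{(2)}=1$ and $C_2^{(2)}=12y^2-2$ and justifies the truncation by orthogonality of $C^{(3)}_{2p}$ against $(1-y^2)^{5/2}$, whereas you appeal to the Gamma pole in the Mellin denominator --- two descriptions of the same cancellation.
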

\begin{proof}
Let $\gamma \geq 0$ be a fixed integer and pick any integer $m \geq 2\gamma+1$. Then, by the definition of the Fourier coefficients, we have 
 		\begin{align*} 
		\mathfrak{C}_{\gamma \gamma mm} & =\mathfrak{w}_{\gamma}^2 \mathfrak{w}_{m}^2  \int_{-1}^{1 } \left(C_{\gamma}^{(2)}(y)\right)^2 \left(C_{m}^{(2)}(y)\right)^2 \left(1-y^2 \right)^{\frac{5}{2}}dy.
		\end{align*} 
		On the one hand, we use the linearization formula \eqref{AdditionFormulaYM}--\eqref{DefinitionConnectionCoefficient2} together with the special cases $C_{0}^{(2)}(y)=1$ and $C_{2}^{(2)}(y)=12 y^2-2 $ to obtain
		\begin{align*} 
			\left(C_{m}^{(2)}(y)\right)^2 &= \sum_{\ell_1=0}^{m} \beta_{m2}(\ell_1)C_{2\ell_1}^{(2)}(y) \\
			&=\sum_{\ell_1=0}^{m} \frac{\left(\ell_1+1\right)^2 \left(-\ell_1+m+1\right) \left(\ell_1+m+3\right)}{4 \ell_1 \left(\ell_1+2\right)+3} C_{2\ell_1}^{(2)}(y), \\ 
			&= \frac{1}{3} (m+1) (m+3)  + \frac{4}{15} m (m+4) \left(12 y^2-2 \right)  \\
			&+ \sum_{\ell_1=2}^{m} \frac{\left(\ell_1+1\right)^2 \left(-\ell_1+m+1\right) \left(\ell_1+m+3\right)}{4 \ell_1 \left(\ell_1+2\right)+3} C_{2\ell_1}^{(2)}(y).   
		\end{align*}
		Furthermore, for all $\ell_1 \geq 2$, the connection formula \eqref{ConnectionFormula}--\eqref{DefinitionAdditionCoefficient} yields 
		\begin{align*} 
		C_{2\ell_1}^{(2)}(y) &=	\sum_{\nu_1=0}^{\ell_1}\alpha_{2\ell_1,2,3}(\nu_1) 
			C_{2(\ell_1-\nu_1)}^{(3)}(y)  \\
			& =\sum_{\nu_1=0}^{\ell_1} \frac{(2  \ell_1 -2  \nu_1 +3) (-1)_{ \nu_1 } \Gamma (2  \ell_1 - \nu_1 +2)}{3  \nu_1 ! (4)_{2  \ell_1 - \nu_1 }} C_{2(\ell_1-\nu_1)}^{(3)}(y) \\
			& = \frac{(2  \ell_1 +3) (2)_{2  \ell_1 }}{3 (4)_{2  \ell_1 }}C_{2\ell_1}^{(3)}(y) -\frac{(2  \ell_1 +1) (2)_{2  \ell_1 -1}}{3 (4)_{2  \ell_1 -1}} C_{2(\ell_1-1)}^{(3)}(y) \\
			& = \frac{1}{ \ell_1 +1} \left(C_{2\ell_1}^{(3)}(y) - C_{2(\ell_1-1)}^{(3)}(y) \right),
		\end{align*}
		since $(-1)_{ \nu_1 }=0$ for all $\nu_1 \geq 2$. Consequently, we have
		\begin{align*}
			\left(C_{m}^{(2)}(y)\right)^2 &=\frac{1}{3} (m+1) (m+3)  + \frac{4}{15} m (m+4) \left(12 y^2-2 \right)  \\
			&+ \sum_{\ell_1=2}^{m}\frac{ ( \ell_1 +1) ( \ell_1 -m-1) ( \ell_1 +m+3) }{4  \ell_1  ( \ell_1 +2)+3} \left( C_{2(\ell_1-1)}^{(3)}(y)-C_{2\ell_1}^{(3)}(y) \right).
		\end{align*} 
		On the other hand, the linearization formula \eqref{AdditionFormulaYM}--\eqref{DefinitionConnectionCoefficient2} together with the definition of the Gegenbauer polynomials, 
		\begin{align*}
			C_{2\ell_2}^{(2)}(y)= \sum_{\nu_2=0}^{\ell_2} d_{\ell_2}(\nu_2) y^{2(\ell_2-\nu_2)}, \Hquad
			 d_{\ell_2}(\nu_2) := \frac{(-1)^{ \nu_2 } \left(4^{ \ell_2 - \nu_2 } \Gamma (2  \ell_2 - \nu_2 +2)\right)}{\Gamma ( \nu_2 +1) \Gamma (2  \ell_2 -2  \nu_2 +1)} 
		\end{align*}
		yield
		\begin{align*}
			\left(C_{\gamma}^{(2)}(y)\right)^2 &= \sum_{\ell_2=0}^{\gamma} \beta_{\gamma2}(\ell_2)C_{2\ell_2}^{(2)}(y) =
			\sum_{\ell_2=0}^{\gamma} \beta_{\gamma2}(\ell_2)\sum_{\nu_2=0}^{\ell_2} d_{\ell_2}(\nu_2) y^{2(\ell_2-\nu_2)}  = \sum_{\ell_2=0}^{\gamma}\sum_{\nu_2=0}^{\ell_2} \delta_{\gamma}(\ell_2,\nu_2) y^{2(\ell_2-\nu_2)},
		\end{align*}
		where we set
		\begin{align*}
		\delta_{\gamma}(\ell_2,\nu_2):= \beta_{\gamma2}(\ell_2)d_{\ell_2}(\nu_2) = 	\frac{( \ell_2 +1)^2 (-1)^{ \nu_2 } (\gamma - \ell_2 +1) (\gamma + \ell_2 +3) 2^{2 ( \ell_2 - \nu_2 )} \Gamma (2  \ell_2 - \nu_2 +2)}{(4  \ell_2  ( \ell_2 +2)+3) \Gamma ( \nu_2 +1) \Gamma (2  \ell_2 -2  \nu_2 +1)}.
		\end{align*}
		Now, putting all together, we infer
		\begin{align*}
			\mathfrak{C}_{\gamma \gamma mm} & = \mathfrak{w}_{\gamma}^2 \mathfrak{w}_{m}^2  \int_{-1}^{1 } \left(C_{\gamma}^{(2)}(y)\right)^2 \left(C_{m}^{(2)}(y)\right)^2 \left(1-y^2 \right)^{\frac{5}{2}}dy  \\ 
			& =\mathfrak{w}_{\gamma}^2 \mathfrak{w}_{m}^2  \sum_{\ell_2=0}^{\gamma}\sum_{\nu_2=0}^{\ell_2} \delta_{\gamma}(\ell_2,\nu_2)\int_{-1}^{1 }  y^{2(\ell_2-\nu_2)} \Bigg[
			\frac{1}{3} (m+1) (m+3)  + \frac{4}{15} m (m+4) \left(12 y^2-2 \right)  \\
			&+ \sum_{\ell_1=2}^{m}\frac{ ( \ell_1 +1) ( \ell_1 -m-1) ( \ell_1 +m+3) }{4  \ell_1  ( \ell_1 +2)+3} \left( C_{2(\ell_1-1)}^{(3)}(y)-C_{2\ell_1}^{(3)}(y) \right)
			\Bigg]  \left(1-y^2 \right)^{\frac{5}{2}}  dy \\
			& =\mathfrak{w}_{\gamma}^2 \mathfrak{w}_{m}^2  \sum_{\ell_2=0}^{\gamma}\sum_{\nu_2=0}^{\ell_2} \delta_{\gamma}(\ell_2,\nu_2)J_{m}(\ell_2,\nu_2), 
		\end{align*}
		where
		\begin{align*}
			J_{m}(\ell_2,\nu_2)&:= 
			\frac{1}{3} (m+1) (m+3) \int_{-1}^{1 }  y^{2(\ell_2-\nu_2)} \left(1-y^2 \right)^{\frac{5}{2}}dy \\
			&+ \frac{4}{15} m (m+4) \int_{-1}^{1 }  y^{2(\ell_2-\nu_2)} \left(12 y^2-2 \right) \left(1-y^2 \right)^{\frac{5}{2}}dy \\
			& + \sum_{\ell_1=2}^{m}\frac{ ( \ell_1 +1) ( \ell_1 -m-1) ( \ell_1 +m+3) }{4  \ell_1  ( \ell_1 +2)+3} \int_{-1}^{1 }  y^{2(\ell_2-\nu_2)}   C_{2(\ell_1-1)}^{(3)}(y) \left(1-y^2 \right)^{\frac{5}{2}}  dy \\
			& - \sum_{\ell_1=2}^{m}\frac{ ( \ell_1 +1) ( \ell_1 -m-1) ( \ell_1 +m+3) }{4  \ell_1  ( \ell_1 +2)+3} \int_{-1}^{1 }  y^{2(\ell_2-\nu_2)} C_{2\ell_1}^{(3)}(y)   \left(1-y^2 \right)^{\frac{5}{2}}  dy.
		\end{align*}
		We compute
		\begin{align*}
			\int_{-1}^{1 }  y^{2(\ell_2-\nu_2)} \left(1-y^2 \right)^{\frac{5}{2}}dy & =\frac{15 \sqrt{\pi } \Gamma \left(\ell_2 -\nu_2 +\frac{1}{2}\right)}{8 \Gamma (\ell_2 -\nu_2 +4)}, \\
			\int_{-1}^{1 }  y^{2(\ell_2-\nu_2)} \left(12 y^2-2 \right) \left(1-y^2 \right)^{\frac{5}{2}}dy &=\frac{15 \sqrt{\pi } (5 \ell_2  -5 \nu_2  -1) \Gamma \left(\ell_2  -\nu_2  +\frac{1}{2}\right)}{4 \Gamma (\ell_2  -\nu_2  +5)}.
		\end{align*}
		On the one hand, for all $\ell_1 \geq 2$ with $\ell_1>\ell_2-\nu_2+1 $, we have $2(\ell_2-\nu_2)<2(\ell_1-1)$ and hence
\begin{align*}
	\int_{-1}^{1 }  y^{2(\ell_2-\nu_2)}   C_{2(\ell_1-1)}^{(3)}(y) \left(1-y^2 \right)^{\frac{5}{2}}  dy=0
\end{align*}
since the Gegenbauer polynomial in the integrand forms an orthonormal and complete basis with respect to the weight $(1-y^2)^{5/2}$. On the other hand, for all $2\leq \ell_1 \leq \ell_2-\nu_2+1$, the identity 
\begin{align*}
	C_{2\lambda}^{\left(3 \right)}(-y)=C_{2\lambda}^{\left(3 \right)}(y),
\end{align*}
valid for all real $y\in [-1,1]$ and integers $\lambda \geq 0$, yields 
\begin{align*}
	& \int_{-1}^{1 }  y^{2(\ell_2-\nu_2)}   C_{2(\ell_1-1)}^{(3)}(y) \left(1-y^2 \right)^{\frac{5}{2}}  dy =   2\int_{0}^{1 }  y^{2(\ell_2-\nu_2)}   C_{2(\ell_1-1)}^{(3)}(y) \left(1-y^2 \right)^{\frac{5}{2}}  dy  \\
	& = \frac{\pi  4^{-\ell_2+\nu _2-3} \Gamma \left(2 \ell_1+4\right) \Gamma \left(2 \ell_2-2 \nu _2+1\right)}{\Gamma \left(2 \ell_1-1\right) \Gamma \left(-\ell_1+\ell_2-\nu _2+2\right) \Gamma \left(\ell_1+\ell_2-\nu _2+3\right)},
\end{align*}
where we used \eqref{DifficultIntegralGegenbauerPolynomialsYM} to compute the last integral. In other words, we have
\begin{align*}
	& \sum_{\ell_1=2}^{m}\frac{ ( \ell_1 +1) ( \ell_1 -m-1) ( \ell_1 +m+3) }{4  \ell_1  ( \ell_1 +2)+3}  \int_{-1}^{1 }  y^{2(\ell_2-\nu_2)}   C_{2(\ell_1-1)}^{(3)}(y) \left(1-y^2 \right)^{\frac{5}{2}}  dy  = \\
	& \sum_{\ell_1=2}^{\ell_2-\nu_2+1}\frac{ ( \ell_1 +1) ( \ell_1 -m-1) ( \ell_1 +m+3) }{4  \ell_1  ( \ell_1 +2)+3}  \int_{-1}^{1 }  y^{2(\ell_2-\nu_2)}   C_{2(\ell_1-1)}^{(3)}(y) \left(1-y^2 \right)^{\frac{5}{2}}  dy  = \\
	& \sum_{\ell_1=2}^{\ell_2-\nu_2+1} \frac{\pi  \ell_1 \left(\ell_1+1\right)^2 \left(2 \ell_1-1\right) 4^{-\ell_2+\nu _2-2} \left(\ell_1-m-1\right) \left(\ell_1+m+3\right) \Gamma \left(2 \ell_2-2 \nu _2+1\right)}{\Gamma \left(-\ell_1+\ell_2-\nu _2+2\right) \Gamma \left(\ell_1+\ell_2-\nu _2+3\right)}.
\end{align*}
Similarly, on the one hand, for all $\ell_1 \geq 2$ with $\ell_1>\ell_2-\nu_2 $, we have $2(\ell_2-\nu_2)<2\ell_1$ and hence
\begin{align*}
	\int_{-1}^{1 }  y^{2(\ell_2-\nu_2)}   C_{2\ell_1}^{(3)}(y) \left(1-y^2 \right)^{\frac{5}{2}}  dy=0
\end{align*}
since the Gegenbauer polynomial in the integrand forms an orthonormal and complete basis with respect to the weight $(1-y^2)^{5/2}$. On the other hand, for all $2\leq \ell_1 \leq \ell_2-\nu_2 $, the identity 
\begin{align*}
	C_{2\lambda}^{\left(3 \right)}(-y)=C_{2\lambda}^{\left(3 \right)}(y),
\end{align*}
valid for all real $y\in [-1,1]$ and integers $\lambda \geq 0$, yields 
\begin{align*}
	& \int_{-1}^{1 }  y^{2(\ell_2-\nu_2)}   C_{2\ell_1}^{(3)}(y) \left(1-y^2 \right)^{\frac{5}{2}}  dy =   2\int_{0}^{1 }  y^{2(\ell_2-\nu_2)}   C_{2\ell_1}^{(3)}(y) \left(1-y^2 \right)^{\frac{5}{2}}  dy  \\
	& =  \frac{\pi  4^{-\ell_2+\nu _2-3} \Gamma \left(2 \ell_1+6\right) \Gamma \left(2 \ell_2-2 \nu _2+1\right)}{\Gamma \left(2 \ell_1+1\right) \Gamma \left(-\ell_1+\ell_2-\nu _2+1\right) \Gamma \left(\ell_1+\ell_2-\nu _2+4\right)},
\end{align*}
where we used once again \eqref{DifficultIntegralGegenbauerPolynomialsYM} to compute the last integral. In other words, we have
\begin{align*}
	& \sum_{\ell_1=2}^{m}\frac{ ( \ell_1 +1) ( \ell_1 -m-1) ( \ell_1 +m+3) }{4  \ell_1  ( \ell_1 +2)+3}  \int_{-1}^{1 }  y^{2(\ell_2-\nu_2)}   C_{2\ell_1}^{(3)}(y) \left(1-y^2 \right)^{\frac{5}{2}}  dy  = \\
	& \sum_{\ell_1=2}^{\ell_2-\nu_2}\frac{ ( \ell_1 +1) ( \ell_1 -m-1) ( \ell_1 +m+3) }{4  \ell_1  ( \ell_1 +2)+3}  \int_{-1}^{1 }  y^{2(\ell_2-\nu_2)}   C_{2\ell_1}^{(3)}(y) \left(1-y^2 \right)^{\frac{5}{2}}  dy  = \\
	& \sum_{\ell_1=2}^{\ell_2-\nu_2}  \frac{\pi  \left(\ell_1+1\right)^2 \left(\ell_1+2\right) \left(2 \ell_1+5\right) 4^{-\ell_2+\nu _2-2} \left(\ell_1-m-1\right) \left(\ell_1+m+3\right) \Gamma \left(2 \ell_2-2 \nu _2+1\right)}{\Gamma \left(-\ell_1+\ell_2-\nu _2+1\right) \Gamma \left(\ell_1+\ell_2-\nu _2+4\right)}.
\end{align*}
Putting all together, yields
		 \begin{align*} 
		J_{m}(\ell_2,\nu_2) & =
		\frac{3 \sqrt{\pi } (5  \ell_2  (3 m (m+4)+1)+m (m+4) (4-15  \nu_2 )-5 ( \nu_2 -4)) \Gamma \left( \ell_2 - \nu_2 +\frac{1}{2}\right)}{8 \Gamma ( \ell_2 - \nu_2 +5)}   \\
		& +\sum_{\ell_1=2}^{\ell_2-\nu_2+1} \frac{\pi  \ell_1 \left(\ell_1+1\right)^2 \left(2 \ell_1-1\right) 4^{-\ell_2+\nu _2-2} \left(\ell_1-m-1\right) \left(\ell_1+m+3\right) \Gamma \left(2 \ell_2-2 \nu _2+1\right)}{\Gamma \left(-\ell_1+\ell_2-\nu _2+2\right) \Gamma \left(\ell_1+\ell_2-\nu _2+3\right)} \\
		& - \sum_{\ell_1=2}^{\ell_2-\nu_2}  \frac{\pi  \left(\ell_1+1\right)^2 \left(\ell_1+2\right) \left(2 \ell_1+5\right) 4^{-\ell_2+\nu _2-2} \left(\ell_1-m-1\right) \left(\ell_1+m+3\right) \Gamma \left(2 \ell_2-2 \nu _2+1\right)}{\Gamma \left(-\ell_1+\ell_2-\nu _2+1\right) \Gamma \left(\ell_1+\ell_2-\nu _2+4\right)}
		\end{align*} 
		that completes the proof.
\end{proof}

\begin{remark}[Closed formulas for $\mathfrak{C}_{\gamma \gamma mm} $ for small values of $\gamma$]\label{MonotonicityModel3YM}
	Finally, we note that one can use Lemma \ref{LemmaClosedFormulasModel3YMCgammagammamm} to find closed formulas for the Fourier coefficients provided that $\gamma$ is sufficiently small. For example, for $\gamma \in \{0,1,2,3,4,5\}$, we find that   
	 \begin{align*}  
	 	\mathfrak{C}_{00 mm}&=	\frac{4 (m (m+4)+5)}{3 \pi  (m+1) (m+3)}, &
	 	\mathfrak{C}_{11 mm}&=	\frac{2 (m (m+4)+7)}{\pi  (m+1) (m+3)}, &
	 \mathfrak{C}_{22 mm}&=	\frac{8 (5 m (m+4)+49)}{15 \pi  (m+1) (m+3)}	  ,\\
	  \mathfrak{C}_{33 mm}&=	\frac{2 (5 m (m+4)+67)}{3 \pi  (m+1) (m+3)}	  ,&
	  \mathfrak{C}_{44 mm}&=	\frac{4 (5 m (m+4)+89)}{5 \pi  (m+1) (m+3)}	,&
	 	 \mathfrak{C}_{55 mm}&=	\frac{14 (m (m+4)+23)}{3 \pi  (m+1) (m+3)} , 
	 \end{align*}
    for all $m \geq 2\gamma+1$. Figure \ref{Lemma78PicRef} illustrates the Fourier coefficients $\mathfrak{C}_{\gamma\gamma mm} $ for $\gamma \in \{0,1,2\}$ respectively as $m $ varies within $\{1,2,\dots,50\}$.
\begin{figure}[h!]
\vspace{0.5cm}
    \centering
    \includegraphics[width=0.7\textwidth]{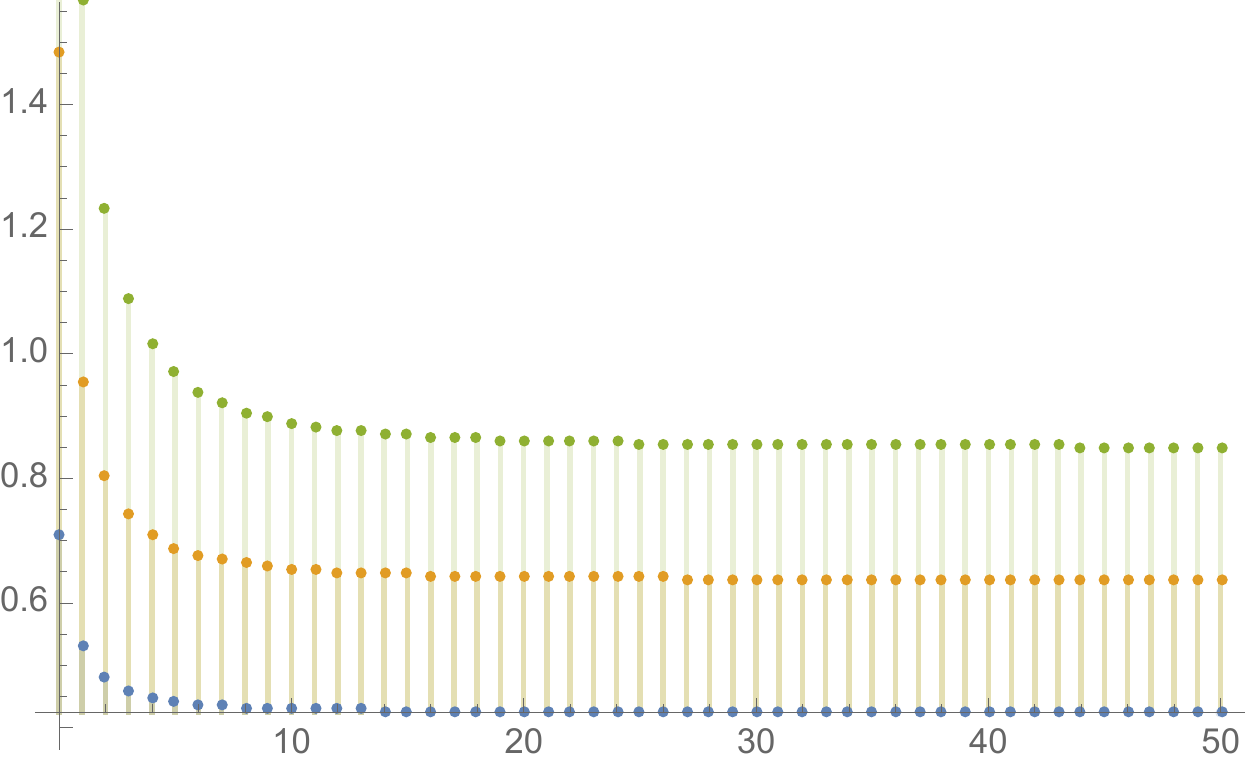}
    \caption{The Fourier coefficients $\mathfrak{C}_{\gamma\gamma mm} $ for $\gamma =0$ (blue), $\gamma=1$ (orange) and $\gamma=2$ (green) as $m $ varies within $\{1,2,\dots,50\}$. They are decreasing for $m\geq 2\gamma+1$.}
    \label{Lemma78PicRef}
\end{figure}

\end{remark}

\section{1--mode initial data} \label{Section1modeinitialdata}
In this section, we study the operators $\mathcal{M}$ and $ \textswab{M}_{\pm}$ (Section \ref{SectionMethodBambusiPaleari}) for 1--mode initial data. Specifically, we
\begin{itemize}
	\item verify that all the 1--modes are zeros of the operators $\mathcal{M}$ (for CW and CH) and $ \textswab{M}_{-}$ (for YM),
	\item compute the differentials $d\mathcal{M}$ and $d \textswab{M}_{-}$ at the 1--mode initial data.
\end{itemize}

\subsection{Conformal cubic wave equation in spherical symmetry}
Recall that the eigenfunctions $\{  e_{n}: n\geq 0\}$ are given by \eqref{EigenfunctionsModel1WS} and the PDE in the Fourier space from \eqref{PDEFourierSpaceModel1WS} reads 
\begin{align*} 
	& \ddot{u}^{m}(t) + \left(  A   u(t) \right)^m  =  \left( f \{ u^j(t): j \geq 0 \} ) \right)^m, \Hquad m \geq 0
\end{align*}
 where the dots denote derivatives with respect to time and
\begin{align*}
	\left(  A   u(t) \right)^m  &:= \omega_{m}^2  u^m(t), \\ 
	  \left( f( \{ u^j(t): j \geq 0 \} ) \right)^m &:= - \sum_{i,j,k=0}^{\infty}  C_{ijkm} 
	 u^{i}(t) u^{j}(t)  u^{k}(t) .
\end{align*}  
For any initial data 
\begin{align*}
u(0,\cdot)=\sum_{n=0}^{\infty} 	\xi^{n}e_{n} ,\Hquad \xi =\{ \xi ^{n} : n \geq 0 \},
\end{align*}
we denote by  
\begin{align*}
  \Phi ^{t} (\xi) = \left \{	  \xi^{n} \cos(\omega_{n}t) : n \geq 0   \right \}
\end{align*}
the linear flow, that is the solution to the linear problem 
\begin{align*}
\begin{dcases}
	  \ddot{u}^{n}(t) + \omega_{n}^2 u^{n}(t) =0,\Hquad  t\in \mathbb{R}
	 \\
	 u^{n}(0)=\xi^{n},  \Hquad  \dot{u}^{n}(0) =0 .
\end{dcases}
\end{align*} 
For this model, we aim towards implementing the original version of Bambusi--Paleari's theorem (Theorem \ref{OrigivalversionTheoremBambusi}) and define
\begin{align*} 
	\mathcal{M}\left(\xi \right) : =   A \xi + \langle f \rangle(\xi),   \Hquad 
\langle f \rangle(\xi):=\frac{1}{2\pi } \int_{0}^{2\pi } \Phi ^{t} \left[f\left(\Phi ^{t} (\xi) \right) 
	\right]  dt .
\end{align*} 
To begin with, we show that the 1--modes are zeros of the operator $\mathcal{M}$. 
\begin{lemma}[Zeros of the operator $\mathcal{M}$]\label{LemmaResonantSystemModel1CW}
Let $\xi = \{\xi^m: m\geq 0 \}$ be the rescaled 1--mode initial data,
\begin{align}\label{Definition1modesModel1CW}
	 \xi^m:=K_{\gamma}\mathds{1}(m=\gamma) ,\Hquad m\geq 0,
\end{align}
where
\begin{align}\label{DefinitionKappa1modesModel1CW}
	K_{\gamma}=\pm 2 \omega_{\gamma} \sqrt{\frac{2}{3 C_{\gamma\gamma\gamma\gamma}}} .
\end{align}
Then, we have $\mathcal{M}(\xi)=0$.
\end{lemma}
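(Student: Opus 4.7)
The plan is a direct calculation exploiting the simple structure of 1--mode initial data. First, I would compute the linear flow explicitly: for $\xi^n = K_\gamma \mathds{1}(n=\gamma)$, one has $(\Phi^t(\xi))^n = K_\gamma \cos(\omega_\gamma t)\mathds{1}(n=\gamma)$, so substituting into the cubic non--linearity \eqref{DefinitionNonLinearityfModel1} kills all but the $(i,j,k)=(\gamma,\gamma,\gamma)$ term, yielding
\begin{align*}
\bigl(f(\Phi^t(\xi))\bigr)^m = -K_\gamma^3\, C_{\gamma\gamma\gamma m}\cos^3(\omega_\gamma t).
\end{align*}
Applying $\Phi^t$ once more then gives $(\Phi^t[f(\Phi^t(\xi))])^m = -K_\gamma^3 C_{\gamma\gamma\gamma m} \cos^3(\omega_\gamma t)\cos(\omega_m t)$.

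Next, I would use the elementary identity $\cos^3\theta = \tfrac{3}{4}\cos\theta + \tfrac{1}{4}\cos(3\theta)$ and the orthogonality relation $\tfrac{1}{2\pi}\int_0^{2\pi}\cos(at)\cos(bt)\,dt = \tfrac12\mathds{1}(|a|=|b|)$ for non--negative integers $a,b$ to conclude that the time average
\begin{align*}
\langle f\rangle(\xi)^m = -K_\gamma^3\, C_{\gamma\gamma\gamma m}\,\frac{1}{2\pi}\int_0^{2\pi}\cos^3(\omega_\gamma t)\cos(\omega_m t)\,dt
\end{align*}
is supported only on indices $m$ with $\omega_m = \omega_\gamma$ (contribution $3/8$) or $\omega_m = 3\omega_\gamma$ (contribution $1/8$). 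Since $\omega_n = n+1$, these correspond to $m=\gamma$ and $m=3\gamma+2$ respectively.

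The key observation is that the resonance at $m=3\gamma+2$ does not contribute, because it satisfies $\omega_\gamma + \omega_\gamma + \omega_\gamma - \omega_{3\gamma+2} = 0$ and hence $C_{\gamma\gamma\gamma,3\gamma+2}=0$ by Lemma \ref{LemmaVanishingFourierModel1CW}. Thus the only non--trivial component of $\mathcal{M}(\xi)$ is at $m=\gamma$, where
\begin{align*}
\mathcal{M}(\xi)^\gamma = \omega_\gamma^2 K_\gamma - \tfrac{3}{8}\, C_{\gamma\gamma\gamma\gamma}\, K_\gamma^3,
\end{align*}
and all other components vanish identically. Solving the scalar equation $\omega_\gamma^2 = \tfrac{3}{8}C_{\gamma\gamma\gamma\gamma}K_\gamma^2$ for $K_\gamma$ yields precisely the normalization \eqref{DefinitionKappa1modesModel1CW}, proving $\mathcal{M}(\xi)=0$. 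No real obstacle is anticipated; the only substantive input beyond elementary trigonometry is the vanishing of $C_{\gamma\gamma\gamma,3\gamma+2}$, which has already been established in Lemma \ref{LemmaVanishingFourierModel1CW}.
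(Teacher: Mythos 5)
Your proposal is correct and takes essentially the same route as the paper: evaluate the linear flow at the 1--mode, reduce the cubic non--linearity to a single $\cos^3(\omega_\gamma t)$ term, time--average against $\cos(\omega_m t)$, kill the $m=3\gamma+2$ resonance via Lemma~\ref{LemmaVanishingFourierModel1CW}, and choose $K_\gamma$ to cancel the surviving $m=\gamma$ component. The only cosmetic difference is that you first use $\cos^3\theta=\tfrac34\cos\theta+\tfrac14\cos3\theta$ and then orthogonality, while the paper expands the product $\cos^3(\omega_\gamma t)\cos(\omega_m t)$ directly into four cosines of sums; the computations are identical.
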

\begin{proof}
Let $\xi=\{\xi^m : m\geq 0\} $ be given by \eqref{Definition1modesModel1CW}--\eqref{DefinitionKappa1modesModel1CW} and pick any integer $m\geq 0$. Then, we compute
\begin{align*}
(  A \xi  )^m &= \omega_{m}^2 \xi ^m =    K_{\gamma}\omega_{\gamma }^2 \mathds{1}(m=\gamma), \\
	( \Phi ^{t} (\xi  ) )^m &=\xi ^m\cos(\omega_{m}t)=K_{\gamma}\cos(\omega_{\gamma}t) \mathds{1}(m=\gamma), \\
\left ( f\left(\Phi ^{t} (\xi ) \right) \right )^m & = 
	 - \sum_{i,j,k}  C_{ijkm} ( \Phi ^{t} (\xi  ) )^i 
	 ( \Phi ^{t} (\xi  ) )^j ( \Phi ^{t} (\xi  ) )^k 
	 = - K_{\gamma}^3  C_{\gamma \gamma \gamma m}   \cos^3(\omega_{\gamma}t) , \\
 \left( \Phi ^{t} \left[f\left(\Phi ^{t} (\xi ) \right) 
	\right] \right)^m &=  ( f\left(\Phi ^{t} (\xi ) \right) )^m  \cos (\omega_{m}t)   = - K_{\gamma}^3  C_{\gamma \gamma \gamma m}   \cos^3(\omega_{\gamma}t)\cos(\omega_{m}t), \\
  \left(\langle f \rangle(\xi) \right)^m &=  -  \frac{ C_{\gamma \gamma \gamma m} }{2\pi }K_{\gamma}^3 \int_{0}^{2\pi }  \cos^3(\omega_{\gamma}t)\cos(\omega_{m}t)    dt \\
& = -  \frac{ C_{\gamma \gamma \gamma m} }{2\pi } K_{\gamma}^3 \Bigg( \frac{3}{8} \int_{0}^{2\pi }\cos \left( ( \omega _m+ \omega _{\gamma })t\right)dt+\frac{1}{8} \int_{0}^{2\pi }\cos \left( (\omega _m+3 \omega _{\gamma })t\right) dt \\
&+\frac{3}{8}\int_{0}^{2\pi } \cos \left( ( \omega _m- \omega _{\gamma })t\right) dt +\frac{1}{8} \int_{0}^{2\pi }\cos \left( ( \omega _m-3  \omega _{\gamma })t dt \right) \Bigg)   \\
& = -  C_{\gamma \gamma \gamma m} K_{\gamma}^3  \Bigg(  \frac{3}{8}  \mathds{1}(m=\gamma) +\frac{1}{8}  \mathds{1}(m=3\gamma+2)  \Bigg) \\
& = -       \frac{3 C_{\gamma \gamma \gamma \gamma }  }{8} K_{\gamma}^3  \mathds{1}(m=\gamma )  ,\\
 ( \mathcal{M}\left( \xi  \right) )^m & = (  A  \xi  )^m +  \left(\langle f \rangle(\xi) \right)^m  = K_{\gamma} \left(\omega_{\gamma }^2 -       \frac{3 C_{\gamma \gamma \gamma \gamma }  }{8} K_{\gamma}^2 \right) \mathds{1}(m=\gamma)=0,
\end{align*}
where we used the fact that
\begin{align*}
	\omega_{m}+\omega_{\gamma} &\neq 0, \Hquad 
	\omega_{m}+3\omega_{\gamma} \neq 0, \\ 
	\omega_{m}-\omega_{\gamma} &= 0 \Longleftrightarrow m=\gamma, \\ 
	\omega_{m}-3\omega_{\gamma} &= 0 \Longleftrightarrow m=3 \gamma +2,
\end{align*}
as well as $C_{\gamma \gamma \gamma m} = 0$ for $m=3\gamma+2$ according to Lemma \ref{LemmaVanishingFourierModel1CW}.
\end{proof}

Next, we derive the differential of $\mathcal{M}$ at the rescaled 1--modes.
\begin{lemma}[Differential of $\mathcal{M}$ at the 1--modes]\label{LemmaDifferentialofMModel1CW}
	Let $\xi=\{\xi^m : m\geq 0\} $ be given by \eqref{Definition1modesModel1CW}--\eqref{DefinitionKappa1modesModel1CW}. Then, for all $h=\{ h^{j}: j\geq 0\} \in  l_{s+3}^{2} $, we have that 
\begin{align*}   
 C_{\gamma \gamma \gamma \gamma} (	d \mathcal{M}( \xi )[h] )^m & =	  
	\left[
	\left(\omega_{m}^2 C_{\gamma \gamma \gamma \gamma}-  2\omega_{\gamma}^2 C_{\gamma \gamma mm}  \right)h^m   -   \omega_{\gamma}^2 C_{\gamma, 2\gamma-m, \gamma,m}   h^{2\gamma-m}
	\right] \mathds{1} \left(0 \leq m \leq \gamma-1 \right)  \\
	&+
	\left[
	-2\omega_{\gamma}^2 C_{\gamma \gamma \gamma \gamma} h^{\gamma}
	\right]
	  \mathds{1} \left(  m = \gamma \right) \\
	&+   
	\left[
	\left(\omega_{m}^2 C_{\gamma \gamma \gamma \gamma}-  2\omega_{\gamma}^2 C_{\gamma \gamma mm}  \right)h^m   -  \omega_{\gamma}^2 C_{\gamma, 2\gamma-m, \gamma,m}   h^{2\gamma-m}
	\right] \mathds{1} \left(\gamma+1 \leq m \leq 2\gamma \right)  \\
	& + 
	\left[
	\left(\omega_{m}^2 C_{\gamma \gamma \gamma \gamma}-  2\omega_{\gamma}^2 C_{\gamma \gamma mm}  \right)h^m 
	\right] \mathds{1} \left(  m \geq 2\gamma +1 \right)  ,
\end{align*}
where $C_{ijkm}$ are given in closed formulas in Lemma \ref{LemmaClosedformulaFourierModel1CW}.
\end{lemma}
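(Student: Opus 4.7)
The plan is to differentiate $\mathcal{M}(\xi) = A\xi + \langle f \rangle(\xi)$ directly in the direction $h$. Since $A$ is linear, the linear flow $\Phi^t$ is $\mathbb{R}$-linear, and $f$ is cubic and homogeneous, one obtains
\begin{align*}
(d\mathcal{M}(\xi)[h])^m = \omega_m^2 h^m + \frac{1}{2\pi}\int_0^{2\pi} \Big(\Phi^t\big[df(\Phi^t(\xi))[\Phi^t(h)]\big]\Big)^m\, dt.
\end{align*}
First I would substitute $(\Phi^t(\xi))^j = K_\gamma \cos(\omega_\gamma t)\mathds{1}(j=\gamma)$ together with $(\Phi^t(h))^i = h^i \cos(\omega_i t)$. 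Because $\xi$ is concentrated on the single mode $\gamma$, the quadruple sum in $df(\Phi^t(\xi))[\Phi^t(h)]$ collapses to a single sum over $i$ carrying a factor $\cos^2(\omega_\gamma t)$, and the $m$-th component of $d\langle f \rangle(\xi)[h]$ becomes
\begin{align*}
-\frac{3 K_\gamma^2}{2\pi}\sum_{i\geq 0} C_{i\gamma\gamma m}\, h^i \int_0^{2\pi}\cos^2(\omega_\gamma t)\cos(\omega_i t)\cos(\omega_m t)\, dt.
\end{align*}

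The next step is to evaluate the time integral. Expanding $\cos^2(\omega_\gamma t)=\tfrac12+\tfrac12\cos(2\omega_\gamma t)$ and applying product-to-sum identities, the integral picks out precisely those $i$ satisfying one of four linear conditions: $i=m$, $i=m-2\gamma-2$ (with $m\geq 2\gamma+2$), $i=2\gamma+m+2$, or $i=2\gamma-m$ (with $0\leq m\leq 2\gamma$), with respective weights $\pi/2$, $\pi/4$, $\pi/4$, $\pi/4$. At this point I would invoke Lemma \ref{LemmaVanishingFourierModel1CW}: the middle two conditions correspond to the resonances $\omega_i+\omega_\gamma+\omega_\gamma-\omega_m=0$ and $-\omega_i+\omega_\gamma+\omega_\gamma+\omega_m=0$, each of which forces $C_{i\gamma\gamma m}=0$. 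Therefore only the contributions from $i=m$ and $i=2\gamma-m$ survive.

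Finally, I would combine the surviving terms with $(Ah)^m=\omega_m^2 h^m$ and use the identity $K_\gamma^2 C_{\gamma\gamma\gamma\gamma}=8\omega_\gamma^2/3$, which is exactly the definition \eqref{DefinitionKappa1modesModel1CW} of the rescaling $K_\gamma$, to clear the prefactors. Multiplying through by $C_{\gamma\gamma\gamma\gamma}$ and using the symmetry $C_{m\gamma\gamma m}=C_{\gamma\gamma m m}$ and $C_{2\gamma-m,\gamma,\gamma,m}=C_{\gamma,2\gamma-m,\gamma,m}$ to match the stated form, splitting the analysis into the four regimes $0\leq m\leq \gamma-1$, $m=\gamma$, $\gamma+1\leq m\leq 2\gamma$, and $m\geq 2\gamma+1$ gives exactly the claimed formula. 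The case $m=\gamma$ demands particular care: there the two surviving indices $i=m$ and $i=2\gamma-m$ coincide, so both the $\pi/2$ and the $\pi/4$ contributions add, producing the factor $-2\omega_\gamma^2 C_{\gamma\gamma\gamma\gamma}$; this is the only place where the specific normalisation of $K_\gamma$ fully enters, since elsewhere it merely rescales the Fourier coefficients. The main obstacle is purely combinatorial, namely tracking which of the four indicator cases is active in each regime and treating the overlap at $m=\gamma$ correctly; no genuine analytic difficulty arises, as all integrals are explicit elementary trigonometric ones.
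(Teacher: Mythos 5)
Your proof is correct and follows essentially the same strategy as the paper: linearize $\langle f\rangle$ at the $1$-mode, reduce the resulting quadruple-cosine time integral to a sum of indicator functions, kill two of the four surviving index choices via Lemma \ref{LemmaVanishingFourierModel1CW}, and substitute the normalization $K_\gamma^2 C_{\gamma\gamma\gamma\gamma}=8\omega_\gamma^2/3$ before multiplying through by $C_{\gamma\gamma\gamma\gamma}$. The only cosmetic difference is that you first split $\cos^2(\omega_\gamma t)=\tfrac12(1+\cos(2\omega_\gamma t))$ whereas the paper expands the fourfold product directly into an eightfold $\sum_{\pm}$; the accounting is equivalent, and your explicit remark about the index coincidence at $m=\gamma$ (so that the $\pi/2$ and $\pi/4$ weights add) matches the paper's formula, where $h^{2\gamma-m}=h^m$ at $m=\gamma$ makes the two surviving terms combine to produce the coefficient $-2\omega_\gamma^2 C_{\gamma\gamma\gamma\gamma}$.
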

\begin{proof}
Let $\xi=\{\xi^m : m\geq 0\} $ be given by \eqref{Definition1modesModel1CW}--\eqref{DefinitionKappa1modesModel1CW}, $\epsilon >0$, $h=\{ h^{j}: j\geq 0\} \in  l_{s+3}^{2} $ and pick any integer $m\geq 0$. Then, using the symmetries of the Fourier coefficients, we compute 
\begin{align*}
 (  A (\xi +\epsilon h)  )^m &= \omega_{m}^2 (\xi^m +\epsilon h^m) =   (  A \xi  )^m  +\epsilon  \omega_{m}^2 h^m , \\
	( \Phi ^{t} (\xi +\epsilon h  ) )^m &=(\xi^m +\epsilon h^m)\cos(\omega_{m}t)=( \Phi ^{t} (\xi  ) )^m + \epsilon h^m \cos(\omega_{m}t), \\
 \left ( f\left(\Phi ^{t} (\xi +\epsilon h ) \right) \right )^m & = 
	 - \sum_{i,j,k}  C_{ijkm} ( \Phi ^{t} (\xi +\epsilon h  ) )^i 
	 ( \Phi ^{t} (\xi  ) )^j ( \Phi ^{t} (\xi +\epsilon h  ) )^k \\ 
& = \left ( f\left(\Phi ^{t} (\xi   ) \right) \right )^m   -3\epsilon K_{\gamma}^2   \sum_{ k}  C_{\gamma \gamma km} h^k   \cos^2(\omega_{\gamma}t)    \cos ( \omega_{k}t)   + \mathcal{O}(\epsilon^2) , \\
 \left( \Phi ^{t} \left[f\left(\Phi ^{t} (\xi+\epsilon h ) \right) 
	\right] \right)^m &=   
	 \left( \Phi ^{t} \left[f\left(\Phi ^{t} (\xi  ) \right) 
	\right] \right)^m  -3\epsilon K_{\gamma}^2   \sum_{ i}  C_{i\gamma \gamma m} h^i   \cos^2(\omega_{\gamma}t)    \cos ( \omega_{i}t) \cos(\omega_{m}t)   + \mathcal{O}(\epsilon^2)  , \\
  \left(\langle f \rangle(\xi+\epsilon h) \right)^m & =  \left(\langle f \rangle(\xi ) \right)^m - \frac{3\epsilon K_{\gamma}^2 }{2\pi }  \sum_{ i}  C_{i\gamma \gamma m} h^i \int_{0}^{2\pi }  \cos^2(\omega_{\gamma}t)    \cos ( \omega_{i}t) \cos(\omega_{m}t) dt  + \mathcal{O}(\epsilon^2).
 \end{align*}
Therefore, we infer
\begin{align*}
	& \left( d \langle f \rangle(\xi )  [h]\right)^m = - \frac{3  K_{\gamma}^2 }{2\pi }  \sum_{ i}  C_{i\gamma \gamma m} h^i \int_{0}^{2\pi }  \cos^2(\omega_{\gamma}t)    \cos ( \omega_{i}t) \cos(\omega_{m}t) dt \\
& =- \frac{3  K_{\gamma}^2 }{16\pi }  \sum_{ i}  C_{i\gamma \gamma m} h^i \sum_{\pm} \int_{0}^{2\pi } \cos(( \omega_{i}\pm\omega_{\gamma}\pm \omega_{\gamma}\pm \omega_{m})t) dt \\
& = - \frac{3  K_{\gamma}^2 }{8 }   \sum_{ i}  C_{i\gamma \gamma m} h^i \sum_{\pm} \mathds{1}(\omega_{i} \pm \omega_{\gamma}\pm \omega_{\gamma}  \pm \omega_{m}=0) \\
& =- \frac{3  K_{\gamma}^2 }{8 } \left[  \sum_{ i}  C_{i\gamma \gamma m} h^i \mathds{1}( i=m ) +\sum_{ i}  C_{i\gamma \gamma m} h^i \mathds{1}( i=m  ) 
+\sum_{ i}  C_{i\gamma \gamma m} h^i \mathds{1}( i=2\gamma -m\geq 0)  \right] \\
& =- \frac{3  K_{\gamma}^2 }{8 } \left[ 2    C_{m\gamma \gamma m} h^m    
+  C_{2\gamma -m,\gamma ,\gamma ,m} h^{2\gamma -m} \mathds{1}( 0\leq m \leq 2\gamma )  \right] \\
&  =  
- \frac{  \omega_{\gamma} ^2}{ C_{\gamma \gamma \gamma \gamma} }  \left[
2 C_{\gamma \gamma m m}     h^m  
+\mathds{1} \left(0 \leq m \leq 2\gamma \right)
 C_{\gamma,2\gamma-m,\gamma,m}   
	  h^{2\gamma-m }   \right] ,
\end{align*}
where we used the fact that $C_{ijkm}=0$ for $\omega_{i}\pm\omega_{j}\pm\omega_{k}\pm\omega_{m}=0$ with only 1 minus sign according to Lemma \ref{LemmaVanishingFourierModel1CW}, that is
\begin{align*} 
	\begin{dcases}
		-\omega_{i}+\omega_{\gamma}+ \omega_{\gamma}+ \omega_{m} =0, \\
		\omega_{i}-\omega_{\gamma}+ \omega_{\gamma}+ \omega_{m} =0, \\
		\omega_{i}+\omega_{\gamma}- \omega_{\gamma}+ \omega_{m} =0, \\
		\omega_{i}+\omega_{\gamma}+ \omega_{\gamma}- \omega_{m} =0,
	\end{dcases}
\end{align*}
so we are left with $\omega_{i}\pm\omega_{j}\pm\omega_{k}\pm\omega_{m}=0$ with only 2 minus signs and there are three such terms in total, that is
\begin{align*}
\begin{dcases}
	\omega_{i}+\omega_{\gamma}- \omega_{\gamma}- \omega_{m} =0, \\
	\omega_{i}-\omega_{\gamma}+ \omega_{\gamma}- \omega_{m} =0, \\
	\omega_{i}-\omega_{\gamma}- \omega_{\gamma}+ \omega_{m} =0, \\
	\end{dcases} \Longleftrightarrow
	\begin{dcases}
	i=m, \\
	i=m, \\
	i=2\gamma-m \text{ and } i \geq 0. \\
	\end{dcases}
\end{align*}
Finally, we obtain
\begin{align*}
	(	d \mathcal{M}( \xi )[h] )^m &=  \omega_{m}^2 h^m +\left( d \langle f \rangle(\xi )  [h] \right)^m\\
	&=\left[ \omega_{m}^2 -\frac{   2 \omega_{\gamma} ^2 C_{\gamma \gamma m m} }{ C_{\gamma \gamma \gamma \gamma} } 
   \right] h^m  -\mathds{1} \left(0 \leq m \leq 2\gamma \right) 
\frac{  \omega_{\gamma} ^2 C_{\gamma,2\gamma-m,\gamma,m}}{ C_{\gamma \gamma \gamma \gamma} }   
	  h^{2\gamma-m }   \\
	&=\left[ \omega_{m}^2 -\frac{   2 \omega_{\gamma} ^2 C_{\gamma \gamma m m} }{ C_{\gamma \gamma \gamma \gamma} } 
   \right] h^m  -\mathds{1} \left(0 \leq m \leq 2\gamma \right) 
\frac{  \omega_{\gamma} ^2 C_{\gamma,2\gamma-m,\gamma,m}}{ C_{\gamma \gamma \gamma \gamma} }   
	  h^{2\gamma-m }   \\
	&=  	  \mathds{1} \left(0 \leq m \leq \gamma-1 \right)
	\left[
	\left(\omega_{m}^2 - \frac{2\omega_{\gamma}^2 C_{\gamma \gamma mm}}{C_{\gamma \gamma \gamma \gamma}} \right)h^m   -  \frac{\omega_{\gamma}^2 C_{\gamma, 2\gamma-m, \gamma,m}}{C_{\gamma \gamma \gamma \gamma}}  h^{2\gamma-m}
	\right] \\
	&+
	 \mathds{1} \left(  m = \gamma \right)
	\left[
	-2\omega_{\gamma}^2 h^{\gamma}
	\right] \\
	&+  \mathds{1} \left(\gamma+1 \leq m \leq 2\gamma \right)
	\left[
	\left(\omega_{m}^2 - \frac{2\omega_{\gamma}^2 C_{\gamma \gamma mm}}{C_{\gamma \gamma \gamma \gamma}} \right)h^m   -  \frac{\omega_{\gamma}^2 C_{\gamma, 2\gamma-m, \gamma,m}}{C_{\gamma \gamma \gamma \gamma}}  h^{2\gamma-m}
	\right]  \\
	& +  \mathds{1} \left(  m \geq 2\gamma +1 \right)
	\left[
	\left(\omega_{m}^2 - \frac{2\omega_{\gamma}^2 C_{\gamma \gamma mm}}{C_{\gamma \gamma \gamma \gamma}} \right)h^m 
	\right] ,
 \end{align*} 
that completes the proof. 
\end{proof}

\subsection{Conformal cubic wave equation out of spherical symmetry}
Recall that the eigenfunctions $\{  \mathsf{e}_{n}^{(\mu_1,\mu_2)}: n\geq 0\}$ are given by \eqref{EigenfunctionsModel2WH} and the PDE in the Fourier space from \eqref{PDEFourierSpaceModel2CH} reads 
\begin{align*} 
	& \ddot{u}^{m}(t) + \left( \mathsf{A} u(t) \right)^m = \left( \mathsf{f}( \{ u^j(t): j \geq 0 \} ) \right)^m, \Hquad m \geq 0
\end{align*}
 where the dots denote derivatives with respect to time and
\begin{align*}
	 \left( \mathsf{A} u(t) \right)^m &:= \left(\somega_n^{(\mu_1,\mu_2)} \right)^2 u^m(t), \\ 
	 \left( \mathsf{f}( \{ u^j(t): j \geq 0 \} ) \right)^m&:= - \sum_{i,j,k=0}^{\infty}  \mathsf{C}_{ijkm}^{(\mu_1,\mu_2)}  
	u^{i}(t)u^{j}(t) u^{k}(t) .
\end{align*}  
For any initial data 
\begin{align*}
u(0,\cdot)=\sum_{n=0}^{\infty} 	\xi^{n}  \mathsf{e}_{n}^{(\mu_1,\mu_2)} ,\Hquad \xi =\{ \xi ^{n} : n \geq 0 \},
\end{align*}
we denote by  
\begin{align*}
  \mathsf{\Phi} ^{t} (\xi) = \left \{	  \xi^{n} \cos(\somega_{n}^{(\mu_1,\mu_2)}t) : n \geq 0   \right \}
\end{align*}
the linear flow, that is the solution to the linear problem 
\begin{align*}
\begin{dcases}
	\ddot{u}^{n}(t) + \left(\somega_{n}^{(\mu_1,\mu_2)} \right)^2 u^{n}(t) =0,\Hquad  t\in \mathbb{R}
	 \\
	 u^n(0)=\xi^n,  \Hquad  \dot{u}(0) =0 .
\end{dcases}
\end{align*} 
For this model, we aim towards implementing the original version of Bambusi--Paleari's theorem (Theorem \ref{OrigivalversionTheoremBambusi}) and define
\begin{align*} 
	 \mathcal{M} \left(\xi \right) : =    \mathsf{A}  \xi +\langle \mathsf{f} \rangle (\xi),  \Hquad 
	\langle \mathsf{f} \rangle (\xi) := \frac{1}{2\pi } \int_{0}^{2\pi }  \mathsf{\Phi}  ^{t} \left[f\left( \mathsf{\Phi}  ^{t} (\xi) \right) 
	\right]  dt .
\end{align*}
To begin with, we show that the 1--modes are zeros of the operator $\mathcal{M}$. 
\begin{lemma}[Zeros of the operator $\mathcal{M}$]\label{LemmaResonantSystemModel2CH}
Let $\xi = \{\xi^m: m\geq 0 \}$ be the rescaled 1--mode initial data,
\begin{align}\label{Definition1modesModel2CH}
	 \xi^m:= \mathsf{K}_{\gamma}^{(\mu_1,\mu_2)}\mathds{1}(m=\gamma) ,\Hquad m\geq 0,
\end{align}
where
\begin{align}\label{DefinitionKappa1modesModel2CH}
	\mathsf{K}_{\gamma}^{(\mu_1,\mu_2)}=\pm 2\somega_{\gamma}^{(\mu_1,\mu_2)} \sqrt{\frac{2 }{3 C_{\gamma\gamma\gamma\gamma}^{(\mu_1,\mu_2)}}}.
\end{align}
Then, we have $\mathcal{M}(\xi)=0$.
\end{lemma}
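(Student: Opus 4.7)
The plan is to mimic, essentially verbatim, the argument used for the CW model in Lemma \ref{LemmaResonantSystemModel1CW}, with the eigenvalues $\omega_n$, eigenfunctions $e_n$, Fourier coefficients $C_{ijkm}$ and linear flow $\Phi^t$ replaced by their sans-serif counterparts. First, I would fix an integer $m\ge 0$ and compute the action of $\mathsf{A}$ on the 1--mode data $\xi$, obtaining $(\mathsf{A}\xi)^m=\mathsf{K}_\gamma^{(\mu_1,\mu_2)} \bigl(\somega_\gamma^{(\mu_1,\mu_2)}\bigr)^2\mathds{1}(m=\gamma)$. Likewise, from the definition of the linear flow, $(\mathsf{\Phi}^t(\xi))^m=\mathsf{K}_\gamma^{(\mu_1,\mu_2)}\cos\bigl(\somega_\gamma^{(\mu_1,\mu_2)}\,t\bigr)\mathds{1}(m=\gamma)$, so that only the single index $\gamma$ contributes when this is substituted into the non--linearity, giving
\begin{equation*}
\bigl(\mathsf{f}(\mathsf{\Phi}^t(\xi))\bigr)^m=-\bigl(\mathsf{K}_\gamma^{(\mu_1,\mu_2)}\bigr)^3\,\mathsf{C}^{(\mu_1,\mu_2)}_{\gamma\gamma\gamma m}\,\cos^3\!\bigl(\somega_\gamma^{(\mu_1,\mu_2)}\,t\bigr).
\end{equation*}

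Next, I would linearize $\cos^3$ as $\tfrac{3}{4}\cos(\somega_\gamma^{(\mu_1,\mu_2)}t)+\tfrac14\cos(3\somega_\gamma^{(\mu_1,\mu_2)}t)$, multiply by $\cos(\somega_m^{(\mu_1,\mu_2)}t)$ (coming from the extra $\mathsf{\Phi}^t$ in front of $\mathsf{f}$), use standard product-to-sum formulas and integrate over $[0,2\pi]$. The only nonvanishing contributions come from $\somega_m^{(\mu_1,\mu_2)}=\somega_\gamma^{(\mu_1,\mu_2)}$, i.e.\ $m=\gamma$, and from $\somega_m^{(\mu_1,\mu_2)}=3\somega_\gamma^{(\mu_1,\mu_2)}$, i.e.\ $m=3\gamma+\mu_1+\mu_2+2$. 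The second case, however, is precisely a resonance of the type $\somega_i+\somega_j+\somega_k-\somega_m=0$ with $i=j=k=\gamma$, so Lemma \ref{LemmaVanisginFourierModel2CH} forces $\mathsf{C}^{(\mu_1,\mu_2)}_{\gamma\gamma\gamma m}=0$ and this triple-frequency mode drops out.

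Consequently, only the $m=\gamma$ term survives, and one gets
\begin{equation*}
\bigl(\langle \mathsf{f}\rangle(\xi)\bigr)^m=-\tfrac{3}{8}\,\mathsf{C}^{(\mu_1,\mu_2)}_{\gamma\gamma\gamma\gamma}\,\bigl(\mathsf{K}_\gamma^{(\mu_1,\mu_2)}\bigr)^3\,\mathds{1}(m=\gamma).
\end{equation*}
Combining this with the computation of $\mathsf{A}\xi$ yields
\begin{equation*}
(\mathcal{M}(\xi))^m=\mathsf{K}_\gamma^{(\mu_1,\mu_2)}\left[\bigl(\somega_\gamma^{(\mu_1,\mu_2)}\bigr)^2-\tfrac{3}{8}\,\mathsf{C}^{(\mu_1,\mu_2)}_{\gamma\gamma\gamma\gamma}\,\bigl(\mathsf{K}_\gamma^{(\mu_1,\mu_2)}\bigr)^2\right]\mathds{1}(m=\gamma),
\end{equation*}
and the bracket vanishes identically by the choice \eqref{DefinitionKappa1modesModel2CH} of the normalization $\mathsf{K}_\gamma^{(\mu_1,\mu_2)}$. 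There is no real obstacle here: the only nontrivial input is the vanishing of $\mathsf{C}^{(\mu_1,\mu_2)}_{\gamma\gamma\gamma,3\gamma+\mu_1+\mu_2+2}$, which is exactly the content of Lemma \ref{LemmaVanisginFourierModel2CH}, and the rest is a direct transcription of the computation already performed in the CW case.
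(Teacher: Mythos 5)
Your proposal is correct and follows exactly the route the paper intends: the paper's own proof of this lemma simply says ``similar to Lemma \ref{LemmaResonantSystemModel1CW},'' and you have reproduced that CW computation with the CH eigenvalues, eigenfunctions, Fourier coefficients and linear flow substituted in, then used Lemma \ref{LemmaVanisginFourierModel2CH} to kill the triple-frequency mode. One small arithmetic slip worth fixing: the resonant index where $\somega_m^{(\mu_1,\mu_2)}=3\somega_\gamma^{(\mu_1,\mu_2)}$ is $m=3\gamma+\mu_1+\mu_2+1$, not $3\gamma+\mu_1+\mu_2+2$ (from $2m+1+\mu_1+\mu_2=3(2\gamma+1+\mu_1+\mu_2)$), though this does not affect the conclusion since the only relevant fact is that this index falls in the vanishing case of Lemma \ref{LemmaVanisginFourierModel2CH}.
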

\begin{proof}
The proof is similar to the one of Lemma \ref{LemmaResonantSystemModel1CW}.
 \end{proof}
Next, we derive the differential of $\mathcal{M}$ at the rescaled 1--modes.
\begin{lemma}[Differential of $\mathcal{M}$ at the 1--modes]\label{LemmaDifferentialofMModel2CH}
	Let $\xi=\{\xi^m : m\geq 0\} $ be given by \eqref{Definition1modesModel2CH}--\eqref{DefinitionKappa1modesModel2CH}. Then, for all $h=\{ h^{j}: j\geq 0\} \in  l_{s+3}^{2} $, we have that 
\begin{align*}      
 \mathsf{C}_{\gamma \gamma \gamma \gamma}^{(\mu_1,\mu_2)} (	d \mathcal{M}( \xi )[h] )^m&  =	
	\left[
	\left(\somega_{m}^2 \mathsf{C}_{\gamma \gamma \gamma \gamma}^{(\mu_1,\mu_2)}-  2\somega_{\gamma}^2 \mathsf{C}_{\gamma \gamma mm}^{(\mu_1,\mu_2)}  \right)h^m   -   \somega_{\gamma}^2 \mathsf{C}_{\gamma, 2\gamma-m, \gamma,m}^{(\mu_1,\mu_2)}   h^{2\gamma-m}
	\right] \mathds{1} \left(0 \leq m \leq \gamma-1 \right)  \\
	&+
	\left[
	-2\somega_{\gamma}^2 \mathsf{C}_{\gamma \gamma \gamma \gamma}^{(\mu_1,\mu_2)} h^{\gamma}
	\right]  \mathds{1} \left(  m = \gamma \right) \\
	&+  
	\left[
	\left(\somega_{m}^2 \mathsf{C}_{\gamma \gamma \gamma \gamma}^{(\mu_1,\mu_2)} -  2\somega_{\gamma}^2 \mathsf{C}_{\gamma \gamma mm}^{(\mu_1,\mu_2)}  \right)h^m   -  \somega_{\gamma}^2 \mathsf{C}_{\gamma, 2\gamma-m, \gamma,m}^{(\mu_1,\mu_2)}    h^{2\gamma-m}
	\right]\mathds{1} \left(\gamma+1 \leq m \leq 2\gamma \right)   \\
	& +  
	\left[
	\left(\somega_{m}^2 \mathsf{C}_{\gamma \gamma \gamma \gamma}^{(\mu_1,\mu_2)} -  2\somega_{\gamma}^2 \mathsf{C}_{\gamma \gamma mm}^{(\mu_1,\mu_2)}  \right)h^m 
	\right] \mathds{1} \left(  m \geq 2\gamma +1 \right) ,
\end{align*}
where $\mathsf{C}_{\gamma \gamma m m}^{(\mu_1,\mu_2)}$ are given by closed formulas in Lemma \ref{LemmaUniformAsymptoticFormulasModel2CH}.
\end{lemma}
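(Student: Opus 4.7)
My plan is to mimic the proof of Lemma \ref{LemmaDifferentialofMModel1CW} line by line, since the CH model differs from the CW model only in the replacement of eigenvalues, eigenfunctions, Fourier coefficients and linear flow, and not in the algebraic structure of the operator $\mathcal{M} = \mathsf{A} + \langle \mathsf{f} \rangle$. More precisely, I would fix the 1--mode datum $\xi$ given by \eqref{Definition1modesModel2CH}--\eqref{DefinitionKappa1modesModel2CH}, take a perturbation $\hat{\xi} = \xi + \epsilon h$ with $h \in l_{s+3}^2$ and $\epsilon > 0$ small, and expand $\mathcal{M}(\hat{\xi})$ in powers of $\epsilon$, reading off the differential as the coefficient of $\epsilon^1$.

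The linear part is immediate, namely $(\mathsf{A}\hat{\xi})^m = \somega_m^{(\mu_1,\mu_2)^2}\xi^m + \epsilon \, \somega_m^{(\mu_1,\mu_2)^2} h^m$. For the nonlinear term I would use that $\mathsf{\Phi}^t$ is linear, so $\mathsf{\Phi}^t(\hat{\xi}) = \mathsf{\Phi}^t(\xi) + \epsilon \mathsf{\Phi}^t(h)$, and then plug into the cubic expression $\mathsf{f}$. Since $\xi$ is supported only at index $\gamma$, the $\mathcal{O}(\epsilon)$ contribution picks up exactly three identical terms (one for each slot of the cubic that is replaced by $\epsilon h$), and using the symmetry of $\mathsf{C}_{ijkm}^{(\mu_1,\mu_2)}$ in its first three indices yields
\begin{align*}
\bigl(\mathsf{f}(\mathsf{\Phi}^t(\hat{\xi}))\bigr)^m = \bigl(\mathsf{f}(\mathsf{\Phi}^t(\xi))\bigr)^m - 3 \epsilon \bigl(\mathsf{K}_\gamma^{(\mu_1,\mu_2)}\bigr)^2 \sum_i \mathsf{C}_{i\gamma\gamma m}^{(\mu_1,\mu_2)} h^i \cos^2(\somega_\gamma t)\cos(\somega_i t) + \mathcal{O}(\epsilon^2).
\end{align*}
Applying $\mathsf{\Phi}^t$ introduces an additional factor $\cos(\somega_m t)$, and then averaging in time over $[0, 2\pi]$ collapses the product of four cosines into the sum $\tfrac{1}{4}\sum_{\pm}\mathds{1}(\somega_i \pm \somega_\gamma \pm \somega_\gamma \pm \somega_m = 0)$.

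At this point the main bookkeeping task is to identify which of the eight sign combinations actually contribute. Here I would invoke Lemma \ref{LemmaVanisginFourierModel2CH}: any combination with an odd number of minus signs forces $\mathsf{C}_{i\gamma\gamma m}^{(\mu_1,\mu_2)} = 0$, so exactly the three combinations with two minus signs survive. These yield $i = m$ (twice, from the two ways of cancelling a $+\somega_\gamma$ against $-\somega_\gamma$) and $i = 2\gamma - m$ subject to $0 \leq m \leq 2\gamma$. Assembling the terms and dividing by $\mathsf{C}_{\gamma\gamma\gamma\gamma}^{(\mu_1,\mu_2)}$ via the definition \eqref{DefinitionKappa1modesModel2CH} of $\mathsf{K}_\gamma^{(\mu_1,\mu_2)}$ gives
\begin{align*}
\bigl(d\mathcal{M}(\xi)[h]\bigr)^m = \bigl(\somega_m^{(\mu_1,\mu_2)}\bigr)^2 h^m - \frac{2 \somega_\gamma^{(\mu_1,\mu_2)^2} \mathsf{C}_{\gamma\gamma mm}^{(\mu_1,\mu_2)}}{\mathsf{C}_{\gamma\gamma\gamma\gamma}^{(\mu_1,\mu_2)}} h^m - \mathds{1}(0 \leq m \leq 2\gamma)\frac{\somega_\gamma^{(\mu_1,\mu_2)^2} \mathsf{C}_{\gamma,2\gamma-m,\gamma,m}^{(\mu_1,\mu_2)}}{\mathsf{C}_{\gamma\gamma\gamma\gamma}^{(\mu_1,\mu_2)}} h^{2\gamma - m}.
\end{align*}
Finally I would split the indicator $\mathds{1}(0 \leq m \leq 2\gamma)$ into the four disjoint ranges $0 \leq m \leq \gamma - 1$, $m = \gamma$, $\gamma + 1 \leq m \leq 2\gamma$, and $m \geq 2\gamma + 1$, using at $m = \gamma$ that $h^{2\gamma - m} = h^\gamma = h^m$ and that the three contributions combine into the stated $-2\somega_\gamma^{(\mu_1,\mu_2)^2}\mathsf{C}_{\gamma\gamma\gamma\gamma}^{(\mu_1,\mu_2)} h^\gamma$ term.

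There is no real obstacle here; the proof is a routine replication of Lemma \ref{LemmaDifferentialofMModel1CW} in the CH font, and the only nontrivial ingredient is the vanishing statement of Lemma \ref{LemmaVanisginFourierModel2CH}, which plays exactly the same role for CH as Lemma \ref{LemmaVanishingFourierModel1CW} does for CW. The closed formulas for $\mathsf{C}_{\gamma\gamma mm}^{(\mu_1,\mu_2)}$ from Lemma \ref{LemmaUniformAsymptoticFormulasModel2CH} are only needed later when checking the non-degeneracy condition, not in the derivation of the differential itself.
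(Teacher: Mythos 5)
Your proof is correct and follows the same route as the paper: the paper's own proof simply states that the argument from Lemma~\ref{LemmaDifferentialofMModel1CW} carries over verbatim because Lemma~\ref{LemmaVanisginFourierModel2CH} eliminates the one--minus--sign resonances and leaves exactly the same three two--minus--sign cases $i=m$ (twice) and $i=2\gamma-m$. Your write-up spells out that replication explicitly, which matches the paper in every step.
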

\begin{proof}
The proof is similar to the one of Lemma \ref{LemmaDifferentialofMModel1CW} due to the fact that  $\mathsf{C}_{ijkm}^{(\mu_1,\mu_2)}=0$ for $\somega_{i}^{(\mu_1,\mu_2)}\pm\somega_{j}^{(\mu_1,\mu_2)}\pm\somega_{k}^{(\mu_1,\mu_2)}\pm\somega_{m}^{(\mu_1,\mu_2)}=0$ with only 1 minus sign according to Lemma \ref{LemmaVanisginFourierModel2CH}, that is
\begin{align*} 
	\begin{dcases}
		-\somega_{i}^{(\mu_1,\mu_2)}+\somega_{\gamma}^{(\mu_1,\mu_2)}+ \somega_{\gamma}^{(\mu_1,\mu_2)}+ \somega_{m}^{(\mu_1,\mu_2)} =0, \\
		\somega_{i}^{(\mu_1,\mu_2)}-\somega_{\gamma}^{(\mu_1,\mu_2)}+ \somega_{\gamma}^{(\mu_1,\mu_2)}+  \somega_{m}^{(\mu_1,\mu_2)} =0, \\
		\somega_{i}^{(\mu_1,\mu_2)}+\somega_{\gamma}^{(\mu_1,\mu_2)}- \somega_{\gamma}^{(\mu_1,\mu_2)}+  \somega_{m}^{(\mu_1,\mu_2)} =0, \\
		\somega_{i}^{(\mu_1,\mu_2)}+\somega_{\gamma}^{(\mu_1,\mu_2)}+ \somega_{\gamma}^{(\mu_1,\mu_2)}-  \somega_{m}^{(\mu_1,\mu_2)} =0,
	\end{dcases}
\end{align*}
so we are left with $\somega_{i}^{(\mu_1,\mu_2)}\pm \somega_{j}^{(\mu_1,\mu_2)}\pm \somega_{k}^{(\mu_1,\mu_2)}\pm \somega_{m}^{(\mu_1,\mu_2)}=0$ with only 2 minus signs and there are again the same three such terms in total, that is
\begin{align*}
\begin{dcases}
	\somega_{i}^{(\mu_1,\mu_2)}+\somega_{\gamma}^{(\mu_1,\mu_2)}- \somega_{\gamma}^{(\mu_1,\mu_2)}- \somega_{m}^{(\mu_1,\mu_2)} =0, \\
	\somega_{i}^{(\mu_1,\mu_2)}-\somega_{\gamma}^{(\mu_1,\mu_2)}+ \somega_{\gamma}^{(\mu_1,\mu_2)}-\somega_{m}^{(\mu_1,\mu_2)} =0, \\
	\somega_{i}^{(\mu_1,\mu_2)}-\somega_{\gamma}^{(\mu_1,\mu_2)}- \somega_{\gamma}^{(\mu_1,\mu_2)}+\somega_{m}^{(\mu_1,\mu_2)} =0, \\
	\end{dcases} \Longleftrightarrow
	\begin{dcases}
	i=m, \\
	i=m, \\
	i=2\gamma-m \text{ and } i \geq 0, \\
	\end{dcases}
\end{align*}
that completes the proof.
\end{proof}

\subsection{Yang--Mills equation in spherical symmetry} 
Recall that the eigenfunctions $\{  \mathfrak{e}_{n}: n\geq 0\}$ are given by \eqref{EigenfunctionsModel3YM} and the PDE in the Fourier space from \eqref{PDEFourierSpaceModel3YM} reads 
\begin{align*} 
	& \ddot{u}^{m}(t) +\left( \mathfrak{A} u(t)\right)^m =\left(  \mathfrak{f}( \{ u^j(t): j \geq 0 \} ) \right)^m, \Hquad m \geq 0
\end{align*}
where the dots denote derivatives with respect to time and 
\begin{align*}
	\left( \mathfrak{A} u(t)\right)^m  :=  \ssomega_{n}^2 u^m(t), \Hquad 
	  \left(  \mathfrak{f}( u) \right)^m := \left(  \mathfrak{f}^{(2)}( u ) \right)^m+
	   \left(  \mathfrak{f}^{(3)}( u ) \right)^m,
\end{align*}  
with
\begin{align*}
\left(  \mathfrak{f}^{(2)}( \{ u^j(t): j \geq 0 \} ) \right)^m &:=	-3\sum_{i,j=0}^{\infty}\overline{\mathfrak{C}}_{ijm}  u^{i}(t)u^{j}(t)   , \\
\left(  \mathfrak{f}^{(3)}( \{ u^j(t): j \geq 0 \} ) \right)^m &:= - \sum_{i,j,k=0}^{\infty}  \mathfrak{C}_{ijkm} u^{i}(t)u^{j}(t)u^{k}(t) .
\end{align*}
For any initial data 
\begin{align*}
u(0,\cdot)=\sum_{n=0}^{\infty} 	\xi^{n}  \mathfrak{e}_{n}  ,\Hquad \xi =\{ \xi ^{n} : n \geq 0 \},
\end{align*}
we denote by  
\begin{align*}
  \pPhi ^{t} (\xi) = \left \{	  \xi^{n} \cos(\ssomega_{n} t) : n \geq 0   \right \}
\end{align*}
the linear flow, that is the solution to the linear problem 
\begin{align*}
\begin{dcases}
	\ddot{u}^{n}(t) +  \ssomega_{n}  ^2 u^{n}(t) =0,\Hquad  t\in \mathbb{R}
	 \\
	 u^n(0)=\xi^n,  \Hquad  \dot{u}^n(0) =0 .
\end{dcases}
\end{align*} 
As a starting point, we show that the original version of Bambusi--Paleari's theorem (Theorem \ref{OrigivalversionTheoremBambusi}) is not applicable.

\begin{lemma}[Non--resonant $\mathfrak{f}^{(2)}$]\label{LemmaNonResoantnf2}
	For all initial data $\xi \in l_s^2$, we have
	\begin{align*}
		\langle \mathfrak{f}^{(2)} \rangle(\xi):=\frac{1}{2\pi } \int_{0}^{2\pi }    \pPhi  ^{t} \left[ \mathfrak{f}^{(2)}\left(   \pPhi ^{t} (\xi) \right) 
	\right]  dt =0.
	\end{align*}
\end{lemma}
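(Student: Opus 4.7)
The plan is to compute the time--averaged quantity $\langle \mathfrak{f}^{(2)}\rangle(\xi)$ coordinate-by-coordinate in the Fourier basis and observe that each nonzero term is killed by one of the vanishing conditions established in Lemma \ref{LemmaVanisginFourierModel3YM}. The key point is that quadratic resonances for the YM spectrum $\{\ssomega_j=j+2\}$ always coincide exactly with the resonant index configurations on which the Fourier coefficient $\overline{\mathfrak{C}}_{ijm}$ has been shown to vanish.

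Concretely, first I would write $(\pPhi^t(\xi))^n = \xi^n\cos(\ssomega_n t)$ and substitute into \eqref{DefinitionNonLinearityfModel3a} to obtain
\begin{equation*}
\bigl(\mathfrak{f}^{(2)}(\pPhi^t(\xi))\bigr)^m = -3 \sum_{i,j\geq 0} \overline{\mathfrak{C}}_{ijm}\,\xi^i\xi^j \cos(\ssomega_i t)\cos(\ssomega_j t).
\end{equation*}
Next, using the product-to-sum identity $\cos(a)\cos(b) = \tfrac{1}{2}[\cos(a-b)+\cos(a+b)]$ and applying the outer linear flow (which multiplies the $m$--th component by $\cos(\ssomega_m t)$), I would get
\begin{equation*}
\bigl(\pPhi^t[\mathfrak{f}^{(2)}(\pPhi^t(\xi))]\bigr)^m = -\tfrac{3}{2}\sum_{i,j\geq 0}\overline{\mathfrak{C}}_{ijm}\,\xi^i\xi^j \bigl[\cos((\ssomega_i-\ssomega_j)t)+\cos((\ssomega_i+\ssomega_j)t)\bigr]\cos(\ssomega_m t).
\end{equation*}

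Then, averaging in time over $[0,2\pi]$ and using the elementary identity $\tfrac{1}{2\pi}\int_0^{2\pi}\cos(at)\cos(bt)\,dt = \tfrac{1}{2}\mathds{1}(|a|=|b|)$ (valid since $\ssomega_m=m+2>0$, so no constant term appears), only the triples $(i,j,m)$ producing a resonance $\ssomega_m = |\ssomega_i - \ssomega_j|$ or $\ssomega_m = \ssomega_i+\ssomega_j$ contribute. In every such case, the index configuration is one of
\begin{equation*}
\ssomega_i+\ssomega_j-\ssomega_m=0,\quad \ssomega_i-\ssomega_j+\ssomega_m=0,\quad -\ssomega_i+\ssomega_j+\ssomega_m=0,
\end{equation*}
but Lemma \ref{LemmaVanisginFourierModel3YM} guarantees that $\overline{\mathfrak{C}}_{ijm}=0$ on exactly these resonant configurations. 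Hence every surviving term in the averaging carries a vanishing Fourier coefficient, and $(\langle\mathfrak{f}^{(2)}\rangle(\xi))^m = 0$ for each $m\geq 0$.

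There is no serious obstacle: the only routine care needed is to handle both signs $(\ssomega_i-\ssomega_j)$ and $(\ssomega_j-\ssomega_i)$ symmetrically (which is automatic since $\cos$ is even and $\overline{\mathfrak{C}}_{ijm}$ is symmetric in $i,j$), and to note that the condition $a=b=0$ in the cosine orthogonality relation cannot occur here because $\ssomega_m\neq 0$. The convergence of the double sum in $l_s^2$ is ensured by the smoothness of $\mathfrak{f}^{(2)}$ established in Lemma \ref{LipschitzModel3}, so exchanging sum and integral is justified.
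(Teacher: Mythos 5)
Your proof is correct and follows essentially the same route as the paper's: expand the triple cosine product, average in time to isolate the resonance conditions $\ssomega_i\pm\ssomega_j\pm\ssomega_m=0$, observe that only the configurations with a single minus sign can occur (since $\ssomega_i+\ssomega_j+\ssomega_m>0$), and kill those via the vanishing Fourier coefficients of Lemma~\ref{LemmaVanisginFourierModel3YM}. The paper groups all three cosines at once into a sum over four sign choices while you first reduce the inner pair and then invoke cosine orthogonality, but this is a cosmetic difference only.
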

\begin{proof}
Let $\xi=\{\xi^m : m\geq 0\} \in l_s^2 $ be any initial data and pick an integer $m\geq 0$. We compute
\begin{align*}
(  \mathfrak{A} \xi  )^m &= \ssomega_{m}^2 \xi ^m  , \\
	(\pPhi ^{t} (\xi  ) )^m &=\xi ^m\cos(\ssomega_{m}t) , \\
\left ( \mathfrak{f}^{(2)}\left(\pPhi ^{t} (\xi ) \right) \right )^m & = 
	 -3\sum_{i,j=0}^{\infty}\overline{\mathfrak{C}}_{ijm}  ( \pPhi ^{t} (\xi  ) )^i ( \pPhi ^{t} (\xi  ) )^j    = 
	 -3\sum_{i,j=0}^{\infty}\overline{\mathfrak{C}}_{ijm} \xi^i \xi^j \cos(\ssomega_{i}t)\cos(\ssomega_{j}t)   \\
 \left( \pPhi ^{t} \left[\mathfrak{f}^{(2)}\left(\pPhi ^{t} (\xi ) \right) 
	\right] \right)^m &=  ( \mathfrak{f}\left(\pPhi ^{t} (\xi ) \right) )^m  \cos (\ssomega_{m}t)
	 = - 3\sum_{i,j=0}^{\infty}\overline{\mathfrak{C}}_{ijm} \xi^i \xi^j \prod_{\lambda \in \{i,j,m\}}\cos(\ssomega_{\lambda}t  ) , \\
\langle \mathfrak{f}^{(2)} \rangle(\xi)  &  =  
  - \frac{3}{2\pi }\sum_{i,j=0}^{\infty}\overline{\mathfrak{C}}_{ijm} \xi^i \xi^j  \int_{0}^{2\pi }\prod_{\lambda \in \{i,j,m\}}\cos(\ssomega_{\lambda}t  )  dt
  \\
 &= - \frac{3}{2\pi }\sum_{i,j=0}^{\infty}\overline{\mathfrak{C}}_{ijm} \xi^i \xi^j  \int_{0}^{2\pi }\sum_{ \substack{\pm   } } \frac{1}{4}\cos \left(\ssomega_{i} \pm \ssomega_{j} \pm \ssomega_{m} \right)  dt \\
  &= - \frac{3}{4 }\sum_{i,j=0}^{\infty}\overline{\mathfrak{C}}_{ijm} \xi^i \xi^j   \sum_{ \substack{\pm   } } \mathds{1} \left(\ssomega_{i} \pm \ssomega_{j} \pm \ssomega_{m}=0 \right)  .
\end{align*}
Now, notice that all the possible conditions are those with only 1 minus sign, namely 
\begin{align*}
	-\ssomega_{i} + \ssomega_{j} + \ssomega_{m}=0, \\
	+\ssomega_{i} - \ssomega_{j} +\ssomega_{m}=0, \\
	+\ssomega_{i} + \ssomega_{j} -\ssomega_{m}=0,  
\end{align*}
and according to Lemma \ref{LemmaVanisginFourierModel3YM} the corresponding Fourier coefficients vanish.  
\end{proof}
Consequently, for this model, we aim towards implementing the modified version of Bambusi--Paleari's theorem (Theorem \ref{TheoremModificationofBambusiPaleari}) and define
\begin{align*}
	 \textswab{M}_{\pm}(\xi) &= \pm  \mathfrak{A} \xi +     \langle \mathfrak{f}^{(3)}  \rangle( \xi)+\mathfrak{F}_{0}( \xi) , \Hquad 
	\langle \mathfrak{f}^{(3)} \rangle (\xi) := \frac{1}{2\pi } \int_{0}^{2\pi }  \pPhi ^{t} \left[\mathfrak{f}^{(3)}\left( \pPhi  ^{t} (\xi) \right) 
	\right]  dt , 
\end{align*}
where $\mathfrak{F}_{0}( \xi) $ is given for any initial data by Lemma \ref{LemmaDefinitionMathfrakF} and for the 1--mode initial data by Lemma \ref{LemmaComputemathfrakFforsfmallepsilon}. Also, recall the Diophantine condition $\ssomega \in \mathcal{W}_{\alpha}$ for some $0<\alpha< 1/ 3$ from Theorem \ref{TheoremModificationofBambusiPaleari}.  \\ \\
To begin with, we show that the 1--modes are zeros of the operator $ \textswab{M}_{-}$. 
\begin{lemma}[Zeros of the operator $ \textswab{M}_{-}$]\label{LemmaResonantSystemModel3YM}
Let $\gamma \in \{0,1,\dots, 5 \}$ and  
\begin{align*}
		\mathfrak{q}_{\gamma}  :=	\frac{9}{4} \sum_{  \nu = 0}^{2\gamma}  \left(  \overline{\mathfrak{C}}_{\gamma \gamma \nu} \right)^2
  \left(\frac{2}{\ssomega_{\nu}^2 }  +  \frac{1}{\ssomega_{\nu}^2-(2\ssomega_{\gamma})^2   } \right).
	\end{align*}
Then, we have that $8\mathfrak{q}_{\gamma} >3 \mathfrak{C}_{\gamma\gamma\gamma\gamma}$. Moreover, let $\xi = \{\xi^m: m\geq 0 \}$ to be the rescaled 1--mode initial data,  
\begin{align}\label{Definition1modesModel3YM}
	 \xi^m:=  \mathfrak{K}_{\gamma} \mathds{1}(m=\gamma) ,\Hquad m\geq 0,
\end{align}
where
\begin{align}\label{DefinitionKappa1modesModel3YM}
	 \mathfrak{K}_{\gamma}:=  \pm 2 \somega_{\gamma} \sqrt{ 
	  \frac{2}{8\mathfrak{q}_{\gamma} -3 \mathfrak{C}_{\gamma\gamma\gamma\gamma} }  }.
\end{align}
Then, we have that $ \textswab{M}_{-}(\xi)=0$.
\end{lemma}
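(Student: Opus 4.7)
The plan is to evaluate the three components of $\textswab{M}_-(\xi) = -\mathfrak{A}\xi + \langle \mathfrak{f}^{(3)}\rangle(\xi) + \mathfrak{F}_0(\xi)$ coordinate-by-coordinate at the $1$--mode $\xi^m = \mathfrak{K}_\gamma \mathds{1}(m=\gamma)$, combine them into a single scalar equation at the resonant index $m=\gamma$, solve that scalar equation for $\mathfrak{K}_\gamma$, and verify that the combination $8\mathfrak{q}_\gamma - 3\mathfrak{C}_{\gamma\gamma\gamma\gamma}$ is strictly positive for each $\gamma \in \{0,1,2,3,4,5\}$ (which is needed for $\mathfrak{K}_\gamma$ to be real).

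First, the linear part is trivial: $(\mathfrak{A}\xi)^m = \ssomega_\gamma^2 \mathfrak{K}_\gamma\, \mathds{1}(m=\gamma)$. Next, for $\langle \mathfrak{f}^{(3)}\rangle(\xi)$, I would repeat the computation of Lemma \ref{LemmaResonantSystemModel1CW}: expand $\mathfrak{f}^{(3)}(\pPhi^t(\xi))$, note that only the term $i=j=k=\gamma$ survives, integrate $\cos^3(\ssomega_\gamma t)\cos(\ssomega_m t)$ against the linear flow, and use the fact that the only resonance is $m=\gamma$ (the other candidate $m=3\gamma+4$ gives a vanishing Fourier coefficient $\mathfrak{C}_{\gamma\gamma\gamma,3\gamma+4}=0$ by the degree count in Lemma \ref{LemmaClosedFormulasModel3YMCbarijm} applied to $\overline{\mathfrak{C}}$, since the cubic coefficients $\mathfrak{C}_{ijkm}$ inherit the same vanishing whenever $i+j+k<m$). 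This yields $\langle\mathfrak{f}^{(3)}\rangle(\xi)^m = -\tfrac{3}{8}\mathfrak{C}_{\gamma\gamma\gamma\gamma}\mathfrak{K}_\gamma^3 \mathds{1}(m=\gamma)$. Finally, for $\mathfrak{F}_0(\xi)$, I would invoke Lemma \ref{LemmaComputemathfrakFforsfmallepsilon} at $\epsilon=0$, which gives $(\mathfrak{F}_0(\xi))^m = \mathfrak{q}_\gamma \mathfrak{K}_\gamma^3 \mathds{1}(m=\gamma)$ directly.

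Summing the three contributions, $(\textswab{M}_-(\xi))^m$ vanishes automatically for $m\neq \gamma$, while for $m=\gamma$ we are left with the single scalar equation
\begin{equation*}
-\ssomega_\gamma^2 \mathfrak{K}_\gamma + \bigl(\mathfrak{q}_\gamma - \tfrac{3}{8}\mathfrak{C}_{\gamma\gamma\gamma\gamma}\bigr) \mathfrak{K}_\gamma^3 = 0,
\end{equation*}
whose non-trivial solutions are exactly $\mathfrak{K}_\gamma = \pm 2\ssomega_\gamma\sqrt{2/(8\mathfrak{q}_\gamma - 3\mathfrak{C}_{\gamma\gamma\gamma\gamma})}$, matching \eqref{DefinitionKappa1modesModel3YM}.

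The main (and really the only non-routine) obstacle is the positivity claim $8\mathfrak{q}_\gamma > 3\mathfrak{C}_{\gamma\gamma\gamma\gamma}$, which guarantees that $\mathfrak{K}_\gamma$ is real. My plan here is a finite direct verification: use the closed formulas for the resonant Fourier coefficients $\overline{\mathfrak{C}}_{\gamma\gamma,2\tau}$ from Lemma \ref{LemmaClosedFormulasModel3YMCbarijmSpecificijm} and for $\mathfrak{C}_{\gamma\gamma\gamma\gamma}$ from (the diagonal case of) Lemma \ref{LemmaClosedFormulasModel3YMCgammagammamm} / Remark \ref{MonotonicityModel3YM}, substitute $\ssomega_\nu = \nu+2$ and $\ssomega_\gamma = \gamma+2$ into the finite sum defining $\mathfrak{q}_\gamma$, and compute both sides as explicit rationals for each $\gamma \in \{0,1,2,3,4,5\}$. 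Since the sum defining $\mathfrak{q}_\gamma$ has only $2\gamma+1$ terms, this is a finite check that can be done by hand (or by the Mathematica files mentioned in Remark \ref{RemarkRangeGamma}); the restriction $\gamma \leq 5$ in Assumptions \ref{AssumptionsRef} is precisely what makes this case-by-case verification tractable. One should also double-check that none of the denominators $\ssomega_\nu^2 - (2\ssomega_\gamma)^2$ vanish for $0\le \nu \le 2\gamma$ (they do not, since $\ssomega_\nu = \nu+2 \le 2\gamma+2 < 2\gamma+4 = 2\ssomega_\gamma$), which justifies dropping the indicator $\mathds{1}(\ssomega_\nu^2\neq(2\ssomega_\gamma)^2)$ in the expression for $\mathfrak{q}_\gamma$.
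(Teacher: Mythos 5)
Your proposal matches the paper's proof essentially step for step: reduce $\textswab{M}_{-}(\xi)$ to the single scalar equation at $m=\gamma$ by computing each of the three contributions, solve that cubic for $\mathfrak{K}_\gamma$, and verify $8\mathfrak{q}_\gamma - 3\mathfrak{C}_{\gamma\gamma\gamma\gamma}>0$ by a finite case-by-case computation using the closed formulas from Lemmata \ref{LemmaClosedFormulasModel3YMCbarijmSpecificijm} and \ref{LemmaClosedFormulasModel3YMCgammagammamm}. One small correction: you justify $\mathfrak{C}_{\gamma\gamma\gamma,3\gamma+4}=0$ by claiming the cubic coefficients vanish ``whenever $i+j+k<m$,'' but the $\sin^2$ weight in the definition of $\mathfrak{C}_{ijkm}$ raises the degree of the polynomial factor to $i+j+k+2$, so the correct threshold is $i+j+k+2<m$ (which is exactly what Lemma \ref{LemmaVanisginFourierModel3YM}, the proper reference for $\mathfrak{C}$, establishes); this still holds for $m=3\gamma+4$ since $3\gamma+2<3\gamma+4$, so your argument is valid in the case you need but the general statement you quoted is too strong. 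Your extra observation that $\ssomega_\nu^2 \ne (2\ssomega_\gamma)^2$ for $0\le\nu\le 2\gamma$, which justifies dropping the indicator in $\mathfrak{q}_\gamma$, is a useful addition that the paper leaves implicit.
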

\begin{proof}
Let $\gamma \in \{0,1,\dots, 5 \}$, define $\xi = \{\xi^m: m\geq 0 \}$ to be the rescaled 1--mode initial data given by \eqref{Definition1modesModel3YM}--\eqref{DefinitionKappa1modesModel3YM} and pick any integer $m\geq 0$. Firstly, we compute $-\mathfrak{A} \xi +    \langle \mathfrak{f}^{(3)}  \rangle( \xi)$.  We have
\begin{align*}
(  \mathfrak{A} \xi  )^m &= \ssomega_{m}^2 \xi ^m =   \mathfrak{K}_{\gamma}   \ssomega_{\gamma}^2 \mathds{1}(m=\gamma), \\
	( \pPhi ^{t} (\xi  ) )^m &=\xi ^m\cos(\ssomega_{m}t)= \mathfrak{K}_{\gamma}\cos(\ssomega_{\gamma}t) \mathds{1}(m=\gamma), \\
\left ( \mathfrak{f}^{(3)}\left(\pPhi ^{t} (\xi ) \right) \right )^m & = 
	 - \sum_{i,j,k}  \mathfrak{C}_{ijkm} ( \pPhi ^{t} (\xi  ) )^i 
	 ( \pPhi ^{t} (\xi  ) )^j ( \pPhi ^{t} (\xi  ) )^k 
	 = - \mathfrak{K}_{\gamma}^3   \mathfrak{C}_{\gamma \gamma \gamma m}   \cos^3(\ssomega_{\gamma}t) , \\
 \left( \pPhi ^{t} \left[\mathfrak{f}^{(3)}\left(\pPhi ^{t} (\xi ) \right) 
	\right] \right)^m &=  ( \mathfrak{f}^{(3)}\left(\pPhi ^{t} (\xi ) \right) )^m  \cos (\ssomega_{m}t)   = - \mathfrak{K}_{\gamma}^3   \mathfrak{C}_{\gamma \gamma \gamma m}   \cos^3(\ssomega_{\gamma}t)\cos(\ssomega_{m}t), \\
  \left(\langle \mathfrak{f}^{(3)} \rangle(\xi) \right)^m &=  -  \frac{ \mathfrak{C}_{\gamma \gamma \gamma m} }{2\pi }\mathfrak{K}_{\gamma}^3  \int_{0}^{2\pi }  \cos^3(\ssomega_{\gamma}t)\cos(\ssomega_{m}t)    dt \\
& = -  \frac{ \mathfrak{C}_{\gamma \gamma \gamma m} }{2\pi } \mathfrak{K}_{\gamma}^3  \Bigg( \frac{3}{8} \int_{0}^{2\pi }\cos \left( ( \ssomega _m+ \ssomega _{\gamma })t\right)dt+\frac{1}{8} \int_{0}^{2\pi }\cos \left( (\ssomega _m+3 \ssomega _{\gamma })t\right) dt \\
&+\frac{3}{8}\int_{0}^{2\pi } \cos \left( ( \ssomega _m- \ssomega _{\gamma })t\right) dt +\frac{1}{8} \int_{0}^{2\pi }\cos \left( ( \ssomega _m-3  \ssomega _{\gamma })t \right) dt \Bigg)   \\
& = -  \mathfrak{C}_{\gamma \gamma \gamma m} \mathfrak{K}_{\gamma}^3   \Bigg(  \frac{3}{8}  \mathds{1}(m=\gamma) +\frac{1}{8}  \mathds{1}(m=3\gamma+4)  \Bigg) \\
& = -       \frac{3 \mathfrak{C}_{\gamma \gamma \gamma \gamma }  }{8} \mathfrak{K}_{\gamma}^3   \mathds{1}(m=\gamma ) , \\
-(  \mathfrak{A} \xi  )^m +  \left(\langle \mathfrak{f}^{(3)} \rangle(\xi) \right)^m &= -\mathfrak{K}_{\gamma} \left(   \ssomega_{\gamma}^2 +  \frac{3 \mathfrak{C}_{\gamma \gamma \gamma \gamma }  }{8} \mathfrak{K}_{\gamma}^2  \right)\mathds{1}(m=\gamma ),
\end{align*}
where we used the fact that
\begin{align*}
	\ssomega_{m}+\ssomega_{\gamma} &\neq 0, \Hquad 
	\ssomega_{m}+3\ssomega_{\gamma} \neq 0, \\ 
	\ssomega_{m}-\ssomega_{\gamma} &= 0 \Longleftrightarrow m=\gamma, \\ 
	\ssomega_{m}-3\ssomega_{\gamma} &= 0 \Longleftrightarrow m=3\gamma+4,
\end{align*}
as well as $\mathfrak{C}_{\gamma \gamma \gamma m} = 0$ for $m=3\gamma+4$ according to Lemma \ref{LemmaVanisginFourierModel3YM}. Furthermore, we use the computation for $\mathfrak{F}_{0}( \xi) $ at the 1--mode initial data we derived in Lemma \ref{LemmaComputemathfrakFforsfmallepsilon}, that is
\begin{align*}
 	\left( \mathfrak{F}_{0}( \xi) \right)^m = \mathfrak{q}_{\gamma}   \mathfrak{K}_{\gamma} ^3  \mathds{1}(m=\gamma),
	\end{align*}
	with
	\begin{align*}
		\mathfrak{q}_{\gamma}  :=	\frac{9}{4} \sum_{  \nu = 0}^{2\gamma}  \left(  \overline{\mathfrak{C}}_{\gamma \gamma \nu} \right)^2
  \left(\frac{2}{\ssomega_{\nu}^2 }  +  \frac{1}{\ssomega_{\nu}^2-(2\ssomega_{\gamma})^2   } \right) ,
	\end{align*}
to conclude that
\begin{align*}
	 (  \textswab{M}_{-}\left( \xi  \right) )^m & = -(  \mathfrak{A}  \xi  )^m + \left(\langle \mathfrak{f}^{(3)} \rangle(\xi) \right)^m +\left( \mathfrak{F}_{0}( \xi) \right)^m  \\ 
	 & = -\mathfrak{K}_{\gamma} \left(   \ssomega_{\gamma}^2 +  \frac{3 \mathfrak{C}_{\gamma \gamma \gamma \gamma }  }{8} \mathfrak{K}_{\gamma}^2  -\mathfrak{q}_{\gamma}   \mathfrak{K}_{\gamma} ^2  \right)\mathds{1}(m=\gamma ) \\ 
	 & = -\frac{1}{8}\mathfrak{K}_{\gamma} \left(  8 \ssomega_{\gamma}^2 +  3 \mathfrak{C}_{\gamma \gamma \gamma \gamma }  \mathfrak{K}_{\gamma}^2  -8\mathfrak{q}_{\gamma}   \mathfrak{K}_{\gamma} ^2  \right)\mathds{1}(m=\gamma ) \\ 
	 &=0,
\end{align*}
provided that
\begin{align*}
	\mathfrak{K}_{\gamma}^2   
	 =  \frac{-8\ssomega_{\gamma}^2}{3 \mathfrak{C}_{\gamma\gamma\gamma\gamma}-8\mathfrak{q}_{\gamma} }.
\end{align*}
Finally, it remains to show that this choice is well defined, that is  $\mathfrak{K}_{\gamma}  \in \mathbb{R}$, for all $\gamma \in \{0,1,\dots, 10 \}$. To this end, we fix $\gamma \in \{0,1,\dots, 10 \}$ and use the definition of the Fourier coefficients to  compute each $\mathfrak{K}_{\gamma}  $ and verify that they are all real numbers. Figure \ref{Lemma86PicRef} illustrates the constants $\mathfrak{K}_{\gamma} $ as $\gamma$ varies within $\{0,1,\dots,10\}$. 
\begin{figure}[h!]
\vspace{0.5cm}
    \centering
    \includegraphics[width=0.7\textwidth]{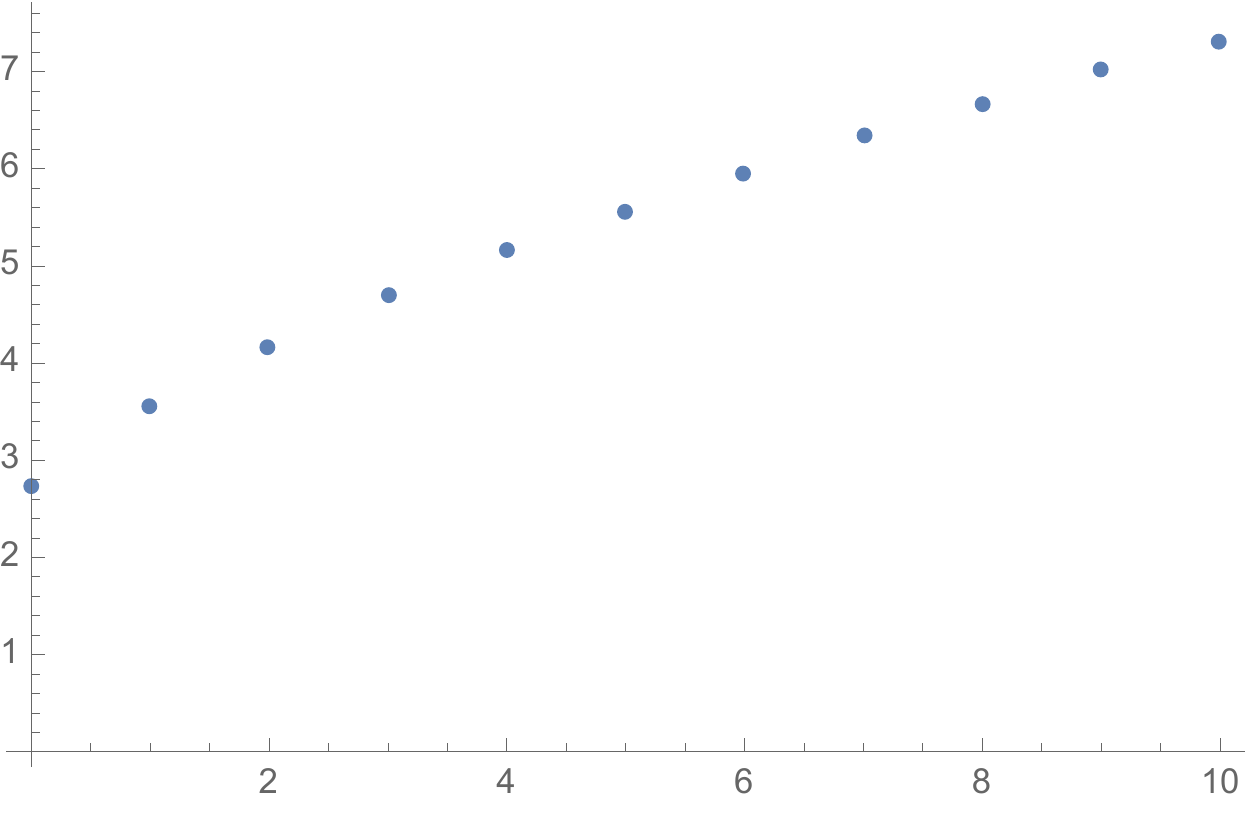}
    \caption{The constants $\mathfrak{K}_{\gamma}  $ as $\gamma$ varies within $\{0,1,\dots,10\}$. They are all real numbers.}
    \label{Lemma86PicRef}
\end{figure}  
\end{proof}
Next, we derive the differential of $ \textswab{M}_{-}$ at the rescaled 1--modes.
\begin{lemma}[Differential of $ \textswab{M}_{-}$ at the 1--modes]\label{LemmaDifferentialofMModel3YM}
	Let $\gamma \in \{0,1,\dots, 10 \}$ and let $\xi=\{\xi^m : m\geq 0\} $ be given by \eqref{Definition1modesModel3YM}--\eqref{DefinitionKappa1modesModel3YM}. Then, for all $h=\{ h^{j}: j\geq 0\} \in  l_{s+3}^{2} $, we have that  
\begin{align*}
 -\mathfrak{K}_{\gamma}^{-2} (	d  \textswab{M}_{-}( \xi )[h] )^m &=	\mathds{1}(0\leq m\leq \gamma-1)\Bigg[
 	h^m	\mathfrak{u}_{\gamma m}  
 	+ h^{2\gamma-m}  \mathfrak{v}_{\gamma m}   \Bigg] 
 	+\mathds{1}( m= \gamma)\Bigg[
 	h^{\gamma}	\left(  
 	\mathfrak{u}_{\gamma \gamma}  
  +\mathfrak{v}_{\gamma \gamma} )\right) 
 	  	\Bigg] \\
 	 &+\mathds{1}(\gamma+1\leq m\leq 2\gamma)\Bigg[
 	h^m	\mathfrak{u}_{\gamma m}   +
 	h^{2\gamma-m}  \mathfrak{v}_{\gamma m}   
 	\Bigg]  +\mathds{1}( m \geq  2\gamma+1)\Bigg[
  h^m	\mathfrak{u}_{\gamma m}  
  \Bigg],
\end{align*}
where 
	\begin{align*}
	\mathfrak{u}_{\gamma m}  :=    
 	\left(\frac{\ssomega_{m}}{  \mathfrak{K}_{\gamma}} \right)^{2}+\frac{3}{4} \mathfrak{C}_{\gamma \gamma mm} -  \mathfrak{a}_{\gamma m} 
 	 ,\Hquad 
	\mathfrak{v}_{\gamma m}  :=  
 	\frac{3}{8} \mathfrak{C}_{\gamma ,2\gamma-m,\gamma, m} -\mathfrak{b}_{\gamma m}  ,
\end{align*}
and $\overline{\mathfrak{C}}_{ijm}$ and $\mathfrak{C}_{\gamma \gamma mm} $ are given respectively by Lemmata \ref{LemmaClosedFormulasModel3YMCbarijm} and \ref{LemmaClosedFormulasModel3YMCgammagammamm} whereas $ \mathfrak{a}_{\gamma m} $ and $\mathfrak{b}_{\gamma m} $ are given by Lemma \ref{LemmaDifferentialMathFrakF0LongComputation}.  
\end{lemma}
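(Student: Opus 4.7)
The plan is to differentiate $\textswab{M}_{-}(\xi)=-\mathfrak{A}\xi+\langle\mathfrak{f}^{(3)}\rangle(\xi)+\mathfrak{F}_{0}(\xi)$ term by term at the 1--mode initial data, assemble the three contributions, and then split the answer according to whether $m$ is smaller than, equal to, or larger than $\gamma$ and $2\gamma$. Since $\mathfrak{A}$ is linear, the first term contributes $-\ssomega_{m}^{2}h^{m}$. The second term is the direct analogue of the differential computation already carried out for the CW model in Lemma \ref{LemmaDifferentialofMModel1CW} and for the CH model in Lemma \ref{LemmaDifferentialofMModel2CH}: the cubic structure of $\mathfrak{f}^{(3)}$ in \eqref{DefinitionNonLinearityfModel3b} is identical to that of $f$ in \eqref{DefinitionNonLinearityfModel1}, only the Fourier coefficients $C_{ijkm}$ are replaced by $\mathfrak{C}_{ijkm}$. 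The third term is already known from Lemma \ref{LemmaDifferentialMathFrakF0LongComputation}.

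For $\langle\mathfrak{f}^{(3)}\rangle$, I would expand $\mathfrak{f}^{(3)}(\pPhi^{t}(\xi+\epsilon h))$, isolate the $\mathcal{O}(\epsilon)$ contribution, average against $\cos(\ssomega_{m}t)$, and use the elementary identity
\begin{align*}
\frac{1}{2\pi}\int_{0}^{2\pi}\cos(\ssomega_{i}t)\cos^{2}(\ssomega_{\gamma}t)\cos(\ssomega_{m}t)\,dt=\frac{1}{8}\sum_{\pm}\mathds{1}\bigl(\ssomega_{i}\pm\ssomega_{\gamma}\pm\ssomega_{\gamma}\pm\ssomega_{m}=0\bigr).
\end{align*}
By Lemma \ref{LemmaVanisginFourierModel3YM} the four sign choices with exactly one minus give vanishing Fourier coefficients $\mathfrak{C}_{i\gamma\gamma m}$, so only the three combinations with two minus signs survive. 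These reduce to $i=m$ (twice) and $i=2\gamma-m$ (with the constraint $0\le m\le 2\gamma$), yielding
\begin{align*}
\bigl(d\langle\mathfrak{f}^{(3)}\rangle(\xi)[h]\bigr)^{m}=-\frac{3\mathfrak{K}_{\gamma}^{2}}{8}\Bigl[2\mathfrak{C}_{\gamma\gamma mm}\,h^{m}+\mathfrak{C}_{\gamma,2\gamma-m,\gamma,m}\,h^{2\gamma-m}\mathds{1}(0\le m\le 2\gamma)\Bigr],
\end{align*}
exactly as in the CW computation but with the YM constants.

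Summing the three contributions, multiplying through by $-\mathfrak{K}_{\gamma}^{-2}$ and collecting coefficients of $h^{m}$ and $h^{2\gamma-m}$ gives
\begin{align*}
-\mathfrak{K}_{\gamma}^{-2}\bigl(d\textswab{M}_{-}(\xi)[h]\bigr)^{m}=\mathfrak{u}_{\gamma m}h^{m}+\mathds{1}(0\le m\le 2\gamma)\,\mathfrak{v}_{\gamma m}h^{2\gamma-m},
\end{align*}
with $\mathfrak{u}_{\gamma m}$ and $\mathfrak{v}_{\gamma m}$ as defined in the statement. The claim then follows by bookkeeping, distinguishing four cases according to the position of $m$: for $0\le m\le\gamma-1$ the indices $m$ and $2\gamma-m$ are distinct and both lie in the admissible range; for $m=\gamma$ the two indices coalesce, producing the coefficient $\mathfrak{u}_{\gamma\gamma}+\mathfrak{v}_{\gamma\gamma}$; for $\gamma+1\le m\le 2\gamma$ the indices are again distinct with $2\gamma-m\le\gamma-1$; and for $m\ge 2\gamma+1$ the indicator $\mathds{1}(0\le m\le 2\gamma)$ kills the $h^{2\gamma-m}$ contribution, leaving only $\mathfrak{u}_{\gamma m}h^{m}$.

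The only place where genuine care is needed is keeping the vanishing pattern of $\mathfrak{C}_{i\gamma\gamma m}$ consistent with the analogous cancellations already exploited inside Lemma \ref{LemmaDifferentialMathFrakF0LongComputation} for $\mathfrak{F}_{0}$; a mismatch would leave spurious cross terms. However, since both computations rely on the same triangle-type vanishing conditions from Lemma \ref{LemmaClosedFormulasModel3YMCbarijm}, and since the two indicators $\mathds{1}(0\le m\le 2\gamma)$ appearing in the $\mathfrak{f}^{(3)}$-piece and in $\mathfrak{F}_{0}$ agree, the combination reorganises cleanly and the four cases emerge as stated.
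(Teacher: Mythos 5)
Your proposal is correct and follows essentially the same route as the paper: differentiating $\textswab{M}_{-}$ term by term, computing $d\langle\mathfrak{f}^{(3)}\rangle(\xi)[h]$ via the cosine-product averaging identity and the vanishing of $\mathfrak{C}_{i\gamma\gamma m}$ on the one-minus-sign resonances (Lemma \ref{LemmaVanisginFourierModel3YM}), importing $d\mathfrak{F}_{0}(\xi)[h]$ from Lemma \ref{LemmaDifferentialMathFrakF0LongComputation}, and then normalizing by $-\mathfrak{K}_{\gamma}^{-2}$ and splitting into the four ranges of $m$. The paper's proof proceeds identically, down to the same identification of the three surviving resonant triples $i=m,m,2\gamma-m$.
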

\begin{proof}
Let $\gamma \in \{0,1,\dots, 10 \}$, $\xi=\{\xi^m : m\geq 0\} $ be given by \eqref{Definition1modesModel3YM}--\eqref{DefinitionKappa1modesModel3YM}, $h=\{ h^{j}: j\geq 0\} \in  l_{s+3}^{2} $ and pick any integer $m\geq 0$. Firstly, we use similar computations to the ones derived in Lemma \ref{LemmaDifferentialofMModel1CW}, to obtain 
\begin{align*}
	&  \left( d \langle \mathfrak{f}^{(3)} \rangle(\xi )  [h]\right)^m  = - \frac{3  \mathfrak{K}_{\gamma}^2  }{8 }   \sum_{ i}  \mathfrak{C}_{i\gamma \gamma m} h^i \sum_{\pm} \mathds{1}(\ssomega_{i} \pm \ssomega_{\gamma}\pm \ssomega_{\gamma}  \pm \ssomega_{m}=0) \\
& =- \frac{3  \mathfrak{K}_{\gamma}^2   }{8 } \left[  \sum_{ i}   \mathfrak{C}_{i\gamma \gamma m} h^i \mathds{1}( i=m ) +\sum_{ i}   \mathfrak{C}_{i\gamma \gamma m} h^i \mathds{1}( i=m  ) 
+\sum_{ i}   \mathfrak{C}_{i\gamma \gamma m} h^i \mathds{1}( i=2\gamma -m\geq 0)  \right] \\
& =- \frac{3  \mathfrak{K}_{\gamma}^2   }{8 } \left[ 2     \mathfrak{C}_{m\gamma \gamma m} h^m    
+   \mathfrak{C}_{2\gamma -m,\gamma ,\gamma ,m} h^{2\gamma -m} \mathds{1}( 0\leq m \leq 2\gamma )  \right]   ,
\end{align*}
where we used the fact that $\mathfrak{C}_{ijkm}=0$ for $\ssomega_{i}\pm\ssomega_{j}\pm\ssomega_{k}\pm\ssomega_{m}=0$ with only 1 minus sign according to Lemma \ref{LemmaVanisginFourierModel3YM}, that is
\begin{align*} 
	\begin{dcases}
		-\ssomega_{i}+\ssomega_{\gamma}+ \ssomega_{\gamma}+ \ssomega_{m} =0, \\
		\ssomega_{i}-\ssomega_{\gamma}+ \ssomega_{\gamma}+ \ssomega_{m} =0, \\
		\ssomega_{i}+\ssomega_{\gamma}- \ssomega_{\gamma}+ \ssomega_{m} =0, \\
		\ssomega_{i}+\ssomega_{\gamma}+ \ssomega_{\gamma}- \ssomega_{m} =0,
	\end{dcases}
\end{align*}
so we are left with $\ssomega_{i}\pm\ssomega_{j}\pm\ssomega_{k}\pm\ssomega_{m}=0$ with only 2 minus signs and there are three such terms in total, that is
\begin{align*}
\begin{dcases}
	\ssomega_{i}+\ssomega_{\gamma}- \ssomega_{\gamma}- \ssomega_{m} =0, \\
	\ssomega_{i}-\ssomega_{\gamma}+ \ssomega_{\gamma}- \ssomega_{m} =0, \\
	\ssomega_{i}-\ssomega_{\gamma}- \ssomega_{\gamma}+ \ssomega_{m} =0, \\
	\end{dcases} \Longleftrightarrow
	\begin{dcases}
	i=m, \\
	i=m, \\
	i=2\gamma-m \text{ and } i \geq 0. \\
	\end{dcases}
\end{align*}

Then, we infer
\begin{align*}
 &	-\left( d \mathfrak{A} \xi [h] \right)^m+     \left( d \langle \mathfrak{f}^{(3)}  \rangle( \xi)[h] \right)^m = - \ssomega_{m}^2 h^m +\left( d \langle \mathfrak{f}^{(3)}  \rangle(\xi )  [h] \right)^m\\ 
 & =  -\ssomega_{m}^2 h^m - \frac{3  \mathfrak{K}_{\gamma}^2   }{8 }  \left[ 2 \mathfrak{C}_{\gamma \gamma m m}     h^m  
+\mathds{1} \left(0 \leq m \leq 2\gamma \right)
 \mathfrak{C}_{\gamma,2\gamma-m,\gamma,m}   
	  h^{2\gamma-m }   \right] \\ 
	& =-\left[ \ssomega_{m}^2 + \frac{3}{4 }  \mathfrak{K}_{\gamma}^2    \mathfrak{C}_{\gamma \gamma mm} 
   \right] h^m  -\mathds{1} \left(0 \leq m \leq 2\gamma \right) 
 \frac{3  }{8 } \mathfrak{K}_{\gamma}^2    \mathfrak{C}_{\gamma,2\gamma-m,\gamma,m}
	  h^{2\gamma-m }   . 
\end{align*} 
Recall that the differential of $\mathfrak{F}_0$ at the 1--modes is given by Lemma \ref{LemmaDifferentialMathFrakF0LongComputation}, that is
\begin{align*}
		\left(	d  \mathfrak{F}_{0}( \xi)[h] \right)^m & =\mathfrak{K}_{\gamma}^{2}(\ssomega) \Big[  \mathfrak{a}_{\gamma m}   h^{m} +  \mathds{1}(0\leq m\leq 2\gamma) \mathfrak{b}_{\gamma m} h^{2\gamma-m}  \Big].
	\end{align*}
Putting all together yields that
 \begin{align*}
	(	d  \textswab{M}_{-}( \xi )[h] )^m &= - \left( d \mathfrak{A} \xi [h] \right)^m+     \left( d \langle \mathfrak{f}^{(3)}  \rangle( \xi)[h] \right)^m + \left(d\mathfrak{F}_{0}( \xi)[h] \right)^m 
\end{align*} 
  is given by
 \begin{align*}
 	&- h^m	\left[
 	\ssomega_{m}^2 + \mathfrak{K}_{\gamma}^2(\omega)\left(
 	\frac{3}{4} \mathfrak{C}_{\gamma \gamma mm} -  \mathfrak{a}_{\gamma m} 
 	\right)
 	\right]  -  
 	\mathds{1}(0\leq m \leq 2\gamma)  h^{2\gamma-m} \mathfrak{K}_{\gamma}^2(\omega)\left[
 	\frac{3}{8} \mathfrak{C}_{\gamma ,2\gamma-m,\gamma, m} -\mathfrak{b}_{\gamma m}  \right]  .
 \end{align*}
 Finally, we rewrite this as follows
\begin{align*}
 -\mathfrak{K}_{\gamma}^{-2} (	d  \textswab{M}_{-}( \xi )[h] )^m=	 \mathfrak{u}_{\gamma m}  h^m	 +  
 	\mathds{1}(0\leq m \leq 2\gamma) \mathfrak{v}_{\gamma m}  h^{2\gamma-m} , 
 	 \end{align*}
where we set
	\begin{align*}
	\mathfrak{u}_{\gamma m}  :=    
 	\left(\frac{\ssomega_{m}}{  \mathfrak{K}_{\gamma}} \right)^{2}+\frac{3}{4} \mathfrak{C}_{\gamma \gamma mm} -  \mathfrak{a}_{\gamma m} 
 	 ,\Hquad
	\mathfrak{v}_{\gamma m}  :=  
 	\frac{3}{8} \mathfrak{C}_{\gamma ,2\gamma-m,\gamma, m} -\mathfrak{b}_{\gamma m}.  
\end{align*}
or equivalently as
\begin{align*}
 	 -\mathfrak{K}_{\gamma}^{-2} (	d  \textswab{M}_{-}( \xi )[h] )^m &= \mathds{1}(0\leq m\leq \gamma-1)\Bigg[
 	h^m	\mathfrak{u}_{\gamma m}  
 	+ h^{2\gamma-m}  \mathfrak{v}_{\gamma m}   \Bigg] 
 	+\mathds{1}( m= \gamma)\Bigg[
 	h^{\gamma}	\left(  
 	\mathfrak{u}_{\gamma \gamma}  
  +\mathfrak{v}_{\gamma \gamma} )\right) 
 	  	\Bigg] \\
 	 &+\mathds{1}(\gamma+1\leq m\leq 2\gamma)\Bigg[
 	h^m	\mathfrak{u}_{\gamma m}   +
 	h^{2\gamma-m}  \mathfrak{v}_{\gamma m}   
 	\Bigg]  +\mathds{1}( m \geq  2\gamma+1)\Bigg[
  h^m	\mathfrak{u}_{\gamma m}  
  \Bigg],
\end{align*}
 that completes the proof. 
\end{proof}

\section{Non--degeneracy conditions for the 1--modes}

\label{SectionNonDegeneracyCondition}
In this section, we derive and establish the crucial non--degeneracy conditions for 1--mode initial data according to Theorem \ref{OrigivalversionTheoremBambusi} (for CW and CH) and Theorem \ref{TheoremModificationofBambusiPaleari} (for YM).   

\subsection{Conformal cubic wave equation in spherical symmetry}
 Firstly, we consider the conformal cubic wave equation in spherical symmetry and derive of the non-degeneracy condition for the 1--modes.	
		
\begin{lemma}[CW model: Derivation of the non-degeneracy condition for the 1--modes]\label{LemmaDerivationNonGegeneracyConditionCWModel1}
	Let $\gamma \geq 0$ be any integer and define $ \xi $ to be the rescaled 1--mode according to \eqref{Definition1modesModel1CW}--\eqref{DefinitionKappa1modesModel1CW}. Then, the non--degeneracy condition 
	\begin{align*}
		 \ker \left( d \mathcal{M}( \xi ) \right) = \{ 0 \}
	\end{align*}
	is equivalent to
	\begin{align}\label{NonDegeneracyConditionCWModel1}
		\begin{dcases} 
	\omega_{m}^2 C_{\gamma \gamma \gamma \gamma}-  2\omega_{\gamma}^2 C_{\gamma \gamma mm}   \neq 0, \text{ for all }   m \geq 2\gamma+1, \\
D_{\gamma n}  \neq 0 ,  \text{ for all }   n \in \{0,1,\dots, \gamma -1 \},
		\end{dcases}
	\end{align}
	where
	\begin{align*}
		D_{\gamma n}   :=
	 \left[ \omega_{n}^2 C_{\gamma \gamma \gamma \gamma}-  2\omega_{\gamma}^2 C_{\gamma \gamma nn} \right]\left[ \omega_{2\gamma-n}^2 C_{\gamma \gamma \gamma \gamma}-  2\omega_{\gamma}^2 C_{\gamma ,\gamma ,2\gamma-n,2\gamma-n}  \right]- \left[ \omega_{\gamma}^2 C_{\gamma, 2\gamma-n, \gamma,n} \right]^2.
	\end{align*}
\end{lemma}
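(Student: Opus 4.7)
The plan is to read off the kernel of $d\mathcal{M}(\xi)$ directly from the block decomposition provided by Lemma \ref{LemmaDifferentialofMModel1CW}. Inspecting that formula, one sees that the operator $h\mapsto d\mathcal{M}(\xi)[h]$ couples only the components $h^m$ and $h^{2\gamma-m}$ (with the convention that no coupling occurs when $2\gamma-m<0$). In other words, $d\mathcal{M}(\xi)$ is block diagonal with respect to the decomposition
\begin{align*}
l^2_{s+3} \;=\; \bigoplus_{n=0}^{\gamma-1}\mathrm{span}\{e_n,e_{2\gamma-n}\}\;\oplus\;\mathrm{span}\{e_\gamma\}\;\oplus\;\bigoplus_{m\ge 2\gamma+1}\mathrm{span}\{e_m\}.
\end{align*}
Thus the non--degeneracy condition reduces to the invertibility of each block, which I will verify one family at a time.

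First I would treat the high modes $m\ge 2\gamma+1$: here Lemma \ref{LemmaDifferentialofMModel1CW} gives the diagonal action $(d\mathcal{M}(\xi)[h])^m = C_{\gamma\gamma\gamma\gamma}^{-1}\bigl(\omega_m^2 C_{\gamma\gamma\gamma\gamma}-2\omega_\gamma^2 C_{\gamma\gamma mm}\bigr)h^m$, and the kernel on this block is trivial precisely when the first condition in \eqref{NonDegeneracyConditionCWModel1} holds. Next, the single middle mode $m=\gamma$ gives $(d\mathcal{M}(\xi)[h])^\gamma = -2\omega_\gamma^2 h^\gamma$, which is always invertible (and therefore imposes no condition) since $\omega_\gamma\ne 0$; this is consistent with the construction \eqref{DefinitionKappa1modesModel1CW} of $K_\gamma$, where the non--vanishing of $C_{\gamma\gamma\gamma\gamma}$ is implicit.

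For the remaining low modes, I would fix $n\in\{0,\ldots,\gamma-1\}$ and write the pair of equations $(d\mathcal{M}(\xi)[h])^n=0$ and $(d\mathcal{M}(\xi)[h])^{2\gamma-n}=0$ as a $2\times 2$ linear system in $(h^n,h^{2\gamma-n})$. By Lemma \ref{LemmaDifferentialofMModel1CW} its matrix equals
\begin{align*}
\frac{1}{C_{\gamma\gamma\gamma\gamma}}\begin{pmatrix}\omega_n^2 C_{\gamma\gamma\gamma\gamma}-2\omega_\gamma^2 C_{\gamma\gamma nn} & -\omega_\gamma^2 C_{\gamma,2\gamma-n,\gamma,n}\\[1mm] -\omega_\gamma^2 C_{\gamma,n,\gamma,2\gamma-n} & \omega_{2\gamma-n}^2 C_{\gamma\gamma\gamma\gamma}-2\omega_\gamma^2 C_{\gamma,\gamma,2\gamma-n,2\gamma-n}\end{pmatrix},
\end{align*}
and using the full symmetry $C_{\gamma,2\gamma-n,\gamma,n}=C_{\gamma,n,\gamma,2\gamma-n}$ of the Fourier coefficients, its determinant is exactly $C_{\gamma\gamma\gamma\gamma}^{-2}D_{\gamma n}$. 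Hence triviality of this block is equivalent to $D_{\gamma n}\ne 0$, which yields the second condition in \eqref{NonDegeneracyConditionCWModel1}.

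I expect no genuine obstacle here: the proof is essentially a bookkeeping argument once the block structure of Lemma \ref{LemmaDifferentialofMModel1CW} is recognized. The only point requiring mild care is verifying that the pairing $m\leftrightarrow 2\gamma-m$ exhausts the coupling (i.e.\ that no off--diagonal term links a low mode to a mode $\ge 2\gamma+1$); this is immediate from the indicator $\mathds{1}(0\le m\le 2\gamma)$ attached to the $h^{2\gamma-m}$ term in Lemma \ref{LemmaDifferentialofMModel1CW}. The quantitative verification of \eqref{NonDegeneracyConditionCWModel1} using the closed formulas of Lemma \ref{LemmaClosedformulaFourierModel1CW} is postponed to the subsequent lemmata.
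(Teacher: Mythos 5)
Your proposal is correct and follows essentially the same argument as the paper's proof: both read off the kernel of $d\mathcal{M}(\xi)$ from the formula in Lemma \ref{LemmaDifferentialofMModel1CW}, isolate the scalar action on $e_\gamma$ and on each $e_m$ with $m\ge 2\gamma+1$, and reduce the coupled modes to $\gamma$ independent $2\times 2$ systems whose determinants are $D_{\gamma n}/C_{\gamma\gamma\gamma\gamma}^2$. Your framing in terms of an explicit block-diagonal decomposition is a slightly cleaner bookkeeping of what the paper does equation by equation, but the content is identical, including the use of the full symmetry of $C_{ijkm}$ to match the two off-diagonal entries.
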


\begin{proof}
Let $\gamma \geq 0$ be any integer and define $ \xi $ to be the rescaled 1--mode according to \eqref{Definition1modesModel1CW}--\eqref{DefinitionKappa1modesModel1CW}. Furthermore, pick any $h=\{h^j: j \geq 0\}\in  l_{s+3}^{2}  $ such that $d \mathcal{M}( \xi )[h]=0$ and fix any integer $m \geq 0 $. Then, according to Lemma \ref{LemmaDifferentialofMModel1CW}, we have that the system
\begin{align}
	&  \left(\omega_{m}^2 C_{\gamma \gamma \gamma \gamma}-  2\omega_{\gamma}^2 C_{\gamma \gamma mm}  \right)h^m   -   \omega_{\gamma}^2 C_{\gamma, 2\gamma-m, \gamma,m}   h^{2\gamma-m}=0, \Hquad \text {for } 0 \leq m \leq \gamma-1, \label{Model1CWEquation1}\\
	&  \left(\omega_{m}^2 C_{\gamma \gamma \gamma \gamma}-  2\omega_{\gamma}^2 C_{\gamma \gamma mm}  \right)h^m   -  \omega_{\gamma}^2 C_{\gamma, 2\gamma-m, \gamma,m}   h^{2\gamma-m}=0,\Hquad \text {for } \gamma+1 \leq m \leq 2\gamma,\label{Model1CWEquation2} \\
	& \left( \omega_{m}^2 C_{\gamma \gamma \gamma \gamma}-  2\omega_{\gamma}^2 C_{\gamma \gamma mm}  \right)h^m =0,\Hquad \text {for }   m \geq 2\gamma+1, \label{Model1CWEquation3}
\end{align}
coupled to
\begin{align}
	 \omega_{\gamma}^2 C_{\gamma \gamma \gamma \gamma} h^{\gamma} & = 0, 
\end{align}
has $h=\{h^i: i\geq 0\}=0$ as the unique solution. Firstly, \eqref{Model1CWEquation1} yields  $h^{\gamma}=0 $  due to the fact that $C_{\gamma \gamma \gamma \gamma} \neq 0 $ and $ \omega_{\gamma}  \neq 0$ for all $\gamma \geq 0$, whereas, for \eqref{Model1CWEquation3}, we must have
\begin{align*}
	 \omega_{m}^2 C_{\gamma \gamma \gamma \gamma}-  2\omega_{\gamma}^2 C_{\gamma \gamma mm}   \neq 0,
\end{align*}
for all $m \geq 2\gamma+1$. Next, we rearrange \eqref{Model1CWEquation1} and \eqref{Model1CWEquation2} by setting $m=n$ and $m=2\gamma-n$ respectively and obtain
\begin{align*}
	 \left(\omega_{n}^2 C_{\gamma \gamma \gamma \gamma}-  2\omega_{\gamma}^2 C_{\gamma \gamma nn}  \right)h^n   -   \omega_{\gamma}^2 C_{\gamma, 2\gamma-n, \gamma,n}   h^{2\gamma-n}&=0,    \\
	  \left(\omega_{2\gamma-n}^2 C_{\gamma \gamma \gamma \gamma}-  2\omega_{\gamma}^2 C_{\gamma ,\gamma ,2\gamma-n,2\gamma-n}  \right)h^{2\gamma-n}   -  \omega_{\gamma}^2 C_{\gamma, n, \gamma,2\gamma-n}   h^{n}&=0.
\end{align*}
for all $  n \in \{0,1,\dots, \gamma -1 \}$, that can be written in the matrix form
\begin{align*}
	\begin{bmatrix}
\omega_{n}^2 C_{\gamma \gamma \gamma \gamma}-  2\omega_{\gamma}^2 C_{\gamma \gamma nn} & -   \omega_{\gamma}^2 C_{\gamma, 2\gamma-n, \gamma,n} \\
  -  \omega_{\gamma}^2 C_{\gamma, n, \gamma,2\gamma-n}   & \omega_{2\gamma-n}^2 C_{\gamma \gamma \gamma \gamma}-  2\omega_{\gamma}^2 C_{\gamma ,\gamma ,2\gamma-n,2\gamma-n} 
\end{bmatrix}
	\begin{bmatrix}
h^n   \\
 h^{2\gamma-n}  
\end{bmatrix}=
	\begin{bmatrix}
0  \\
0  
\end{bmatrix},
\end{align*}
for all $  n \in \{0,1,\dots, \gamma -1 \}$. Observe that these are in total $\gamma$ $(2 \times 2)-$linear systems where the unknowns are $h^{m}$ for $m \in  \{0,1,\cdots,2\gamma\}\setminus \{\gamma\} $. Finally, these systems have only the trivial solution $h^{m}=0$ for all $m \in \{0,1,\cdots,2\gamma\}\setminus \{\gamma\} $ if and only if the determinants 
\begin{align*}
	 D_{\gamma n} &  =
	 \left[ \omega_{n}^2 C_{\gamma \gamma \gamma \gamma}-  2\omega_{\gamma}^2 C_{\gamma \gamma nn} \right]\left[ \omega_{2\gamma-n}^2 C_{\gamma \gamma \gamma \gamma}-  2\omega_{\gamma}^2 C_{\gamma ,\gamma ,2\gamma-n,2\gamma-n}  \right]- \left[ \omega_{\gamma}^2 C_{\gamma, 2\gamma-n, \gamma,n} \right]^2
\end{align*}
are non--zero for all $  n \in \{0,1,\dots, \gamma -1 \}$, that completes the proof.
\end{proof} 

Next, we establish the non--degeneracy condition for this model.

\begin{prop}[Non-degeneracy condition for the 1--modes and the CW model]
	Let $\gamma \geq 0$ be any integer. Then, the non--degeneracy condition  \eqref{NonDegeneracyConditionCWModel1} holds true. 
\end{prop}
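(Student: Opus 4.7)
The plan is to reduce both non-degeneracy conditions to explicit algebraic inequalities in $\omega_n = n+1$ and $\omega_\gamma = \gamma+1$, using the closed formulas for the Fourier coefficients derived from the Chebyshev addition theorem, and then verify these inequalities by elementary arithmetic.

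First I would compute all Fourier coefficients appearing in \eqref{NonDegeneracyConditionCWModel1}. Starting from the explicit representation \eqref{ClosedformulaFourierModel1CW}, a direct count of the intersection of the two arithmetic progressions gives $C_{\gamma\gamma mm} = \omega_{\min(\gamma,m)}$ for every $m\ge 0$. In particular, $C_{\gamma\gamma\gamma\gamma}=\omega_\gamma$ and $C_{\gamma\gamma mm}=\omega_\gamma$ for $m \geq \gamma$, while $C_{\gamma\gamma nn}=\omega_n$ for $0\le n\le \gamma-1$. For the mixed coefficient, the resonance $\omega_\gamma -\omega_{2\gamma-n}+\omega_\gamma-\omega_n=0$ holds with $\min\{\gamma,2\gamma-n,\gamma,n\}=n$, so Lemma~\ref{LemmaClosedformulaFourierModel1CW} yields $C_{\gamma,2\gamma-n,\gamma,n}=\omega_n$ for $0\le n\le\gamma-1$.

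Next I would verify the first family of conditions. For $m\ge 2\gamma+1$, substituting the formulas above gives
\[
\omega_m^2 C_{\gamma\gamma\gamma\gamma}-2\omega_\gamma^2 C_{\gamma\gamma mm}=\omega_\gamma\bigl(\omega_m^2-2\omega_\gamma^2\bigr).
\]
Since $\omega_m=m+1\ge 2\gamma+2=2\omega_\gamma$, we obtain $\omega_m^2\ge 4\omega_\gamma^2>2\omega_\gamma^2$, so the expression is strictly positive, which gives the first half of \eqref{NonDegeneracyConditionCWModel1}.

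For the determinants $D_{\gamma n}$ with $0\le n\le \gamma-1$, plugging in the closed formulas and writing $a=\omega_n$, $b=\omega_\gamma$, $c=\omega_{2\gamma-n}=2b-a$ transforms the expression into
\[
D_{\gamma n}=ab^2\bigl[(a-2b)(c^2-2b^2)-a b^2\bigr].
\]
The hard part, if any, is recognizing the algebraic structure of the bracket; a short computation gives $(a-2b)((2b-a)^2-2b^2)-ab^2=a^3-6a^2b+9ab^2-4b^3$, and noting that $a=b$ is a root one factors this as $(a-b)^2(a-4b)$. Therefore
\[
D_{\gamma n}=\omega_n\,\omega_\gamma^2\,(\omega_n-\omega_\gamma)^2\,(\omega_n-4\omega_\gamma).
\]
For $0\le n\le \gamma-1$ we have $\omega_n\ge 1$, $\omega_\gamma\ge 1$, $\omega_n-\omega_\gamma=n-\gamma\le -1$, and $\omega_n-4\omega_\gamma=n-4\gamma-3\le -3\gamma-4<0$. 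Hence $D_{\gamma n}$ is a strictly negative real number and in particular nonzero. This establishes the second half of \eqref{NonDegeneracyConditionCWModel1} and completes the proof.
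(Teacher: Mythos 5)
Your proof is correct and follows essentially the same route as the paper: you use the closed formula $C_{ijkm}=\omega_{\min\{i,j,k,m\}}$ from Lemma~\ref{LemmaClosedformulaFourierModel1CW}, reduce the first condition to $\omega_\gamma(\omega_m^2-2\omega_\gamma^2)>0$, and reduce $D_{\gamma n}$ to the same product $\omega_n\,\omega_\gamma^2\,(\omega_n-\omega_\gamma)^2\,(\omega_n-4\omega_\gamma)$ (equivalently $\omega_n\omega_\gamma^2(n-\gamma)^2(n-4\gamma-3)$ as written in the paper). The only cosmetic difference is that you display the intermediate factorization $a^3-6a^2b+9ab^2-4b^3=(a-b)^2(a-4b)$ whereas the paper states the final formula directly.
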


\begin{proof}
	Let $\gamma \geq 0$ be any integer. Also, pick any integers $m \geq 2\gamma+1$ and $n \in \{0,1,\dots,\gamma-1\}$. Then, according to Lemma \ref{LemmaClosedformulaFourierModel1CW}, we have
\begin{align*}
	C_{ijkm}= \omega_{\min \{i,j,k,m\}} 
\end{align*}
provided that either
\begin{align*}	  
	  & \omega_{i} + \omega_{j} - \omega_{k} - \omega_{m} = 0, \\
	  & \omega_{i} - \omega_{j} + \omega_{k} - \omega_{m} = 0, \\
	   & \omega_{i} - \omega_{j} - \omega_{k} + \omega_{m} = 0.  
\end{align*}
One can easily show that all the indices $(i,j,k,m)$ of the Fourier coefficients that appear in Lemma \ref{LemmaDerivationNonGegeneracyConditionCWModel1} satisfy at least one of these conditions and hence we infer
\begin{align*}
	C_{\gamma\gamma\gamma\gamma} &= \omega_{\gamma}  , \\
	C_{\gamma\gamma nn} &=\omega_{n}   , \\
	C_{\gamma\gamma mm} &=  \omega_{\gamma}    , \\
	C_{\gamma,\gamma,2\gamma-n,2\gamma-n} &=   \omega_{\gamma}   , \\
	C_{\gamma,2\gamma-n,\gamma,n} &=  \omega_{n}   .
\end{align*}
Putting all together, yields
\begin{align*}    
	& \omega_{m}^2 C_{\gamma \gamma \gamma \gamma}-  2\omega_{\gamma}^2 C_{\gamma \gamma mm} = \omega_{\gamma} \left(\omega_{m}^2 -2 \omega_{\gamma}^2 \right) \geq \omega_{\gamma} \left(\omega_{2\gamma+1}^2 -2 \omega_{\gamma}^2 \right) =2\omega_{\gamma}^3  \geq 2,  \\
 & D_{\gamma n} = \omega_{n}\omega_{\gamma}^2  (n-3-4 \gamma ) (n-\gamma )^2  \neq 0 ,  	
\end{align*}
for all $m \geq 2\gamma+1$ and $n \in \{0,1,\dots,\gamma-1\}$, that competes the proof.
\end{proof}

\subsection{Conformal cubic wave equation out of spherical symmetry}
 Next, we consider the conformal cubic wave equation out of spherical symmetry and show that the non--degeneracy condition is a condition on the Fourier coefficients.		
		
\begin{lemma}[Derivation of the non-degeneracy condition for the 1--modes and the CH model]\label{LemmaDerivationNonGegeneracyConditionCHModel2}
	Let $\gamma$ and $\mu_1,\mu_2 $ be any integers and define $ \xi $ according to \eqref{Definition1modesModel2CH}--\eqref{DefinitionKappa1modesModel2CH}. Then, the non--degeneracy condition 
	\begin{align*}
		 \ker \left( d \mathcal{M}( \xi ) \right) = \{ 0 \}
	\end{align*}
	is equivalent to
	\begin{align}\label{NonDegeneracyConditionCHModel2}
		\begin{dcases} 
	\left(\somega_{m}^{(\mu_1,\mu_2)} \right)^2 \mathsf{C}_{\gamma \gamma \gamma \gamma}^{(\mu_1,\mu_2)}-  2 \left(\somega_{\gamma}^{(\mu_1,\mu_2)} \right)^2 \mathsf{C}_{\gamma \gamma mm}^{(\mu_1,\mu_2)}   \neq 0, \text{ for all }   m \geq 2\gamma+1, \\
\mathsf{D}_{\gamma n}^{(\mu_1,\mu_2)}  \neq 0 ,  \text{ for all }   n \in \{0,1,\dots, \gamma -1 \},
		\end{dcases}
	\end{align}
	where
	\begin{align*}
		\mathsf{D}_{\gamma n}^{(\mu_1,\mu_2)}  & :=
	 \left[ \left( \somega_{n}^{(\mu_1,\mu_2)} \right)^2 \mathsf{C}_{\gamma \gamma \gamma \gamma}^{(\mu_1,\mu_2)} -  2 \left(\somega_{\gamma}^{(\mu_1,\mu_2)} \right)^2 \mathsf{C}_{\gamma \gamma nn}^{(\mu_1,\mu_2)} \right]\cdot \\
	 & \cdot \left[ \big(\somega_{2\gamma-n}^{(\mu_1,\mu_2)} \big)^2 \mathsf{C}_{\gamma \gamma \gamma \gamma}^{(\mu_1,\mu_2)}  -  2 \left( \somega_{\gamma}^{(\mu_1,\mu_2)} \right)^2 \mathsf{C}_{\gamma ,\gamma ,2\gamma-n,2\gamma-n}^{(\mu_1,\mu_2)}  \right] 
	  - \left[ \left( \somega_{\gamma}^{(\mu_1,\mu_2)} \right)^2 \mathsf{C}_{\gamma, 2\gamma-n, \gamma,n}^{(\mu_1,\mu_2)} \right]^2.
	\end{align*}
\end{lemma}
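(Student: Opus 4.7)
The plan is to mirror the proof of Lemma \ref{LemmaDerivationNonGegeneracyConditionCWModel1} line by line, simply replacing the CW quantities $(\omega_n, C_{ijkm})$ by their CH counterparts $(\somega_n^{(\mu_1,\mu_2)}, \mathsf{C}_{ijkm}^{(\mu_1,\mu_2)})$. The starting point is the explicit formula for the differential $d\mathcal{M}(\xi)[h]$ at a 1--mode that was already computed in Lemma \ref{LemmaDifferentialofMModel2CH}: it decomposes the index $m\geq 0$ into the four disjoint ranges $\{0,\dots,\gamma-1\}$, $\{\gamma\}$, $\{\gamma+1,\dots,2\gamma\}$ and $\{m\geq 2\gamma+1\}$, and in each range the equation $(d\mathcal{M}(\xi)[h])^m=0$ couples at most the two unknowns $h^m$ and $h^{2\gamma-m}$.

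First I would read off from the $m=\gamma$ component that $-2\bigl(\somega_\gamma^{(\mu_1,\mu_2)}\bigr)^2\mathsf{C}_{\gamma\gamma\gamma\gamma}^{(\mu_1,\mu_2)}\,h^\gamma=0$, which forces $h^\gamma=0$ since $\somega_\gamma^{(\mu_1,\mu_2)}\neq 0$ and $\mathsf{C}_{\gamma\gamma\gamma\gamma}^{(\mu_1,\mu_2)}\neq 0$ (positivity of the norm). Next, for each $m\geq 2\gamma+1$ the equation reduces to a single equation in the unknown $h^m$, so $h^m=0$ is forced precisely when
\[
\bigl(\somega_m^{(\mu_1,\mu_2)}\bigr)^2\mathsf{C}_{\gamma\gamma\gamma\gamma}^{(\mu_1,\mu_2)} -2\bigl(\somega_\gamma^{(\mu_1,\mu_2)}\bigr)^2\mathsf{C}_{\gamma\gamma mm}^{(\mu_1,\mu_2)}\neq 0,
\]
which is the first bullet of \eqref{NonDegeneracyConditionCHModel2}.

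Finally, for the finite range $n\in\{0,1,\dots,\gamma-1\}$, I would pair the equation indexed by $m=n$ (from the first regime) with the equation indexed by $m=2\gamma-n$ (from the third regime). Using the symmetry $\mathsf{C}_{\gamma,2\gamma-n,\gamma,n}^{(\mu_1,\mu_2)}=\mathsf{C}_{\gamma,n,\gamma,2\gamma-n}^{(\mu_1,\mu_2)}$ of the Fourier coefficients, this is a closed linear $2\times 2$ system in the unknowns $(h^n,h^{2\gamma-n})$ whose coefficient matrix is exactly
\[
\begin{bmatrix}
\bigl(\somega_n^{(\mu_1,\mu_2)}\bigr)^2\mathsf{C}_{\gamma\gamma\gamma\gamma}^{(\mu_1,\mu_2)}-2\bigl(\somega_\gamma^{(\mu_1,\mu_2)}\bigr)^2\mathsf{C}_{\gamma\gamma nn}^{(\mu_1,\mu_2)} & -\bigl(\somega_\gamma^{(\mu_1,\mu_2)}\bigr)^2\mathsf{C}_{\gamma,2\gamma-n,\gamma,n}^{(\mu_1,\mu_2)}\\[2pt]
-\bigl(\somega_\gamma^{(\mu_1,\mu_2)}\bigr)^2\mathsf{C}_{\gamma,n,\gamma,2\gamma-n}^{(\mu_1,\mu_2)} & \bigl(\somega_{2\gamma-n}^{(\mu_1,\mu_2)}\bigr)^2\mathsf{C}_{\gamma\gamma\gamma\gamma}^{(\mu_1,\mu_2)}-2\bigl(\somega_\gamma^{(\mu_1,\mu_2)}\bigr)^2\mathsf{C}_{\gamma,\gamma,2\gamma-n,2\gamma-n}^{(\mu_1,\mu_2)}
\end{bmatrix}.
\]
This system admits only the trivial solution iff its determinant $\mathsf{D}_{\gamma n}^{(\mu_1,\mu_2)}$ is nonzero, which is the second bullet of \eqref{NonDegeneracyConditionCHModel2}. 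Collecting the three ranges yields the stated equivalence.

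There is essentially no analytical obstacle here: the lemma is purely linear algebra on the differential formula from Lemma \ref{LemmaDifferentialofMModel2CH}, and the only facts used beyond that are the strict positivity of $\somega_\gamma^{(\mu_1,\mu_2)}$ and the symmetry $\mathsf{C}_{ijkm}^{(\mu_1,\mu_2)}=\mathsf{C}_{jikm}^{(\mu_1,\mu_2)}=\mathsf{C}_{ikjm}^{(\mu_1,\mu_2)}$ inherited from the definition. The genuine work—verifying that these conditions are actually satisfied for the admissible $\gamma$ and $(\mu_1,\mu_2)=(\mu,\mu)$—is a separate matter and will rely on the closed formulas of Lemma \ref{LemmaUniformAsymptoticFormulasModel2CH} together with the monotonicity provided by Lemma \ref{LemmaMonotonicityA}, but it is not part of the present statement.
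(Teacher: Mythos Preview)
Your proposal is correct and follows exactly the same approach as the paper: the paper's proof simply states that it is similar to that of Lemma \ref{LemmaDerivationNonGegeneracyConditionCWModel1}, and your write-up is precisely that argument transported to the CH setting via Lemma \ref{LemmaDifferentialofMModel2CH}.
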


\begin{proof}
The proof is similar to the one of Lemma \ref{LemmaDerivationNonGegeneracyConditionCWModel1}.
\end{proof}

Next, we establish the non--degeneracy condition for this model.

\begin{prop}[Non-degeneracy condition for the 1--modes and the CH model]
	Let $\gamma,\mu_1,\mu_2 \geq 0$ be any integers with $\gamma \in \{0,1,2,3,4,5\}$ and $\mu_1=\mu_2=:\mu$ where $\mu$ is either sufficiently small with $\mu \in \{0,1,2,3,4,5\}$ or sufficiently large. Then, the non--degeneracy condition \eqref{NonDegeneracyConditionCHModel2} holds true. 
\end{prop}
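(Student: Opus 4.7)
The plan is to reduce both conditions in \eqref{NonDegeneracyConditionCHModel2} to explicit verifications based on the closed formula of Lemma \ref{LemmaUniformAsymptoticFormulasModel2CH}, combined with a monotonicity argument that turns the infinite family of inequalities in the first condition into a finite check.

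First, I handle the infinite family indexed by $m \geq 2\gamma+1$. Writing $\somega_m^{(\mu,\mu)} = 2m+1+2\mu$, the quantity $(\somega_m^{(\mu,\mu)})^2 \mathsf{C}_{\gamma\gamma\gamma\gamma}^{(\mu,\mu)}$ grows quadratically in $m$, while Lemma \ref{LemmaMonotonicityA} implies $\mathsf{M}_m^{(\mu)}(\lambda) \leq \mathsf{M}_\gamma^{(\mu)}(\lambda)$ for $m \geq \gamma$. Since $\xi_\lambda(\mu) > 0$ for all $\lambda, \mu$, the closed formula gives
\[
\mathsf{C}_{\gamma\gamma mm}^{(\mu,\mu)} = \tfrac{1}{2}\sum_{\lambda=0}^{\gamma} \mathsf{M}_\gamma^{(\mu)}(\lambda)\,\mathsf{M}_m^{(\mu)}(\lambda)\,\xi_\lambda(\mu) \leq \mathsf{C}_{\gamma\gamma\gamma\gamma}^{(\mu,\mu)}, \qquad m \geq \gamma.
\]
Consequently, there is a threshold $M^\star = M^\star(\gamma,\mu)$ beyond which the first condition in \eqref{NonDegeneracyConditionCHModel2} is satisfied with a positive sign, and I only need to verify its non-vanishing on the finite set $\{2\gamma+1,\ldots,M^\star\}$.

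The second condition consists of at most $\gamma \leq 5$ determinant checks, each of which is an explicit algebraic combination of four Fourier coefficients accessible through Lemma \ref{LemmaUniformAsymptoticFormulasModel2CH}. For $\mu \in \{0,1,\ldots,5\}$, both the finite subset of the first condition and all determinants $\mathsf{D}_{\gamma n}^{(\mu,\mu)}$ reduce to explicit rational expressions whose non-vanishing can be verified by direct substitution; this is the regime documented in the accompanying Mathematica files. The total number of checks is bounded since $\gamma, \mu$ range over fixed finite sets and $m$ is bounded by $M^\star(\gamma,\mu)$.

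For $\mu$ sufficiently large, the argument proceeds by asymptotic analysis. Applying Stirling-type expansions to the Gamma functions in $\mathsf{M}_m^{(\mu)}(\lambda)$ and $\xi_\lambda(\mu)$, I extract the leading-order behavior of each Fourier coefficient as $\mu \to \infty$ with $\gamma, n, m, \lambda$ fixed. I expect the main obstacle to lie precisely here: the two large factors in the product $[\cdots][\cdots]$ defining $\mathsf{D}_{\gamma n}^{(\mu,\mu)}$ are comparable in size and could conspire to cancel at leading order, forcing me to expand each factor to sub-leading order and verify that the remainder is a non-vanishing constant uniformly in $n \in \{0,\ldots,\gamma-1\}$. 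A parallel refinement is needed for the first condition, since as $\mu \to \infty$ the ratio $(\somega_m^{(\mu,\mu)})^2/(\somega_\gamma^{(\mu,\mu)})^2 \to 1$ for any fixed $m$, so the threshold $M^\star(\gamma,\mu)$ a priori grows with $\mu$. I plan to resolve this by establishing a stronger asymptotic bound $\mathsf{C}_{\gamma\gamma mm}^{(\mu,\mu)}/\mathsf{C}_{\gamma\gamma\gamma\gamma}^{(\mu,\mu)} \to 0$ (or at least a strict inequality with $1/2$) for $m > \gamma$ as $\mu \to \infty$, which would yield a threshold $M^\star$ uniform in $\mu$ and reduce the proof to a $\mu$-independent finite check.
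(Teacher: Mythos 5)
Your decomposition of \eqref{NonDegeneracyConditionCHModel2} into an infinite family of scalar conditions plus $\gamma$ determinant checks is correct, and you have rightly identified the weak point in your own argument: the threshold $M^\star(\gamma,\mu)$. If one only uses $\mathsf{C}_{\gamma\gamma mm}^{(\mu,\mu)} \leq \mathsf{C}_{\gamma\gamma\gamma\gamma}^{(\mu,\mu)}$, then requiring $(\somega_m^{(\mu,\mu)})^2 > 2(\somega_\gamma^{(\mu,\mu)})^2$ gives $M^\star \approx \sqrt{2}\gamma + (\sqrt{2}-1)\mu$, which grows linearly in $\mu$, so the ``finite check'' is not $\mu$-uniform and the large-$\mu$ regime is not actually handled. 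You flag this and propose to prove $\mathsf{C}_{\gamma\gamma mm}^{(\mu,\mu)}/\mathsf{C}_{\gamma\gamma\gamma\gamma}^{(\mu,\mu)} \to 0$ (or $<1/2$ strictly) for $m > \gamma$, but you do not carry this out, and as stated this is a genuine gap: the ratio claim is itself nontrivial and would need its own argument.

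The paper's proof closes this gap with a cleaner normalization that removes the threshold altogether. Rather than bounding $\mathsf{C}_{\gamma\gamma mm}$ by $\mathsf{C}_{\gamma\gamma\gamma\gamma}$ and relying on the quadratic growth of $(\somega_m)^2$, one factors
\begin{align*}
(\somega_m^{(\mu,\mu)})^2 \mathsf{C}_{\gamma\gamma\gamma\gamma}^{(\mu,\mu)} - 2(\somega_\gamma^{(\mu,\mu)})^2 \mathsf{C}_{\gamma\gamma mm}^{(\mu,\mu)}
= (\somega_m^{(\mu,\mu)})^2 (\somega_\gamma^{(\mu,\mu)})^2 \left[ \frac{\mathsf{C}_{\gamma\gamma\gamma\gamma}^{(\mu,\mu)}}{(\somega_\gamma^{(\mu,\mu)})^2} - 2 \frac{\mathsf{C}_{\gamma\gamma mm}^{(\mu,\mu)}}{(\somega_m^{(\mu,\mu)})^2} \right],
\end{align*}
and observes that $\mathsf{P}_m^{(\mu)}(\lambda) := \mathsf{M}_m^{(\mu)}(\lambda)/(\somega_m^{(\mu,\mu)})^2$ is decreasing in $m$ as a product of two positive decreasing functions (Lemma \ref{LemmaMonotonicityA} plus the monotonicity of $1/(\somega_m)^2$). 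Hence the bracket is minimized at $m=2\gamma+1$, giving a lower bound $\mathsf{S}_\gamma^{(\mu)}$ that is independent of $m$; one then checks positivity of $\mathsf{S}_\gamma^{(\mu)}$ for each fixed $\gamma$, which is a single inequality per $(\gamma,\mu)$ rather than a growing family. For small $\mu$ this is direct, and for large $\mu$ one extracts the asymptotics $\mathsf{S}_\gamma^{(\mu)} = \sigma_\gamma \mu^{1/2} + \mathcal{O}(\mu^{-1/2})$ with $\sigma_\gamma > 0$. Your determinant treatment matches the paper's strategy in outline (explicit for small $\mu$, asymptotics for large $\mu$), but is likewise only sketched; the paper simply computes $\mathsf{D}_{\gamma n}^{(\mu,\mu)}$ as explicit rational-Gamma expressions and verifies non-vanishing in both regimes. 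To repair your proposal, you should replace the $M^\star$ argument by this normalized monotonicity argument, or else actually prove the quantitative decay of $\mathsf{C}_{\gamma\gamma mm}^{(\mu,\mu)}/\mathsf{C}_{\gamma\gamma\gamma\gamma}^{(\mu,\mu)}$ uniformly for $m > \gamma$ and $\mu$ large.
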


\begin{proof}
	Let $\gamma,\mu_1,\mu_2 \geq 0$ be any integers with $\gamma \in \{0,1,2,3,4,5\}$ and $\mu_1=\mu_2=:\mu$ where $\mu$ is either sufficiently small with $\mu \in \{0,1,2,3,4,5\}$ or sufficiently large. Also, pick any integer $m \geq 2\gamma+1$ and, according to Lemmata \ref{LemmaUniformAsymptoticFormulasModel2CH} and \ref{LemmaMonotonicityA}, we have that
\begin{align*}
	\mathsf{C}_{\gamma \gamma \gamma\gamma}^{(\mu,\mu)}  
	  & =  \frac{1}{2}      \sum_{\lambda=0}^{\gamma} \mathsf{M}_{ \gamma}^{(\mu)} (\lambda) \mathsf{M}_{ \gamma}^{(\mu)} (\lambda) \xi_{\lambda}(\mu), \\\mathsf{C}_{\gamma \gamma mm}^{(\mu,\mu)}  
	  & =  \frac{1}{2}      \sum_{\lambda=0}^{\gamma} \mathsf{M}_{ \gamma}^{(\mu)} (\lambda) \mathsf{M}_{ m}^{(\mu)} (\lambda) \xi_{\lambda}(\mu),
\end{align*}
where the function
\begin{align*} 
	\mathsf{M}_{ m}^{(\mu)} (\lambda) &=\frac{1}{2 \pi ^{3/2}} \frac{(4 \lambda +4 \mu +1) \Gamma \left(\lambda +\frac{1}{2}\right) \Gamma \left(2 \mu +\frac{1}{2}\right) \Gamma \left(\lambda +\mu +\frac{1}{2}\right)}{\Gamma (\lambda +\mu +1) \Gamma (\lambda +2 \mu +1)} \cdot \\
	& \cdot \frac{(2 \mu +2 m+1) \Gamma \left(m-\lambda +\frac{1}{2}\right) \Gamma (m+\lambda +2 \mu +1)}{\Gamma (m-\lambda +1) \Gamma \left(m+\lambda +2 \mu +\frac{3}{2}\right)}
\end{align*}	
is decreasing with respect to $m$. In addition, recall that the eigenvalues are given by
\begin{align*}
	\left(  \somega_{m}^{(\mu,\mu)} \right)  ^2:=(2m+1+2\mu)^2 
\end{align*}
and they are clearly increasing with respect to $m \geq 0$. In other words, the function
\begin{align*}
  \mathsf{P}_{m}^{(\mu)}(\lambda):=\frac{\mathsf{M}_{ m}^{(\mu)} (\lambda)}{\big( \somega_{m}^{(\mu,\mu)} \big)^2}  
\end{align*}
is decreasing with respect to $m$ for all $m \geq \gamma$ as a product of two positive and decreasing functions. In the following, we show that
\begin{align*}  
	\big( \somega_{m}^{(\mu,\mu)} \big)^2 \mathsf{C}_{\gamma \gamma \gamma \gamma}^{(\mu,\mu)}-   2 \left( \somega_{\gamma}^{(\mu,\mu)} \right)^2 \mathsf{C}_{\gamma \gamma mm}^{(\mu,\mu)}
\end{align*}
stays away from zero for all $m \geq 2\gamma+1$ and $\gamma \in \{0,1,2,3,4,5\}$ provided that $\mu$ is either sufficiently small, say $\mu \in \{1,2,\dots,10\}$ or sufficiently large. To this end, we use the monotonicity of $\mathsf{P}_{m}^{(\mu)}(\lambda)$ with respect to $m$ to infer
\begin{align*} 
&	\big( \somega_{m}^{(\mu,\mu)} \big)^2 \mathsf{C}_{\gamma \gamma \gamma \gamma}^{(\mu,\mu)}-   2 \left( \somega_{\gamma}^{(\mu,\mu)} \right)^2 \mathsf{C}_{\gamma \gamma mm}^{(\mu,\mu)} = \\
&	\big( \somega_{m}^{(\mu,\mu)} \big)^2\big( \somega_{\gamma}^{(\mu,\mu)} \big)^2 \left[\frac{\mathsf{C}_{\gamma \gamma \gamma \gamma}^{(\mu,\mu)}}{\big( \somega_{\gamma}^{(\mu,\mu)} \big)^2 } -2 \frac{\mathsf{C}_{\gamma \gamma mm }^{(\mu,\mu)}}{\big( \somega_{m}^{(\mu,\mu)} \big)^2 } \right]= \\
& \frac{1}{2} 	\big( \somega_{m}^{(\mu,\mu)} \big)^2\big( \somega_{\gamma}^{(\mu,\mu)} \big)^2 \left[     \sum_{\lambda=0}^{\gamma}\frac{ \mathsf{M}_{ \gamma}^{(\mu)} (\lambda)}{\big( \somega_{\gamma}^{(\mu,\mu)} \big)^2 } \mathsf{M}_{ \gamma}^{(\mu)} (\lambda) \xi_{\lambda}(\mu) -2 \sum_{\lambda=0}^{\gamma} \mathsf{M}_{ \gamma}^{(\mu)} (\lambda) \frac{ \mathsf{M}_{ m}^{(\mu)} (\lambda)}{\big( \somega_{m}^{(\mu,\mu)} \big)^2 }  \xi_{\lambda}(\mu)\right]= \\
& \frac{1}{2} 	\big( \somega_{m}^{(\mu,\mu)} \big)^2\big( \somega_{\gamma}^{(\mu,\mu)} \big)^2 \left[     \sum_{\lambda=0}^{\gamma}\mathsf{P}_{\gamma}^{(\mu)}(\lambda) \mathsf{M}_{ \gamma}^{(\mu)} (\lambda) \xi_{\lambda}(\mu) -2 \sum_{\lambda=0}^{\gamma} \mathsf{M}_{ \gamma}^{(\mu)} (\lambda) \mathsf{P}_{m}^{(\mu)}(\lambda) \xi_{\lambda}(\mu)\right]= \\
& \frac{1}{2} 	\big( \somega_{m}^{(\mu,\mu)} \big)^2\big( \somega_{\gamma}^{(\mu,\mu)} \big)^2  \sum_{\lambda=0}^{\gamma} \mathsf{M}_{ \gamma}^{(\mu)} (\lambda)\left[    \mathsf{P}_{\gamma}^{(\mu)}(\lambda)    -2    \mathsf{P}_{m}^{(\mu)}(\lambda)\right] \xi_{\lambda}(\mu) \geq  \\
& \frac{1}{2} 	\big( \somega_{m}^{(\mu,\mu)} \big)^2\big( \somega_{\gamma}^{(\mu,\mu)} \big)^2  \sum_{\lambda=0}^{\gamma} \mathsf{M}_{ \gamma}^{(\mu)} (\lambda)\left[    \mathsf{P}_{\gamma}^{(\mu)}(\lambda)    -2    \mathsf{P}_{2\gamma+1}^{(\mu)}(\lambda)\right] \xi_{\lambda}(\mu)  = 
	\big( \somega_{m}^{(\mu,\mu)} \big)^2 \mathsf{S}_{\gamma}^{(\mu)},
\end{align*}
where we set
\begin{align*}
	\mathsf{S}_{\gamma}^{(\mu)}:=\frac{1}{2} \big( \somega_{\gamma}^{(\mu,\mu)} \big)^2  \sum_{\lambda=0}^{\gamma} \mathsf{M}_{ \gamma}^{(\mu)} (\lambda)\left[    \mathsf{P}_{\gamma}^{(\mu)}(\lambda)    -2    \mathsf{P}_{2\gamma+1}^{(\mu)}(\lambda)\right] \xi_{\lambda}(\mu).
\end{align*}
On the one hand,  for all $\gamma \in \{0,1,2,3,4,5 \}$ and $\mu \in \{0,1,2,3,4,5\}$ we compute $\mathsf{S}_{\gamma}^{(\mu)}$ and verify that all  $\mathsf{S}_{\gamma}^{(\mu)}$ are strictly positive. On the other hand, for all $\gamma \in \{0,1,\dots,10 \}$ and sufficiently large $\mu$, we firstly compute $\mathsf{S}_{\gamma}^{(\mu)}$ in terms of $\mu$ and then derive its asymptotic expansion as $\mu \longrightarrow \infty$. For $\gamma=0$, we find
\begin{align*}
\mathsf{S}_{0}^{(\mu)}=	\frac{4^{\mu } (2 \mu +1) (10 \mu +7) \Gamma \left(\mu +\frac{1}{2}\right)^2 \Gamma \left(\mu +\frac{5}{2}\right)}{\pi  (2 \mu +3)^2 \Gamma (\mu +1) \Gamma \left(2 \mu +\frac{5}{2}\right)}  ,
\end{align*}
that is strictly positive, for all $\mu \geq 0$, and for $\gamma \in \{1,2,3,4,5\}$, we expand 
\begin{align*}
	\mathsf{S}_{\gamma}^{(\mu)}=  
		 \sigma_{\gamma} \mu ^{\frac{1}{2}}+ \mathcal{O} \left(\mu^{-\frac{1}{2}} \right), 
\end{align*} 
as $\mu \longrightarrow \infty$, for some strictly positive constants $\sigma_{\gamma}$. Figure \ref{Lemma94Pic3Ref} illustrates the constants $\sigma_{\gamma}$ as $\gamma $ varies within $ \{1,2,\dots,10\}$. 
\begin{figure}[h!]
\vspace{0.5cm}
    \centering
    \includegraphics[width=0.7\textwidth]{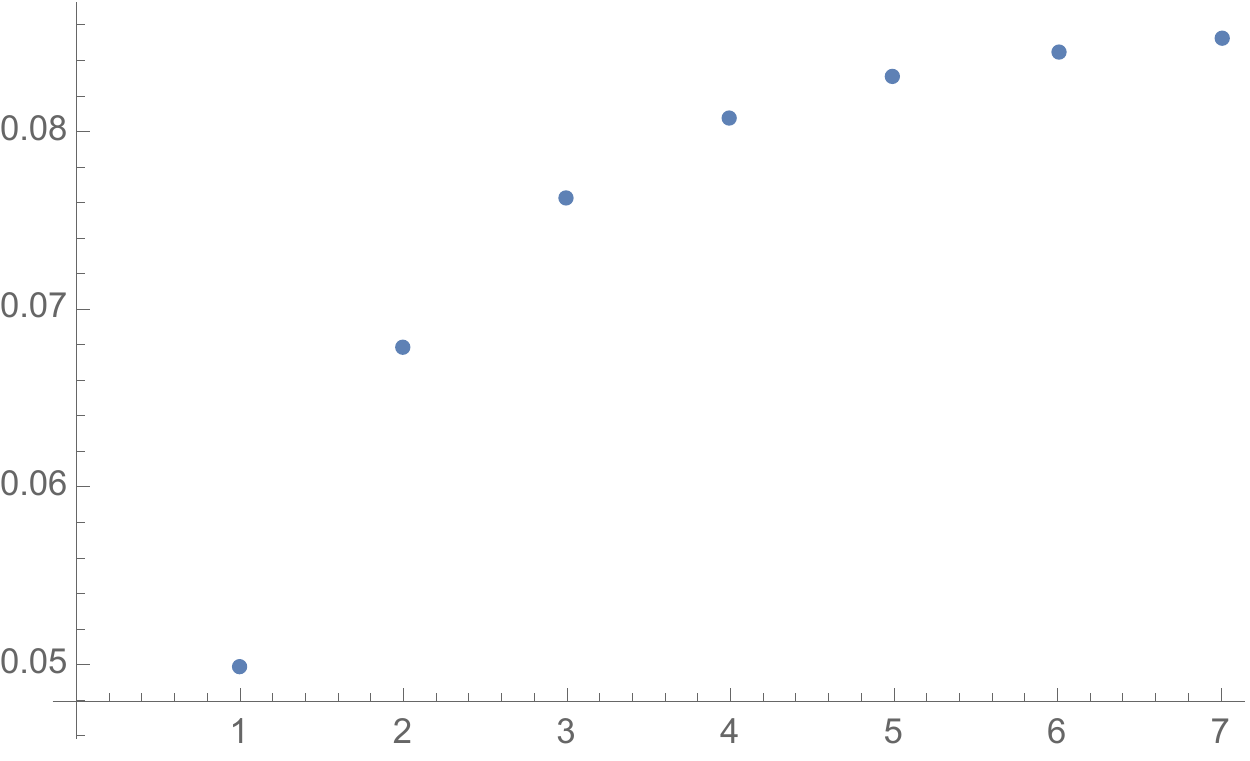}
    \caption{The constants $\sigma_{\gamma}$ as $\gamma $ varies within $ \{1,2,\dots,10\}$. They are all strictly positive. }
    \label{Lemma94Pic3Ref}
\end{figure}  
Consequently, in both cases, we can ensure that $\mathsf{S}_{\gamma}^{(\mu)} >0$ and hence we conclude that
\begin{align*} 
&	\big( \somega_{m}^{(\mu,\mu)} \big)^2 \mathsf{C}_{\gamma \gamma \gamma \gamma}^{(\mu,\mu)}-   2 \left( \somega_{\gamma}^{(\mu,\mu)} \right)^2 \mathsf{C}_{\gamma \gamma mm}^{(\mu,\mu)}  \geq  	\big( \somega_{m}^{(\mu,\mu)} \big)^2 \mathsf{S}_{\gamma}^{(\mu)} \geq \big( \somega_{2\gamma+1}^{(\mu,\mu)} \big)^2 \mathsf{S}_{\gamma}^{(\mu)}>0,
\end{align*}
for all $m \geq 2\gamma+1$. Finally, it remains to show that the determinants 
\begin{align*}
		\mathsf{D}_{\gamma n}^{(\mu,\mu)}  & :=
	 \left[ \left( \somega_{n}^{(\mu,\mu)} \right)^2 \mathsf{C}_{\gamma \gamma \gamma \gamma}^{(\mu,\mu)} -  2 \left(\somega_{\gamma}^{(\mu,\mu)} \right)^2 \mathsf{C}_{\gamma \gamma nn}^{(\mu,\mu)} \right]\cdot \\
	 & \cdot \left[ \big(\somega_{2\gamma-n}^{(\mu,\mu)} \big)^2 \mathsf{C}_{\gamma \gamma \gamma \gamma}^{(\mu,\mu)}  -  2 \left( \somega_{\gamma}^{(\mu,\mu)} \right)^2 \mathsf{C}_{\gamma ,\gamma ,2\gamma-n,2\gamma-n}^{(\mu,\mu)}  \right] 
	  - \left[ \left( \somega_{\gamma}^{(\mu,\mu)} \right)^2 \mathsf{C}_{\gamma, 2\gamma-n, \gamma,n}^{(\mu,\mu)} \right]^2.
	\end{align*}
are all non--zero for $ n \in \{0,1,\dots, \gamma -1 \}$. To this end, for all $\gamma \in \{1,2,3,4,5\}$ and $ n \in \{0,1,\dots,   \gamma -1 \}$, we firstly compute each of the Fourier coefficients in the determinants above in terms of $\mu$ and then either compute the determinants when $\mu$ is sufficiently small $\mu \in \{0,1,2,3,4,5\}$ or compute their limit as $\mu \longrightarrow \infty$. For example, for $\gamma=1$, we have $n=0$ and compute
\begin{align*}
	\mathsf{D} _{10}^{(\mu,\mu)} &= -\frac{3\ 16^{\mu -1}}{\pi ^2}(\mu +1) (2 \mu +3)^4 (2 \mu +5) (4 \mu +7)\cdot \\
	& \cdot \left(20 \mu ^4+328 \mu ^3+1029 \mu ^2+1155 \mu +435\right) \frac{\Gamma \left(\mu +\frac{1}{2}\right)^2 \Gamma \left(\mu +\frac{3}{2}\right)^3 \Gamma \left(\mu +\frac{3}{2}\right)}{\Gamma (\mu +2)^2 \Gamma \left(2 \mu +\frac{9}{2}\right)^2}
\end{align*}
and it is clearly strictly negative for all integers $\mu \geq 0$. For all $\gamma \in \{1,2,\dots,10\}$ and $n \in \{0,1,\dots,\gamma-1\}$, we find either $\mathsf{D} _{\gamma n}^{(\mu,\mu)} < 0$ when $\mu$ is sufficiently small and $\mathsf{D} _{\gamma n}^{(\mu,\mu)} \longrightarrow \pm \infty$ when $\mu \longrightarrow \infty$, that completes the proof.
\end{proof}

\subsection{Yang--Mills equation in spherical symmetry} 
 Finally, we consider the Yang--Mills equation in spherical symmetry and show that the non--degeneracy condition is a condition on the Fourier coefficients.		
		
\begin{lemma}[Derivation of the non-degeneracy condition for the 1--modes and the YM model]\label{LemmaDerivationNonGegeneracyConditionYMModel3}
	Let $\gamma \in \{0,1,2,3,4,5\}$ and define $ \xi $ according to \eqref{Definition1modesModel3YM}--\eqref{DefinitionKappa1modesModel3YM}. Then, the non--degeneracy condition 
	\begin{align*}
		 \ker \left( d  \textswab{M}_{-}( \xi ) \right) =\{ 0 \}
	\end{align*}
	is equivalent to
	\begin{align}\label{NonDegeneracyConditionCHModel3}
		\begin{dcases} 
	  \mathfrak{u}_{\gamma \gamma}   + \mathfrak{v}_{\gamma \gamma}  \neq 0,\\
	  \mathfrak{u}_{\gamma m}  \neq 0, \text{ for all }   m \geq 2\gamma+1  , \\
\mathfrak{D}_{\gamma n}    \neq 0 ,  \text{ for all }   n \in \{0,1,\dots, \gamma -1 \} ,
		\end{dcases}
	\end{align}
	where
	\begin{align*}
		\mathfrak{D}_{\gamma n}  & : =
	 \mathfrak{u}_{\gamma n} \mathfrak{u}_{\gamma ,2\gamma-n} - \mathfrak{v}_{\gamma n} \mathfrak{v}_{\gamma ,2\gamma-n}   
	\end{align*}
	and
\begin{align*}
	\mathfrak{u}_{\gamma m}  &:=    
 	  \left(\frac{\ssomega_{m}}{  \mathfrak{K}_{\gamma}} \right)^{2}+\frac{3}{4} \mathfrak{C}_{\gamma \gamma mm}  - \frac{9}{2}   \sum_{ \substack{ \nu = 0  }}^{m+\gamma}    \frac{\left(\overline{\mathfrak{C}}_{\gamma \nu m} \right)^2 }{\ssomega_{\nu}^2-(\ssomega_{m}+\ssomega_{\gamma})^2 }-  \frac{9}{4}  \sum_{\substack{ \nu = 0  } }^{2\gamma}   
		 \frac{\overline{\mathfrak{C}}_{m \nu m}\overline{\mathfrak{C}}_{\gamma\gamma \nu }}{\ssomega_{\nu}^2 }  \\
 	 & -\frac{9}{2}  \sum_{ \substack{ \nu = 0 \\ \nu \neq \pm(m-\gamma)-2  }}^{m+\gamma}   \frac{\left(\overline{\mathfrak{C}}_{m\gamma \nu } \right)^2}{\ssomega_{\nu}^2-(\ssomega_{m}-\ssomega_{\gamma})^2  }   , \\
	\mathfrak{v}_{\gamma m}  &:=   \frac{3}{8} \mathfrak{C}_{\gamma ,2\gamma-m,\gamma, m} -\frac{9}{4}   \sum_{\substack{ \nu = 0  } }^{2\gamma}  
		 \frac{\overline{\mathfrak{C}}_{2\gamma-m, \nu, m} \overline{\mathfrak{C}}_{\gamma\gamma \nu }}{\ssomega_{\nu}^2-(2\ssomega_{\gamma})^2  }  -\frac{9}{2} \sum_{ \substack{ \nu = 0 \\ \nu \neq \pm(m-\gamma)-2  }}^{m+\gamma}   \frac{\overline{\mathfrak{C}}_{\gamma \nu m} \overline{\mathfrak{C}}_{2\gamma-m,\gamma ,\nu }}{\ssomega_{\nu}^2-(\ssomega_{2\gamma-m}-\ssomega_{\gamma})^2  } .
\end{align*}

\end{lemma}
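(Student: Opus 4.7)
The plan is to mimic the argument for the CW model in Lemma \ref{LemmaDerivationNonGegeneracyConditionCWModel1}, the only real difference being that the algebraic structure of the differential is now provided by Lemma \ref{LemmaDifferentialofMModel3YM} rather than Lemma \ref{LemmaDifferentialofMModel1CW}. I would fix $\gamma \in \{0,1,\dots,5\}$, take an arbitrary $h=\{h^j:j\ge 0\}\in l_{s+3}^2$ satisfying $d\textswab{M}_{-}(\xi)[h]=0$, and, using the expression given by Lemma \ref{LemmaDifferentialofMModel3YM}, split the vanishing condition component by component according to the range of $m$.

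The four cases produce the following uncoupled or weakly coupled equations. For $m=\gamma$ the equation reduces to the scalar relation $(\mathfrak{u}_{\gamma\gamma}+\mathfrak{v}_{\gamma\gamma})h^\gamma=0$, so that $h^\gamma=0$ forces the condition $\mathfrak{u}_{\gamma\gamma}+\mathfrak{v}_{\gamma\gamma}\neq 0$. For $m\ge 2\gamma+1$ the equation reads $\mathfrak{u}_{\gamma m}h^m=0$, so the vanishing of every $h^m$ in this range is equivalent to $\mathfrak{u}_{\gamma m}\neq 0$ for all such $m$. For the indices with $0\le m\le\gamma-1$ and $\gamma+1\le m\le 2\gamma$, set $m=n$ in the former and $m=2\gamma-n$ in the latter, so that both equations involve the same pair of unknowns $(h^n,h^{2\gamma-n})$. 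This couples them into the $2\times 2$ linear system
\begin{equation*}
\begin{pmatrix} \mathfrak{u}_{\gamma n} & \mathfrak{v}_{\gamma n} \\ \mathfrak{v}_{\gamma,2\gamma-n} & \mathfrak{u}_{\gamma,2\gamma-n} \end{pmatrix}
\begin{pmatrix} h^n \\ h^{2\gamma-n} \end{pmatrix}
=\begin{pmatrix} 0 \\ 0 \end{pmatrix},
\end{equation*}
for each $n\in\{0,1,\dots,\gamma-1\}$, whose determinant is exactly $\mathfrak{D}_{\gamma n}=\mathfrak{u}_{\gamma n}\mathfrak{u}_{\gamma,2\gamma-n}-\mathfrak{v}_{\gamma n}\mathfrak{v}_{\gamma,2\gamma-n}$. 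Triviality of the kernel therefore amounts to $\mathfrak{D}_{\gamma n}\neq 0$ for each such $n$.

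Assembling the three types of conditions yields the equivalence claimed in \eqref{NonDegeneracyConditionCHModel3}: the scalar constraint at $m=\gamma$, the diagonal constraints for $m\ge 2\gamma+1$, and the determinantal constraints for $n\in\{0,\dots,\gamma-1\}$ together exhaust the obstructions to $h=0$. The only routine-but-delicate point is matching indices between the lines $0\le m\le\gamma-1$ and $\gamma+1\le m\le 2\gamma$ (the symmetry $m\leftrightarrow 2\gamma-m$), which pairs the unknowns correctly and produces the stated determinant; this mirrors the pairing used in Lemma \ref{LemmaDerivationNonGegeneracyConditionCWModel1}. No analytic estimates are required here, since Lemma \ref{LemmaDifferentialofMModel3YM} already encodes the full algebraic form of $d\textswab{M}_{-}(\xi)$; the actual positivity of $\mathfrak{u}_{\gamma\gamma}+\mathfrak{v}_{\gamma\gamma}$, $\mathfrak{u}_{\gamma m}$ and $\mathfrak{D}_{\gamma n}$ will be the substantive content of the subsequent proposition verifying \eqref{NonDegeneracyConditionCHModel3}.
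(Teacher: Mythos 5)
Your proposal is correct and follows essentially the same route as the paper's own proof: both invoke Lemma \ref{LemmaDifferentialofMModel3YM} to split $d\textswab{M}_{-}(\xi)[h]=0$ component by component, read off the scalar condition at $m=\gamma$, the diagonal conditions for $m\ge 2\gamma+1$, and form the $2\times 2$ systems in $(h^n,h^{2\gamma-n})$ whose determinants give $\mathfrak{D}_{\gamma n}$. The paper additionally spells out the expansion of $\mathfrak{u}_{\gamma m}$ and $\mathfrak{v}_{\gamma m}$ using the formulas for $\mathfrak{a}_{\gamma m}$ and $\mathfrak{b}_{\gamma m}$ from Lemma \ref{LemmaDifferentialMathFrakF0LongComputation}, but this is a matter of exposition rather than substance.
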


\begin{proof}
	Let $\gamma \in \{0,1,2,3,4,5\}$ and define $ \xi $ according to \eqref{Definition1modesModel3YM}--\eqref{DefinitionKappa1modesModel3YM}. Furthermore, assume that $\ker ( d  \textswab{M}_{-}( \xi ) ) =\{ 0 \}$ and pick any $h=\{h^j: j \geq 0\}\in  l_{s+3}^{2}  $ such that $d  \textswab{M}_{-}( \xi )[h]=0$. Also, fix any integer $m \geq 0 $. Then, according to Lemma \ref{LemmaDifferentialofMModel3YM}, we have that the system 
\begin{align}
	&\mathfrak{u}_{\gamma m}  h^m	
 	+\mathfrak{v}_{\gamma m}  h^{2\gamma-m}    =0, \Hquad \text{ for } 0\leq m \leq \gamma-1,\label{Eq11NDCModel3YM} \\
 	&\left( \mathfrak{u}_{\gamma \gamma}   + \mathfrak{v}_{\gamma \gamma}  \right)h^{\gamma}  
 	  =0, \Hquad \text{ for } m =\gamma,\label{Eq22NDCModel3YM}  \\
 	&\mathfrak{u}_{\gamma m}  h^m	 +
 \mathfrak{v}_{\gamma m} 	h^{2\gamma-m}    =0,  \Hquad \text{ for } \gamma+1\leq m \leq 2\gamma,\label{Eq33NDCModel3YM}  \\ 
 	&	\mathfrak{u}_{\gamma m} h^m   
=0, \Hquad \text{ for }   m \geq  2\gamma+1 , \label{Eq44NDCModel3YM} 
\end{align}
has $h=\{h^i: i\geq 0\}=0$ as the unique solution, where 
\begin{align*}
	\mathfrak{u}_{\gamma m}  &:=    
 	\left(\frac{\ssomega_{m}}{  \mathfrak{K}_{\gamma}} \right)^{2}+\frac{3}{4} \mathfrak{C}_{\gamma \gamma mm} -  \mathfrak{a}_{\gamma m} 
 	 ,\\
	\mathfrak{v}_{\gamma m}  &:=  
 	\frac{3}{8} \mathfrak{C}_{\gamma ,2\gamma-m,\gamma, m} -\mathfrak{b}_{\gamma m}  
\end{align*}
Furthermore, $ \mathfrak{a}_{\gamma m} $ and $\mathfrak{b}_{\gamma m} $ are given by Lemma \ref{LemmaDifferentialMathFrakF0LongComputation} and putting all together yields
\begin{align*}
	\mathfrak{u}_{\gamma m}  &:=    
 	  \left(\frac{\ssomega_{m}}{  \mathfrak{K}_{\gamma}} \right)^{2}+\frac{3}{4} \mathfrak{C}_{\gamma \gamma mm}  - \frac{9}{2}   \sum_{ \substack{ \nu = 0  }}^{m+\gamma}    \frac{\left(\overline{\mathfrak{C}}_{\gamma \nu m} \right)^2 }{\ssomega_{\nu}^2-(\ssomega_{m}+\ssomega_{\gamma})^2 \ssomega^2}-  \frac{9}{4}  \sum_{\substack{ \nu = 0  } }^{2\gamma}   
		 \frac{\overline{\mathfrak{C}}_{m \nu m}\overline{\mathfrak{C}}_{\gamma\gamma \nu }}{\ssomega_{\nu}^2 }  \\
 	 & -\frac{9}{2}  \sum_{ \substack{ \nu = 0 \\ \nu \neq \pm(m-\gamma)-2  }}^{m+\gamma}   \frac{\left(\overline{\mathfrak{C}}_{m\gamma \nu } \right)^2}{\ssomega_{\nu}^2-(\ssomega_{m}-\ssomega_{\gamma})^2  }   , \\
	\mathfrak{v}_{\gamma m}  &:=   \frac{3}{8} \mathfrak{C}_{\gamma ,2\gamma-m,\gamma, m} -\frac{9}{4}   \sum_{\substack{ \nu = 0  } }^{2\gamma}  
		 \frac{\overline{\mathfrak{C}}_{2\gamma-m, \nu, m} \overline{\mathfrak{C}}_{\gamma\gamma \nu }}{\ssomega_{\nu}^2-(2\ssomega_{\gamma})^2  }  -\frac{9}{2} \sum_{ \substack{ \nu = 0 \\ \nu \neq \pm(m-\gamma)-2  }}^{m+\gamma}   \frac{\overline{\mathfrak{C}}_{\gamma \nu m} \overline{\mathfrak{C}}_{2\gamma-m,\gamma ,\nu }}{\ssomega_{\nu}^2-(\ssomega_{2\gamma-m}-\ssomega_{\gamma})^2  } .
\end{align*}
Now, \eqref{Eq22NDCModel3YM} and \eqref{Eq44NDCModel3YM} yield
\begin{align*}
	\mathfrak{u}_{\gamma \gamma}   + \mathfrak{v}_{\gamma \gamma}  \neq 0
\end{align*}
and
\begin{align*}
	\mathfrak{u}_{\gamma m}  \neq 0
\end{align*}
for all $m\geq 2\gamma+1$. Next, we rearrange \eqref{Eq11NDCModel3YM} and \eqref{Eq33NDCModel3YM} by setting $m=n$ and $m=2\gamma-n$ respectively to obtain
\begin{align*}
	&\mathfrak{u}_{\gamma n}  h^n	
 	+\mathfrak{v}_{\gamma n}  h^{2\gamma-n}   =0,   \\
 	&\mathfrak{u}_{\gamma ,2\gamma-n}  h^{2\gamma-n}	 +
 \mathfrak{v}_{\gamma ,2\gamma-n} 	h^{n}      =0,     
 \end{align*}
 for all $n\in \{0,1,\dots,\gamma-1\}$, that can be written in the matrix form
\begin{align*}
	\begin{bmatrix}
\mathfrak{u}_{\gamma n} & \mathfrak{v}_{\gamma n}  \\
  \mathfrak{v}_{\gamma ,2\gamma-n}    &   \mathfrak{u}_{\gamma ,2\gamma-n} 
\end{bmatrix}
	\begin{bmatrix}
h^n   \\
 h^{2\gamma-n}  
\end{bmatrix}=
	\begin{bmatrix}
0  \\
0  
\end{bmatrix},
\end{align*}
for all $  n \in \{0,1,\dots, \gamma -1 \}$. Observe that these are in total $\gamma$ $(2 \times 2)-$linear systems where the unknowns are $h^{m}$ for $m \in  \{0,1,\cdots,2\gamma\}\setminus \{\gamma\} $. Finally, these systems have only the trivial solution $h^{m}=0$ for all $m \in \{0,1,\cdots,2\gamma\}\setminus \{\gamma\} $ if and only if the determinants 
\begin{align*}
	 \mathfrak{D}_{\gamma n}  &  =
	 \mathfrak{u}_{\gamma n} \mathfrak{u}_{\gamma ,2\gamma-n} - \mathfrak{v}_{\gamma n} \mathfrak{v}_{\gamma ,2\gamma-n} 
\end{align*}
are non--zero for all $  n \in \{0,1,\dots, \gamma -1 \}$, that completes the proof.
\end{proof}
Next, we establish the non--degeneracy condition for this model.

\begin{prop}[Non-degeneracy condition for the 1--modes and the YM model]
	Let $\gamma \in \{0,1,2,3,4,5\}$. Then, the non--degeneracy condition \eqref{NonDegeneracyConditionCHModel3} holds true. 
\end{prop}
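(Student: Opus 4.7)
The plan is to establish the three conditions in \eqref{NonDegeneracyConditionCHModel3} by direct computation, exploiting the fact that $\gamma$ ranges over a finite set. Since all of $\overline{\mathfrak{C}}_{ijm}$, $\mathfrak{C}_{\gamma\gamma mm}$, $\mathfrak{K}_\gamma$ and $\ssomega_m=m+2$ are available in closed form (Lemmata \ref{LemmaClosedFormulasModel3YMCbarijm}, \ref{LemmaClosedFormulasModel3YMCbarijmSpecificijm}, \ref{LemmaClosedFormulasModel3YMCgammagammamm}, \ref{LemmaResonantSystemModel3YM}, and the Remark following Lemma \ref{LemmaClosedFormulasModel3YMCgammagammamm}), the quantities $\mathfrak{u}_{\gamma m}$, $\mathfrak{v}_{\gamma m}$ and the $2{\times}2$ determinants $\mathfrak{D}_{\gamma n}$ are in principle explicit rational functions of $m$ (for each fixed $\gamma$). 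The strategy is therefore parallel to the one used for the CH model in the previous proposition: reduce to finitely many algebraic identities for each fixed $\gamma\in\{0,1,2,3,4,5\}$ and a single monotonicity/asymptotic check in $m$.

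First, I would handle the easy finite part. For each $\gamma\in\{0,1,2,3,4,5\}$ I would substitute the closed formulas of Lemmata \ref{LemmaClosedFormulasModel3YMCbarijmSpecificijm} and \ref{LemmaClosedFormulasModel3YMCgammagammamm} (or the tabulated values of $\mathfrak{C}_{\gamma\gamma mm}$ in the Remark after Lemma \ref{LemmaClosedFormulasModel3YMCgammagammamm}) into the expressions for $\mathfrak{u}_{\gamma\gamma}+\mathfrak{v}_{\gamma\gamma}$ and $\mathfrak{D}_{\gamma n}$ for $n\in\{0,1,\dots,\gamma-1\}$, and verify by direct evaluation that each of these finitely many rational numbers is non-zero. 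This is entirely analogous to the check done for $\mathsf{D}^{(\mu,\mu)}_{1,0}$ in the CH case and is the kind of computation for which the attached Mathematica files are intended (cf.\ Remark \ref{RemarkRangeGamma}).

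The main obstacle is the infinite family $\mathfrak{u}_{\gamma m}\neq 0$ for all $m\ge 2\gamma+1$. Here I would argue as follows. Recall $\ssomega_m=m+2$, so
\[
\mathfrak{u}_{\gamma m}=\frac{(m+2)^2}{\mathfrak{K}_\gamma^{2}}+R_{\gamma}(m),
\]
where $R_\gamma(m)$ collects the three finite sums and the term $\tfrac{3}{4}\mathfrak{C}_{\gamma\gamma mm}$. Each summand in $R_\gamma(m)$ is a rational function of $m$ whose denominators $\ssomega_\nu^2-(\ssomega_m\pm\ssomega_\gamma)^2$ are bounded away from $0$ for $m\ge 2\gamma+1$ (the excluded resonant indices $\nu=\pm(m-\gamma)-2$ are precisely those that would produce a vanishing denominator, and the ranges $0\le\nu\le m+\gamma$ or $0\le\nu\le 2\gamma$ contain finitely many terms once $\gamma$ is fixed). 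Using Lemma \ref{LemmaClosedFormulasModel3YMCbarijm}, one sees that $(\overline{\mathfrak{C}}_{\gamma\nu m})^2$, $\overline{\mathfrak{C}}_{m\nu m}\overline{\mathfrak{C}}_{\gamma\gamma\nu}$ and $\mathfrak{C}_{\gamma\gamma mm}$ are all uniformly $O(1)$ in $m$, while each denominator $\ssomega_\nu^2-(\ssomega_m\pm\ssomega_\gamma)^2$ is $\Theta(m)$ for the boundary indices and $\Theta(m^2)$ for the interior ones. Consequently $R_\gamma(m)=O(1)$ uniformly in $m\ge 2\gamma+1$, so the leading term $(m+2)^2/\mathfrak{K}_\gamma^2$ dominates and $\mathfrak{u}_{\gamma m}>0$ for all sufficiently large $m$, say $m\ge M_\gamma$ for some explicit $M_\gamma$.

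Finally, one is left with checking $\mathfrak{u}_{\gamma m}\neq 0$ for the finite window $2\gamma+1\le m\le M_\gamma-1$ and each $\gamma\in\{0,\dots,5\}$. This is again a direct (Mathematica-assisted) verification using the closed formulas of Lemmata \ref{LemmaClosedFormulasModel3YMCbarijmSpecificijm} and \ref{LemmaClosedFormulasModel3YMCgammagammamm} together with the explicit value of $\mathfrak{K}_\gamma$ from \eqref{DefinitionKappa1modesModel3YM}. The delicate point throughout is bookkeeping of the excluded indices $\nu\neq\pm(m-\gamma)-2$ and $\nu\neq 2\gamma+2$ and the Boolean support conditions in Lemma \ref{LemmaClosedFormulasModel3YMCbarijm}; once these are correctly implemented, the proof reduces to verifying non-vanishing of finitely many explicit rational numbers, concluding the proposition and thereby Theorem \ref{MainTheorem1} in the Yang--Mills case via Theorem \ref{TheoremModificationofBambusiPaleari}.
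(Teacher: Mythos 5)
Your overall strategy is sound and matches the paper's: the finitely many conditions $\mathfrak{u}_{\gamma\gamma}+\mathfrak{v}_{\gamma\gamma}\neq 0$ and $\mathfrak{D}_{\gamma n}\neq 0$ are verified by direct substitution of the closed-form Fourier coefficients, and the infinite family $\mathfrak{u}_{\gamma m}\neq 0$ for $m\geq 2\gamma+1$ is handled by showing that the $\ssomega_m^2/\mathfrak{K}_\gamma^2$ term dominates for large $m$, leaving a finite window to check computationally. Your asymptotic accounting is correct: since $\mathfrak{K}_\gamma^2>0$ (Lemma~\ref{LemmaResonantSystemModel3YM}) the leading term is $\Theta(m^2)$, while the remainder $R_\gamma(m)$ is $O(1)$ uniformly in $m$, so dominance holds.

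The paper differs from you in making this dominance fully quantitative rather than invoking an unspecified threshold $M_\gamma$. It divides $\mathfrak{u}_{\gamma m}$ by $\ssomega_m^2$, substitutes the explicit expression for $\mathfrak{K}_\gamma^{-2}$ from Lemma~\ref{LemmaResonantSystemModel3YM} (which produces helpful cancellations against the $\mathfrak{C}_{\gamma\gamma\gamma\gamma}$ and $\overline{\mathfrak{C}}_{\gamma\gamma\nu}$ terms), restricts the $\nu$-sums to their actual support $\nu\in\{m-\gamma,\dots,m+\gamma\}$ with $\nu$ of the correct parity, and splits $\mathfrak{u}_{\gamma m}/\ssomega_m^2=\mathfrak{I}_{\gamma m}+\mathfrak{E}_{\gamma m}$ into a computable part and an error. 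The error is bounded via the elementary inequalities $|\ssomega_{\nu}^2-(\ssomega_m\pm\ssomega_\gamma)^2|\geq c(m+3)$ and the closed formulas for $\overline{\mathfrak{C}}_{\gamma,2\tau+m-\gamma,m}$, yielding an explicit rational lower bound $\mathfrak{O}_{\gamma m}=\mathfrak{I}_{\gamma m}-\mathfrak{P}_{\gamma m}$ whose positivity is then checked directly for every $m\geq 2\gamma+1$ (with a two-point hand check in the cases $\gamma\in\{4,5\}$). This buys an unconditional, fully explicit verification with no ``for $m$ sufficiently large'' step. One minor inaccuracy in your estimate: after restricting to the support, all the denominators $\ssomega_\nu^2-(\ssomega_m\pm\ssomega_\gamma)^2$ in the relevant sums factor as an $O(1)$ term times an $O(m)$ term, so they are all $\Theta(m)$; there are no $\Theta(m^2)$ ``interior'' denominators. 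This does not change the conclusion that $R_\gamma(m)=O(1)$, but it is worth correcting since the precise size of the denominators is what makes the explicit bound $\mathfrak{P}_{\gamma m}$ decay like $1/m$.
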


\begin{proof}
	  Let $\gamma \in \{0,1,2,3,4,5\}$, $ \alpha = 1/ \sqrt{6}$ and pick any frequency $\ssomega \in \mathcal{W}_{\alpha}$ with $\ssomega<1$. Also, pick an integer $m \geq 2\gamma+1$ and define
\begin{align*}
	\mathfrak{u}_{\gamma m}  &:=    
 	  \left(\frac{\ssomega_{m}}{  \mathfrak{K}_{\gamma}} \right)^{2}+\frac{3}{4} \mathfrak{C}_{\gamma \gamma mm}  - \frac{9}{2}   \sum_{ \substack{ \nu = 0  }}^{m+\gamma}    \frac{\left(\overline{\mathfrak{C}}_{\gamma \nu m} \right)^2 }{\ssomega_{\nu}^2-(\ssomega_{m}+\ssomega_{\gamma})^2  }-  \frac{9}{4}  \sum_{\substack{ \nu = 0  } }^{2\gamma}   
		 \frac{\overline{\mathfrak{C}}_{m \nu m}\overline{\mathfrak{C}}_{\gamma\gamma \nu }}{\ssomega_{\nu}^2 }  \\
 	 & -\frac{9}{2}  \sum_{ \substack{ \nu = 0 \\ \nu \neq \pm(m-\gamma)-2  }}^{m+\gamma}   \frac{\left(\overline{\mathfrak{C}}_{m\gamma \nu } \right)^2}{\ssomega_{\nu}^2-(\ssomega_{m}-\ssomega_{\gamma})^2  }   , \\
	\mathfrak{v}_{\gamma m}  &:=   \frac{3}{8} \mathfrak{C}_{\gamma ,2\gamma-m,\gamma, m} -\frac{9}{4}   \sum_{\substack{ \nu = 0  } }^{2\gamma}  
		 \frac{\overline{\mathfrak{C}}_{2\gamma-m, \nu, m} \overline{\mathfrak{C}}_{\gamma\gamma \nu }}{\ssomega_{\nu}^2-(2\ssomega_{\gamma})^2  }  -\frac{9}{2} \sum_{ \substack{ \nu = 0 \\ \nu \neq \pm(m-\gamma)-2  }}^{m+\gamma}   \frac{\overline{\mathfrak{C}}_{\gamma \nu m} \overline{\mathfrak{C}}_{2\gamma-m,\gamma ,\nu }}{\ssomega_{\nu}^2-(\ssomega_{2\gamma-m}-\ssomega_{\gamma})^2  } .
\end{align*}
Firstly, we show that
\begin{align*}
	\mathfrak{u}_{\gamma \gamma}   + \mathfrak{v}_{\gamma \gamma}  \neq 0.
\end{align*}	
In this case, all the sums above are finite and in particular $\nu$ varies within $\{0,1,\dots,2\gamma\}$. Hence, we compute $\mathfrak{u}_{\gamma \gamma}   + \mathfrak{v}_{\gamma \gamma}$ for all $\gamma \in \{0,1,2,3,4,5\}$ and verify that its has a strictly positive lower bound. Figure \ref{Lemma96Pic1Ref}
\begin{figure}[h!]
\vspace{0.5cm}
    \centering
    \includegraphics[width=0.7\textwidth]{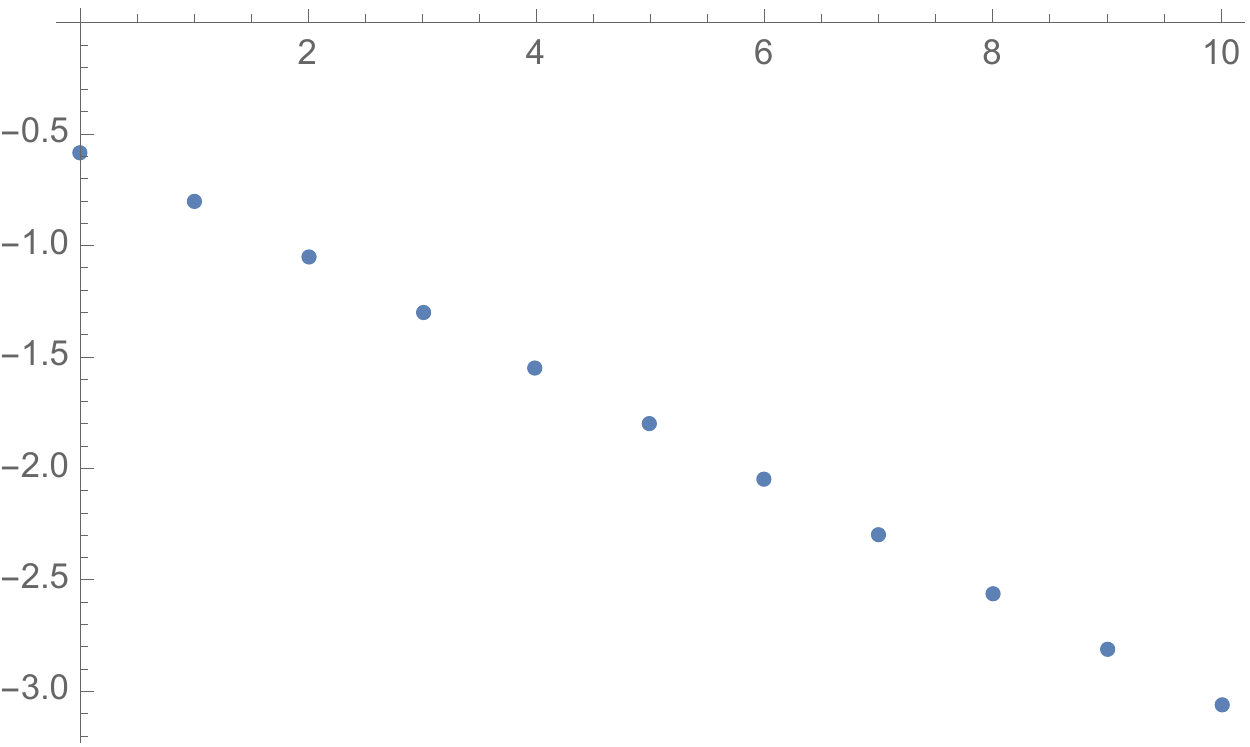}
    \caption{The constants $\mathfrak{u}_{\gamma \gamma}   + \mathfrak{v}_{\gamma \gamma}$ as $\gamma$ varies within $\{0,1,\dots,10 \}$. They decrease and stay away from zero.  }
    \label{Lemma96Pic1Ref}
\end{figure}  
illustrates the constants $\mathfrak{u}_{\gamma \gamma}   + \mathfrak{v}_{\gamma \gamma}$ as $\gamma$ varies within $\{0,1,2,3,4,5\}$. Secondly, we show that
\begin{align*}
	\mathfrak{u}_{\gamma m}  \neq 0,
\end{align*}	
for $ m \geq 2\gamma+1$. To this end, we note that $\overline{\mathfrak{C}}_{ijm}=0$ for all integers $i,j,m \geq 0$ with either $m>i+j$ or $|i-j|>m$ (Lemma \ref{LemmaClosedFormulasModel3YMCbarijm}). Specifically, for any integer $m\geq 2\gamma+1$, we focus on 
\begin{align*}
	   \sum_{ \substack{ \nu = 0  }}^{m+\gamma}    \frac{\left(\overline{\mathfrak{C}}_{\gamma \nu m} \right)^2 }{\ssomega_{\nu}^2-(\ssomega_{m}+\ssomega_{\gamma})^2   } , \Hquad 
	   \sum_{ \substack{ \nu = 0 \\ \nu \neq \pm(m-\gamma)-2  }}^{m+\gamma}   \frac{\left(\overline{\mathfrak{C}}_{m\gamma \nu } \right)^2}{\ssomega_{\nu}^2-(\ssomega_{m}-\ssomega_{\gamma})^2   } 
\end{align*}
and note that we must have 
\begin{align*}
m-\gamma	\leq \nu \leq m+\gamma 
\end{align*}
since
\begin{itemize}
	\item $ m-\gamma=|m-\gamma|>\nu   \Longrightarrow \overline{\mathfrak{C}}_{\gamma   \nu m}=0$,
	\item $ \nu>m+\gamma   \Longrightarrow \overline{\mathfrak{C}}_{\gamma   \nu m}=0$.
\end{itemize}
In addition, for all such $\nu$, the conditions $\nu \neq  \pm(m-\gamma)-2 $ are satisfied since
\begin{itemize} 
	\item $ \nu \geq m-\gamma \Longrightarrow (m-\gamma)-2 <m-\gamma \leq \nu  \Longrightarrow \nu \neq (m-\gamma)-2$, 
	\item $m\geq 2\gamma+1 \Longrightarrow -(m-\gamma)-2 <0 \Longrightarrow \nu \neq  -(m-\gamma)-2 $.
\end{itemize}
Consequently, for all $m\geq 2\gamma+1$, we have
\begin{align*}
	\mathfrak{u}_{\gamma m}  &:=    
 	\left(\frac{\ssomega_{m}}{  \mathfrak{K}_{\gamma}} \right)^{2}+\frac{3}{4} \mathfrak{C}_{\gamma \gamma mm} - \frac{9}{2}   \sum_{ \substack{ \nu = m-\gamma  }}^{m+\gamma}    \frac{\left(\overline{\mathfrak{C}}_{\gamma \nu m} \right)^2 }{\ssomega_{\nu}^2-(\ssomega_{m}+\ssomega_{\gamma})^2   }-  \frac{9}{4}  \sum_{\substack{ \nu = 0  } }^{2\gamma}   
		 \frac{\overline{\mathfrak{C}}_{m \nu m}\overline{\mathfrak{C}}_{\gamma\gamma \nu }}{\ssomega_{\nu}^2 }  \\
 	& -\frac{9}{2}  \sum_{ \substack{ \nu = m-\gamma  }}^{m+\gamma}   \frac{\left(\overline{\mathfrak{C}}_{m\gamma \nu } \right)^2}{\ssomega_{\nu}^2-(\ssomega_{m}-\ssomega_{\gamma})^2   }   
\end{align*}
and by setting $\nu= \sigma+m-\gamma  $ and $\nu=\sigma$ respectively we can rewrite the latter as follows
\begin{align*}
	\mathfrak{u}_{\gamma m}  &:=    
 	  \left(\frac{\ssomega_{m}}{  \mathfrak{K}_{\gamma}} \right)^{2}+\frac{3}{4} \mathfrak{C}_{\gamma \gamma mm} - \frac{9}{2}   \sum_{ \substack{ 
 	\sigma=0  }}^{2\gamma}    \frac{\left(\overline{\mathfrak{C}}_{\gamma, \sigma+m-\gamma  , m} \right)^2 }{\ssomega_{ \sigma+m-\gamma  }^2-(\ssomega_{m}+\ssomega_{\gamma})^2   }-  \frac{9}{4}  \sum_{\substack{ \sigma = 0  } }^{2\gamma}   
		 \frac{\overline{\mathfrak{C}}_{m \sigma m}\overline{\mathfrak{C}}_{\gamma\gamma \sigma }}{\ssomega_{\sigma}^2 }  \\
 	& -\frac{9}{2}  \sum_{ \substack{ \sigma = 0  }}^{2\gamma}   \frac{\left(\overline{\mathfrak{C}}_{m,\gamma, \sigma+m-\gamma   } \right)^2}{\ssomega_{ \sigma+m-\gamma  }^2-(\ssomega_{m}-\ssomega_{\gamma})^2   }.  
\end{align*}
Now, recall from Lemma \ref{LemmaResonantSystemModel3YM} that  
\begin{align*}
	\mathfrak{K}_{\gamma}^{-2}    
	& =\frac{3 \mathfrak{C}_{\gamma\gamma\gamma\gamma}-8\mathfrak{q}_{\gamma} }{-8\ssomega_{\gamma}^2}   = - \frac{ 3 }{8\ssomega_{\gamma}^2}   \mathfrak{C}_{\gamma\gamma\gamma\gamma} + \frac{ 1 }{\ssomega_{\gamma}^2}  \mathfrak{q}_{\gamma}   \\
	 & = - \frac{ 3 }{8\ssomega_{\gamma}^2}   \mathfrak{C}_{\gamma\gamma\gamma\gamma} +\frac{9}{4 \ssomega_{\gamma}^2}  \sum_{  \nu = 0}^{2\gamma}  \left(  \overline{\mathfrak{C}}_{\gamma \gamma \nu} \right)^2
  \left(\frac{2}{\ssomega_{\nu}^2 }  +  \frac{1}{\ssomega_{\nu}^2-(2\ssomega_{\gamma})^2    } \right) 
\end{align*} 
that yields
\begin{align*}
  \left(\frac{\ssomega_{m}}{  \mathfrak{K}_{\gamma}} \right)^{2} =
	- \frac{ 3\ssomega_{m}^2 }{8\ssomega_{\gamma}^2}   \mathfrak{C}_{\gamma\gamma\gamma\gamma} +\frac{9\ssomega_{m}^2}{4 \ssomega_{\gamma}^2}  \sum_{  \nu = 0}^{2\gamma}  \left(  \overline{\mathfrak{C}}_{\gamma \gamma \nu} \right)^2
  \left(\frac{2}{\ssomega_{\nu}^2 }  +  \frac{1}{\ssomega_{\nu}^2-(2\ssomega_{\gamma})^2    } \right) .
\end{align*}
Putting all together, we obtain
\begin{align*}
	\mathfrak{u}_{\gamma m}  &=    
 	 - \frac{ 3\ssomega_{m}^2 }{8\ssomega_{\gamma}^2}   \mathfrak{C}_{\gamma\gamma\gamma\gamma} +\frac{9\ssomega_{m}^2}{4 \ssomega_{\gamma}^2}  \sum_{  \sigma = 0}^{2\gamma}  \left(  \overline{\mathfrak{C}}_{\gamma \gamma \sigma} \right)^2
  \left(\frac{2}{\ssomega_{\sigma}^2 }  +  \frac{1}{\ssomega_{\sigma}^2-(2\ssomega_{\gamma})^2    } \right) \\
  & +\frac{3}{4} \mathfrak{C}_{\gamma \gamma mm} - \frac{9}{2}   \sum_{ \substack{ 
 	\sigma=0  }}^{2\gamma}    \frac{\left(\overline{\mathfrak{C}}_{\gamma, \sigma+m-\gamma  , m} \right)^2 }{\ssomega_{ \sigma+m-\gamma  }^2-(\ssomega_{m}+\ssomega_{\gamma})^2   }-  \frac{9}{4}  \sum_{\substack{ \sigma = 0  } }^{2\gamma}   
		 \frac{\overline{\mathfrak{C}}_{m \sigma m}\overline{\mathfrak{C}}_{\gamma\gamma \sigma }}{\ssomega_{\sigma}^2 }  \\
 	& -\frac{9}{2}  \sum_{ \substack{ \sigma = 0  }}^{2\gamma}   \frac{\left(\overline{\mathfrak{C}}_{m,\gamma, \sigma+m-\gamma   } \right)^2}{\ssomega_{ \sigma+m-\gamma  }^2-(\ssomega_{m}-\ssomega_{\gamma})^2   }.  
\end{align*}
In addition, we also note that $\overline{\mathfrak{C}}_{ijm}=0$ for all integers $i,j,m \geq 0$ with $i+j-m \notin 2 \mathbb{N}\cup \{0\}$ (Lemma \ref{LemmaClosedFormulasModel3YMCbarijm}). Specifically, we must have 
\begin{align*}
\sigma \in 2 \mathbb{N}\cup \{0\} 
\end{align*}
since
\begin{itemize}
	\item $ \sigma =\sigma+\gamma-\gamma \notin  2 \mathbb{N}\cup \{0\}  \Longrightarrow \overline{\mathfrak{C}}_{\gamma \gamma \sigma}=0$,
	\item $ \sigma =\gamma+\sigma+m-\gamma-m  \notin  2 \mathbb{N}\cup \{0\}  \Longrightarrow \overline{\mathfrak{C}}_{\gamma,\sigma+m-\gamma,m}=\overline{\mathfrak{C}}_{m,\gamma,\sigma+m-\gamma}=0$.
\end{itemize}
Therefore, by setting $\sigma =2\tau$, we arrive at	
\begin{align*}
	\mathfrak{u}_{\gamma m}  &=    
 	 - \frac{ 3\ssomega_{m}^2 }{8\ssomega_{\gamma}^2}   \mathfrak{C}_{\gamma\gamma\gamma\gamma} +\frac{9\ssomega_{m}^2}{2 \ssomega_{\gamma}^2}  \sum_{  \tau = 0}^{\gamma} 
 \frac{ \left(  \overline{\mathfrak{C}}_{\gamma ,\gamma , 2\tau} \right)^2}{\ssomega_{2\tau}^2 }  + \frac{9\ssomega_{m}^2}{4 \ssomega_{\gamma}^2}  \sum_{  \tau = 0}^{\gamma}  \frac{ \left(  \overline{\mathfrak{C}}_{\gamma ,\gamma , 2\tau} \right)^2}{\ssomega_{2\tau}^2-(2\ssomega_{\gamma})^2    }  \\
  & +\frac{3}{4} \mathfrak{C}_{\gamma \gamma mm} - \frac{9}{2}   \sum_{ \substack{ 
 \tau=0  }}^{\gamma}    \frac{\left(\overline{\mathfrak{C}}_{\gamma, 2\tau+m-\gamma  , m} \right)^2 }{\ssomega_{ 2\tau+m-\gamma  }^2-(\ssomega_{m}+\ssomega_{\gamma})^2   }-  \frac{9}{4}  \sum_{\substack{ \tau = 0  } }^{\gamma}   
		 \frac{\overline{\mathfrak{C}}_{m, 2\tau, m}\overline{\mathfrak{C}}_{\gamma,\gamma,2\tau }}{\ssomega_{2\tau}^2 }  \\
 	& -\frac{9}{2}  \sum_{ \substack{ \tau = 0  }}^{\gamma}   \frac{\left(\overline{\mathfrak{C}}_{m,\gamma, 2\tau+m-\gamma   } \right)^2}{\ssomega_{ 2\tau +m-\gamma  }^2-(\ssomega_{m}-\ssomega_{\gamma})^2   }.  
\end{align*}
Now, all the Fourier coefficients above are non--zero and, according to Lemma \ref{LemmaClosedFormulasModel3YMCbarijmSpecificijm}, we have
\begin{align*}
	\overline{\mathfrak{C}}_{\gamma,\gamma,2\tau} &=
	2 \sqrt{\frac{2}{\pi }} \frac{(\tau +1)^2 (\gamma -\tau +1) (\gamma +\tau +3)}{(\gamma +1) (\gamma +3) \sqrt{4 \tau  (\tau +2)+3}}, \\
	\overline{\mathfrak{C}}_{m ,2\tau , m} & =2 \sqrt{\frac{2}{\pi }} \frac{ (\tau +1)^2 (m-\tau +1) (m+\tau +3)}{(m+1) (m+3) \sqrt{4 \tau  (\tau +2)+3}}, \\
	\overline{\mathfrak{C}}_{\gamma, 2\tau +m-\gamma  , m}&=2 \sqrt{\frac{2}{\pi }}  \frac{(\tau +1) (\gamma -\tau +1) (m+\tau +3) (-\gamma +m+\tau +1)}{\sqrt{(\gamma +1) (\gamma +3) (m+1) (m+3) (-\gamma +m+2 \tau +1) (-\gamma +m+2 \tau +3)}}.
\end{align*}
The latter allows us to compute 
\begin{align*}
	\sum_{  \tau  = 0}^{\gamma}  \frac{\left(  \overline{\mathfrak{C}}_{\gamma ,\gamma, 2\tau  } \right)^2}{\ssomega_{2\tau  }^2 } &=\frac{(\gamma +2) (2 \gamma +3) (2 \gamma +5)}{15 \pi  (\gamma +1) (\gamma +3)}, \\
 \sum_{\substack{ \tau  = 0  } }^{\gamma}   
		 \frac{\overline{\mathfrak{C}}_{m ,2\tau , m}\overline{\mathfrak{C}}_{\gamma,\gamma, 2\tau }}{\ssomega_{2\tau }^2 }	&=  \frac{(\gamma +2) (-\gamma  (\gamma +4)+5 m (m+4)+15)}{15 \pi  (m+1) (m+3)},
\end{align*}	
for all integers $\gamma \geq 0$. Also, recall that $\mathfrak{C}_{\gamma\gamma mm}$ and $\mathfrak{C}_{\gamma\gamma\gamma\gamma}$ are given by closed formulas (Remark \ref{MonotonicityModel3YM}). Consequently, we rescale $\mathfrak{u}_{\gamma m} $, 
\begin{align*}
	\frac{\mathfrak{u}_{\gamma m} }{\ssomega_{m}^2} &=    
 	 - \frac{ 3  }{8\ssomega_{\gamma}^2}   \mathfrak{C}_{\gamma\gamma\gamma\gamma} +\frac{9 }{2 \ssomega_{\gamma}^2}  \sum_{  \tau = 0}^{\gamma} 
 \frac{ \left(  \overline{\mathfrak{C}}_{\gamma ,\gamma , 2\tau} \right)^2}{\ssomega_{2\tau}^2 }  + \frac{9 }{4 \ssomega_{\gamma}^2}  \sum_{  \tau = 0}^{\gamma}  \frac{ \left(  \overline{\mathfrak{C}}_{\gamma ,\gamma , 2\tau} \right)^2}{\ssomega_{2\tau}^2-(2\ssomega_{\gamma})^2    }  \\
  & +\frac{3}{4\ssomega_{m}^2} \mathfrak{C}_{\gamma \gamma mm} - \frac{9}{2\ssomega_{m}^2}   \sum_{ \substack{ 
 \tau=0  }}^{\gamma}    \frac{\left(\overline{\mathfrak{C}}_{\gamma, 2\tau+m-\gamma  , m} \right)^2 }{\ssomega_{ 2\tau+m-\gamma  }^2-(\ssomega_{m}+\ssomega_{\gamma})^2   }-  \frac{9}{4\ssomega_{m}^2}  \sum_{\substack{ \tau = 0  } }^{\gamma}   
		 \frac{\overline{\mathfrak{C}}_{m, 2\tau, m}\overline{\mathfrak{C}}_{\gamma,\gamma,2\tau }}{\ssomega_{2\tau}^2 }  \\
 	& -\frac{9}{2\ssomega_{m}^2}  \sum_{ \substack{ \tau = 0  }}^{\gamma}   \frac{\left(\overline{\mathfrak{C}}_{m,\gamma, 2\tau+m-\gamma   } \right)^2}{\ssomega_{ 2\tau +m-\gamma  }^2-(\ssomega_{m}-\ssomega_{\gamma})^2   }
\end{align*}
and obtain that
\begin{align}\label{SplitingofMathfrakU}
	\frac{\mathfrak{u}_{\gamma m} }{\ssomega_{m}^2} = \mathfrak{I}_{\gamma m}  + \mathfrak{E}_{\gamma m}  ,
\end{align}
where $\mathfrak{I}_{\gamma m} $ stands for the part that can be explicitly computed, 
\begin{align*}
	\mathfrak{I}_{\gamma m}  &:= - \frac{ 3  }{8\ssomega_{\gamma}^2}   \mathfrak{C}_{\gamma\gamma\gamma\gamma} +\frac{9 }{2 \ssomega_{\gamma}^2}  \sum_{  \tau = 0}^{\gamma} 
 \frac{ \left(  \overline{\mathfrak{C}}_{\gamma ,\gamma , 2\tau} \right)^2}{\ssomega_{2\tau}^2 }  + \frac{9 }{4 \ssomega_{\gamma}^2}  \sum_{  \tau = 0}^{\gamma}  \frac{ \left(  \overline{\mathfrak{C}}_{\gamma ,\gamma , 2\tau} \right)^2}{\ssomega_{2\tau}^2-(2\ssomega_{\gamma})^2    }  \\
  & +\frac{3}{4\ssomega_{m}^2} \mathfrak{C}_{\gamma \gamma mm} -  \frac{9}{4\ssomega_{m}^2}  \sum_{\substack{ \tau = 0  } }^{\gamma}   
		 \frac{\overline{\mathfrak{C}}_{m, 2\tau, m}\overline{\mathfrak{C}}_{\gamma,\gamma,2\tau }}{\ssomega_{2\tau}^2 },
\end{align*}
and $\mathfrak{E}_{\gamma m}$ stands for the part that cannot be explicitly computed, 
\begin{align*} 
	\mathfrak{E}_{\gamma m}   & :=    -  \frac{9}{2\ssomega_{m}^2}   \sum_{ \substack{ 
 \tau=0  }}^{\gamma}    \frac{\left(\overline{\mathfrak{C}}_{\gamma, 2\tau+m-\gamma  , m} \right)^2 }{\ssomega_{ 2\tau+m-\gamma  }^2-(\ssomega_{m}+\ssomega_{\gamma})^2  } 
 -\frac{9}{2\ssomega_{m}^2}  \sum_{ \substack{ \tau = 0  }}^{\gamma}   \frac{\left(\overline{\mathfrak{C}}_{m,\gamma, 2\tau+m-\gamma   } \right)^2}{\ssomega_{ 2\tau +m-\gamma  }^2-(\ssomega_{m}-\ssomega_{\gamma})^2  }.
\end{align*} 
Now, using the elementary inequalities 
\begin{align*}
	|\ssomega_{ 2\tau+m-\gamma  }^2-(\ssomega_{m}+\ssomega_{\gamma})^2|&=|4 (\gamma -\tau +1) (m+\tau +3)| \geq 4(m+3) , \\
	|\ssomega_{ 2\tau +m-\gamma  }^2-(\ssomega_{m}-\ssomega_{\gamma})^2|&=|4 (\tau +1) (-\gamma +m+\tau +1)| \geq 2(m+3),
\end{align*}
for all $0 \leq \tau \leq \gamma$ and $\gamma \geq 0$, we estimate
\begin{align*}
& \left|	\mathfrak{E}_{\gamma m}   \right| \leq   \frac{9}{2\ssomega_{m}^2}   \sum_{ \substack{ 
 \tau=0  }}^{\gamma}    \frac{\left( \overline{ \mathfrak{C}}_{\gamma, 2\tau+m-\gamma  , m} \right)^2 }{\left| \ssomega_{ 2\tau+m-\gamma  }^2-(\ssomega_{m}+\ssomega_{\gamma})^2     \right|}  +\frac{9}{2\ssomega_{m}^2}  \sum_{ \substack{ \tau = 0  }}^{\gamma}   \frac{\left(\overline{\mathfrak{C}}_{m,\gamma, 2\tau+m-\gamma   } \right)^2}{\left| \ssomega_{ 2\tau +m-\gamma  }^2-(\ssomega_{m}-\ssomega_{\gamma})^2     \right|} \\
 & \leq \frac{9  }{2\ssomega_{m}^2} \frac{1}{4(m+3)}  \sum_{ \substack{ 
 \tau=0  }}^{\gamma}  \left( \overline{ \mathfrak{C}}_{\gamma, 2\tau+m-\gamma  , m} \right)^2 + \frac{9  }{2\ssomega_{m}^2} \frac{1}{2(m+3)}\sum_{ \substack{ 
 \tau=0  }}^{\gamma}  \left( \overline{ \mathfrak{C}}_{\gamma, 2\tau+m-\gamma  , m} \right)^2 \\
 & = \frac{9  }{2\ssomega_{m}^2} \left(\frac{1}{4(m+3)}+\frac{1}{2(m+3)}\right) \sum_{ \substack{ 
 \tau=0  }}^{\gamma}  \left( \overline{ \mathfrak{C}}_{\gamma, 2\tau+m-\gamma  , m} \right)^2  \\
 & =\frac{9 (\gamma +2)}{70 \pi  (m+1) (m+2)^2 (m+3)^2}\Big[-3 \gamma ^4-24 \gamma ^3-40 \gamma ^2+32 \gamma +7 \gamma ^2 m^2+28 \gamma  m^2+35 m^2 \\
 &+28 \gamma ^2 m+112 \gamma  m+140 m+105 \Big] :=  \mathfrak{P}_{\gamma m},
\end{align*}
where we used the closed formula for $\overline{ \mathfrak{C}}_{\gamma, 2\tau+m-\gamma  , m} $ from above. Hence, for all $m \geq 2\gamma+1$, we obtain
\begin{align}\label{ProveRHSpositive}
	\frac{\mathfrak{u}_{\gamma m} }{\ssomega_{m}^2}   = \mathfrak{I}_{\gamma m}  +  \mathfrak{E}_{\gamma}  \geq  \mathfrak{I}_{\gamma m} -  \mathfrak{P}_{\gamma m}   :=\mathfrak{O}_{\gamma m}.
\end{align}
Finally, for each $\gamma \in \{0,1,2,3,4,5\}$, we use the closed formulas for $\mathfrak{C}_{\gamma\gamma mm}$ and $\mathfrak{C}_{\gamma\gamma\gamma\gamma}$ we derived in Remark \ref{MonotonicityModel3YM} to firstly explicitly compute $\mathfrak{I}_{\gamma m} $ in terms of $m$ and then explicitly compute $\mathfrak{O}_{\gamma m}$ in terms of $m$. Once the closed formula is derived, we then show that $\mathfrak{O}_{\gamma m}>0$ for all $m \geq 2\gamma+1$. For example, for $\gamma=0$, we find 
\begin{align*}
\mathfrak{I}_{0 m} =	\frac{m (m+4) (5 m (m+4)+29)+66}{12 \pi  (m+1) (m+2)^2 (m+3)}
\end{align*}
and hence 
\begin{align*}
	\mathfrak{O}_{0 m}=\frac{5 m^4+40 m^3+109 m^2+8 m-42}{12 \pi  (m+1) (m+2)^2 (m+3)},
\end{align*}
that is greater than $10^{-3}$ provided that $m \geq 1$. Similarly, for $\gamma=1$, we compute
\begin{align*}
	\mathfrak{I}_{1 m} =\frac{\left(m^2+2\right) (m (m+8)+18)}{4 \pi  (m+1) (m+2)^2 (m+3)}
\end{align*}
and hence 
\begin{align*}
	\mathfrak{O}_{1m}=\frac{m \left(m^4+11 m^3+44 m^2-32 m-348\right)}{4 \pi  (m+1) (m+2)^2 (m+3)^2},
\end{align*}
that is greater than $10^{-3}$ provided that $m \geq 3$.
For all the other cases with $\gamma \in \{2,3,4,5\}$, we find
\begin{align*}
	\mathfrak{O}_{2m} &= \frac{109 m^5+1199 m^4+4523 m^3-30347 m^2-132936 m+107244}{600 \pi  (m+1) (m+2)^2 (m+3)^2} ,\\
	\mathfrak{O}_{3m} &= \frac{43 m^5+473 m^4+1646 m^3-33554 m^2-129372 m+238248}{300 \pi  (m+1) (m+2)^2 (m+3)^2} ,\\
	\mathfrak{O}_{4m} &= \frac{83 m^5+913 m^4+2851 m^3-139159 m^2-515982 m+1611198}{700 \pi  (m+1) (m+2)^2 (m+3)^2} ,\\
	\mathfrak{O}_{5m} &=  \frac{17 m^5+187 m^4+505 m^3-53329 m^2-194760 m+905292}{168 \pi  (m+1) (m+2)^2 (m+3)^2}
\end{align*}
and the claim follows similarly\footnote{For $\gamma \in \{2,3\}$, the estimates $\mathfrak{O}_{\gamma m} \geq 10^{-3}$ hold true for all integers $m \geq 2\gamma+1$, whereas, for $\gamma \in \{4,5\}$, we have that $\mathfrak{O}_{\gamma m} \geq 10^{-3}$ provided that $m \geq 2\gamma+3$ instead of $m \geq 2\gamma+1$. In this case, we use the definition of the Fourier coefficients to explicitly compute $	 \mathfrak{u}_{\gamma m}  \ssomega_{m}^{-2} $ for $m\in \{2\gamma+1,2\gamma+2\}$ and verify that it is still strictly positive.}. Finally, it remains to show that the determinants
\begin{align*}
	\mathfrak{D}_{\gamma n} & : =
	 \mathfrak{u}_{\gamma n} \mathfrak{u}_{\gamma ,2\gamma-n} - \mathfrak{v}_{\gamma n} \mathfrak{v}_{\gamma ,2\gamma-n}  
\end{align*}
are all non--zero for $ n \in \{0,1,\dots, \gamma -1 \}$ that follows by a direct computation using the definition of the Fourier coefficients. Specifically, we  compute $\mathfrak{D}_{\gamma n}$ for all $\gamma \in \{0,1,2,3,4,5\}$  and $ n \in \{0,1,\dots, \gamma -1 \}$ and verify that they are all strictly negative, that completes the proof. Figure \ref{Lemma96Pic3Ref}
\begin{figure}[h!]
\vspace{0.5cm}
    \centering
    \includegraphics[width=0.7\textwidth]{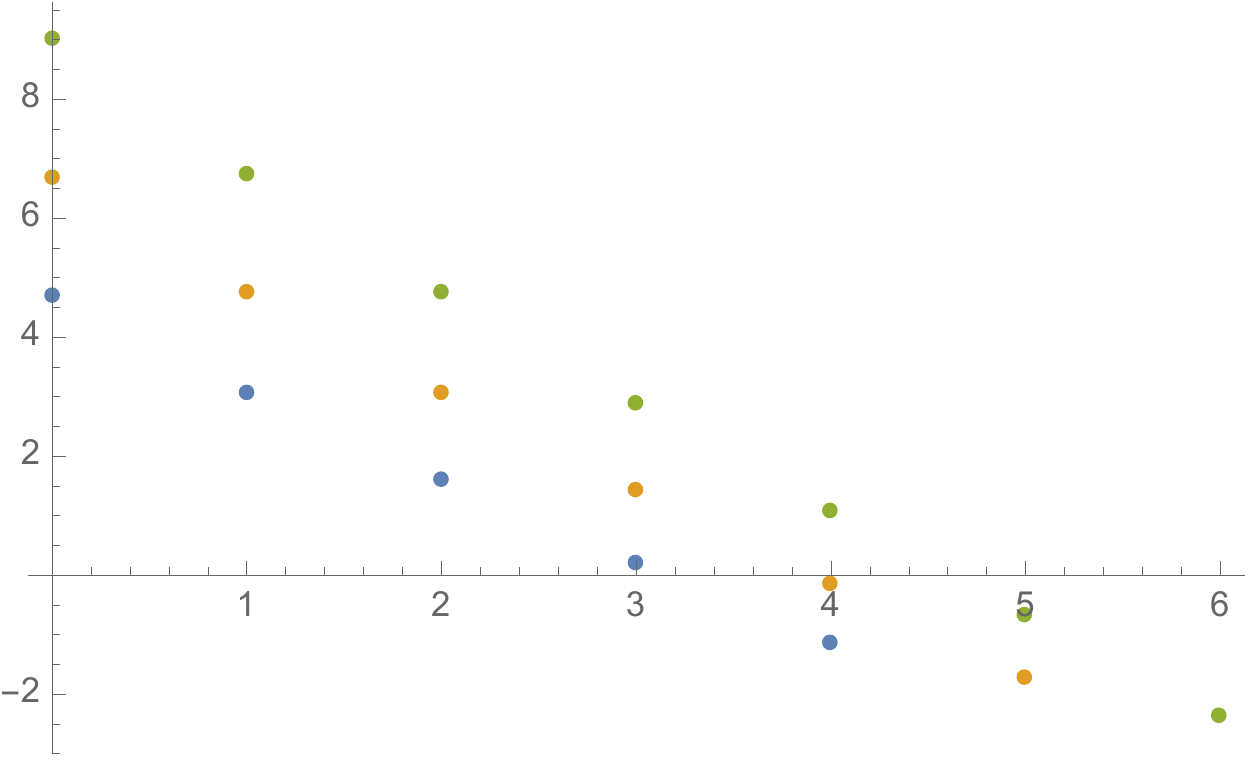}
    \caption{The determinants $\mathfrak{D}_{\gamma n}  $ for $\gamma =5$ (blue), $\gamma =6$ (orange) and $\gamma =7$ (green) as $n$ varies within $ \{0,1,\dots,\gamma-1\}$. They are all in fact non--zero.}
    \label{Lemma96Pic3Ref}
\end{figure}  
illustrates the determinants $\mathfrak{D}_{\gamma n}  $ for $\gamma \in \{5,6,7\}$ as $n$ varies within $ \{0,1,\dots,\gamma-1\}$.

\end{proof}		
  
\appendix

 \bibliographystyle{plain}
 \bibliography{V13_Non_linear_Periodic_waves}

\end{document}